\numberwithin{equation}{section}
\newtheorem{theorem}{Theorem}[section]
\newtheorem{lemma}[theorem]{Lemma}
\newtheorem{proposition}[theorem]{Proposition}
\newtheorem{remark}[theorem]{Remark}
\newtheorem{definition}[theorem]{Definition}
\newtheorem{corollary}[theorem]{Corollary}
\newtheorem{algorithm}{Algorithm}
\newcommand{\barredsum}{%
  \DOTSB\mathop{\mathpalette\@barredsum\relax}\slimits@
}
\newcommand{\@barredsum}[2]{%
  \begingroup
  \sbox\z@{$#1\sum$}%
  \setlength{\unitlength}{\dimexpr2pt+\ht\z@+\dp\z@\relax}%
  \@barredsumthickness{#1}%
  \vphantom{\@barredsumbar}%
  \ooalign{$\m@th#1\sum$\cr\hidewidth$#1\@barredsumbar$\hidewidth\cr}%
  \endgroup
}
\newcommand{\@barredsumbar}{%
  \vcenter{\hbox{\begin{picture}(0,1)\roundcap\Line(0,0)(0,1)\end{picture}}}%
}
\newcommand{\@barredsumthickness}[1]{% see https://tex.stackexchange.com/a/477200/
  \linethickness{%
    1.25\fontdimen8
      \ifx#1\displaystyle\textfont\else
      \ifx#1\textstyle\textfont\else
      \ifx#1\scriptstyle\scriptfont\else
      \scriptscriptfont\fi\fi\fi 3
  }%
}
\newcommand{\al}{\alpha}
\newcommand{\be}{\beta}
\newcommand{\ga}{\gamma}
\newcommand{\Ga}{\Gamma}
\newcommand{\de}{\delta}
\newcommand{\De}{\Delta}
\newcommand{\e}{\varepsilon}
\newcommand{\la}{\lambda}
\newcommand{\si}{\sigma}
\newcommand{\Si}{\Sigma}
\newcommand{\vp}{\varphi}
\newcommand{\om}{\omega}
\newcommand{\cs}{\mathcal S}
\newcommand{\cp}{\mathcal P}
\newcommand{\co}{\mathcal O}
\newcommand{\cb}{\mathcal B}
\newcommand{\wt}{\widetilde}
\newcommand{\wh}{\widehat}
\newcommand{\ZR}{\mathbb{R}}
\newcommand{\ZT}{\mathbb{T}}
\newcommand{\ZC}{\mathbb{C}}
\newcommand{\ZN}{\mathbb{N}}
\newcommand{\ZS}{\mathbb{S}}
\newcommand{\Id}{{\textit{1}}}
\newcommand{\bY}{{\bf Y}}
\newcommand{\Tau}{\mathcal{T}}
\newcommand{\cB}{{\mathcal B}}
\newcommand{\R}{\mathbb{R}}
\newcommand{\T}{\mathbb{T}  }
\newcommand{\cO}{\mathcal{O}}
\newcommand{\Z}{\mathbb Z}
\newcommand{\fB}{\mathfrak{B}}
\newcommand{\Sq}{{\rm{Sq}}}
\newcommand{\vf}{\boldsymbol f}
\newcommand{\vg}{\boldsymbol g}
\newcommand{\rap}{{\rm RapDec}}
\newcommand{\dist}{{\rm dist}}
\newcommand{\poly}{{\rm{Poly}}}
\newcommand{\BL}{\textup{BL}}
\newcommand{\bZ}{{\bf Z}}
\newcommand{\Dec}{\textup{Dec}}
\newcommand{\supp}{{\rm supp}}
\newcommand{\proj}{{\rm{Proj}}}
\newcommand{\dc}{\delta_{\circ}}
\begin{document}

\title{New bounds for Stein's square functions in higher dimensions}

\author{Shengwen Gan} \address{Shengwen Gan\\  Deparment of Mathematics, Massachusetts Institute of Technology, USA}\email{shengwen@mit.edu}

\author{Changkeun Oh} \address{Changkeun Oh\\Department of Mathematics\\ University of Wisconsin-Madison, USA}
\email{coh28@wisc.edu}

\author{Shukun Wu} \address{ Shukun Wu\\  Department of Mathematics\\ California Institute of Technology, USA} \email{skwu@caltech.edu}

\date{}
\begin{abstract}
We improve the $L^p(\ZR^n)$ bounds on Stein's square function to the best known range of the Fourier restriction problem when $n\geq4$. Applications including certain local smoothing estimates are also discussed. 
\end{abstract}

\maketitle

\section{Introduction} 
Recall the Bochner-Riesz operator of order $\lambda$
\begin{equation}
    T_t^{\lambda}f(x):=\int_{\R^n}\Big(1-\frac{|\xi|^2}{t^2}\Big)_{+}^{\lambda}\widehat{f}(\xi)e^{ix \cdot \xi}\,d\xi
\end{equation}
and Stein's square function of order $\la$
\begin{equation}
    G^{\lambda}f(x):=\Big( \int_{0}^{\infty}\Big|\frac{\partial}{\partial t}T_t^{\lambda}f(x)\Big|^2\,tdt\Big)^{\frac12}.
\end{equation}
Let $p_n$ be the smallest exponent obtained in \cite{hickman2020note}, which is also the best exponent that the restriction conjecture has been verified when $n\geq4$. Our main result is

\begin{theorem}
\label{thm1.1}
For $p >p_n$ and $n\geq3$, we have
\begin{equation}
    \|G^{\lambda}f\|_{L^p(\R^n)} \leq C_{p,n}\|f\|_{L^p(\R^n)}, \;\;\;\; \mathrm{for} \;\;\; \lambda > \frac{n}{2}-\frac{n}{p}.
\end{equation}
\end{theorem}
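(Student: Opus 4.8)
\smallskip
\noindent\textbf{Proof strategy.} The plan is to reduce Theorem~\ref{thm1.1} to a single-scale square function estimate for a thin neighbourhood of the sphere --- equivalently, for the light cone in one extra dimension --- and then to obtain that estimate by importing the Fourier-restriction machinery that underlies the exponent $p_n$.

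\emph{Reductions.} The multiplier of $\partial_t T_t^\lambda$ equals $2\lambda|\xi|^2t^{-3}(1-|\xi|^2/t^2)_+^{\lambda-1}$; it is $O(1)$, supported in $\{|\xi|\le t\}$, and, after integration against $t\,dt$, concentrated on $\{|\xi|\sim t\}$. Applying a Littlewood--Paley decomposition in $\xi$ and exploiting the scale invariance of the factor $(1-|\xi|^2/t^2)$, one sees that $\|G^\lambda f\|_{L^p}$ is dominated by the $\ell^2$-sum over dyadic frequencies $R$ of pieces $B_Rf$, each a square function in the dilation parameter $t\in[R,2R]$ acting on $\{|\xi|\sim R\}$; rescaling to unit frequency turns $B_R$ into a fixed operator whose Fourier support lies in the $R^{-1}$-neighbourhood of the truncated cone $\{(\xi,\tau):\tau=|\xi|\sim 1\}\subset\R^{n+1}$, with $t$ playing the role of the radial frequency along the cone. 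Decomposing the order-$\lambda$ cutoff dyadically in the distance to the cone contributes a geometric factor in that distance, and interpolating with the elementary bounds (on $L^2$ once $\lambda>\tfrac{1}{2}$, and on every $L^p$ once $\lambda>\tfrac{n}{2}$) reduces matters to a single-scale estimate: an $R^\varepsilon$-bound at the critical index for the square function attached to the $R^{-1}$-neighbourhood of the cone, with $f$ Fourier-supported in $\{|\xi|\sim R\}$; a routine $\varepsilon$-removal then recovers the open condition $\lambda>\tfrac{n}{2}-\tfrac{n}{p}$.

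\emph{The single-scale estimate via restriction.} This is the heart of the matter and the place where $p_n$ enters. Foliating the truncated cone by the dilated spheres $\{|\xi|=t\}$ and partitioning it into $\sim R^{1/2}$ angular sectors, a C\'ordoba-type square function argument (in the spirit of the cone-multiplier analysis and of the earlier work of Lee, Rogers and Seeger on Stein's square function) bounds the single-scale square function by the sharp $L^p(\R^n)$ Fourier extension estimate for $S^{n-1}$ applied on the angular sectors, the $L^2$-average in $t$ being absorbed into the square sum over sectors together with the radial orthogonality. Since the extension operator for $S^{n-1}$ is bounded from $L^p$ to $L^p(\R^n)$ for all $p>p_n$ by \cite{hickman2020note}, and the numerology of the argument returns exactly the threshold $\lambda>\tfrac{n}{2}-\tfrac{n}{p}$, this closes the estimate --- provided one has the extension input in the full range $p>p_n$ and in the precise form needed.

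\emph{The main obstacle.} The difficulty is exactly that "precise form": to reach the full restriction range rather than a weaker Stein--Tomas-type range one cannot invoke an off-the-shelf extension inequality, because the bound at the exponent $p_n$ is itself the output of the Bourgain--Guth broad--narrow induction, and that induction must be reinstalled inside the present cone / vector-valued setting. Concretely, one would split $B_R$ into broad and narrow parts, handle the narrow contributions by parabolic rescaling plus induction on $R$, and control the broad part by a $k$-broad square function estimate resting on multilinear Kakeya and on the polynomial-method input behind \cite{hickman2020note}. The crux is to propagate the square-function ($L^2_t$, solid-cone) structure through the wave-packet decomposition, the broad--narrow induction, and the multilinear estimates with no loss, so that the closing numerology reproduces $p_n$ and not a weaker exponent. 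The remaining points --- the $\varepsilon$-removal, the low-frequency part of $G^\lambda$, and checking that the same numerology survives in the borderline dimension $n=3$ --- are comparatively routine.
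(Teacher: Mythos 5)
Your opening reductions (Littlewood--Paley, dyadic splitting in the $\lambda$-cutoff, interpolation against the elementary $\lambda>1/2$ and $\lambda>n/2$ bounds, and $\varepsilon$-removal) are fine and match what the paper does via its Lemma on the discrete square function and the subsequent local estimate. From that point on, however, the proposal is not a proof: you identify the single-scale estimate as ``the heart of the matter,'' describe a strategy for it, and then explicitly state that the crucial step is unresolved. That gap is not merely a matter of bookkeeping.

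More specifically, the claim that ``a C\'ordoba-type square function argument bounds the single-scale square function by the sharp $L^p(\mathbb{R}^n)$ Fourier extension estimate for $S^{n-1}$ applied on the angular sectors, the $L^2$-average in $t$ being absorbed into the square sum over sectors together with the radial orthogonality'' is precisely the step that is \emph{not} available in $n\ge 3$. If one could absorb the radial $L^2$-average into the angular square sum and then invoke extension, then the Stein square function bound at the restriction exponent would be an immediate corollary of the restriction theorem; it is not, and that is why the $L^p$ theory of $G^\lambda$ lagged behind restriction for decades. Absorbing the $\ell^2_j$ sum over thin shells is an instance of a reverse square function inequality for the sphere (the Mockenhaupt-type conjecture), which is open for $n\ge 3$; C\'ordoba's argument gives it only in $\mathbb{R}^2$. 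Your sketch does not say how to get around this, and the paper's point of departure is exactly that one cannot.

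The paper's actual mechanism is different from what you indicate even in the ``fallback'' paragraph. It does not lift to a cone in $\mathbb{R}^{n+1}$ nor re-run the Bourgain--Guth broad--narrow / multilinear-Kakeya scheme; it works directly with the vector-valued discrete square function $\mathrm{Sq}\,\vec f=(\sum_j|S_jf|^2)^{1/2}$ in $\mathbb{R}^n$, reduces to a $k$-broad norm estimate for $\mathrm{Sq}\,\vec f$ with the interpolated right-hand side $\|\mathrm{Sq}\,\vec f\|_{L^2}^{2/p}\,\|f\|_{L^\infty}^{1-2/p}$, and then runs an iterated polynomial-partitioning algorithm (Sections 5--7) combined with the nested polynomial Wolff axiom (Zahl's lemma) and transverse equidistribution. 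The genuinely new ingredient, which your proposal does not identify, is the refined $L^2$ bound proved in Section 8: for the functions attached to the smallest cells $O$, one shows $\|\mathrm{Sq}\,\vec f_O\|_2^2\lesssim \rho R^{-1}\|f\,\mathbbm{1}_X\|_2^2$, where $X$ is a thin semialgebraic neighbourhood whose measure is controlled by the nested polynomial Wolff estimate. It is this step that replaces Guth's pointwise $L^2$--$L^\infty$ input from the extension-operator setting and makes the polynomial-partitioning numerology close at $p_n$ for the square function. Without an ingredient of this kind, an approach ``by restriction'' does not reach the full range, and your proposal neither supplies such an ingredient nor explains how one would arise from the cone picture you set up.
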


The square function $G^\la f$ was introduced by Stein in \cite{Stein-BR}, aiming to study the almost everywhere convergence of the Bochner-Riesz mean. An overview of the square function can be found in  \cite{MR3329855}. Regarding to its $L^p$ problem, it is conjectured that $\|G^\la f\|_p\leq C_{p,n} \|f\|_p$ for $\la>\max\{1/2,n/2-p/2\}$ and $p\geq2$.  Indeed, this conjecture was verified by Carbery \cite{Carbery-MBR} when $n=2$. For higher dimensions there are only partial results. See for instance \cite{gan2021new} for $n=3$ and \cite{Lee-Rogers-Seeger}, \cite{Lee-Sqfcn}  for higher dimensions. Theorem \ref{thm1.1} improves the range when $n\geq4$. We remark that the methods in this paper also give a slightly different proof for the 3-dimensional result in \cite{gan2021new}.

\subsection{Applications}

There are many connections between Stein's square function and other problems. We list some applications below to motivate its $L^p$ estimate. Other than that, Stein's square function or its variation appeared in several places, for example, in the study of variation operators (see \cite{MR4149067}, \cite{beltran2020variation}).

\subsubsection{Radial and maximal radial multipliers}

Let $m$ be a bounded function on $\R$. Define the associated radial multiplier operator $T_mf$  by
\begin{equation}
    \widehat{T_mf}(\xi):=m(|\xi|)\widehat{f}(\xi).
\end{equation}
It is known in \cite{MR747595} that the $L^p$-boundedness of radial multipliers with certain regularity can be obtained by Stein's square function estimate. 

\begin{corollary}
\label{cor-radial}
Let $n \geq 3$ and $\psi \not\equiv 0$ be a compactly supported smooth function on $(0,\infty)$. For every $p \geq p_n$ and 
$ \alpha>n|\frac12-\frac1p|$, we have 
\begin{equation}
\label{radial-esti}
    \|T_mf\|_{L^p(\R^n)} \lesssim \sup_{t>0}\|\psi m(t \cdot)\|_{L^2_{\alpha}(\R)}\|f\|_{L^p(\R^n)}.
\end{equation}
\end{corollary}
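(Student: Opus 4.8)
The plan is to reduce Corollary~\ref{cor-radial} to Theorem~\ref{thm1.1} via the standard subordination formula relating a radial multiplier to an integral (or square-function) superposition of Bochner--Riesz-type pieces, following the circle of ideas in \cite{MR747595}. Concretely, for a bounded $m$ on $\ZR$ one writes $m(|\xi|)$ after the localization $\psi$ as a synthesis of dilates of a fixed bump, and expresses $\psi m$ in terms of its behavior under the Mellin/Fourier transform in the radial variable; the hypothesis $\psi m(t\cdot)\in L^2_\alpha(\ZR)$ uniformly in $t$ is exactly what lets one control the synthesis coefficients in an $L^2_\alpha$-weighted sense. The first step is therefore to set up this decomposition carefully: decompose $m$ into Littlewood--Paley pieces $m_k$ supported on annuli $|\xi|\sim 2^k$ (so $\psi$ effectively makes $k=O(1)$, but one keeps the $t$-dilation), and for each piece use the elementary identity that writes a function with $\alpha$ derivatives in $L^2$ as a superposition $\int (1-s|\xi|^2)_+^{\lambda}\,\mu(ds)$ plus a lower-order remainder, where the measure $\mu$ has total mass controlled by $\|\psi m(t\cdot)\|_{L^2_\alpha}$ once $\lambda$ is chosen with $\lambda>\alpha-1/2$, which will match $\lambda>\tfrac n2-\tfrac np$ when $\alpha>n|\tfrac12-\tfrac1p|$.

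Second, I would invoke Minkowski's inequality together with the pointwise/$L^p$ domination of the operator $T_m f$ by a quantity of the form $\big(\int_0^\infty |\partial_t T_t^{\lambda} f|^2\, t\,dt\big)^{1/2}$ weighted by the coefficient measure; this is the step where Stein's square function $G^\lambda$ literally appears. Here one uses Cauchy--Schwarz in the $s$-variable: the $L^2_\alpha$ norm of $\psi m(t\cdot)$ provides the $\ell^2$-summable weights, and the remaining factor is precisely $G^\lambda f$ after recognizing $\partial_t T_t^\lambda f$ as the relevant derivative of the Bochner--Riesz average. One must be a little careful that the square function in the statement of Theorem~\ref{thm1.1} is built from $\partial_t T_t^\lambda$ rather than from $T_t^\lambda$ itself, but this is the classical normalization and costs nothing. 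Applying Theorem~\ref{thm1.1} for the given range $p>p_n$ (and noting $p_n\ge$ the relevant threshold so $p\ge p_n$ is admissible up to the endpoint, which one handles by a trivial $\varepsilon$-loss or by the open range in the hypothesis $\alpha>n|\tfrac12-\tfrac1p|$) then yields $\|T_m f\|_p\lesssim \sup_t\|\psi m(t\cdot)\|_{L^2_\alpha}\|f\|_p$.

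The main obstacle is bookkeeping the two independent scalings — the dilation $t$ in the multiplier and the dilation built into the square function — so that the supremum over $t$ on the right-hand side is genuinely what controls the synthesis, rather than some integrated norm that could be larger. The clean way is to fix the Littlewood--Paley frequency localization first so that $\psi$ pins the radial frequency to a unit annulus, reducing to a single-scale estimate where the $t$-dilation acts as an overall rescaling that leaves both $L^p(\ZR^n)$ and $L^2_\alpha(\ZR)$ (after appropriate normalization) invariant; then one sums the Littlewood--Paley pieces using that $\alpha>0$ gives geometric decay in $k$. A secondary technical point is the passage from the $L^2_\alpha$ Sobolev hypothesis to an honest compactly-supported-in-$s$ representation: one should first multiply $m$ by a smooth cutoff adapted to $\supp\psi$, extend, and use that $L^2_\alpha$ controls a suitable weighted $L^1$ of the Fourier coefficients when $\alpha>1/2$ — but since we only need $\alpha>n|\tfrac12-\tfrac1p|\ge 1/2$ for $p$ in the relevant range when $n\ge 3$ (indeed $n|\tfrac12-\tfrac1p|$ can be large), this is automatic and the remainder terms are handled by the trivial $L^2$ bound for radial multipliers. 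With these reductions in place the corollary follows by combining the decomposition, Minkowski/Cauchy--Schwarz, and Theorem~\ref{thm1.1}.
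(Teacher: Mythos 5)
The paper gives no proof of Corollary~\ref{cor-radial} beyond citing \cite{MR747595}, so you are reconstructing the argument of that reference, which is indeed the intended route: write $\psi m(t\cdot)$ as a superposition of Bochner--Riesz symbols, apply Cauchy--Schwarz to pull out $G^\lambda f$ and an $L^2$-weighted norm of the coefficient density, and then invoke Theorem~\ref{thm1.1}. Your high-level architecture is correct and this is the same mechanism the authors have in mind.

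There is, however, a directional error at the crux of the subordination step. You write that the coefficient measure is controlled by $\sup_t\|\psi m(t\cdot)\|_{L^2_\alpha}$ ``once $\lambda$ is chosen with $\lambda>\alpha-1/2$.'' This cannot be right: the subordination identity expresses $m$ through (roughly) $\lambda+1$ fractional derivatives of $m$, so \emph{larger} $\lambda$ requires \emph{more} smoothness from $m$. If $\lambda>\alpha-1/2$ were the correct constraint you could take $\lambda$ arbitrarily large for free, which would make the corollary trivially follow from the easiest case of Theorem~\ref{thm1.1}, and the hypothesis $\alpha>n|\tfrac12-\tfrac1p|$ would play no role. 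The correct constraint bounds $\lambda$ from \emph{above} in terms of $\alpha$ (a statement of the shape $\lambda<\alpha$, with the precise offset depending on the exact weight and normalization of the square function in \eqref{sqfcn-1} versus $G^\lambda$). With that direction in place, the logic is: Theorem~\ref{thm1.1} gives $G^\lambda$ bounded on $L^p$ for all $\lambda>\tfrac n2-\tfrac np$, the subordination requires $\lambda<\alpha$ (sharp form), and one chooses $\lambda\in(\tfrac n2-\tfrac np,\alpha)$, which is nonempty exactly because $\alpha>n|\tfrac12-\tfrac1p|=\tfrac n2-\tfrac np$ in the range $p\ge p_n>2$. Getting the sharp form of the subordination — so the threshold is $\alpha>\lambda$ and not, say, $\alpha>\lambda+\tfrac12$ — is precisely the content of \cite{MR747595} that your sketch glosses over and that you would need to nail down to obtain the stated range of $\alpha$ rather than a lossy version of it.
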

\noindent An analogous result for maximal radial multiplier operator also follows from the $L^p$ estimate of Stein's square function. This was shown in \cite{MR848141} Theorem 4. As a special case, we improve the maximal Bochner-Riesz estimate to the same range of $p$. We refer the readers to \cite{MR2784663} and \cite{MR2755678} for the study on general radial multipliers and their applications.

\subsubsection{Dimension of divergent set of the Bochner-Riesz mean}
The almost everywhere convergence of the Bochner-Riesz mean on $L^p$ is established by \cite{MR972135} when $p \geq 2$. Here we consider the pointwise convergence of the Bochner-Riesz mean in Sobolev space.
Consider the dimension of the divergent set.
\begin{equation}
    D_{n,\lambda,p}(\beta):=\sup\limits_{f \in L^p_{\beta}(\R^n)}\mathrm{dim}\{x \in \R^n: \lim_{t \rightarrow \infty} T_t^{\lambda}f(x) \neq f(x) \}
\end{equation}
where dim stands for the Hausdorff dimension.

\begin{corollary}
Let $n\geq 3$, $\lambda>n|\frac12-\frac1p|-\frac12$, and $0< \beta < \frac{n}{p}$. Then for $p > p_n$
\begin{equation}
    D_{n,\lambda,p}(\beta)=n-\beta p.
\end{equation}
\end{corollary}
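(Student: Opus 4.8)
The plan is to deduce the statement about the Hausdorff dimension $D_{n,\lambda,p}(\beta)$ of the divergence set from the $L^p$ boundedness of Stein's square function $G^\lambda$ established in Theorem \ref{thm1.1}, following the now-standard Carbery--Rubio de Francia--Vega type scheme for relating square function bounds to pointwise convergence with a quantitative divergence-set estimate. First I would observe that it suffices to establish the two inequalities $D_{n,\lambda,p}(\beta)\le n-\beta p$ and $D_{n,\lambda,p}(\beta)\ge n-\beta p$ separately; the lower bound is the easier direction and is essentially sharpness, obtained by exhibiting, for each $\e>0$, a function $f\in L^p_\beta(\R^n)$ whose Bochner--Riesz means $T_t^\lambda f$ fail to converge on a set of Hausdorff dimension at least $n-\beta p-\e$ --- the usual construction takes $f$ whose Fourier transform is (a smooth truncation of) $|\xi|^{-n/p-\beta+\e'}$-type or a lacunary superposition adapted to a fixed Cantor-like set of the prescribed dimension, and one checks $f\in L^p_\beta$ by a direct computation with the Sobolev norm while divergence on the Cantor set follows from a lower bound on $|T_t^\lambda f|$ along a lacunary sequence of $t$'s.

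For the upper bound, which is the substantive direction, the strategy is to fix $f\in L^p_\beta(\R^n)$ and a Littlewood--Paley decomposition $f=\sum_{k\ge 0} f_k$ with $\widehat{f_k}$ supported in $|\xi|\sim 2^k$, and to show that if $\beta>\alpha_0:=n|\tfrac12-\tfrac1p|-\lambda-\tfrac12$ --- which is exactly the hypothesis $\lambda>n|\tfrac12-\tfrac1p|-\tfrac12$ rewritten after allowing the $\beta<n/p$ range --- then the divergence set $E=\{x:\lim_{t\to\infty}T_t^\lambda f(x)\ne f(x)\}$ is contained, up to a set of dimension $0$, in a set whose $(n-\beta p)$-dimensional Hausdorff content is finite. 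The key analytic input is the pointwise bound
\begin{equation}
\label{eq:sqfn-pointwise}
\sup_{t>0}\big|T_t^\lambda f_k(x)-f_k(x)\big| \lesssim 2^{-k\lambda}\, G^{\lambda'}f_k(x) + (\text{rapidly decaying tail}),
\end{equation}
valid for a suitable $\lambda'<\lambda$, which reduces oscillation of the means to the square function; summing in $k$ after pairing with the $2^{-k\beta}$ gain from $f\in L^p_\beta$ and invoking Theorem \ref{thm1.1} for the exponent $p>p_n$ gives a bound on $\big\| \sum_k 2^{k(\beta'-\beta)}(\text{maximal oscillation of }f_k)\big\|_{L^p}$ that is summable precisely when $\beta>\alpha_0$. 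One then converts this $L^p$ control into a Hausdorff-content estimate on the level sets $\{x: \limsup_{t}|T_t^\lambda f(x)-f(x)|>\sigma\}$ by a standard argument: cover $\R^n$ by dyadic cubes, use the $L^p$ bound on each annular piece to estimate the number of cubes on which the relevant maximal function exceeds a threshold, and optimize the resulting geometric series in the cube side-length against the exponent $n-\beta p$. Letting $\sigma\to 0$ along a sequence shows $\dim E\le n-\beta p$.

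I would expect the main obstacle to be the careful bookkeeping in the passage from the $L^p$ square-function estimate to the Hausdorff-dimension bound, specifically making sure that the loss one pays in the Littlewood--Paley summation is genuinely absorbed by the hypothesis $\beta>\alpha_0$ with no endpoint issue, and that the $p>p_n$ restriction inherited from Theorem \ref{thm1.1} is exactly what forces the "$p>p_n$" in the corollary rather than something weaker. A secondary technical point is justifying \eqref{eq:sqfn-pointwise}: one writes $T_t^\lambda f_k(x)-f_k(x)=-\int_t^\infty \frac{\partial}{\partial s}T_s^\lambda f_k(x)\,ds$ (using $T_s^\lambda f_k\to 0$ as $s\to\infty$ for $s\gg 2^k$, and $T_s^\lambda f_k = f_k$ for $s$ small relative to $2^k$ up to acceptable error from the smooth cutoff), then applies Cauchy--Schwarz in $s$ against the measure $s\,ds$ on the dyadic range $s\sim 2^k$, which produces exactly $G^\lambda f_k$ times $2^{-k\lambda}$ after accounting for the normalization; outside this range the symbol $(1-|\xi|^2/s^2)_+^\lambda$ is either identically $1$ or supported away from $|\xi|\sim 2^k$, giving the rapidly decaying tail. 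All of this is classical once the sharp $L^p$ bound for $G^\lambda$ is available, so the corollary is genuinely a consequence of Theorem \ref{thm1.1}; the only new content is that the improved range $p>p_n$ for the square function propagates verbatim to the divergence-set statement. The sharpness in the lower bound matches because the extremizers for $G^\lambda$ boundedness and for divergence of Bochner--Riesz means are of the same type, so the two bounds meet at $n-\beta p$.
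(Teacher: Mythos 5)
The paper does not actually prove this corollary; it cites a chain of external results. Specifically, the paper's route is: Theorem~\ref{thm1.1} gives the $L^p$ bound for $G^\lambda$, which (via the radial multiplier machinery, Corollary~\ref{cor-radial} and its maximal analogue from [MR848141, Theorem~4]) yields $L^p$ boundedness of the maximal Bochner--Riesz operator $T^\lambda_*=\sup_t|T^\lambda_t|$ for $\lambda>n|\tfrac12-\tfrac1p|-\tfrac12$ and $p>p_n$, and the divergence-set statement is then quoted from [MR3026351]. Your proposal instead tries to go directly from $G^\lambda$ to the Hausdorff-dimension bound, essentially reconstructing the contents of those two references. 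The overall scheme (square function $\Rightarrow$ maximal-oscillation control $\Rightarrow$ content/capacity estimate on the divergence set, plus a construction for sharpness) is the right one, but two steps are off.

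First, your displayed pointwise estimate cannot be correct as written. If $\sup_t|T^\lambda_tf_k-f_k|\lesssim 2^{-k\lambda}G^{\lambda'}f_k$ held with $\lambda'$ admissible for Theorem~\ref{thm1.1}, then the Littlewood--Paley sum would converge absolutely in $L^p$ for every $\lambda>0$ once $\beta>0$, and your own threshold $\alpha_0=n|\tfrac12-\tfrac1p|-\lambda-\tfrac12$ would be irrelevant (indeed $\alpha_0<0$ exactly under the corollary's hypothesis, so ``$\beta>\alpha_0$'' is vacuous). The genuine mechanism is that the fundamental-theorem-of-calculus plus dyadic Cauchy--Schwarz trades the supremum for $G^{\lambda+1/2-\epsilon}$ \emph{without} any extra $2^{-k\lambda}$ gain, so the $L^p$ bound on $T^\lambda_*$ comes from $\lambda+\tfrac12>n(\tfrac12-\tfrac1p)$, i.e.\ the stated hypothesis on $\lambda$; this $1/2$-shift is exactly what [MR848141] formalizes for maximal radial multipliers. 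Second, once $T^\lambda_*$ is $L^p$-bounded, pointwise a.e.\ convergence already holds for all of $L^p$, so the quantitative bound $D_{n,\lambda,p}(\beta)=n-\beta p$ for $L^p_\beta$ is \emph{not} a summability question. The upper bound is obtained by testing $T^\lambda_*$ against Frostman measures $\mu$ with $\mu(B(x,r))\lesssim r^\alpha$ for $\alpha>n-\beta p$ and showing $T^\lambda_tf\to f$ $\mu$-a.e., as in [MR3026351]; your ``cover by dyadic cubes and optimize a geometric series'' argument is too vague to land on the sharp exponent and should be replaced by the Frostman/capacity argument. The lower bound by explicit examples is fine in spirit and is also in [MR3026351].
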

\noindent This is a corollary of the $L^p$-boundedness of the maximal Bochner-Riesz operator, which is implied by Corollary \ref{cor-radial} (See the discussion below \eqref{radial-esti}). We refer to \cite{MR3026351} for the proof and more discussions on this topic.

\subsubsection{Regularity problem for fractional Schr\"{o}dinger equations}
Let $\al>0$. Consider the partial differential equation $i\partial_t u +(-\Delta_x)^{\alpha/2}u=0$ with the initial data $u(\cdot,0)=f$. Let us denote the solution by $e^{it(-\Delta)^{\alpha/2}}f:=u(x,t)$. It is proved in \cite{Lee-Rogers-Seeger} Proposition 5.1 that Stein's square function estimate implies some mixed norm estimates for the fractional Schr\"odinger operator.

\begin{corollary}
\label{mixed-norm-cor-1}
Let $n \geq 3$, $\alpha \in (0,\infty)$, and $I$ be a compact interval. For $p>p_n$ and $\frac{s}{\alpha}=n(\frac{1}{2}-\frac{1}{p})-\frac{1}{2}$, we have
\begin{equation}
\label{local-smoothing}
    \Big\|\big( \int_I \Big|e^{it(-\Delta)^{\alpha/2}}f\big|^2\,dt \Big)^{\frac12} \Big\|_{L^p(\R^{n})} \lesssim \|f\|_{L^p_s(\R^n)}.
\end{equation}

\end{corollary}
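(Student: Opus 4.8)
The plan is to deduce this mixed-norm estimate for the fractional Schr\"odinger propagator directly from Theorem \ref{thm1.1}, following the scheme of \cite{Lee-Rogers-Seeger} Proposition 5.1. The first step is a Littlewood--Paley decomposition in the frequency variable: write $f=\sum_{k\ge 0} f_k$ where $\widehat{f_k}$ is supported in $\{|\xi|\sim 2^k\}$ (plus a harmless low-frequency piece handled trivially). Since the spatial $L^p$ norm of a square function in $t$ commutes well with Littlewood--Paley projections via the usual vector-valued inequality, it suffices to prove, for each fixed $k$, the estimate
\begin{equation}
\Big\|\Big(\int_I \big|e^{it(-\Delta)^{\alpha/2}}f_k\big|^2\,dt\Big)^{\frac12}\Big\|_{L^p(\R^n)} \lesssim 2^{ks}\|f_k\|_{L^p(\R^n)},
\end{equation}
with a constant independent of $k$, and then sum in $k$ using $s>0$ (which holds since $p>p_n>2$ forces $n(\tfrac12-\tfrac1p)>\tfrac12$).

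The second step is to pass from the propagator $e^{it(-\Delta)^{\alpha/2}}$ on a frequency annulus to a piece of Stein's square function by a change of variables and a rescaling. On the annulus $|\xi|\sim 2^k$ the phase $t|\xi|^\alpha$ can be rewritten, after substituting $t = 2^{-k\alpha}\tau$ (so the $\tau$-interval becomes $2^{k\alpha}I$), as $\tau (|\xi|/2^k)^\alpha$ near $|\xi|/2^k \sim 1$; a Taylor expansion of $u^\alpha$ about $u=1$ shows the oscillation is, up to lower-order terms that one handles by integration by parts or by further Littlewood--Paley localization, comparable to that of the multiplier defining $\partial_t T_t^\lambda$ when $\lambda = s/\alpha + \text{(something)}$. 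More precisely, one realizes the annular piece as an average (in $t$) of the operators $\tfrac{\partial}{\partial t}T_t^\lambda$ against a bump, and invokes the $L^p$ boundedness of $G^\lambda$ from Theorem \ref{thm1.1}: this requires $\lambda > \tfrac n2 - \tfrac np$, and matching the Sobolev exponents gives exactly the relation $\tfrac{s}{\alpha} = n(\tfrac12-\tfrac1p)-\tfrac12$ stated in the corollary. The compactness of $I$ is used to truncate the $\tau$-integral and to sum the resulting pieces.

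The main obstacle I expect is the bookkeeping in the second step: carefully converting the dilation structure of $T_t^\lambda$ (which lives on a whole family of scales $t\in(0,\infty)$) into the single-annulus Schr\"odinger estimate, controlling the non-leading terms of the Taylor expansion of $u\mapsto u^\alpha$ uniformly in $\alpha\in(0,\infty)$, and verifying that the error terms are summable rather than merely bounded. For $\alpha$ away from $1$ the phase is genuinely different from the Bochner--Riesz phase, so one should either argue that Theorem \ref{thm1.1} is insensitive to such a smooth reparametrization of the radial variable (which it is, since the proof only uses the curvature/decoupling structure of the sphere), or cite the reduction already carried out in \cite{Lee-Rogers-Seeger} Proposition 5.1 and simply feed in the improved exponent $p_n$ from Theorem \ref{thm1.1}. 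I would take the latter route for brevity: state that \cite{Lee-Rogers-Seeger} reduces \eqref{local-smoothing} to the square function bound $\|G^\lambda f\|_p\lesssim\|f\|_p$ for $\lambda>n(\tfrac12-\tfrac1p)-\tfrac12$, and observe that this is precisely the content of Theorem \ref{thm1.1} in the range $p>p_n$.
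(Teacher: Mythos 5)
Your final route is precisely the paper's: the paper gives no proof for this corollary beyond the sentence preceding it, which invokes Proposition 5.1 of \cite{Lee-Rogers-Seeger} to reduce \eqref{local-smoothing} to the $L^p$ boundedness of $G^\lambda$, and then reads off the improved range $p>p_n$ from Theorem \ref{thm1.1}. Your first two paragraphs (Littlewood--Paley, rescaling, Taylor expansion of $u\mapsto u^\alpha$) are a sketch of what the Lee--Rogers--Seeger reduction actually does; the paper does not reproduce any of this, so the extra material is fine but unnecessary. One small slip in your closing sentence: the square function bound that Theorem \ref{thm1.1} supplies holds for $\lambda>\tfrac n2-\tfrac np = n(\tfrac12-\tfrac1p)$, not for $\lambda>n(\tfrac12-\tfrac1p)-\tfrac12$; the $-\tfrac12$ shift belongs to the conversion from the Bochner--Riesz index $\lambda$ to the Sobolev regularity $s/\alpha$ in \cite{Lee-Rogers-Seeger}, which is exactly how the equality $\tfrac s\alpha=n(\tfrac12-\tfrac1p)-\tfrac12$ in the corollary arises.
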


Corollary \ref{mixed-norm-cor-1} itself in fact implies a larger class of mixed norm estimates:

\begin{corollary}
\label{mixed-norm-cor-2}
Suppose that $n \geq 3$ and $I$ is a compact interval. Then for $\al>0$, $q\geq2$, $p>p_n$ and $\frac{s}{\alpha}=n(\frac{1}{2}-\frac{1}{p})-\frac{1}{q}$, we have
\begin{equation}
\label{local-smoothing-2}
    \Big\|\big( \int_I \Big|e^{it(-\Delta)^{\alpha/2}}f\big|^q\,dt \Big)^{\frac{1}{q}} \Big\|_{L^p(\R^{n})} \lesssim \|f\|_{L^p_s(\R^n)}.
\end{equation}
\end{corollary}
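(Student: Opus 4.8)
The plan is to deduce Corollary \ref{mixed-norm-cor-2} from Corollary \ref{mixed-norm-cor-1} by a purely ``soft'' interpolation and embedding argument, with no further use of oscillatory integral machinery. The point is that Corollary \ref{mixed-norm-cor-1} already supplies the endpoint $q=2$ of the desired family of estimates, while the case $q=\infty$ is essentially trivial, so the whole range $2\le q\le\infty$ should come from interpolation once the statements are normalized correctly.

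First I would record the two endpoints. For $q=2$, Corollary \ref{mixed-norm-cor-1} gives, for $p>p_n$,
\begin{equation}
\Big\|\big( \textstyle\int_I \big|e^{it(-\Delta)^{\alpha/2}}f\big|^2\,dt \big)^{1/2} \Big\|_{L^p(\R^{n})} \lesssim \|f\|_{L^p_{s_2}(\R^n)},\qquad \frac{s_2}{\alpha}=n\big(\tfrac12-\tfrac1p\big)-\tfrac12.
\end{equation}
For $q=\infty$, since $I$ is a compact interval, Sobolev embedding in the $t$-variable (or simply the fixed-time bound $\|e^{it(-\Delta)^{\alpha/2}}f\|_{L^p}\lesssim\|f\|_{L^p_{s_\infty}}$ when combined with a maximal/fundamental-theorem-of-calculus estimate against $\partial_t$) yields
\begin{equation}
\big\| \sup_{t\in I}\big|e^{it(-\Delta)^{\alpha/2}}f\big| \big\|_{L^p(\R^{n})} \lesssim \|f\|_{L^p_{s_\infty}(\R^n)},\qquad \frac{s_\infty}{\alpha}=n\big(\tfrac12-\tfrac1p\big),
\end{equation}
which matches the claimed exponent relation at $q=\infty$. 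Here I would pick a bump function $\psi$ adapted to a neighborhood of $I$, decompose $f=\sum_k f_k$ into Littlewood--Paley pieces of frequency $\sim 2^k$, and observe that on each piece the $t$-derivative of $e^{it(-\Delta)^{\alpha/2}}f_k$ costs a factor $2^{k\alpha}$, so $\sup_{t}|e^{it(-\Delta)^{\alpha/2}}f_k|\lesssim 2^{k\alpha}(\int_I|e^{it(-\Delta)^{\alpha/2}}f_k|^2 dt)^{1/2}+\text{(average)}$; summing with the gain from $s_2$ to $s_\infty$ reproduces the $q=\infty$ bound from the $q=2$ one. Alternatively one cites the standard fact that $e^{it(-\Delta)^{\alpha/2}}$ is bounded on $L^p_s$ uniformly for $t$ in a compact set with loss $\alpha n|\tfrac12-\tfrac1p|$, which is classical.

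Next I interpolate. Fix $p>p_n$ and $q\in(2,\infty)$, and write $\frac1q=\frac{1-\theta}{2}+\frac{\theta}{\infty}=\frac{1-\theta}{2}$, so $\theta=1-\frac2q\in(0,1)$. The two endpoint estimates above are, respectively, the $\ell^2$ and $\ell^\infty$ (in the $t$-variable, viewed in the vector-valued sense) mapping bounds of the linear operator $f\mapsto \big(t\mapsto e^{it(-\Delta)^{\alpha/2}}f\big)$ from $L^p_{s}(\R^n)$ into $L^p(\R^n; L^q(I))$, after absorbing the Bessel potential $(1-\Delta)^{-s/2}$ into $f$. Complex interpolation of the mixed-norm spaces $L^p(\R^n;L^q(I))$ (with $q$ between $2$ and $\infty$, $p$ fixed) together with the fact that the Sobolev-exponent requirement is affine in $\frac1q$ — namely $\frac{s}{\alpha}=(1-\theta)\big(n(\tfrac12-\tfrac1p)-\tfrac12\big)+\theta\,n(\tfrac12-\tfrac1p)=n(\tfrac12-\tfrac1p)-\tfrac{1-\theta}{2}=n(\tfrac12-\tfrac1p)-\tfrac1q$ — yields exactly \eqref{local-smoothing-2}. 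One technical point is that interpolation with a $\sup$ over $t$ requires $L^\infty(I)$ rather than a genuine $L^q$ sup, but since $I$ is a compact interval the two agree up to constants for our purposes, and one may equally run a real-interpolation/convexity argument on the frequency-localized pieces to avoid any subtlety.

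The main obstacle, such as it is, is bookkeeping rather than substance: making the interpolation rigorous for the vector-valued mixed-norm spaces and checking that the Sobolev exponent tracks correctly under complex interpolation of Bessel-potential spaces. A cleaner route that sidesteps abstract mixed-norm interpolation is to run the estimate frequency piece by frequency piece: for $f_k$ with frequency localized to $|\xi|\sim 2^k$, prove $\|(\int_I|e^{it(-\Delta)^{\alpha/2}}f_k|^q dt)^{1/q}\|_{L^p}\lesssim 2^{ks}\|f_k\|_{L^p}$ by interpolating the $q=2$ bound (from Corollary \ref{mixed-norm-cor-1} applied to $f_k$, which gives the factor $2^{k\alpha(n(1/2-1/p)-1/2)}$) against the trivial $q=\infty$ bound $\sup_{t\in I}|e^{it(-\Delta)^{\alpha/2}}f_k|\lesssim 2^{k\alpha n|1/2-1/p|}\|f_k\|_{L^p}$, using $\|F\|_{L^q(I)}\le \|F\|_{L^2(I)}^{2/q}\|F\|_{L^\infty(I)}^{1-2/q}$ pointwise in $x$ and then Hölder in $x$ with exponents $q/2$ and $q/(q-2)$ scaled by $p$; then sum in $k$, which converges since $p\ge2>1$ allows a standard square-function/Littlewood--Paley step (or simply $\ell^1$-triangle inequality after noting the $2^{ks}$ weights are subsumed into $\|f\|_{L^p_s}$ with room to spare because the estimate is not at an endpoint in $k$). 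Either way the proof is short once Corollary \ref{mixed-norm-cor-1} is in hand.
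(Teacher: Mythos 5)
Your strategy --- interpolating in $q$ between the $q=2$ case (Corollary~\ref{mixed-norm-cor-1}) and a $q=\infty$ maximal estimate --- genuinely differs from the proof in Appendix~C. The paper never introduces a $q=\infty$ endpoint: it does a Littlewood--Paley decomposition, rescales each dyadic piece so the spatial frequency is $\sim 1$ while $t$ ranges over an interval of length $\sim 2^{k\alpha}$, observes that on any unit $t$-interval the solution has $O(1)$ temporal Fourier support so that Bernstein in $t$ gives $L^q_t\lesssim L^2_t$ with no loss, and then sums the unit blocks via $\ell^2\hookrightarrow\ell^q$, $q\ge 2$. This goes directly from $L^q_t$ to $L^2_t$ and never requires a maximal-in-time estimate. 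Your interpolation route (especially the frequency-piece-by-frequency-piece pointwise version, which avoids vector-valued mixed-norm interpolation subtleties) is also viable in principle, and the affine relation $s/\alpha=n(1/2-1/p)-1/q$ is tracked correctly.

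The concrete gap is in your $q=\infty$ endpoint. The fundamental-theorem-of-calculus bound you state,
\[
\sup_{t\in I}\big|e^{it(-\Delta)^{\alpha/2}}f_k\big|\;\lesssim\; 2^{k\alpha}\Big(\int_I\big|e^{it(-\Delta)^{\alpha/2}}f_k\big|^2\,dt\Big)^{1/2}+\text{(average)},
\]
loses a full $2^{k\alpha}$ from the time derivative; combined with the $q=2$ bound it yields $s_\infty=s_2+\alpha=\alpha n(1/2-1/p)+\alpha/2$, which is $\alpha/2$ worse than the value $s_\infty=\alpha n(1/2-1/p)$ that your interpolation requires, so the scheme does not close as written. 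The classical fact you cite as an alternative is the fixed-time multiplier bound $\|e^{it(-\Delta)^{\alpha/2}}f_k\|_p\lesssim 2^{k\alpha n|1/2-1/p|}\|f_k\|_p$ uniformly in $t\in I$; that is correct but it does not by itself control the pointwise-in-$x$ supremum over $t$. The repair is the Agmon/Cauchy--Schwarz form of the endpoint: for $F(t)=e^{it(-\Delta)^{\alpha/2}}f_k(x)$,
\[
\sup_{t\in I}|F(t)|^2 \;\le\; \fint_I|F|^2 + 2\int_I|F|\,|\partial_t F| \;\lesssim\; \|F\|_{L^2(I)}^2+\|F\|_{L^2(I)}\|\partial_t F\|_{L^2(I)},
\]
and since $\|\partial_t F\|_{L^2(I)}\lesssim 2^{k\alpha}\|F\|_{L^2(I)}$ on a $2^k$-frequency piece, one only loses $2^{k\alpha/2}$. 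Taking $L^p_x$ and using the $q=2$ estimate then gives $s_\infty=s_2+\alpha/2=\alpha n(1/2-1/p)$, and the interpolation goes through.
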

\noindent Note that when $p=q$, \eqref{local-smoothing-2} is a local smoothing estimate. Thus, as a byproduct, we obtain some local smoothing estimates for fractional Schr\"ondinger equation (the case $\al=1$ corresponds to the wave equation). As mentioned in \cite{Lee-Rogers-Seeger}, the exponent $\frac{s}{\alpha}=n(\frac{1}{2}-\frac{1}{p})-\frac{1}{q}$ is sharp unless $\al=1$. We will give a simple proof of Corollary \ref{mixed-norm-cor-2} using Corollary \ref{mixed-norm-cor-1} in Appendix C.

\subsubsection{Bilinear Bochner-Riesz}
Consider the bilinear Bochner-Riesz operator \begin{equation}
    B^{\alpha}(f,g)(x):=\iint_{\R^d \times \R^d} e^{2\pi i x \cdot (\xi+\eta)}(1-|\xi|^2-|\eta|^2)^{\alpha}_{+}\widehat{f}(\xi)\widehat{g}(\eta)\,d\xi d\eta.
\end{equation}
The bilinear Bochner-Riesz problem posed by \cite{MR3421992} is to find all possible triples $(p,q,r)$ satisfying
\begin{equation}
    \|B^{\alpha}(f,g)\|_{L^r} \lesssim \|f\|_{L^p(\R^d)}\|g\|_{L^q(\R^d)}.
\end{equation}
It is proved in \cite{MR3856823} Theorem 1.2 that the $L^p$ estimate of Stein's square function implies the boundedness of the bilinear Bochner-Riesz operator for certain range. Similar implication is also true for the maximal bilinear Bochner-Riesz operator, as shown in Theorem 1.2 of \cite{MR4103874}. By their results, Theorem \ref{thm1.1} improves the previously known bounds for the bilinear and maximal bilinear Bochner-Riesz operators. We do not state the ranges here.

\subsection{A sketch of the main ideas}

One of the main tools we use is polynomial partitioning. It was developed by Guth and Katz in \cite{Guth-Katz}, and was introduced to the study of oscillatory integral operators by Guth in \cite{Guth-R3} and \cite{Guth-II}. In there, Guth considered the interpolated norm $\|f\|_2^\al\|f\|_\infty^{1-\al}$ (see \cite{Guth-R3} page 373, 375) and used it to bound $\|Ef\|_p$, where $E$ is the extension operator for paraboloid. Another main tool is the polynomial Wolff axiom. It roughly says that there can not be too many tubes in a thin neighborhood of a variety. The polynomial Wolff axiom was first formulated by Guth and Zahl in \cite{guth2018polynomial}, and later proved by Katz and Rogers in \cite{katz2018polynomial}. Furthermore, a refined version called the nested polynomial Wolff axiom was verified independently in \cite{hickman2019improved} and \cite{zahl2021new}. By combining these two tools, progresses have been made in the Kakeya maximal operator conjecture (see \cite{hickman2019improved}, \cite{zahl2021new}), and in the Fourier restriction problem (see \cite{hickman2018improved}, \cite{hickman2020note}).

Using similar ideas, there are also some progresses related to the Bochner-Riesz operator (See \cite{Wu}, \cite{GOWWZ} and \cite{gan2021new}). Apart from the polynomial methods aforementioned, one new ingredient in \cite{Wu} and \cite{gan2021new} is a refined $L^2$ estimate, and one new ingredient in \cite{GOWWZ} is an application of a pseudo-conformal transform. Nevertheless, it seems to the authors that none of them is a good method to attack higher dimensional Stein's square function. In fact, the refined $L^2$ estimate does provide some improvements upon the square function, but what it provides is weaker than Theorem \ref{thm1.1}. As for the method in \cite{GOWWZ}, their framework relies on Carleson-Sjolin's reduction: one freezes a variable and reduces the Bochner-Riesz problem to some restriction type estimate. However, for Stein's square function, freezing one variable looks very wasteful. Indeed, if we freeze one variable, then the desired estimate is no longer true.

In this paper, we develop a way to use the idea of interpolated norm on Stein's square function. This help us to improve the $L^p$ bounds of the square function to the best known range of the restriction problem. Also, our argument automatically gives a slightly different proof for the results in \cite{Wu} and \cite{GOWWZ}, regarding to the Bochner-Riesz conjecture. Using a similar idea, the authors are able to make improvements on the local smoothing problem for some fractional Schr\"{o}dinger equations. This is discussed in a separate paper \cite{GOW}. 

\smallskip
The rest of the introduction is devoted to an intuitive sketch of our main idea on using the interpolated norm. To save us from abundant notations, let us take $\ZR^3$ as an example. For all higher dimensions the idea is very similar.

Similar to \cite{gan2021new}, we follow Guth's idea to reduce our problem to an $L^p$ estimate for certain broad norm of the vector-valued function defined in \eqref{bdsq}, namely, $\| \Sq \vf\|_{\BL^p_{2,A}(B_R)}^p$. Here  $\vf=\{f_1,\cdots,f_R\}$, $f_j:= m_j\ast f$, and $\wh {m}_j$ is a smooth function supported in an $R^{-1}$ neighborhood of the sphere of radius $1+j/R$. The problem then boils down to the estimate
\begin{equation}\label{sk0}
    \| \Sq \vf\|_{\BL^p_{2,A}(B_R)}^p\lesssim R^{p-3}\|f\|_{L^2(B_R)}^2\|f\|_{L^\infty(B_R)}^{p-2}.
\end{equation} 
We iteratively apply the polynomial partitioning to break down the broad norm. When the iteration stops, we obtain a collection of cells $\co=\{O\}$ at scale $\rho$, associated functions $\{\vf_{O}\}_{O\in\co}$, and a broad estimate
\begin{equation}
\label{sk1}
    \|\Sq\vf\|_{\BL^p_{2,A}(B_R)}^p\lesssim \sum_{O\in\co}\|\Sq\vf_{O}\|_{\BL^p_{2,A}(B_R)}^p,
\end{equation}

Here is our key estimate: For each cell $O$ there is a small set $X\subset B_R$ such that
\begin{equation}
\label{refined-esti-intro}
    \| \Sq\vf_{O} \|_{2}^2\lesssim \rho R^{-1}\|f\Id_X\|_2^2 \lesssim\rho R^{-1}|X|\cdot\|f\|_{L^\infty(B_R)}^2.
\end{equation}
To get \eqref{refined-esti-intro}, a crucial observation is that only a very small portion of $f$, which we denote by $f\Id_X$, would make contribution to the function $\Sq\vf_{O}$. The reason is that $\Sq\vf_{O}$ is concentrated on some tangent wave packets, and the set $X$ can be think of as some ``thin-neighborhood" of the variety related to $O$. One may want to compare \eqref{refined-esti-intro} to the estimate on $\int |f_{\tau,j,tang}|^2$ in \cite{Guth-R3} page 402. The detailed proof of \eqref{refined-esti-intro} is presented in Section \ref{section8}.

\medskip

\noindent {\bf Organization of the paper.} In Section 2, we give the general statement of our results. We make several reduction of the target operator in Section 3. Section 4 is devoted to the wave packet decomposition. Section 5,6,7 contain the iteration of polynomial partitioning. Our key estimate is given in Section 8, and in the same section we finish the proof. There are also three appendices, and the proof of Corollary \ref{mixed-norm-cor-2} is included in Appendix C.

\medskip

\noindent {\bf Notation.}

\noindent$\bullet$  We use $a\sim b$ to denote $ca\leq b\leq Ca$ for some unimportant constants $c$ and $C$. We use $a\lesssim b$ to denote $a\leq Cb$ for an unimportant constant $C$. These constants may change from line to line.

\noindent$\bullet$ We write $A(R) \leq \mathrm{RapDec}(R)B$ to mean that for any power $\beta$, there is a constant $C_{N}$ such that
\begin{equation}
\nonumber
    A(R) \leq C_{N}R^{-N}B \;\; \text{for all $R \geq 1$}.
\end{equation}

\noindent$\bullet$ For every number $R>0$ and set $S$, we denote by $N_{R}(S)$ the $R$-neighborhood of the set $S$.

\noindent$\bullet$ The symbol $B^n(x,r)$ represents the open ball centered at $x$, of radius $r$, in $\ZR^n$. We sometimes simply write it as $B_r$ when we only care about its radius but not its position.

\noindent$\bullet$ We use $\Id_X$ to denote the characteristic function of the set $X\subset\ZR^n$.

\noindent$\bullet$ For $X$ being a ball $B_r$ or a rectangle, we use $\om_{X}$ to denote the indicator function of $B_r$ with rapidly decaying term. For example, $w_{B_R}(x)=(1+\dist(x,B_r))^{-100n}$.

\noindent$\bullet$ By saying an $r$-tube, we mean a rectangle of dimensions $R^\beta r^{1/2}\times\cdots\times R^{\beta}r^{1/2}\times r$. Here $\beta$ is a tiny number to be defined later.

\noindent$\bullet$ By saying an $r$-cap, we mean a cap of radius $r$ in $\ZS^{n-1}$.

\noindent$\bullet$ For any cap $\tau\subset \ZS^{n-1}$, we use $c_\tau$ to denote the center of $\tau$.

\medskip

\noindent
{\bf {Some numbers.}} 
We will encounter many different numbers in the paper. For readers' convenience, we summarize all of them here. Note that here we only give a rough description of these numbers. The precise definition will be given later in the paper.

\noindent$\bullet$ $\be\ll\dc\ll\de\ll\e$, where $\e$ is a very small number, $\de=\e^2$, $\de_\circ=\de^3=\e^6$ and $\be=\e^{1000}$. We remark that $\beta$ is used to handle the rapidly decaying tail of wave packets.

\noindent$\bullet$ $K\sim R^{\e^{10000}}$ is the number appearing in the broad-narrow argument.

\noindent$\bullet$ $A\sim \log\log R$ is the number in the subscript of the broad norm $\|f\|_{\BL_{k,A}^p} $. Also, we would like $A$ to be of form $2^{\ZN}$.

\noindent$\bullet$ $d=d(\e)$ is a big number (depending on $\e$) which is the degree of the polynomial in the polynomial partitioning argument.

\noindent$\bullet$ In this paper, the implicit constant in ``$\lesssim$" may depend on $\de$, but never depend on $R$. We also define the notation ``$\lessapprox$" in Remark \ref{pretend}.

\medskip

\noindent {\bf Acknowledgement.} The authors would like to thank Shaoming Guo, Larry Guth, Xiaochun Li and Andreas Seeger for valuable discussions. C.O. and S.W. would also like to thank Hong Wang and Ruixiang Zhang for  valuable discussions during the collaboration of the paper \cite{GOWWZ}. C.O. was partially supported by the NSF grant DMS-1800274.

\section{General statement}
    
We will state and prove a more general result. To do this, let us first introduce a class of hypersurfaces $\{\Ga_j\}_{1\le j\le R}$. Here, we may assume $R$ is an integer for convenience. Let $\Phi(\bar\xi,t): B^{n-1}(0,1) \times [1,2] \rightarrow \R$ be a smooth function satisfying the following two properties:

\begin{itemize}
    \item $|\partial_t\Phi(\bar{\xi},t)| \sim 1, \;\; |\partial_t \nabla_{\bar{\xi}}\Phi(\bar{\xi},t)| \lesssim 1$ for all $t \in [1,2]$ and $\bar{\xi} \in B^{n-1}(0,1)$.
    \item All the eigenvalues of the Hessian matrix $\nabla_{\bar{\xi}}^2\Phi$ have the same sign and magnitude $\sim 1$, uniformly in $t$.
\end{itemize}
Fix a collection of $R^{-1}$-separated points $\{t_j \}_{1 \leq j \leq R}$ lying in $[1,2]$ (one could choose $t_j=1+\frac{j}{R}$). For each $t_j$, we consider the hypersurface
\begin{equation}\label{surface}
    \Gamma_j:=\{(\bar{\xi},\Phi(\bar{\xi};t_j)):|\bar{\xi}| \leq 1/2  \}.
\end{equation}

\begin{remark}
\rm

In the paper the subscript $j$ is only used for these hypersurfaces.
\end{remark}

We introduce a square function associated to these hypersurfaces. For each $j$, let $\eta_j\!:\!\R^{n-1}\to\ZR$ and let $\psi_j\!:\!\R\to\ZR$ be two smooth functions, with $\supp(\eta_j)\subset B^{n-1}(0,1/2)$ and $\supp(\psi_j)\subset [-1/2,1/2]$. Let $\phi:\ZR^n\times[0,1]\to\ZR$ be a smooth function that takes the value in $[1/2,2]$. All these three functions have bounded derivatives of any order, independent of $j$. Define a general Fourier multiplier
\begin{equation}
\label{kernel-1}
    \wh{m}_j(\xi):=\eta_j(\bar\xi)\psi_j\Big(\frac{\xi_n-\Phi(\bar\xi;t_j)}{R^{-1}}\phi(\xi;t_j)\Big).
\end{equation}
Hence $\textup{supp}(\wh m_j)\subset N_{R^{-1}}\Ga_j$. We also define the associated operator 
\begin{equation}
\label{operator-1}
    S_j f(x):={m}_j\ast f(x).
\end{equation}
The discrete square function associated to the hypersurfaces $\{\Ga_j\}_{j}$ is defined as:

\begin{definition}[Square function]
\label{defsqfunction}
For any Schwartz function $f$, define
\begin{equation}
\label{sqfcn-1}
    \Sq f(x):=\Big(\sum_{j=1}^R|S_jf(x)|^2\Big)^{1/2}.
\end{equation}
\end{definition}

We have the following reduction:
\begin{lemma}\label{discretelem}
Let $\Sq f$ be defined in \eqref{sqfcn-1}. For some fixed $p_0\geq \frac{2n}{n-1}$, suppose that for any $\e>0$ we have
\begin{equation}
\label{discrete-sqfcn}
    \|\Sq f\|_{L^{p_0}(\R^n)}\leq C_\e R^{\frac{n-1}{2}-\frac{n}{p_0}+\e}\|f\|_{L^{p_0}(\R^n)}.
\end{equation}
Then Theorem \ref{thm1.1} holds for $p>p_0$.
\end{lemma}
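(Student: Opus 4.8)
The plan is to pass from the continuous square function $G^\lambda f$ appearing in Theorem \ref{thm1.1} to the discrete square function $\Sq f$ of Definition \ref{defsqfunction} by a combination of a Littlewood--Paley decomposition in the frequency variable, rescaling, and interpolation with an easy $L^2$ bound. First I would observe that by Littlewood--Paley theory in the radial variable it suffices to bound a single dyadic piece: write $G^\lambda f = \big(\sum_k |G^\lambda_k f|^2\big)^{1/2}$ where $G^\lambda_k$ localizes the dilation parameter $t$ to $t\sim 2^k$ (equivalently localizes $\widehat f$ to $|\xi|\sim 2^k$), and reduce to $k=0$, i.e. to the piece with $\widehat f$ supported in an annulus $|\xi|\sim 1$. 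The derivative $\partial_t T_t^\lambda f$ carries a smooth bump of the shape $(1-|\xi|^2/t^2)_+^{\lambda-1}$ times lower-order terms, so on the annulus $t\in[1,2]$ one further decomposes dyadically in the distance $1-|\xi|/t$ to the sphere: the pieces at distance $\sim 2^{-m}$ from $|\xi|=t$ contribute a factor $\sim 2^{-m\lambda}$ (roughly, from the $(\cdot)_+^{\lambda-1}$ weight, after accounting for the $tdt$ and the $\partial_t$), and there are $\sim 2^m$ values of $j$ with $t_j = 1+j/R$, $R\sim 2^m$, that matter. So the model object at scale $R=2^m$ is exactly $\Sq f$ with the multipliers $\widehat m_j$ of \eqref{kernel-1}, where $\Phi(\bar\xi,t)=t\sqrt{1-|\bar\xi|^2/t^2}$ near the relevant cap and $\psi_j,\eta_j,\phi$ encode the smooth cutoff; the two curvature/transversality hypotheses on $\Phi$ are satisfied by the sphere.

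The second step is the arithmetic of exponents. Granting \eqref{discrete-sqfcn}, the dyadic-in-$m$ piece $G^\lambda_{0,m} f$ of $G^\lambda f$ (distance $\sim 2^{-m}$ to the sphere) satisfies, after undoing the normalization, a bound of the form $\|G^\lambda_{0,m}f\|_{L^{p_0}}\lesssim 2^{-m\lambda} R^{\frac{n-1}{2}-\frac{n}{p_0}}\cdot R^{?}\|f\|_{L^{p_0}}$ with $R=2^m$, where the extra power comes from converting the $R^{-1}$-scale multipliers into the normalized form of $\widehat m_j$ and from the $tdt$ weight; collecting powers one gets a net exponent $2^{-m(\lambda-(\frac{n}{2}-\frac{n}{p_0}))+\e m}$, which is summable in $m$ precisely when $\lambda>\frac n2-\frac n{p_0}$. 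Since $p_0$ is an endpoint and we are given the estimate for every $p>p_0$ with an $R^\e$ loss, for any fixed $p>p_0$ one absorbs the $R^\e$ loss into the slack $\lambda-(\frac n2-\frac np)>0$, obtaining $\|G^\lambda f\|_{L^p}\lesssim\|f\|_{L^p}$ for $\lambda>\frac n2-\frac np$ as claimed. The hypothesis $p_0\ge \frac{2n}{n-1}$ guarantees $\frac n2 - \frac n{p_0}\ge \frac12$, so the resulting range of $\lambda$ is the conjectured one $\lambda>\max\{1/2,\,n/2-n/p\}$ in its interesting regime.

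For the pieces far from the sphere (distance $\gtrsim 1$, or the very-close pieces with $m$ bounded) one uses a trivial estimate: on such frequency regions the multiplier is smooth and $G^\lambda$ is controlled in $L^2$ (hence in $L^{p}$ after using that the symbols are uniformly bounded Mikhlin multipliers on each dyadic block), so these contribute $O(\|f\|_p)$ with no loss. One also needs to linearize: the $t$-integral defining $\Sq f$ is a continuum, but standard arguments (a sub-Littlewood--Paley / Minkowski-in-$\ell^2$ step, discretizing $t$ at scale $R^{-1}$ and controlling the error by the fundamental theorem of calculus in $t$) reduce it to the discrete sum $\sum_j$, at the cost of a $\log R$ or $R^\e$ factor that is again harmless.

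The main obstacle I expect is bookkeeping rather than a genuine difficulty: one must verify that the smooth symbol of $\partial_t T_t^\lambda$, once frozen at $t\sim 1$ and localized to distance $2^{-m}$ from the sphere, really does fall into the class \eqref{kernel-1} with $j$-uniform bounds on all the derivatives of $\eta_j,\psi_j,\phi$, and that $\Phi$ defined from the sphere obeys the Hessian and $\partial_t$ conditions on the cap $|\bar\xi|\le 1/2$ after an affine normalization. Care is also needed because the natural profile for the sphere is $\sqrt{1-|\bar\xi|^2}$, which degenerates at the equator, so one first restricts to a $O(1)$-cap by a finite partition of unity on $\ZS^{n-1}$, rotates it to a coordinate patch, and parabolically rescales; the transversality hypothesis $|\partial_t\Phi|\sim1$ is exactly what survives this normalization. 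Assembling these reductions, together with the exponent arithmetic above, completes the proof.
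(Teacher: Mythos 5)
The paper does not reproduce a proof of Lemma~\ref{discretelem}; it cites the analogous ``Lemma~2.7 $\Rightarrow$ Theorem~1.1'' argument of \cite{gan2021new}. So I can only judge your proposal on its own merits. The overall shape is right --- Littlewood--Paley localization in $|\xi|$ to reduce to the annulus $|\xi|\sim 1$, dyadic decomposition at distance $\sim 2^{-m}$ from the sphere, identification of the scale-$2^m$ piece with the model $\Sq f$ of \eqref{sqfcn-1} after rotating and restricting to an $O(1)$-cap, exponent bookkeeping $2^{-m(\lambda-1)}\cdot 2^{m/2}\cdot 2^{m(\frac{n-1}{2}-\frac{n}{p_0}+\e)}$, and summation over $m$. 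That part is essentially what the referenced argument does.

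However there is a genuine gap in the passage from $p_0$ to $p>p_0$. The hypothesis \eqref{discrete-sqfcn} is only \emph{at} the exponent $p_0$, and your dyadic summation argument therefore only yields $G^\lambda\colon L^{p_0}\to L^{p_0}$ for $\lambda>\tfrac n2-\tfrac n{p_0}$ --- i.e.\ the $p=p_0$ case of the conclusion, not the $p>p_0$ case that the lemma actually asserts. At two places you paper over this: (i) you write that you will ``interpolate with an easy $L^2$ bound,'' but interpolating $L^{p_0}$ with $L^2$ only covers $2\le p\le p_0$ and moves in the wrong direction; and (ii) you assert that ``we are given the estimate for every $p>p_0$ with an $R^\e$ loss,'' which is not what \eqref{discrete-sqfcn} says. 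What is actually needed is an interpolation with an estimate at a \emph{larger} exponent: for instance the trivial pointwise bound $\|\Sq f\|_{L^\infty}\lesssim R^{1/2}\|f\|_{L^\infty}$ (from $\|S_jf\|_\infty\lesssim\|f\|_\infty$ and $\ell^2$-summation over $R$ values of $j$), which after interpolating with \eqref{discrete-sqfcn} at $p_0$ gives $\|\Sq f\|_{L^p}\lesssim R^{\frac{n-1}{2}-\frac n p+\e}\|f\|_{L^p}$ for all $p\ge p_0$; one then runs the dyadic sum at exponent $p$. (Equivalently, one can run the dyadic sum at $p_0$ and then do analytic interpolation in $\lambda$ against a large-$\lambda$, $L^\infty$ bound for $G^\lambda$.) Without one of these upward interpolations the argument does not reach any $p>p_0$.

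A secondary, minor point: the identity $\Phi(\bar\xi,t)=\sqrt{t^2-|\bar\xi|^2}$ satisfies the required hypotheses only on a fixed cap away from the equator, which you do acknowledge (finite partition of unity on $\ZS^{n-1}$ plus rotation), and the discretization of the $t$-integral you appeal to (Fundamental Theorem of Calculus in $t$ at scale $R^{-1}$) is standard and harmless. These parts are fine; the missing ingredient is the interpolation to exponents above $p_0$.
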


Lemma \ref{discretelem} was proved in \cite{gan2021new} when $n=3$. The readers may look at how Lemma 2.7 implies Theorem 1.1 in \cite{gan2021new}. For $n\geq 4$, this lemma can be proved in the same way.

The wanted estimate \eqref{discrete-sqfcn} boils down to a local estimate stated below.

\begin{theorem}\label{sqfcn-localest}
Assuming the same notations as in Lemma \ref{discretelem} and $p\ge p_n$, then for any $\e>0$ we have
\begin{equation}
\label{discrete-sqfcn2}
    \|\Sq f\|_{L^p(B_R)}\leq C_\e R^{\frac{n-1}{2}-\frac{n}{p}+\e}\|f\|^{\frac{2}{p}}_{L^2(w_{B_R})}\|f\|^{1-\frac{2}{p}}_{L^\infty(\R^n)}.
\end{equation}
\end{theorem}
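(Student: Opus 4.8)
The plan is to run a broad--narrow decomposition followed by an iterated polynomial partitioning argument, in the spirit of Guth's work on the restriction problem and the paper \cite{gan2021new}, but with the interpolated norm $\|f\|_{L^2}^{2/p}\|f\|_{L^\infty}^{1-2/p}$ on the right-hand side rather than a pure $L^p$ norm. First I would pass to the broad norm: by the broad--narrow argument at scale $K\sim R^{\e^{10000}}$, it suffices to prove the broad estimate
\begin{equation}
\label{prop-broad}
\|\Sq \vf\|_{\BL^p_{2,A}(B_R)}^p\lesssim R^{p\left(\frac{n-1}{2}-\frac np\right)+\e p}\|f\|_{L^2(w_{B_R})}^2\|f\|_{L^\infty(\R^n)}^{p-2},
\end{equation}
since the narrow contribution can be handled by a lower-dimensional induction (decoupling into caps and applying the result in fewer dimensions, together with parabolic rescaling). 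Here $\vf$ is the vector of wave-packet-decomposed pieces $f_j$ built in Section 4. The key point is that \eqref{prop-broad} has the same homogeneity as \eqref{discrete-sqfcn2} under the scaling $f\mapsto \1_X f$: replacing $f$ by $f$ restricted to a set of measure $|X|$ scales the right side by $|X|^{2/p}\|f\|_\infty^{-2/p}\cdot\|f\|_\infty$, which is exactly what the key estimate \eqref{refined-esti-intro} produces.

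Next I would set up the polynomial partitioning iteration of Sections 5--7. One iteratively applies polynomial partitioning to the broad norm, at each stage either landing in the \emph{cellular} case (the broad norm is controlled by a sum over $\sim d^n$ subcells, each of which is handled by induction on the radius $R$), the \emph{transverse} (algebraic) case (wave packets transverse to the variety $Z$, handled by a transversal equidistribution / Bezout-type estimate plus induction), or the \emph{tangential} case (wave packets tangent to $Z$, where the polynomial Wolff axiom --- in its nested form from \cite{hickman2019improved}, \cite{zahl2021new} --- gives the gain). Running this until the radius drops to $\rho$ produces the collection of cells $\co=\{O\}$, functions $\{\vf_O\}$, and the decomposition \eqref{sk1}:
\begin{equation}
\|\Sq\vf\|_{\BL^p_{2,A}(B_R)}^p\lesssim R^{O(\de)}\sum_{O\in\co}\|\Sq\vf_O\|_{\BL^p_{2,A}(B_R)}^p.
\end{equation}
For each cell $O$ one bounds $\|\Sq\vf_O\|_{\BL^p_{2,A}}$ by $\|\Sq\vf_O\|_{L^p}$ and then interpolates between $L^2$ and $L^\infty$: $\|\Sq\vf_O\|_{L^p(B_R)}^p\le \|\Sq\vf_O\|_{L^2(B_R)}^2\|\Sq\vf_O\|_{L^\infty(B_R)}^{p-2}$. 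The $L^\infty$ factor is trivially controlled by $R^{O(1)}\|f\|_\infty$ times an $R$-power that is absorbed by the tangency gain, and the crucial $L^2$ factor is exactly \eqref{refined-esti-intro}: only $f\1_X$ contributes to $\Sq\vf_O$, where $X$ is a thin neighborhood of the variety attached to $O$, so $\|\Sq\vf_O\|_2^2\lesssim \rho R^{-1}|X|\,\|f\|_\infty^2$. Summing over $O\in\co$, using that the $X$'s have bounded overlap (so $\sum_O|X_O|\lesssim R^{O(\de)}|B_R|\sim R^{n+O(\de)}$, or more precisely $\sum_O|X_O|\lesssim R^{O(\de)}\|f\|_2^2\|f\|_\infty^{-2}$ after the standard pigeonholing on the height of $f$), and tracking all the $R$-powers from the $\sim\log R/\log\rho$ stages of the iteration, yields \eqref{prop-broad}, hence \eqref{discrete-sqfcn2}.

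The main obstacle, and the genuinely new part, is establishing the key estimate \eqref{refined-esti-intro} --- identifying the set $X$ and proving that $\Sq\vf_O$ sees only $f\1_X$ with the stated $L^2$ gain. This requires understanding the geometry of the tangential wave packets surviving the iteration: they are tangent to a variety $Z$ of bounded degree, so their union lies in a controlled neighborhood of $Z$; transversal equidistribution then localizes the relevant part of $f$ (on the Fourier side, the $R^{-1/2}$-caps contributing to $\Sq\vf_O$ are constrained, and dualizing gives spatial concentration of $f$ on $X$). The delicate book-keeping is that $\Sq$ is a square function over $R$ different hypersurfaces $\Gamma_j$ simultaneously, so one cannot freeze a single frequency annulus; instead one must run the wave packet / tangency analysis uniformly in $j$ and sum the $L^2$ contributions in $j$ at the very end, which is where the factor $\rho R^{-1}$ (rather than $\rho$) comes from --- the $R^{-1}$ is the thickness of each $N_{R^{-1}}\Gamma_j$ and reflects the orthogonality between the $R$ slabs. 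Carrying this out carefully, with the correct definition of $X$ at each scale of the iteration and verifying the bounded-overlap property, is the heart of Section 8.
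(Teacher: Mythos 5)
Your high-level plan is the right one and matches the paper's: reduce by a broad--narrow decomposition to a broad-norm inequality with the interpolated right-hand side $\|f\|_{L^2}^{2/p}\|f\|_{L^\infty}^{1-2/p}$, run iterated polynomial partitioning (cellular / transverse / tangent trichotomy) down to the smallest cells, and use a refined $L^2$ estimate of the form \eqref{refined-esti-intro} (with $X$ a union of fat tubes, controlled in measure by the nested polynomial Wolff axiom) at the bottom. You have also correctly identified that the genuinely new ingredient is the localization $\Sq\vf_O \approx \Sq(\vf\1_X)_O$ and the resulting $L^2$ bound, which is what Section 8 proves via Lemma \ref{lem3} and \eqref{final3.2}.

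However, there is a genuine gap in the step where you close the argument. You propose to write $\|\Sq\vf_O\|_{L^p}^p \le \|\Sq\vf_O\|_2^2\,\|\Sq\vf_O\|_\infty^{p-2}$ for each cell $O$, bound the $L^2$ factor by $\int_{X_O}|f|^2$ via \eqref{refined-esti-intro}, and then sum over $O\in\co$ using ``bounded overlap of the $X_O$'s'' (with or without a pigeonhole on the level sets of $f$) to recover $\|f\|_2^2\|f\|_\infty^{p-2}$. The sets $X_O$ do \emph{not} have bounded overlap: each $X_O$ is a union of the $R$-scale fat tubes from $\T_{S_n,S_l}$ associated with the chain $O<S_l<\cdots<S_n$, and distinct chains share ancestors, so many leaves $O$ give rise to near-identical $X_O$'s; the overlap is far larger than any power of $d$ or $R^{O(\de)}$. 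If bounded overlap held, one would not need the cellular/transverse iteration at all, since $\sum_O|X_O|\lesssim|B_R|$ would already close the argument trivially. The paper avoids this by \emph{not} summing the $L^2$ bound over $O$. Instead it splits $\big(\sum_O\|\Sq\vf_O\|_2^{p_m}\big)^{1/p_m}\!\le\big(\sum_O\|\Sq\vf_O\|_2^{2}\big)^{1/p_m}\max_O\|\Sq\vf_O\|_2^{1-2/p_m}$, controls the \emph{sum} by the cumulative iteration $L^2$ inequalities \eqref{est0.2} (which live at every intermediate scale and have nothing to do with $X$), and uses the nested polynomial Wolff estimate only for the \emph{single} $\max_O\|\Sq\vf_O\|_2$ factor, via Lemmas \ref{lem1}--\ref{lem3} and the dimensional averaging with the weights $\ga_l'$ leading to \eqref{maxL2_1} and \eqref{finalestimate1}. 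This ``$L^2$-from-iteration, $L^\infty$-from-Wolff'' split is what produces the exponent $\frac{n-1}{2}-\frac{n}{p}$ with the correct $M$-power; your proposed sum does not obviously reproduce it. A secondary remark: the narrow part of the reduction in Lemma \ref{reduction} is handled by iterating the broad--narrow split through the angular scales $K^{-1},K^{-2},\ldots$ and invoking the broad theorem with varying $M$ (this is why Theorem \ref{thmbroadnorm} carries the parameter $M$ and the rescaled rectangles), together with the square-function decoupling of Lemma \ref{lemdec} on $(k-1)$-dimensional slices and Lemma \ref{narrow} at the final stage --- not a clean induction on the ambient dimension as your sketch suggests, though that is a more cosmetic discrepancy.
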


\begin{proof}[Proof of Theorem \ref{sqfcn-localest} implying Theorem \ref{thm1.1}]

If Theorem \ref{sqfcn-localest} holds, then by taking the $p$-th power to both side of \eqref{discrete-sqfcn2} and then summing over $B_R\subset \R^n$ we get 
\begin{align*}
    \|\Sq f\|_{L^p(\R^n)}^p&\leq C_{\e} R^{(\frac{n-1}{2}-\frac{n}{p}+\e)p}\sum_{B_R}\|f\|^{2}_{L^2(w_{B_R})}\|f\|^{p-2}_{L^\infty(\ZR^n)}\\
    &\leq C_{\e} R^{(\frac{n-1}{2}-\frac{n}{p}+\e)p}\|f\|^{2}_{L^2(\R^n)}\|f\|^{p-2}_{L^\infty(\R^n)}.
 \end{align*} 
Via real interpolation (See Theorem 1.4.19 in \cite{Grafakos-249} for instance), we obtain \eqref{discrete-sqfcn}, and hence by Lemma \ref{discretelem} we prove Theorem \ref{thm1.1}.
\end{proof}

Now we have reduced Theorem \ref{thm1.1} to Theorem \ref{sqfcn-localest}.
In the next section, we will introduce some more notations and preliminary lemmas, and do broad-narrow reduction to further reduce Theorem \ref{sqfcn-localest} to some broad norm estimate.

\section{Reductions}\label{sectionbn}
In this section, we introduce the notion of vector-valued function and its broad norm. We will see that Theorem \ref{sqfcn-localest} is reduced to the broad norm estimate for the vector-valued function.

\subsection{Gauss maps}

We first give some definitions.

\begin{definition}[Gauss map]
Let $\Phi$ and $\{\Gamma_j\}$ be given in \eqref{surface}. For each $j$, we define the Gauss map of the hypersurface $\Gamma_j$ as:
\begin{equation}
\label{Gauss-map}
    G_j: B^{n-1}(0,1/2)\rightarrow \ZS^{n-1},\ \ \
    G_j(\bar\xi)=\frac{(\nabla_{\bar\xi}\Phi(\bar\xi;t_j),1)}{|(\nabla_{\bar\xi}\Phi(\bar\xi;t_j),1)|}.
\end{equation}
Since there is a one-to-one correspondence between $B^{n-1}(0,1/2)$ and $\Ga_j$, we can unambiguously think of $G_j$ as a function from $\Ga_j$ to $\ZS^{n-1}$.
\end{definition}

\begin{definition}
For any cap $\si\subset \ZS^{n-1}$, we define 
\begin{equation}\label{defslabsurface}
    \Ga_j(\si):=\{ \xi\in\Ga_j: G_j(\xi)\in \si \}.
\end{equation}
\end{definition}

What follows are several lemmas about the Gauss map $G_j$ that are not hard to show. We omit their proofs.
\begin{lemma}
\label{Gauss-map-lem}
For any $1\leq j\leq R$, the Gauss map $G_j(\bar\xi)$ is smooth and injective. In particular, when $\bar\xi\in B^{n-1}(0,1/2)$, one has
\begin{equation}
\label{derivative-Gauss}
    |\partial^\al G_j|\leq C_\al,\hspace{5mm}\al\in\ZN^{n-1}
\end{equation}
uniformly for all $G_j$.
\end{lemma}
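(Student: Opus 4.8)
The statement to prove is Lemma \ref{Gauss-map-lem}: for each $j$ the Gauss map $G_j(\bar\xi) = (\nabla_{\bar\xi}\Phi(\bar\xi;t_j),1)/|(\nabla_{\bar\xi}\Phi(\bar\xi;t_j),1)|$ is smooth and injective on $B^{n-1}(0,1/2)$, with derivative bounds $|\partial^\al G_j| \le C_\al$ uniform in $j$. The smoothness and the derivative bounds are essentially immediate from the hypotheses on $\Phi$; the content is injectivity. My plan is to first reduce $G_j$ to its ``unnormalized'' version $g_j(\bar\xi) := \nabla_{\bar\xi}\Phi(\bar\xi;t_j) \in \R^{n-1}$, observe that the normalization $v \mapsto (v,1)/|(v,1)|$ is a diffeomorphism from $\R^{n-1}$ onto the open lower(/upper) hemisphere, so $G_j$ is injective (resp.\ smooth with bounded derivatives) if and only if $g_j$ is. Then I would prove $g_j$ is a diffeomorphism onto its image using the quantitative convexity hypothesis on $\Phi$.

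\textbf{Key steps.} First, smoothness: $\Phi$ is smooth, so $\nabla_{\bar\xi}\Phi(\cdot;t_j)$ is smooth, and since $|(\nabla_{\bar\xi}\Phi,1)| \ge 1 > 0$ the quotient defining $G_j$ is smooth; this gives the first assertion. Second, the derivative bounds \eqref{derivative-Gauss}: by the second bullet on $\Phi$, $\nabla_{\bar\xi}^2\Phi$ has all eigenvalues of magnitude $\sim 1$, and more generally all derivatives of $\Phi$ in $\bar\xi$ (of order $\ge 2$) are bounded — strictly speaking the hypotheses as stated control only $\nabla^2_{\bar\xi}\Phi$ and $\partial_t\nabla_{\bar\xi}\Phi$, so I would invoke the convention (implicit in the paper's ``bounded derivatives of any order'' setup, or state it as an additional standing assumption on $\Phi$) that $\Phi$ is smooth with all derivatives bounded on the compact domain $B^{n-1}(0,1)\times[1,2]$; then $g_j = \nabla_{\bar\xi}\Phi(\cdot;t_j)$ has all derivatives bounded uniformly in $j \in [1,2]$, and the map $v\mapsto (v,1)/|(v,1)|$ together with all its derivatives is bounded on the bounded set $\{|v| \le C\}$ (where $C = \sup|\nabla_{\bar\xi}\Phi|$), so by the chain/Fa\`a di Bruno formula $|\partial^\al G_j| \le C_\al$ uniformly in $j$. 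Third, injectivity: fix $\bar\xi_1 \ne \bar\xi_2$ in $B^{n-1}(0,1/2)$; by the fundamental theorem of calculus along the segment $\bar\xi(s) = \bar\xi_1 + s(\bar\xi_2-\bar\xi_1)$,
\begin{equation}
g_j(\bar\xi_2) - g_j(\bar\xi_1) = \Big(\int_0^1 \nabla_{\bar\xi}^2\Phi(\bar\xi(s);t_j)\,ds\Big)(\bar\xi_2 - \bar\xi_1).
\end{equation}
Since all eigenvalues of $\nabla_{\bar\xi}^2\Phi$ have the same sign and magnitude $\sim 1$ uniformly, the averaged matrix $M := \int_0^1 \nabla_{\bar\xi}^2\Phi(\bar\xi(s);t_j)\,ds$ is (uniformly) positive or negative definite with $\|M v\| \gtrsim \|v\|$; hence $g_j(\bar\xi_2) \ne g_j(\bar\xi_1)$, proving $g_j$ (hence $G_j$) is injective. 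The ``in particular'' sentence in the lemma is then exactly step two.

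\textbf{Main obstacle.} There is no serious analytic obstacle — the argument is a standard ``convex function has injective gradient'' computation plus chain rule. The only point requiring care is the mismatch between what the stated hypotheses on $\Phi$ literally give (bounds on $\nabla^2_{\bar\xi}\Phi$ and $\partial_t\nabla_{\bar\xi}\Phi$ only) and what is needed for \eqref{derivative-Gauss} (bounds on $\partial^\al\Phi$ for all $\al$). I expect the intended resolution is that $\Phi$ is tacitly assumed smooth with uniformly bounded derivatives of all orders on its compact domain (consistent with the phrase ``bounded derivatives of any order'' used for the cutoffs, and with the paper's remark that these facts ``are not hard to show''); I would state this explicitly at the start of the proof. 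The rest is bookkeeping: carrying the uniformity in $j$ through the chain rule, which is automatic because all the bounds above are uniform over $t_j\in[1,2]$.
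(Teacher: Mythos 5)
The paper does not actually supply a proof of this lemma: in Section 3 it states ``What follows are several lemmas about the Gauss map $G_j$ that are not hard to show. We omit their proofs.'' Your argument is the natural one and is correct: the normalization $v\mapsto(v,1)/|(v,1)|$ is a diffeomorphism onto a hemisphere, so the problem reduces to $g_j=\nabla_{\bar\xi}\Phi(\cdot;t_j)$; smoothness and the bounds $|\partial^\al G_j|\le C_\al$ come from the chain rule; and injectivity follows from writing $g_j(\bar\xi_2)-g_j(\bar\xi_1)$ as the average Hessian applied to $\bar\xi_2-\bar\xi_1$ and using that the eigenvalues of $\nabla^2_{\bar\xi}\Phi$ all have one sign and magnitude $\sim 1$, so the averaged matrix is (uniformly) definite.

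One small remark: the caveat you raise about the hypotheses ``only'' controlling $\nabla^2_{\bar\xi}\Phi$ and $\partial_t\nabla_{\bar\xi}\Phi$ is not actually needed. Since $\Phi$ is smooth on $B^{n-1}(0,1)\times[1,2]$ and the estimate \eqref{derivative-Gauss} is only asserted for $\bar\xi\in B^{n-1}(0,1/2)$, every $\partial^\al_{\bar\xi}\Phi$ is continuous on the compact set $\overline{B^{n-1}(0,1/2)}\times[1,2]$ and hence automatically bounded there, uniformly in $t\in[1,2]$ — in particular uniformly over all $t_j$. So no extra standing assumption has to be invoked; the restriction to the smaller ball in the statement is exactly what makes the bounds free. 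With that simplification your proof is complete and clean.
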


\begin{lemma}
\label{Gauss-map-spt}
Suppose that $\si\subset  \ZS^{n-1}$ is a $\rho^{-1/2}$-cap with $1\le \rho\le R$. Then for any $1\leq j\leq R$, the set $G_j^{-1}(\si)$ is morally a $\rho^{-1/2}$-ball, which means there is a $\rho^{-1/2}$-ball $B$ in $B^{n-1}(0,1/2)$ such that
\begin{equation}
    cB\subset G_j^{-1}(\si)\subset CB.
\end{equation}
for two absolute constants $c<1$ and $C>1$.

As a result, we see that $\Ga_j(\si)$ is a $\rho^{-1/2}$-cap on $\Ga_j$.
\end{lemma}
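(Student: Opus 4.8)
The plan is to prove that each Gauss map $G_j$ is, uniformly in $j$, a bi-Lipschitz embedding of $B^{n-1}(0,1/2)$ into $\ZS^{n-1}$ with constants depending only on $\Phi$; once this is in place, preimages of $\rho^{-1/2}$-caps are automatically sandwiched between dilates of a common $\rho^{-1/2}$-ball, and the statement about $\Ga_j(\si)$ follows because $\Ga_j$ is a uniformly Lipschitz graph. \textbf{Step 1 (the differential).} Writing $N(\bar\xi):=(\nabla_{\bar\xi}\Phi(\bar\xi;t_j),1)$ we have $G_j=N/|N|$ and $1\le|N|\lesssim1$ (by boundedness of $\nabla_{\bar\xi}\Phi$ on the compact set $B^{n-1}(0,1/2)\times[1,2]$). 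Differentiating, $DG_j=|N|^{-1}\big(I-|N|^{-2}NN^{\mathsf T}\big)DN$, where the $n\times(n-1)$ matrix $DN$ has $\nabla_{\bar\xi}^2\Phi$ as its first $n-1$ rows and a zero last row; thus $DN\,v=(\nabla_{\bar\xi}^2\Phi\,v,0)$ and $\mathrm{Im}(DN)\subset\{x_n=0\}$. The upper bound $\|DG_j\|\lesssim1$ is immediate (it is part of Lemma \ref{Gauss-map-lem}), so the content is a uniform lower bound on $DG_j$.

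\textbf{Step 2 (the lower bound).} This is the only substantial step. Since every eigenvalue of the symmetric matrix $\nabla_{\bar\xi}^2\Phi$ has magnitude $\sim1$, we get $|DN\,v|=|\nabla_{\bar\xi}^2\Phi\,v|\gtrsim|v|$. Next, for any $w\in\{x_n=0\}$ one has $\langle N,w\rangle=\langle\nabla_{\bar\xi}\Phi,\bar w\rangle$, so $\langle N,w\rangle^2\le|\nabla_{\bar\xi}\Phi|^2|w|^2$ and
\[
\big|\big(I-|N|^{-2}NN^{\mathsf T}\big)w\big|^2=|w|^2-\frac{\langle N,w\rangle^2}{|N|^2}\ge|w|^2\Big(1-\frac{|\nabla_{\bar\xi}\Phi|^2}{|\nabla_{\bar\xi}\Phi|^2+1}\Big)\gtrsim|w|^2,
\]
i.e.\ the orthogonal projection onto $N^\perp$ does not degenerate on the hyperplane $\{x_n=0\}$, precisely because $N$ has last coordinate exactly $1$ and $|N|\sim1$. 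Composing, $|DG_j\,v|\gtrsim|v|$ for all $v$, uniformly in $\bar\xi\in B^{n-1}(0,1/2)$ and $j$. With Step 1, the mean value inequality then gives $|G_j(\bar\xi)-G_j(\bar\xi')|\sim|\bar\xi-\bar\xi'|$ on $B^{n-1}(0,1/2)$, uniformly.

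\textbf{Step 3 (pulling back a cap and passing to $\Ga_j$).} A $\rho^{-1/2}$-cap $\si$ with $1\le\rho\le R$ is comparable to a ball of radius $\rho^{-1/2}\le1$ in $\ZS^{n-1}$. If $G_j^{-1}(\si)=\emptyset$ there is nothing to prove; otherwise Step 2 forces $\mathrm{diam}\,G_j^{-1}(\si)\lesssim\rho^{-1/2}$, so taking $B$ to be the $\rho^{-1/2}$-ball centered at a $G_j$-preimage of (a point near) the center of $\si$ yields $cB\subset G_j^{-1}(\si)\subset CB$ for absolute $c,C$; the harmless adjustment needed when $\si$ meets $\partial G_j(B^{n-1}(0,1/2))$ only affects the constants. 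This is the first claim. Finally the graph map $\bar\xi\mapsto(\bar\xi,\Phi(\bar\xi;t_j))$ is bi-Lipschitz with uniform constants (bounded gradient), and $\Ga_j(\si)$ is its image of the $\rho^{-1/2}$-ball $G_j^{-1}(\si)$, hence comparable to a $\rho^{-1/2}$-ball on $\Ga_j$, i.e.\ a $\rho^{-1/2}$-cap, as asserted.

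The main obstacle is Step 2, and specifically the observation that projecting $\mathrm{Im}(DN)$ onto $N^\perp$ does not collapse it: this is exactly where the quantitative hypotheses on $\Phi$ — the uniform nondegeneracy of $\nabla_{\bar\xi}^2\Phi$ together with the normalization that gives $N$ a fixed nonzero last coordinate — are used. Everything else is routine bookkeeping with bi-Lipschitz maps, and the constants $c,C$ are absorbed into the words ``morally'' and ``cap.''
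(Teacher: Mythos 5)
The paper omits a proof of this lemma (``not hard to show, we omit their proofs''), so there is no argument of the authors to compare against; what you have written is a reasonable self-contained proof, and the differential-geometric computation in Step~2 (projecting the tangent hyperplane $\{x_n=0\}$ onto $N^\perp$ does not degenerate because $N$ has last coordinate exactly $1$) is correct and is indeed where the hypotheses on $\Phi$ enter. Step~3 and the handling of the boundary are fine as stated.

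There is, however, one genuine logical gap at the end of Step~2. A pointwise lower bound $|DG_j\,v|\gtrsim|v|$ on a convex domain does \emph{not} by itself yield a global lower Lipschitz bound via the mean value inequality: the integral $\int_0^1 DG_j(\gamma(s))\,\Delta\,ds$ along a segment can suffer cancellation even though each integrand has norm $\gtrsim|\Delta|$ (think of a curve winding rapidly around a small circle, whose derivative has large constant magnitude but whose endpoints coincide). To close this you should use the structure your hypotheses actually provide: since every eigenvalue of the symmetric matrix $\nabla_{\bar\xi}^2\Phi$ has the same sign and magnitude $\sim 1$, the map $\Psi(\bar\xi):=\nabla_{\bar\xi}\Phi(\bar\xi;t_j)$ is uniformly monotone,
\[
\big|\langle \Psi(\bar\xi)-\Psi(\bar\xi'),\bar\xi-\bar\xi'\rangle\big|
=\Big|\int_0^1\langle\nabla_{\bar\xi}^2\Phi(\gamma(s))\Delta,\Delta\rangle\,ds\Big|\gtrsim|\Delta|^2,
\]
which together with the upper bound $|D\Psi|\lesssim1$ makes $\Psi$ bi-Lipschitz on $B^{n-1}(0,1/2)$. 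Then $G_j = F\circ\Psi$ with $F(w)=(w,1)/|(w,1)|$, and $F$ is a smooth injective immersion on the compact set $\{|w|\le C\}$ (your Step~2 projection estimate is exactly the nondegeneracy of $DF$), hence bi-Lipschitz there; composing gives the bi-Lipschitz property of $G_j$ that you need. With that substitution the proof is complete.
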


\subsection{Vector-valued functions}

For technical reasons, it's convenient  to work on vector-valued functions. In this subsection, we discuss the properties for vector-valued functions.

\begin{definition}[Square function for vector-valued functions]\label{defsqfunc}
Given a vector-valued function $\vg=\{g_1,\cdots,g_R\}$, we define the square function of $\vg$ by:
\begin{equation}\label{sqsq}
    \Sq\vg(x):=\Big(\sum_{j=1}^R|g_j(x)|^2\Big)^{1/2}.
\end{equation}
\end{definition}

Recall \eqref{operator-1}, \eqref{sqfcn-1}. Under these notations, if we define $f_j$ and $\vf$ as
\begin{equation}
\label{vector-f}
    f_j:=S_{j}f,\hspace{1cm}\vf:=\{f_1,\ldots,f_R\},
\end{equation}
then $\Sq f=\Sq\vf$. Hence we see that \eqref{discrete-sqfcn2} is equivalent to
\begin{equation}\tag{$\Sq_p$}\label{localsq}
    \|\Sq \vf\|_{L^p(B_R)}\leq C_\e R^{\frac{n-1}{2}-\frac{n}{p}+\e}\|f\|^{\frac{2}{p}}_{L^2(w_{B_R})}\|f\|^{1-\frac{2}{p}}_{L^\infty(\R^n)}.
\end{equation}

\medskip

Fix a number $\rho\ge 1$. In the following discussion, assume that we are given a vector-valued function $\vg=\{g_1,\cdots,g_R\}$ such that each component satisfies $\textup{supp}(\wh g_j)\subset N_{R^{\beta}\rho^{-1}}\Ga_j$. 
Here, we remark that $\beta=\e^{1000}$ is a tiny number aiming to handle rigorously the rapidly decaying tail of wave packets. We recommend readers assuming $\beta=0$ in the first reading.

For each $\rho^{-1/2}$-cap $\tau\subset \ZS^{n-1}$, we want to define
$$ \vg_\tau=\{g_{1,\tau},\cdots,g_{R,\tau}\}, $$
so that each component, roughly speaking, satisfies
$$ \textup{supp}(\wh g_{j,\tau})\subset N_{R^{\beta} \rho^{-1}}\Ga_j(2\tau). $$

We fix a scale $\rho$ and let $\{\tau\}$ be a set of $\rho^{-1/2}$-caps that form a finitely overlapping cover of $\ZS^{n-1}$. Then for each $1\le j\le R$, $\{\Ga_j(\tau)\}$ form a finitely overlapping cover of $\Ga_j$.
Let $\{\wh\vp_{j,\tau}(\xi)\}$ be a partition of unity associated to $N_{R^{\beta}\rho^{-1}}\Ga_j$ so that each $\wh\vp_{j,\tau}$ is a smooth cut off at the slab $N_{R^{\beta}\rho^{-1}}\Ga_j(\tau)$ of dimensions $\rho^{-1/2}\times\cdots\times\rho^{-1/2}\times R^\beta \rho^{-1}$. In this way, $\sum_\tau \wh\vp_{j,\tau}$ forms a smooth approximation to $\Id_{N_{R^{\beta}\rho^{-1}}\Ga_j}$, and $\sum_\tau \wh\vp_{j,\tau}=1$ on $N_{R^{\beta}\rho^{-1}}\Ga_j$.
We define
\begin{equation}\label{gtau}
    g_{j,\tau}:=\vp_{j,\tau}*g_j,\ \ \vg_\tau:=\{g_{1,\tau},\cdots,g_{R,\tau}\}.
\end{equation}
From the condition on the Fourier support of each $g_j$, we have
$$ g_j=\sum_\tau g_{j,\tau}. $$
We define the square function associated to $\tau$ as
$$ \Sq \vg_{\tau}:=\Big(\sum_{j=1}^R|g_{j,\tau}(x)|^2\Big)^{1/2}. $$
One can check $\Sq\vg_{\ZS^{n-1}}=\Sq\vg$.

\subsection{Broad norm}

The broad norm we define here is a bit different from the usual way. We first fix an $M^{-1}$-cap $\si\subset \ZS^{n-1}$ and assume that each component of our function $\vg$ has Fourier support in the cap whose direction is determined by $\si$.

\begin{definition}[Broad norm]\label{defbroad}
Suppose that $\si\subset \ZS^{n-1}$ is an $M^{-1}$-cap and $\si$ is covered by finitely overlapping $K^{-1}M^{-1}$-caps: $\si=\cup\tau$. Let $\vg=\{g_1,\cdots,g_R\}$ be a vector-valued function satisfying $\textup{supp}(\wh g_{j})\subset N_{(KM)^{-2}}\Ga_j(\si)$. 

For any linear subspace, $V\subset \R^n$, we use $\angle(\tau,V)$ to denote the smallest angle between any two vectors in $\tau$ and $V$. We write $\tau \notin V$ to mean $\angle(\tau,V)\ge K^{-1}$. Otherwise, we write $\tau\in V$.

Now we partition $\R^n$ into rectangles of dimensions $MK^2\times \cdots\times MK^2\times M^2K^2$, pointing to the direction $c_\si$ (here we use $c_\si$ to denote the center of the cap $\si$). We denote this partition by $\R^n=\sqcup B_{MK^2\times M^2K^2}$.
For each rectangle $B_{MK^2\times M^2K^2}$, we define the $k$-broad norm of $\Sq\vg$ on $B_{MK^2\times M^2K^2}$ by 
\begin{equation}
    \nu(B_{MK^2\times M^2K^2}):=\min_{V_1,\cdots,V_A}\big( \max_{\begin{subarray}{c}\tau\notin V_a\\ \textup{for~any~}1\le a\le A\end{subarray}}\int_{B_{MK^2\times M^2K^2}}|\Sq \vg_\tau|^p \big).
\end{equation}
where the minimum is over $(k-1)$-dimensional subspaces of $\R^n$.

Finally, for $U\subset \R^n$, we define
\begin{equation}\label{bdsq}
    \|\Sq \vg\|_{\BL^p_{k,A}(U)}:=\big( \sum_{B_{MK^2\times M^2K^2}}\frac{|B_{MK^2\times M^2K^2}\cap U|}{|B_{MK^2\times M^2K^2}|}\nu(B_{MK^2\times M^2K^2}) \big)^{1/p}.
\end{equation}
\end{definition}

\begin{remark}
\rm

Our definition of the broad norm depends on $\si$. Even though $\si$ does not appear in the notation, it will be clear form the text when we use it. When we consider the broad norm for $\vg_\si$, it is always with respect to the cap $\si$.
\end{remark}

\begin{remark}
\rm

It may not be clear to readers why we use rectangles $B_{MK^2\times M^2K^2}$ of dimensions $MK^2\times\cdots \times MK^2\times M^2K^2$. To get some intuitions, we first take a look at the case $M=1$. In this case, $B_{MK^2\times M^2K^2}$ is just the $K^2$-ball which was used in \cite{Guth-II}. For general $M^{-1}$-cap $\si$, if we can do parabolic rescaling to transform the cap $\si$ into some $1$-cap, then in the physical space $B_{MK^2\times M^2K^2}$ would be transformed into $B_{K^2}$. This suggests us to use $B_{MK^2\times M^2K^2}$ in Definition \ref{defbroad}.
\end{remark}

We have triangle inequality and H\"older's inequality for the broad norm.

\begin{lemma}[Triangle inequality]
Assuming the same notation, we have
\begin{equation}\label{broad-triangle}
    \| \Sq(\vg_1+\vg_2) \|_{\BL^p_{k,2A}(U)}\lesssim \| \Sq\vg_1 \|_{\BL^p_{k,A}(U)}+\| \Sq\vg_2 \|_{\BL^p_{k,A}(U)} 
\end{equation}
\end{lemma}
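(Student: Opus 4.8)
The plan is to unwind the definition \eqref{bdsq} and reduce everything to a pointwise inequality on each rectangle, using that passing from $A$ to $2A$ subspaces is exactly what lets us combine the optimal choices for $\vg_1$ and $\vg_2$. First I would record that the $\tau$-localization is linear: since $\wh\vp_{j,\tau}$ is a fixed partition of unity, $(\vg_1+\vg_2)_\tau=(\vg_1)_\tau+(\vg_2)_\tau$, and the Fourier-support hypothesis $\supp(\wh g_j)\subset N_{(KM)^{-2}}\Ga_j(\si)$ required in Definition \ref{defbroad} is preserved under addition, so the $2A$-broad norm of $\Sq(\vg_1+\vg_2)$ is well defined with respect to the same cap $\si$. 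Applying the triangle inequality for the $\ell^2$-norm in the index $j$ at each point $x$ gives the pointwise bound $\Sq(\vg_1+\vg_2)_\tau(x)\le \Sq(\vg_1)_\tau(x)+\Sq(\vg_2)_\tau(x)$, hence $|\Sq(\vg_1+\vg_2)_\tau|^p\lesssim_p |\Sq(\vg_1)_\tau|^p+|\Sq(\vg_2)_\tau|^p$.

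Next I would work rectangle by rectangle. Write $B=B_{MK^2\times M^2K^2}$ for short, and let $\nu_i(B)$ denote the quantity defining the $A$-broad norm for $\vg_i$ and $\nu_{1+2}(B)$ the one defining the $2A$-broad norm for $\vg_1+\vg_2$. Choose $(k-1)$-dimensional subspaces $V_1^{(1)},\dots,V_A^{(1)}$ attaining the minimum in $\nu_1(B)$ and $V_1^{(2)},\dots,V_A^{(2)}$ attaining the minimum in $\nu_2(B)$, and feed the concatenated $2A$-tuple into the minimum defining $\nu_{1+2}(B)$. For any $\tau$ that is not in any of these $2A$ subspaces, $\tau\notin V_a^{(1)}$ for all $a$ and $\tau\notin V_a^{(2)}$ for all $a$, so $\int_B|\Sq(\vg_i)_\tau|^p\le\nu_i(B)$ for $i=1,2$; combining with the pointwise bound above and taking the maximum over such $\tau$ yields $\nu_{1+2}(B)\lesssim \nu_1(B)+\nu_2(B)$.

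Finally I would sum over the rectangles $B$ in the partition of $\R^n$, weighted by $|B\cap U|/|B|$ as in \eqref{bdsq}, to get
\[
\|\Sq(\vg_1+\vg_2)\|_{\BL^p_{k,2A}(U)}^p\lesssim \|\Sq\vg_1\|_{\BL^p_{k,A}(U)}^p+\|\Sq\vg_2\|_{\BL^p_{k,A}(U)}^p,
\]
and then take $p$-th roots, using $(a+b)^{1/p}\le a^{1/p}+b^{1/p}$ for $p\ge1$, to conclude \eqref{broad-triangle}. I do not expect a genuine obstacle here; the only point that needs care is the bookkeeping of the subspaces, namely that the doubling $A\mapsto 2A$ on the left-hand side is precisely what makes the concatenation of the two optimal $A$-tuples admissible, and that all implicit constants depend only on $p$ (through $2^{p-1}$) and not on $R$, $M$, or $K$.
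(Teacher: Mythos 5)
Your proof is correct and is the standard argument for the broad-norm triangle inequality (as in Guth's work); the paper states this lemma without proof, and your concatenate-the-two-$A$-tuples-into-a-$2A$-tuple argument is exactly what the doubling of $A$ is designed to accommodate. No gaps.
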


\begin{lemma}[H\"older's inequality]
For $1\le p,p_0,p_1<\infty$ and $0\le \al\le 1$ satisfying
$$ \frac{1}{p}=\frac{1-\al}{p_0}+\frac{\al}{p_1}, $$
we have
\begin{equation}\label{broad-holder}
    \| \Sq\vg \|_{\BL^p_{k,2A}(U)}\lesssim \| \Sq\vg \|_{\BL^{p_0}_{k,A}(U)}^{1-\al}\| \Sq\vg \|_{\BL^{p_1}_{k,A}(U)}^{\al} 
\end{equation}
\end{lemma}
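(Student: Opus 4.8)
The strategy is to reduce everything to the ordinary Hölder inequality for $L^p$ norms applied inside each rectangle $B=B_{MK^2\times M^2K^2}$, and then to check that the minimizing choice of subspaces interacts well with interpolation. First I would fix one rectangle $B$ and examine the quantity $\nu(B)$ appearing in \eqref{bdsq}. For this $B$, let $V_1,\dots,V_A$ be $(k-1)$-dimensional subspaces achieving (within a factor $2$, say) the minimum in the definition of $\nu_{p_0}(B)$ for the exponent $p_0$, and let $V_1',\dots,V_A'$ achieve the minimum for $\nu_{p_1}(B)$. The point is that we may test the minimum defining $\nu_p(B)$ against the $2A$ subspaces $V_1,\dots,V_A,V_1',\dots,V_A'$: with this choice, any $\tau$ with $\tau\notin V_a$ and $\tau\notin V_a'$ for all $a$ is simultaneously "broad" for the two given configurations, so for each such $\tau$,
\begin{equation}
\int_B |\Sq\vg_\tau|^{p_0}\le \nu_{p_0}(B),\qquad \int_B|\Sq\vg_\tau|^{p_1}\le\nu_{p_1}(B)
\end{equation}
(up to the factor $2$). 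Applying the ordinary Hölder inequality with exponents $p_0/((1-\al)p)$ and $p_1/(\al p)$ — whose reciprocals sum to $1$ by the hypothesis $\frac1p=\frac{1-\al}{p_0}+\frac{\al}{p_1}$ — on the function $|\Sq\vg_\tau|^p = |\Sq\vg_\tau|^{(1-\al)p}\cdot|\Sq\vg_\tau|^{\al p}$ over $B$ gives
\begin{equation}
\int_B|\Sq\vg_\tau|^{p}\le \Big(\int_B|\Sq\vg_\tau|^{p_0}\Big)^{(1-\al)p/p_0}\Big(\int_B|\Sq\vg_\tau|^{p_1}\Big)^{\al p/p_1}\lesssim \nu_{p_0}(B)^{(1-\al)p/p_0}\,\nu_{p_1}(B)^{\al p/p_1}.
\end{equation}
Taking the max over admissible $\tau$ on the left and using that the right side no longer depends on $\tau$, we obtain $\nu^{(2A)}_p(B)\lesssim \nu^{(A)}_{p_0}(B)^{(1-\al)p/p_0}\nu^{(A)}_{p_1}(B)^{\al p/p_1}$, where the superscript records the number of subspaces allowed.

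Next I would sum over the rectangles $B$ partitioning $\R^n$. Writing $c_B:=|B\cap U|/|B|\in[0,1]$, the definition \eqref{bdsq} gives $\|\Sq\vg\|_{\BL^p_{k,2A}(U)}^p = \sum_B c_B\,\nu_p^{(2A)}(B)$, and by the previous step this is $\lesssim \sum_B c_B\,\nu_{p_0}^{(A)}(B)^{(1-\al)p/p_0}\nu_{p_1}^{(A)}(B)^{\al p/p_1}$. Since $c_B\le 1$ we may write $c_B = c_B^{(1-\al)p/p_0}\cdot c_B^{\al p/p_1}$ (using again that the two exponents on $c_B$ sum to $1$, and $c_B\le1$ so this only helps), and then apply the discrete Hölder inequality over the index set $\{B\}$ with the same pair of conjugate exponents $p_0/((1-\al)p)$, $p_1/(\al p)$:
\begin{equation}
\sum_B \big(c_B\nu_{p_0}^{(A)}(B)\big)^{(1-\al)p/p_0}\big(c_B\nu_{p_1}^{(A)}(B)\big)^{\al p/p_1}\le \Big(\sum_B c_B\nu_{p_0}^{(A)}(B)\Big)^{(1-\al)p/p_0}\Big(\sum_B c_B\nu_{p_1}^{(A)}(B)\Big)^{\al p/p_1}.
\end{equation}
Recognizing the two factors as $\|\Sq\vg\|_{\BL^{p_0}_{k,A}(U)}^{(1-\al)p}$ and $\|\Sq\vg\|_{\BL^{p_1}_{k,A}(U)}^{\al p}$ and taking $p$-th roots yields \eqref{broad-holder}.

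The only genuinely delicate point — and the step I would be most careful about — is the bookkeeping of the subspace count: the left side must carry the subscript $2A$ precisely because combining the optimal configuration for $p_0$ with the optimal configuration for $p_1$ doubles the number of forbidden subspaces, and the admissible set of $\tau$ shrinks accordingly; one must check that this shrunken set is exactly what the $\min$-$\max$ in $\nu_{2A}(B)$ allows when it selects the doubled configuration as a (not necessarily optimal) competitor. Everything else is the routine pair of Hölder inequalities, once and for all with the conjugate exponents $\big(\tfrac{p_0}{(1-\al)p},\tfrac{p_1}{\al p}\big)$; the only subtlety there is the harmless manipulation $c_B=c_B^{(1-\al)p/p_0}c_B^{\al p/p_1}$, valid since $0\le c_B\le 1$ and the exponents sum to $1$. (The triangle inequality \eqref{broad-triangle} is proved the same way, replacing the two Hölder steps by the elementary inequalities $(a+b)^p\lesssim a^p+b^p$ at the level of each $\tau$ together with $\Sq(\vg_1+\vg_2)_\tau\le \Sq(\vg_1)_\tau+\Sq(\vg_2)_\tau$, and again doubling the subspace count.)
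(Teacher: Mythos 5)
Your proof is correct and follows the standard approach, which the paper states without proof since it is a routine adaptation of the corresponding facts for Guth's $k$-broad norms: combine near-optimal subspace configurations for $p_0$ and $p_1$ into a $2A$-subspace competitor for $p$, apply H\"older inside each rectangle $B$, and then apply a second discrete H\"older over the collection of rectangles; the doubling $A\mapsto 2A$ on the left-hand side is exactly the price of making the two admissible sets of $\tau$ compatible. One cosmetic remark: the factorization $c_B=c_B^{(1-\al)p/p_0}c_B^{\al p/p_1}$ is an identity for every $c_B\ge 0$ once the two exponents sum to $1$, so your appeal to $c_B\le 1$ at that step is unnecessary (though harmless).
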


Now we can state the main estimate in this paper. The most part of the paper is devoted to the proof of this estimate.
\begin{theorem}[The main result for broad norm]\label{thmbroadnorm}
Fix a $M^{-1}$-cap $\si\subset \ZS^{n-1}$.
Let $2\le k\le n-1$, and 
\begin{equation}\label{defpk}
    p> p_n(k):=2+\frac{6}{2(n-1)+(k-1)\prod_{i=k}^{n-1} \frac{2i}{2i+1}}.
\end{equation}
Recall the definition of $\vf$ in \eqref{vector-f}. Then for every $\epsilon>0$, there exists $A$ such that 
\begin{equation}\tag{$\mathrm{BL}_{k,A}^p$}
    \| \Sq \vf_\si \|_{\BL^p_{k,A}(B_R)}^p\lesssim_\e R^{p\e}R^{\frac{n-1}{2}p-n}M^{2n-(n-1)p}
    \|\Sq \vf_\si\|_{L^2(\om_{B_{R}})}^2
    \|f\|_{L^{\infty}(\R^n)}^{p-2}.
    \label{broadinequality}
\end{equation}
\end{theorem}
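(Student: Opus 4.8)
The plan is to prove Theorem \ref{thmbroadnorm} by an induction on the scale $R$ combined with an iterated polynomial partitioning argument, following the broad framework of \cite{Guth-R3}, \cite{Guth-II} and \cite{gan2021new}, but replacing the $L^2$-based estimate that appears at the end of the iteration by the sharper concentration estimate \eqref{refined-esti-intro}. First I would set up the wave packet decomposition (Section 4) for each component $f_j$ of $\vf_\si$ at scale $\rho^{-1/2}$ on the sphere, so that the tangential/transverse dichotomy of Guth is available for the vector-valued square function $\Sq\vf_\si$. The exponent $p_n(k)$ in \eqref{defpk} is exactly the exponent produced by the restriction-type induction in \cite{hickman2020note} with the $k$-broad norm, so the algebraic bookkeeping of the exponents is expected to parallel that paper; the point is to carry the extra factors $\|\Sq\vf_\si\|_{L^2(\om_{B_R})}^{2}\|f\|_\infty^{p-2}$ through unchanged.

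Next I would run the polynomial partitioning iteration in three regimes, as the section layout (Sections 5, 6, 7) suggests. At each step one applies polynomial partitioning of degree $d=d(\e)$ to the broad norm $\|\Sq\vf_\si\|_{\BL^p_{k,A}}$, splitting into \emph{cellular} cells and the \emph{wall} (algebraic) case. In the cellular case one gains the usual factor from the number of cells, and the induction on $R$ closes because each cell has radius $\rho \ll R$ and fewer wave packets pass through it. In the algebraic/transverse case one uses the polynomial Wolff axiom (more precisely the nested version of \cite{hickman2019improved}, \cite{zahl2021new}) to bound the number of tubes tangent to the variety $Z$, which is where the gain driving $p_n(k)$ comes from; here the broad norm is reduced to a lower value of $k$, and one appeals to the inductive statement with $k$ replaced by $k-1$ (hence the product $\prod_{i=k}^{n-1}$ in the definition of $p_n(k)$). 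The iteration stops at some scale $\rho$, yielding the decomposition \eqref{sk1} into cells $O$ with associated $\vf_O$, and one must keep track of the accumulated constants — this is routine but must be done carefully so that only $R^{p\e}$ is lost in total (the ``$\lessapprox$'' bookkeeping alluded to in Remark \ref{pretend}).

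The decisive step is then the key estimate \eqref{refined-esti-intro}: for each terminal cell $O$, the square function $\Sq\vf_O$ is essentially supported on wave packets tangent to the variety $Z$ attached to $O$, and consequently $\|\Sq\vf_O\|_2^2 \lesssim \rho R^{-1}\|f\1_X\|_2^2$ for a small set $X$ which is a thin neighborhood of $Z$. I would prove this by expanding $\|\Sq\vf_O\|_2^2 = \sum_j \|f_{j,O}\|_2^2$, using Plancherel and the fact that each $f_{j,O}$ lives in an $R^\beta\rho^{-1}$-neighborhood of a $\rho^{-1/2}$-cap of $\Ga_j$ (so each contributes a volume factor $\rho^{-(n-1)/2}\cdot R^\beta \rho^{-1}$ on the Fourier side), combined with an $L^2$ orthogonality/local-constancy argument on the physical side showing that $f_{j,O}$ only sees the part of $f$ lying over $X = N(\text{tube neighborhood of }Z)$; summing the $L^2$ masses over $j$ with the separation $t_j = 1+j/R$ produces the gain $\rho R^{-1}$ rather than the trivial $\rho^{1-n/2}$. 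Plugging $\|\Sq\vf_O\|_2^2 \lesssim \rho R^{-1}|X|\,\|f\|_\infty^2$ and the volume bound on $|X|$ into \eqref{sk1}, together with the cell count from the iteration and the $L^2\to L^\infty$ interpolation $\|\Sq\vf_O\|_p^p \le \|\Sq\vf_O\|_2^2\|\Sq\vf_O\|_\infty^{p-2}$, would give \eqref{broadinequality}.

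I expect the main obstacle to be the proof of \eqref{refined-esti-intro}, i.e. showing rigorously that $\Sq\vf_O$ is concentrated on the tangential wave packets and identifying the correct small set $X$ with a usable volume bound; this is where the vector-valued (square-function) structure genuinely differs from the scalar Bochner–Riesz setting of \cite{GOWWZ}, since one cannot afford to freeze the $t$-variable, and one must exploit the $R^{-1}$-separation of the scales $t_j$ simultaneously with the tangency geometry. A secondary technical difficulty is making the polynomial partitioning iteration interact cleanly with the rectangular blocks $B_{MK^2\times M^2K^2}$ in Definition \ref{defbroad} (rather than with balls), and ensuring the rapidly decaying tails controlled by $\beta$ do not accumulate across the $O(\log\log R)$ steps of the iteration.
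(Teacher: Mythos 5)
Your overall outline correctly identifies the main ingredients (wave packet decomposition, iterated polynomial partitioning with a cellular/algebraic dichotomy, nested polynomial Wolff, a refined $L^2$ estimate for the terminal cells), but there are two places where your mechanism differs from the paper's and where the differences are not cosmetic.

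First, you say that in the algebraic case the broad norm is reduced to $k-1$ and that the product $\prod_{i=k}^{n-1}\tfrac{2i}{2i+1}$ in $p_n(k)$ records this descent in $k$. This is not what happens. In the paper's iteration the $k$-broad norm is \emph{never} demoted to a $(k-1)$-broad norm. What descends is the dimension $m$ of the transverse complete intersection: the first algorithm iterates cellular/transverse steps within a fixed $m$-dimensional variety, and each time the tangent case occurs the second algorithm drops to dimension $m-1$, producing the chain $\cs_n\supset\cs_{n-1}\supset\cdots\supset\cs_m$. The parameter $k$ is only the \emph{stopping point} of that dimensional descent. The product in the formula for $p_n(k)$ comes from the Lebesgue exponents $\{p_l\}_{l=k}^n$ and the solution $\{\gamma_l\}$ of the system \eqref{solve1}--\eqref{solve2}, exactly as in \cite{hickman2020note}; it encodes the trade-off across dimensions $l$, not an induction in $k$. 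As written, your proposed induction ``with $k$ replaced by $k-1$'' would simply produce the wrong exponent.

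Second, you propose to run the argument as an induction on $R$ (closing the induction inside cellular cells because they are smaller), and then, at the end, to plug a \emph{single} estimate $\|\Sq\vf_O\|_2^2\lesssim \rho R^{-1}|X|\,\|f\|_\infty^2$. The paper deliberately avoids induction-on-$R$ (and hence parabolic rescaling, which is awkward for this vector-valued square function since the separated slabs $t_j=1+j/R$ would deform); instead it runs the Hickman--Rogers/Guth-type algorithm all the way down to scale $r_{m-1}\sim M^2R^{\e/10n}$ and then uses Bernstein (Lemma on small cells, \eqref{smallradius}). More importantly, the terminal $L^2$ bound is not a single estimate with a single $X$: the paper proves three estimates (Lemma \ref{lem1} descending from $S_l$ to $O$, Lemma \ref{lem2} passing from $\vf_{S_l}$ to the $R$-scale ``ancestor'' packets $\vf^\#_{S_l}$, Lemma \ref{lem3} bounding $\|\Sq\vf^\#_{S_l}\|_2$ by $|X|\|f\|_\infty^2$ via nested Wolff), obtaining one such bound \emph{for each intermediate dimension} $l$ with $m\le l\le n$, and then takes a weighted geometric mean of these bounds with exponents $\gamma'_l$ chosen to solve \eqref{solve1}--\eqref{solve2}. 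This optimization over $l$ is precisely what produces the sharp $M^{2n-(n-1)p}$ and $R^{\frac{n-1}{2}p-n}$ factors in \eqref{broadinequality}. A single plug-in of $\rho R^{-1}|X|$ gives only the $n=3$-style result and would not yield $p_n(k)$ for $n\ge 4$. Until you build in the dimension-descent structure and the weighted combination over $l$, the argument cannot close at the stated exponent.
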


We will begin the proof of Theorem \ref{thmbroadnorm} in Section 5. In the rest of this section, we show how it implies \eqref{localsq} and hence Theorem \ref{sqfcn-localest}. Actually, we will prove the following lemma. Then by optimizing the $k$, we see \eqref{discrete-sqfcn2} holds for $p\geq p_n$.

\begin{lemma}[Reduction to the broad norm estimate]\label{reduction} Let $n \geq 3$ and $2 \leq k \leq n-1$.
Suppose that
\begin{equation}
    2+\frac{4}{2n-k} \leq p \leq 2+\frac{2}{k-2}.
\end{equation}
Them the broad norm estimate \eqref{broadinequality} implies the square function estimate \eqref{localsq}.
\end{lemma}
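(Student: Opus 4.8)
The plan is to run the standard broad--narrow decomposition, iterate it, and feed the broad pieces into Theorem~\ref{thmbroadnorm} while summing the narrow contributions geometrically. Start with the cap $\si=\ZS^{n-1}$ (so $M=1$) and decompose $\ZS^{n-1}$ into $K^{-1}$-caps. At each point $x\in B_R$, either there are $k$ ``quantitatively transverse'' caps whose contributions are all comparable to $\Sq\vf(x)$ --- in which case $x$ is controlled by the $k$-broad norm $\|\Sq\vf_{\si'}\|_{\BL^p_{k,A}}$ for an appropriate sub-cap $\si'$ --- or else all the mass concentrates on caps lying near some $(k-2)$-plane, and we can pass to a union of $O(K^{k-2})$ smaller caps $\tau$ of radius $K^{-1}$. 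The key point is that the narrow term gains a factor which, after accounting for the loss $K^{O(1)}$ from the number of caps and the uncertainty-principle spatial localization, is summable because we stop the recursion after $\sim\log_K R$ steps, contributing only $R^{\e}$. Schematically, one proves by induction on $R$ a bound of the shape
\begin{equation}
\label{eq:plan-bn}
\|\Sq\vf\|_{L^p(B_R)}^p\lesssim_\e R^{p\e}\sum_{\si}\|\Sq\vf_\si\|_{\BL^p_{k,A}(B_R)}^p + (\text{rescaled pieces at smaller caps/balls}),
\end{equation}
and closes the induction using the superlinearity of $p\mapsto$ (the target exponent) together with parabolic rescaling, which converts an $M^{-1}$-cap problem on $B_R$ into a full-cap problem on $B_{M^2 R^{?}}$ --- this is exactly why Definition~\ref{defbroad} uses the rectangles $B_{MK^2\times M^2K^2}$.

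\textbf{Key steps in order.} First, I would record the pointwise broad--narrow inequality: for each ball $B_{K^2}$, either $\Sq\vf\lesssim K^{O(1)}\max_{\tau\notin V}\Sq\vf_\tau$ for all choices of $(k-1)$-planes $V$ (the broad case), or $\Sq\vf\lesssim\sum_{\tau\in V}\Sq\vf_\tau$ for some plane $V$ and a sub-collection of $\lesssim K^{k-2}$ caps (the narrow case). Second, sum the $p$-th powers over $B_{K^2}\subset B_R$; the broad balls assemble into $\|\Sq\vf_\si\|_{\BL^p_{k,A}(B_R)}^p$ (with $A$ enlarged by a constant at each use, which is harmless since $A\sim\log\log R$), while the narrow balls, after Fourier-localizing $\vf_\tau$ to $N_{R^\beta K^2 \rho^{-1}}\Ga_j(\tau)$ and applying parabolic rescaling to blow $\tau$ up to a $1$-cap, become a lower-dimensional instance of \eqref{localsq} at scale $\sim R/K^2$ (roughly) with a cap of size $K^{-1}$. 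Third, apply Theorem~\ref{thmbroadnorm} to each broad term: this yields the factor $R^{\frac{n-1}{2}p-n}M^{2n-(n-1)p}\|\Sq\vf_\si\|_{L^2}^2\|f\|_\infty^{p-2}$, and summing over the $\sim M^{n-1}$ caps $\si$ of size $M^{-1}$ one checks, using $L^2$ orthogonality $\sum_\si\|\Sq\vf_\si\|_{L^2}^2\lesssim\|\Sq\vf\|_{L^2}^2\lesssim\|f\|_{L^2}^2$ (this uses $\supp\wh m_j\subset N_{R^{-1}}\Ga_j$ and finite overlap of the caps), that the exponent of $M$ balances provided $2n-(n-1)p\le 0$, i.e. $p\ge \frac{2n}{n-1}$, which holds in our range. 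Fourth, run the induction on $R$ (or equivalently iterate the broad--narrow step $\sim\e^{-1}\log_K R$ times): at each iteration the narrow term shrinks the ball by $K^2$ and the cap by $K^{-1}$, and the bookkeeping of the constants --- $K^{O(1)}$ per step, $A\mapsto 2A$ per step, and the rescaling Jacobians --- produces in total at most $R^{C\e}$, which is absorbed by renaming $\e$. The constraint $2+\frac{4}{2n-k}\le p\le 2+\frac{2}{k-2}$ is exactly what makes the arithmetic work: the lower bound ensures the $M$-powers and the $L^2$ orthogonality losses are affordable when summing over caps, and the upper bound $p\le 2+\frac{2}{k-2}$ controls the $K^{k-2}$ loss from the number of narrow caps against the rescaling gain.

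\textbf{Main obstacle.} The delicate point is the narrow-case rescaling and the resulting induction-on-scales bookkeeping: one must verify that parabolic rescaling of an $R^{-1}$-neighborhood of $\Ga_j$ restricted to a $K^{-1}$-cap genuinely reproduces a hypersurface of the same class (same $\Phi$-type hypotheses), up to acceptable errors controlled by $R^\beta$ --- this is where the tiny parameter $\beta=\e^{1000}$ and the weight $\om_{B_R}$ earn their keep, since the wave packets have rapidly decaying tails rather than compact support. A secondary subtlety is tracking the right-hand side: \eqref{localsq} has the mixed norm $\|f\|_{L^2(w_{B_R})}^{2/p}\|f\|_\infty^{1-2/p}$, and after rescaling the narrow piece one needs the analogue with $\|f\|_{L^2}$ replaced by the $L^2$ norm of the rescaled function on the rescaled ball; controlling this requires that rescaling interacts well with both the $L^2$ mass (via the Jacobian and orthogonality) and the $L^\infty$ bound (which is scale-invariant), and then checking the exponent arithmetic closes. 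Once the broad estimate Theorem~\ref{thmbroadnorm} is granted, everything else is a careful but routine iteration; the optimization over $k$ to extract the final $p\ge p_n$ is then an elementary calculus exercise comparing $p_n(k)$ with the endpoint $2+\frac{2}{k-2}$.
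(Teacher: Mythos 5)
Your overall plan (broad--narrow decomposition, iterate, feed the broad pieces into Theorem~\ref{thmbroadnorm}, track $K^{O(1)}$ losses over $\sim\log_K R$ steps) is in the right spirit, but it departs from the paper's actual argument in one structural respect and has a genuine gap in the narrow step.

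The structural difference: the paper never performs parabolic rescaling in this lemma and never inducts on $R$. Instead, it keeps the physical scale at $R$ throughout, tiling $\R^n$ by rectangles $B_{K^{-l}R\times R}$ adapted to the current cap direction, and at each iteration invokes Theorem~\ref{thmbroadnorm} with $M=K^l$ directly. This is exactly why Theorem~\ref{thmbroadnorm} was stated for general $M^{-1}$-caps and rescaled balls $\cp_R$: one applies it at every cap scale without ever changing $R$. Your plan of rescaling each $K^{-1}$-cap back to a $1$-cap on $B_{R/K^2}$ and inducting on the scale would require a cap-parametrized version of \eqref{localsq} to close (the statement as written is for the full sphere), and it also forces you to re-derive the $\Phi$-hypotheses and the $j\leftrightarrow t_j$ indexing after rescaling---bookkeeping the paper sidesteps entirely.

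The genuine gap is in how the $O(K^{k-2})$ narrow caps are recombined. You describe this as a ``$K^{k-2}$ loss from the number of narrow caps'' balanced against a ``rescaling gain,'' which suggests a trivial H\"older bound
\begin{equation}
\Big\|\sum_{\tau}\Sq\vf_\tau\Big\|_p^p\lesssim (K^{k-2})^{p-1}\sum_\tau\|\Sq\vf_\tau\|_p^p .
\end{equation}
This loss is too large: following the paper's arithmetic in \eqref{redc5}, the per-step factor then becomes $K^{(k-2)(p-1)}K^{2n-(n-1)p}$, and requiring this to be $\lesssim 1$ gives a strictly larger threshold for $p$ than the claimed $p\geq 2+\frac{4}{2n-k}$ (e.g.\ for $n=4$, $k=3$ it gives $p\geq 7/2$ rather than $p\geq 14/5$). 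The paper instead inserts the square-function $\ell^2 L^p$ decoupling inequality (Lemma~\ref{lemdec} with $m=k-1$), applied slice-by-slice in the $(k-1)$-plane $V_a$, which reduces the narrow loss to $K^{(k-2)(p/2-1)}$ and makes the arithmetic close exactly at $p\geq 2+\frac{4}{2n-k}$. Relatedly, the upper bound $p\leq 2+\frac{2}{k-2}$ is not a balance against rescaling gains as you suggest; it is precisely the admissible range $p\leq 2+\frac{2}{(k-1)-1}$ for the decoupling Lemma~\ref{lemdec} in dimension $m=k-1$. Finally, the iteration terminates not by an induction hypothesis at a smaller $R$ but by the crude estimate Lemma~\ref{narrow} at cap scale $R^{-1/2}$; this endpoint is missing from your proposal.
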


\begin{proof}[Proof of Lemma \ref{reduction}]
We first do a one-step broad-narrow decomposition. Here we need to introduce a new notation. For the set of $K^{-1}$-cap $\Si_1=\{\si_1\}$ that form a finitely overlapping cover of $\ZS^{n-1}$, we want to define the square function whose frequency is restricted to a subcollection of these caps. For a $(k-1)$-plane $V\subset \R^n$, denote the caps that form an angle less than $K^{-1}$ with $V$ by 
\begin{equation}\label{fsi}
    \Si_1(V):=\{ \si_1 \subset \ZS^{n-1} : \si_1\in V \}.
\end{equation}
We define
\begin{equation}\label{setofcap}
    \vf_{\Si_1(V)}(x):= \sum_{\si_1\in\Si_1(V)} \vf_{\si_1},
\end{equation}
so
\begin{equation}
    \Sq\vf_{\Si_1(V)}(x):= \Big( \sum_{j=1}^R \big|\sum_{\si_1\in\Si_1(V)} f_{j,\si_1}(x)\big|^2 \Big)^{1/2}.
\end{equation}
Thus, for each $B_{K^2}$ and any $(k-1)$-planes $V_1\cdots,V_A$, we have
\begin{equation}
    \int_{B_{K^2}}|\Sq \vf|^p\lesssim K^{O(1)}\max_{\si_1\notin V_a, 1\le a\le A}\int_{B_{K^2}}|\Sq\vf_{\si_1}|^p+\sum_{a=1}^A\int_{B_{K^2}}\big|  \Sq\vf_{\Si_1(V_a)} \big|^p.
\end{equation}

Recall the definition of the broad norm in Definition \ref{defbroad} for the case $M=1$. We optimize the choice for $V_1,\cdots, V_A$ to obtain
\begin{equation}\label{redc1}
    \int_{B_{K^2}}|\Sq \vf|^p\lesssim K^{O(1)} \|  \Sq \vf\|_{\BL^p_{k,A}(B_{K^2})}^p+\sum_{a=1}^A\int_{B_{K^2}}\big|  \Sq\vf_{\Si_1(V_a)} \big|^p.
\end{equation}
On the right hand side of \eqref{redc1}, we call the first term broad term and the second term narrow term. In order to deal with the narrow term, we need a decoupling inequality for the square function. 
To state our decoupling lemma, we define $R$ hypersurfaces in $\R^m$, each of which, by an abuse of notation, is also denoted by $\Ga_j$:
\begin{equation}\label{msurface}
    \Ga_j:=\{\xi=(\bar\xi,\xi_m): \xi_m=\Phi_j(\bar\xi)\}.
\end{equation}
Here $\Phi_j:\R^{m-1}\rightarrow \R$ satisfies that $D^2 \Phi_j$ has all eigenvalues lying in $[1/2,2]$.

\begin{lemma}[Decoupling for the square function]\label{lemdec}
Let $\si\subset \ZS^{m-1}$ be a cap of radius $M^{-1}$, and $\Tau_{\sigma}=\{\tau\}$ be a collection of $K^{-1}M^{-1}$-caps that covers $\sigma$. Let $\vg=\{g_1,\cdots,g_R\}$ be any vector-valued function, such that each $g_j:\R^{m}\rightarrow \ZC$ has Fourier support in $N_{K^{-2}M^{-2}}\Ga_j(\si)$  .
Let $B_{MK^2\times M^2K^2}\subset \R^{m}$ be a rectangle of dimensions $MK^2\times\cdots\times MK^2\times M^2K^2$, pointing to the direction $c_\si$. Then for $2 \leq p \leq 2+\frac{2}{m-1}$, we have
\begin{equation}\label{lemdecineq}
    \|\Sq \vg\|_{L^{p}(B_{MK^2\times M^2K^2})}\lesssim_{\e'} (KM)^{\e'}\Big(\sum_{\tau \in \Tau_{\sigma}}\|\Sq \vg_\tau\|^2_{L^{p}(\om_{B_{MK^2\times M^2K^2}})}\Big)^{1/2}.
\end{equation}
\end{lemma}

\begin{remark}
\rm

We have chosen $K=R^{\e^{10000}}$ and note that $M\le R^{1/2}$, so when $\e'$ in \eqref{lemdecineq} is sufficiently small then
$$ (KM)^{\e'}
\le K^{\de/2}. $$
It looks plausible to extend the decoupling inequality to the range $2 \leq p \leq 2(n+1)/(n-1)$ by adapting the argument of Bourgain-Demeter \cite{Bourgain-Demeter}, but we do not pursue it here, since the shorter range $2 \leq p \leq 2n/(n-1)$ is enough for our application.
\end{remark}

The proof of Lemma \ref{lemdec} is included in the Appendix B. As it is done in \cite{Guth-II} Lemma 9.3, we apply Lemma \ref{lemdec} with $M=1$ and $m=k-1$ to each $(k-1)$-dimensional slice of $B_{K^2}$ that is parallel to $V_a$, and then integrate over all the slices. Note the assumption of Lemma \ref{reduction} gives $p\le 2+\frac{2}{k-2}$. We hence obtain
\begin{equation}
    \| \Sq \vf_{\Si_1(V_a)} \|_{L^p(B_{K^2})}\lesssim K^{\de/2} \big(\sum_{\si_1\in\Si_1(V_a)}\|\Sq \vf_{\si_1} \|^2_{L^p(\om_{B_{K^2}})}\big)^{1/2}.
\end{equation}
Note that the number of $\si_1\in \Si_1(V_a)$ is $\lesssim K^{k-2}$. By H\"older's inequality we get
\begin{equation}\label{redc2}
    \int_{B_{K^2}}\big|  \Sq \vf_{\Si_1(V_a)} \big|^p\lesssim K^{\de/2}K^{(k-2)(\frac{p}{2}-1)}\sum_{\si_1\in V_a}\int |\Sq\vf_{\si_1}|^p\om_{B_{K^2}}.
\end{equation}

Plug \eqref{redc2} into \eqref{redc1} and
sum over $B_{K^2}$ so that
\begin{equation}
\nonumber
    \int|\Sq \vf|^p w_{B_R}
    \lesssim K^{O(1)}\|\Sq \vf\|^p_{\BL^p_{k,A}(w_{B_R})}+K^{\de}K^{(k-2)(\frac{p}{2}-1)}\sum_{\si_1\in\Si_1}\int |\Sq\vf_{\si_1}|^p w_{B_R}.
\end{equation}
We apply Theorem \ref{thmbroadnorm} with $M=1$ and $\si=\ZS^{n-1}$ to the first term on the right hand side to obtain
\begin{equation}\label{pfmain}
\begin{split}
    \int|\Sq \vf|^pw_{B_R}&\lesssim K^{O(1)}R^{p \e}R^{\frac{n-1}{2}p-n}
    \|\Sq \vf\|^2_{L^2(\om_{B_{R}})}\|f\|_{L^{\infty}(\R^n)}^{p-2}
    \\&
    +K^{\de}K^{(k-2)(\frac{p}{2}-1)}\sum_{\si_1\in\Si_1}\int |\Sq\vf_{\si_1}|^pw_{B_R}.
\end{split}
\end{equation}

Next, we focus on the second term on the right hand side of \eqref{pfmain}. For each $K^{-1}$-cap $\si_1$, we tile $\R^n=\cup B_{K^{-1}R\times R}$ where each $B_{K^{-1}R\times R}$ is a rectangle of dimensions $K^{-1}R\times \cdots\times K^{-1}R\times R$ whose long side points to the direction $c_{\si_1}$. Note that $$w_{B_R}\lesssim \sum\limits_{B_{K^{-1}R\times R}\subset B_R}{ \om_{B_{K^{-1}R\times R}}}.$$
We repeat the above broad-narrow argument to $\int |\Sq\vf_{\si_1}|^p\om_{B_{K^{-1}R\times R}}$ to obtain
\begin{equation}
\begin{split}
    \int|\Sq \vf_{\si_1}|^p\om_{B_{K^{-1}R\times R}}
    &
    \lesssim K^{O(1)}\|\Sq \vf_{\si_1}\|^p_{\BL^p_{k,A}(\om_{B_{K^{-1}R\times R}})}
    \\&
    +K^{\de}K^{(k-2)(\frac{p}{2}-1)}\sum_{\si_2\subset \si_1}\int |\Sq\vf_{\si_2}|^p\om_{B_{K^{-1}R\times R}}.
\end{split}
\end{equation}
Here $\Si_2=\{\sigma_2\}$ are $K^{-2}$-caps that tile $\ZS^{n-1}$. By summing over the balls $ B_{K^{-1}R\times R}$ and applying Theorem \ref{thmbroadnorm} to the first term on the right hand side above, we obtain
\begin{equation}
\nonumber
\begin{split}
    \int|\Sq \vf_{\si_1}|^p w_{B_R} &\lesssim K^{O(1)}K^{2n-(n-1)p}R^{p\e}R^{\frac{n-1}{2}p-n}
    \|\Sq \vf_{\si_1}\|^2_{L^2(\om_{B_{R}})}\|f\|_{L^{\infty}(\R^n)}^{p-2}
    \\&
    +K^{\de}K^{(k-2)(\frac{p}{2}-1)}\sum_{\si_2\subset \si_1}\int |\Sq\vf_{\si_2}|^p\om_{B_{R}}.
\end{split}
\end{equation}
Plugging into the right hand side of \eqref{pfmain}, we get 
\begin{align*}
    \int&|\Sq \vf|^p\om_{B_{R}}\lesssim K^{O(1)}R^{p \e}R^{\frac{n-1}{2}p-n}
    \|\Sq \vf\|^2_{L^2(\om_{B_{R}})}\|f\|_{L^{\infty}(\R^n)}^{p-2}
    \\&+K^{O(1)}K^{(k-2)(\frac{p}{2}-1)}K^{2n-(n-1)p}R^{p\e}R^{\frac{n-1}{2}p-n}
    \sum_{\sigma_1\in\Si_1}\|\Sq \vf_{\si_1}\|^2_{L^2(\om_{B_{R}})}\|f\|_{L^{\infty}(\R^n)}^{p-2} \\ \nonumber
    &+K^{2\de}K^{2(k-2)(\frac{p}{2}-1)}\sum_{\si_2\in\Si_2}\int |\Sq\vf_{\si_2}|^p\om_{B_{R}}.
\end{align*}
Now we just need to iterate this process for the narrow part. Thus, we obtain
\begin{align}\label{redc5}
    &\int|\Sq \vf|^p\om_{B_{R}}\lesssim K^{m\de}K^{m(k-2)(\frac{p}{2}-1)}\sum_{\si_m\in\Si_m}\int |\Sq\vf_{\si_m}|^p\om_{B_{R}}+ 
    \\ \nonumber
    \sum_{l=0}^{m-1} K^{O(1)}&K^{l(k-2)(\frac{p}{2}-1)}K^{l(2n-(n-1)p)}R^{p\e}R^{\frac{n-1}{2}p-n}
    \sum_{\sigma_l\in\Si_l}\|\Sq \vf_{\si_l}\|^2_{L^2(\om_{B_{R}})}\|f\|_{L^{\infty}(\R^n)}^{p-2}.
\end{align}
Here $m$ is the integer satisfying $K^m\sim R^{1/2}$, each $\Si_l=\{\si_l\}$ is a collection of $K^{-l}$-caps that tile $\ZS^{n-1}$. 

For the second term on the right hand side of \eqref{redc5}, by the assumption that $2+\frac{4}{2n-k} \leq p$ in Lemma \ref{reduction}, we have the factor of $K$ satisfies
\begin{equation}
\nonumber
     K^{O(1)}K^{l(k-2)(\frac{p}{2}-1)}K^{l(2n-(n-1)p)}\le K^{O(1)}\lesssim R^{\de}. 
\end{equation}
As for the first term on the right hand side of \eqref{redc5}, we note by the assumption that $2+\frac{4}{2n-k}\le p$, the factor of $K$ has bound
\begin{equation}
\nonumber
    K^{m\de}K^{m(k-2)(\frac{p}{2}-1)}\sim K^{m\de} R^{\frac{1}{2}(k-2)(\frac{p}{2}-1)}\le R^{\de} R^{\frac{n-1}{2}p-n}.
\end{equation}
We also use the following lemma.
\begin{lemma}[Estimate for narrow part at the final stage]\label{narrow}
Let $\{\si_m\}$ be $R^{-1/2}$-caps that forms a finitely overlapping cover of $\ZS^{n-1}$. Then for any $2 \leq p < \infty$,
\begin{equation}
\sum_{\si_m}\int |\Sq\vf_{\si_m}|^p\om_{B_{R}} \le C_\e R^{p\e}\int  |f|^p\om_{B_{R}}.
\end{equation}
\end{lemma}
\begin{remark}
\rm

The three dimensional version of the above lemma was proved in \cite{gan2021new} Lemma 3.9. The proof for the higher dimensional version is similar.
\end{remark}

We apply Lemma \ref{narrow} to the first term in \eqref{redc5} to get
\begin{equation}
    \int|\Sq \vf|^pw_{B_R}\lesssim_{\varepsilon} R^{\varepsilon}R^{\frac{n-1}{2}p-n}\|f\|_{L^2(w_{B_R})}^2
    \|f\|_{L^{\infty}(\R^n)}^{p-2}.
\end{equation} 
This finishes the proof of Lemma \ref{reduction}.
\end{proof}

Now everything boils down to the proof of \eqref{broadinequality}.
Actually, we will break $B_R$ into smaller pieces that adapt to the size of $\si$. We call these smaller pieces rescaled balls. More precisely, we give the definition:
\begin{definition}[Rescaled balls]
\label{recalsed-ball}
Let $\si$ be an $M^{-1}$-cap as above. For any radius $r>1$, we say a geometric object in $\ZR^{n}$ is a ``rescaled ball" of radius $r$ (or rescaled $r$-ball), if it is a tube of length $r$ and radius $M^{-1}r$, pointing to the direction $c_\si$. We use $\cp(x,r)$ to denote a rescaled $r$ ball centered at $x\in\ZR^n$. We sometimes simply write it as $\cp_r$ when we only care about its radius.
\end{definition}

We further reduce \eqref{broadinequality} to a more local version:
\begin{equation}\tag{$\mathcal{BL}_{k,A}^p$}
    \| \Sq \vf_\si \|_{\BL^p_{k,A}(\cp_R)}^p\lesssim_\e R^{p\e}R^{\frac{n-1}{2}p-n}M^{2n-(n-1)p}
    \|\Sq \vf_\si\|_{L^2(\om_{\cp_{R}})}^2
    \|f\|_{L^{\infty}(\R^n)}^{p-2}.
    \label{broadinequality2}
\end{equation}
Note that \eqref{broadinequality2} implies \eqref{broadinequality} by summing over $\cp(x,R)\subset B_R$.
In the rest of the paper, we focus on the proof of \eqref{broadinequality2}.

From now on our cap $\si$ and its scale $M$ are fixed. In the rest of the paper, we simply write $\vf=\vf_\si$,
assuming $\vf=\{f_1,\cdots,f_R\}$ satisfying $\textup{supp}(\wh f_j)\subset N_{R^{-1}}\Ga_j(\si)$.

\section{Wave packet decomposition}
\label{wave-packet}

In this section, we discuss the wave packet decomposition. Fix a scale $\rho\ge R^\e$. Let $\vg=\{g_1,\cdots,g_R\}$ be a vector-valued function with $\textup{supp}(\wh g_j)\subset N_{R^{\beta}\rho^{-1}}\Ga_j$. We build the wave packet decomposition for $\vg$ at scale $\rho$ as follows.

Recall we have chosen $\rho^{-1/2}$-caps $\{\tau\}$, defined the partition of unity $\{\wh\vp_{j,\tau}\}_\tau$ and the function $g_{j,\tau}=\vp_{j,\tau}*g$ (see \eqref{gtau} and the text there).
So, we can partition each $g_j$ of $\vg$ in the frequency space as
\begin{equation}\label{continue}
    g_j=\sum_\tau g_{j,\tau}.
\end{equation} 

Next, let us partition the physical space. Fix a cap $\tau$. After rotating $c_\tau$, without loss of generality we assume $c_\tau=e_n$. We can choose a partition of unity $\{\zeta_v\}_{v\in\Z^n}$ such that for any $v\in\Z^n$, $\zeta_v(x)=\zeta(x-v)$, where $\zeta:\ZR^n\to\ZR^+$ is a smooth function whose Fourier transform is supported in the unit ball in the frequency space, and $\zeta$ decays rapidly outside the unit ball in the physical space.

To handle the rapidly decaying tail, we modify the partition of unity $\{\zeta_v\}$ a little bit. Recall the tiny number $\be=\e^{1000}$. For each point $u\in R^\be \Z^n$, we define 
\begin{equation}
    \eta_u(x)=\sum_{v\in E(u)}\zeta_v(x),\  \textup{for~} E_u:=\{v:-R^\be/2<v_l-u_l\leq R^\be/2,~l=1,\cdots,n\}.
\end{equation}
Hence $\{\eta_u\}_{u\in R^\be \Z^n}$ also forms a smooth partition of unity. The advantage of using this new partition of unity is that the supports of functions $\{\eta_u\}$ are essentially disjoint in the following sense: if $u,u'\in R^\be \Z^n$ satisfy $|u-u'|\ge 4^n R^\be$, then $\min\limits_x (|\eta_u(x)|,|\eta_{u'}(x)|)=\rap(R)$. This means that each $\eta_u$ essentially correlates with a bounded number (at most $4^n$) of other $\eta_{u'}$.

For each $u\in R^\be\Z^n$, we consider the rescaled function 
$$\wt\eta_u(x):=\eta_u(\rho^{-1/2}x_1,\cdots,\rho^{-1/2} x_{n-1},R^\be\rho^{-1}x_n).$$ 
From the discussion above, we see $\wt{\eta}_u$ is essentially supported in a rectangle of dimensions $R^\be\rho^{1/2}\times\cdots\times R^\be\rho^{1/2}\times \rho$. We also call it a $\rho$-tube. This tube is defined by
\begin{equation}
    T_u:=\{x\in\ZR^n,|x_l-u_l|\leq R^\be\rho^{1/2},~l=1,\cdots,n-1;~|x_n-u_n|\leq \rho\}.
\end{equation}
For any $x\in\ZR^n\setminus 2T_u$, we have
\begin{equation}\label{rapiddec}
    \wt\eta_u(x)\lesssim \rap(R) \Big(1+\frac{|x_1-u_1|}{\rho^{1/2}}+\cdots+\frac{|x_{n-1}-u_{n-1}|}{\rho^{1/2}}+\frac{|x_n-u_n|}{\rho R^{-\be}}\Big)^{-(n+1)}.
\end{equation} 
We also have that the Fourier transform of $\wt\eta_u$ is supported in a $\rho^{-1/2}\times\cdots\times \rho^{-1/2}\times R^\be\rho^{-1}$-slab. This is the reason that we assume the Fourier support of the function is in $R^\be\rho^{-1}$-neighborhood of the surface.

Note that we assumed $c_\tau=e_n$. For general $\tau$, we also defined the partition of unity $\{\wt\eta_u\}$ and the tubes $\{T_u\}$, where the coreline of each tube $T_u$ is parallel to the direction $c_\tau$. We denote by 
\begin{equation}
\label{scale-R-tube-set}
    \ZT_\tau[\rho]:=\{T_u\}
\end{equation}
the collection of these tubes. In order to reveal the essential support of each $\wt\eta_u$, we use the notation 
\begin{equation}
\nonumber
    \Id_{T_u}^\ast(x):=\wt\eta_u(x).
\end{equation}

Let us continue working on the right hand side of \eqref{continue}. 
We further partition each $g_{j,\tau}$ into
$$ g_{j,\tau}=\sum_{T\in \T_\tau[\rho]}g_{j,\tau}\Id^*_{T}. $$
From now on, we just set $g_{j,T}:=g_{j,\tau}\Id_{T}^\ast$. We obtain the wave packet decomposition:

\begin{equation}
\label{wave-packet-1}
    g_j=\sum_{\tau}\sum_{T\in\ZT_\tau[\rho]}g_{j,T},\ \ \ \textup{where~}g_{j,T}:=g_{j,\tau}\Id_{T}^\ast\textup{~for~}T\in\T_\tau[\rho].
\end{equation}
If $T\in\T_\tau[\rho]$, then the Fourier transform of $\Id^*_{T}$ is supported in a slab centered at the origin which has dimensions $\rho^{-1/2}\times\cdots\times \rho^{-1/2}\times \rho^{-1}$,
and the normal direction of the this slab is $c_\tau$. So, $\wh {g_{j,T}}$ is supported in $N_{R^{\beta} \rho^{-1}}\Ga_j(2\tau)$

This is the scale $\rho$ wave packet decomposition. Each $g_{j,T}$ is a single wave packet. Since $\tau$ and $T$ are both independent to the factor $j$, we just write $\vg_\tau:=\{g_{1,\tau},\ldots,g_{R,\tau}\}$ and $\vg_{T}=\vg_{\tau}\Id^\ast_{T}:=\{g_{1,T_\tau},\ldots,g_{R,T_\tau}\}$ (when $T\in\T_\tau[\rho]$). We also call $\vg_{T}$ a single wave packet.

\smallskip
We have the following $L^2$-orthogonality for our wave packets.
\begin{lemma}[$L^2$-orthogonality]
\label{l2-orthogonality-big}
For an arbitrary collection $\ZT'\subset\cup_\tau\ZT_\tau[\rho]$, we have
\begin{equation}
    \Big\|\sum_{T\in\ZT'}g_{j,T}\Big\|_2^2\lesssim \sum_{T\in\ZT'}\|g_{j,T}\|_2^2\lesssim \|g_j\|_2^2.
\end{equation}
This estimate is uniform for all $1\leq j\leq R$.
\end{lemma}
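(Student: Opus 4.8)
The statement is really two independent inequalities, and I would prove them in the order written. \textbf{Step 1 (the Bessel-type bound $\sum_{T\in\ZT'}\|g_{j,T}\|_2^2\lesssim\|g_j\|_2^2$).} First I would throw away $\ZT'$ and sum over all of $\cup_\tau\ZT_\tau[\rho]$. Fix a cap $\tau$. The spatial weights $\{\Id_T^\ast\}_{T\in\ZT_\tau[\rho]}=\{\wt\eta_u\}_u$ form a \emph{non-negative} partition of unity, so $\sum_{T\in\ZT_\tau[\rho]}(\Id_T^\ast)^2\le\big(\sum_{T\in\ZT_\tau[\rho]}\Id_T^\ast\big)^2\equiv1$ pointwise, and by Tonelli $\sum_{T\in\ZT_\tau[\rho]}\|g_{j,T}\|_2^2=\int|g_{j,\tau}|^2\sum_T(\Id_T^\ast)^2\le\|g_{j,\tau}\|_2^2$. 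Next, by Plancherel, $\wh{g_{j,\tau}}=\wh\vp_{j,\tau}\,\wh g_j$, and since $\{\wh\vp_{j,\tau}\}_\tau$ is a bounded, boundedly overlapping partition of unity one has $\sum_\tau|\wh\vp_{j,\tau}|^2\lesssim1$ pointwise, hence $\sum_\tau\|g_{j,\tau}\|_2^2=\int\big(\sum_\tau|\wh\vp_{j,\tau}|^2\big)|\wh g_j|^2\lesssim\|g_j\|_2^2$. Combining these two facts gives Step 1; every constant that appeared is absolute, and the $j$-dependence of the cutoffs enters only through their uniformly bounded derivatives, which is why the bound is uniform in $j$.

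\textbf{Step 2 (almost orthogonality $\|\sum_{T\in\ZT'}g_{j,T}\|_2^2\lesssim\sum_{T\in\ZT'}\|g_{j,T}\|_2^2$).} Here I would group the tubes by their cap: set $G_{j,\tau}:=\sum_{T\in\ZT'\cap\ZT_\tau[\rho]}g_{j,T}=g_{j,\tau}\cdot\chi_\tau$ with $\chi_\tau:=\sum_{T\in\ZT'\cap\ZT_\tau[\rho]}\Id_T^\ast\in[0,1]$, so the left side is $\|\sum_\tau G_{j,\tau}\|_2^2$. Two layers of orthogonality enter. \emph{Across caps:} $\wh{G_{j,\tau}}$ is supported in $N_{R^\beta\rho^{-1}}\Ga_j(2\tau)$, and by Lemma \ref{Gauss-map-spt} these slabs are boundedly overlapping, so writing $\tau\sim\tau'$ when they intersect (each $\tau$ having $O(1)$ partners, uniformly in $j$), Plancherel annihilates all cross terms with $\tau\not\sim\tau'$, and Cauchy--Schwarz over the surviving $O(1)$ partners gives $\|\sum_\tau G_{j,\tau}\|_2^2\lesssim\sum_\tau\|G_{j,\tau}\|_2^2$. \emph{Within a cap:} the bounded overlap of $\{\Id_T^\ast\}_{T\in\ZT_\tau[\rho]}$ (each point meets $O(1)$ of the tubes $T_u$ up to the rapidly decaying tails of \eqref{rapiddec}) gives the pointwise bound $\chi_\tau^2\lesssim\sum_{T\in\ZT'\cap\ZT_\tau[\rho]}(\Id_T^\ast)^2+\rap(R)$, hence $\|G_{j,\tau}\|_2^2\lesssim\sum_{T\in\ZT'\cap\ZT_\tau[\rho]}\|g_{j,T}\|_2^2+\rap(R)\|g_j\|_2^2$. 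Summing over the $O(\rho^{(n-1)/2})$ caps proves Step 2, up to an additive $\rap(R)\|g_j\|_2^2$ which is harmless (one can absorb it into the right side, or simply retain it, since only the composite consequence $\|\sum_{T\in\ZT'}g_{j,T}\|_2^2\lesssim\|g_j\|_2^2$ is used later).

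\textbf{Where the work is.} Nothing here is deep, and I expect the only step needing genuine care to be quantifying the ``bounded overlap'' of the spatial cutoffs in Step 2: since $\{\wt\eta_u\}$ is only \emph{essentially} supported on the tubes $T_u$, the pointwise inequality for $\chi_\tau^2$ must be derived from \eqref{rapiddec} rather than from honest disjointness -- one checks that a tail $(1+\dist(\cdot,T_u)/\cdots)^{-(n+1)}$ summed over all tubes in $\ZT_\tau[\rho]$ costs only $O(1)$ (this uses $n+1>n$), and that after the sum over the $O(\rho^{(n-1)/2})$ caps the accumulated tail is still $\rap(R)\|g_j\|_2^2$. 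The frequency side of Step 2, and all of Step 1, are tail-free because $\{\wh\vp_{j,\tau}\}$ is a genuine compactly supported partition of unity. Throughout I would emphasize that the overlap, support, and decay constants are inherited from the $j$-uniform bounds on $\eta_j,\psi_j,\phi$ and on the Gauss maps $G_j$ (Lemmas \ref{Gauss-map-lem} and \ref{Gauss-map-spt}), so the final estimate is uniform in $1\le j\le R$ as claimed.
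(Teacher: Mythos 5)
The paper states Lemma \ref{l2-orthogonality-big} without proof, treating it as a standard consequence of the wave-packet construction, so there is no in-text argument to compare against. Your proposal is the standard argument and is correct: frequency-side almost-orthogonality across the caps $\tau$ (using bounded overlap of the slabs $N_{R^\beta\rho^{-1}}\Ga_j(2\tau)$ via Lemma \ref{Gauss-map-spt}), physical-side bounded overlap of $\{\Id_T^\ast\}$ within a cap, and the elementary Bessel bound $\sum_T(\Id_T^\ast)^2\le(\sum_T\Id_T^\ast)^2\le 1$ for the second inequality. You are also right that the within-cap step necessarily produces an additive $\rap(R)\|g_j\|_2^2$ error -- because $\sum_T\Id_T^\ast$ controls $\sum_T(\Id_T^\ast)^2$ only up to the rapidly decaying tails of \eqref{rapiddec}, not honestly pointwise -- and that this is harmless: it is the standing convention of the paper (cf.\ \eqref{wave-packet-1}, \eqref{wpt-algorithm}, Lemma \ref{compare}) that wave-packet identities hold only modulo $\rap(R)\|g\|_2$, and only the composite bound $\|\sum_{T\in\ZT'}g_{j,T}\|_2^2\lesssim\|g_j\|_2^2$ is ever invoked downstream. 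The claimed $j$-uniformity follows, as you say, because the overlap and decay constants are controlled solely by the $j$-uniform bounds on $\eta_j,\psi_j,\phi$ and on the Gauss maps from Lemmas \ref{Gauss-map-lem}--\ref{Gauss-map-spt}.
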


We also have the following lemma comparing wave packets at different scales. The argument is standard, so we omit the details.

\begin{definition}\label{tuberelation}
For a tube $T$ of dimensions $\rho\times\cdots\times\rho\times r$ with $\rho\le r$, we define $\om(T)\subset \ZS^{n-1}$ to be a cap of radius $\rho/r$ whose center is the direction of $T$. We call $\om(T)$ the direction cap of $T$. For two tubes $T_i$ of dimensions $\rho_i\times\cdots\times\rho_i\times r_i$ with $\rho_i\le r_i$ $(i=1,2)$. We say $T_1<T_2$ if 
$$ T_1\subset 50 T_2\ \ \textup{and}\ \ \om(T_2)\subset 10 \om(T_1). $$
\end{definition}

\begin{lemma}\label{compare}
Fix two scales $R^\e \le \rho\le r$. Let $T_1$ be a $\rho$-tube and $T_2$ be a $r$-tube such that $T_1\not<T_2$. Then for a wave packet $g_{T_2}$, we have
$$ (g_{T_2})_{T_1}=\rap(R)\|g\|_2. $$
Similarly, for a wave packet $g_{T_1}$, we also have
$$ (g_{T_1})_{T_2}=\rap(R)\|g\|_2. $$
\end{lemma}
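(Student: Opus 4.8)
The plan is to unwind the two wave-packet operations in $(g_{T_2})_{T_1}$ and to split according to the two ways that $T_1<T_2$ can fail: either the frequency caps of the two wave packets are separated, in which case a Fourier-support argument applies, or they are compatible but $T_1\not\subset 50T_2$, in which case a physical-space argument applies. I would first treat $g_{T_2}$ in detail and then observe that the statement for $g_{T_1}$ follows by the same argument with the two scales exchanged.

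Fix the hypersurface $\Ga=\Ga_j$ to which $\wh g$ is attached, and write $\tau_1$ for the $\rho^{-1/2}$-cap with $T_1\in\T_{\tau_1}[\rho]$ and $\tau_2$ for the $r^{-1/2}$-cap with $T_2\in\T_{\tau_2}[r]$, so that $c_{\tau_i}$ is the direction of $T_i$ and $\om(T_i)$ is the cap concentric with $\tau_i$ of radius $R^\be\rho^{-1/2}$ (respectively $R^\be r^{-1/2}$). By the wave packet decomposition of Section \ref{wave-packet}, $(g_{T_2})_{T_1}=(\vp_{j,\tau_1}*g_{T_2})\,\Id^\ast_{T_1}$, where $\wh{g_{T_2}}$ is supported in $N_{R^\be r^{-1}}\Ga(2\tau_2)$, $\wh\vp_{j,\tau_1}$ is supported in $N_{CR^\be\rho^{-1}}\Ga(\tau_1)$, the function $g_{T_2}$ is rapidly small off $2T_2$ by \eqref{rapiddec} (applied at scale $r$), $\Id^\ast_{T_1}$ is rapidly small off $2T_1$, and one has the trivial bound $\|\vp_{j,\tau_1}*g_{T_2}\|_{L^\infty}\lesssim\|g\|_2$. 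Since $T_1\not<T_2$, either $\om(T_2)\not\subset 10\om(T_1)$ or $T_1\not\subset 50T_2$.

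In the first case, the concentricity of $\om(T_1)$ with $\tau_1$ and its radius $R^\be\rho^{-1/2}$ force $|c_{\tau_1}-c_{\tau_2}|\gtrsim R^\be\rho^{-1/2}$; by Lemmas \ref{Gauss-map-lem} and \ref{Gauss-map-spt} the caps $\Ga(\tau_1)$ and $\Ga(2\tau_2)$ are then separated by $\gtrsim R^\be\rho^{-1/2}$ on $\Ga$, far more than the two neighborhood thicknesses $CR^\be\rho^{-1}$ and $R^\be r^{-1}$ (here one uses $\rho\ge R^\e$ and $\be\ll\e$). Hence $\wh\vp_{j,\tau_1}$ and $\wh{g_{T_2}}$ have disjoint supports apart from the Schwartz tails of the cutoffs, so $\vp_{j,\tau_1}*g_{T_2}=\rap(R)\|g\|_2$, and therefore $(g_{T_2})_{T_1}=\rap(R)\|g\|_2$. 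In the second case we may assume $\om(T_2)\subset 10\om(T_1)$ (the alternative is covered above), so $\angle(c_{\tau_1},c_{\tau_2})\le 10R^\be\rho^{-1/2}$. The kernel of $\vp_{j,\tau_1}$ is rapidly concentrated on a box of dimensions $\lesssim\rho^{1/2}\times\cdots\times\rho^{1/2}\times R^{-\be}\rho$ in the direction $c_{\tau_1}$, and since $\rho\le r$ and the two directions are this close, the Minkowski sum of $2T_2$ with that box lies inside $3T_2$; hence $\vp_{j,\tau_1}*g_{T_2}$ is rapidly small off $3T_2$. Using this together with $\|\vp_{j,\tau_1}*g_{T_2}\|_{L^\infty}\lesssim\|g\|_2$ and the rapid smallness of $\Id^\ast_{T_1}$ off $2T_1$, it remains only to verify that $3T_2\cap 2T_1=\emptyset$. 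If some point lay in both, an elementary computation using only $\rho\le r$ and $\angle(c_{\tau_1},c_{\tau_2})\lesssim R^\be\rho^{-1/2}$ would place $T_1$ inside a bounded dilate $CT_2$ with $C\le 50$, contradicting $T_1\not\subset 50T_2$. Thus $(g_{T_2})_{T_1}=\rap(R)\|g\|_2$ in this case as well, with all bounds uniform in $j$. The claim for $g_{T_1}$ follows by running the same two cases with $\rho$ and $r$ exchanged: the frequency estimate is symmetric, and in the spatial case one convolves $g_{T_1}$ (rapidly small off $2T_1$) against $\vp_{j,\tau_2}$, whose kernel is concentrated on an $R^{-\be}r$-long box in direction $c_{\tau_2}$, and argues exactly as above that the resulting function cannot overlap $2T_2$ without forcing $T_1\subset 50T_2$.

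The only non-routine point is the elementary geometry used to close the spatial case: two tubes whose axes are parallel up to angle $\lesssim R^\be\rho^{-1/2}$, whose transverse radii and lengths coincide up to the fixed power $R^\be$ and with the length of $T_1$ at most that of $T_2$, cannot meet a moderate dilate of one another unless already $T_1\subset 50T_2$; the specific constants $50$ and $10$ in Definition \ref{tuberelation} are chosen so that this holds. Everything else is routine bookkeeping with Fourier supports and with the rapidly decaying tails furnished by \eqref{rapiddec} and the smooth cutoffs of Section \ref{wave-packet}.
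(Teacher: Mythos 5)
Correct. The paper actually omits the proof of Lemma~\ref{compare} entirely, stating only that ``the argument is standard,'' so there is no in-text proof to compare against; your argument is exactly the standard one meant by that remark. The decomposition $(g_{T_2})_{T_1}=(\vp_{j,\tau_1}\ast g_{T_2})\,\Id^\ast_{T_1}$, the case split according to whether $T_1\not<T_2$ fails on the direction-cap condition or on the containment condition, the Fourier-support separation argument in the first case, and the physical box-convolution argument with the dual-box Minkowski sum in the second case are all as intended, and the constants $10$ and $50$ in Definition~\ref{tuberelation} together with $\rho\le r$, $\rho\ge R^\e$, and $\be\ll\e$ do close the geometric containment as you sketch ($3T_2\cap 2T_1\neq\emptyset$ with $\angle(c_{\tau_1},c_{\tau_2})\lesssim R^\be\rho^{-1/2}$ forces $T_1\subset CT_2$ with $C<50$). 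The symmetric treatment of $(g_{T_1})_{T_2}$, where the large dual box of $\vp_{j,\tau_2}$ is absorbed back into $3T_2$ before reducing to the same intersection picture, is likewise correct.
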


\section{Modified polynomial partitioning}
\label{modified-poly}

Starting in this section, we set $\de=\e^2$, $\dc=\e^6\ll\de$ and $d=d(\e)$ a sufficiently large constant depending on $\e$. The main result of this section is Lemma \ref{onestep}.

\subsection{Polynomial partitioning}

\begin{definition}
Suppose $Q_1,\ldots,Q_k$ are polynomials in $\ZR^n$. We say $Z(Q_1,\ldots,Q_k)$ is a transverse complete intersection if for any $x\in Z(Q_1,\ldots,Q_k)$, the vectors $\nabla Q_1(x),\ldots,\nabla Q_k(x)$ are linearly independent.
\end{definition}

We need a rescaled version of polynomial partitioning. The original one was proved in \cite{Guth-II} (see also in \cite{hickman2018improved}). The following one is just by rescaling.
\begin{proposition}
\label{final-partition}
Let $\rho>0$. Suppose that $g\in L^1(\R^n)$ is a non-negative function supported in $\cp_\rho\cap N_{\rho^{1/2}R^{\dc}}\bZ$, where $\cp_\rho$ is a rescaled $\rho$-ball and $\bZ$ is an $m$-dimensional transverse complete intersection of degree at most $d$. Then there exist: 
\begin{enumerate}
    \item An $(m-1)$-dimensional transverse complete intersection $\bY$ of degree $O(d)$.
    \item $\sim d^m$ many disjoint cells $O$ each of which lies in a rescaled $\rho/2$-ball. Also, any $\rho$-tube intersect at most $d+1$ cells.\label{tubeintersect} 
\end{enumerate}

\medskip

\noindent We also have:
\begin{equation}\label{partition1}
    \int_{\cp_\rho\cap N_{\rho^{1/2}R^{\dc}}\bZ} g\lesssim \sum_O \int_O g  + \int_{\cp_\rho\cap N_{\rho^{1/2}R^{\dc}}\bY}g,
\end{equation}
and 
\begin{equation}\label{partition2}
    \int_O g\lesssim d^{-m}\int_{\cp_\rho\cap N_{\rho^{1/2}R^{\dc}}\bZ} g.
\end{equation}

\end{proposition}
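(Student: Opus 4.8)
The plan is to deduce this from the unrescaled polynomial partitioning lemma of Guth \cite{Guth-II} (in the refined form of Hickman--Rogers \cite{hickman2018improved}) by applying the anisotropic linear map that normalises the cap $\si$. After a rotation we may assume $c_\si=e_n$, so that $\cp_\rho$ is comparable to the ellipsoid $\{x=(x',x_n):|x'|\le M^{-1}\rho,\ |x_n|\le\rho\}$. Let $S\colon\ZR^n\to\ZR^n$ be the linear map $S(x',x_n)=(Mx',x_n)$, so that $S(\cp_\rho)$ is comparable to a round ball $B^n(0,C\rho)$; put $\tilde g:=g\circ S^{-1}$ and $\tilde\bZ:=S(\bZ)$. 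The first point to check is that the algebraic hypotheses survive: if $\bZ=Z(Q_1,\dots,Q_{n-m})$ then $\tilde\bZ=Z(Q_1\circ S^{-1},\dots,Q_{n-m}\circ S^{-1})$, composition with the invertible linear map $S^{-1}$ preserves degrees, and the gradients at a point get transformed by $(S^{-1})^{\mathsf T}$, hence remain linearly independent; so $\tilde\bZ$ is again an $m$-dimensional transverse complete intersection of degree at most $d$.

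Next I would transfer the metric data and invoke the cited lemma. Since $S$ dilates lengths by a factor $M$ in the $n-1$ short directions of $\cp_\rho$ and fixes the long direction, $S\big(N_{\rho^{1/2}R^{\dc}}\bZ\big)\subset N_{M\rho^{1/2}R^{\dc}}\tilde\bZ$, so $\supp\tilde g\subset B^n(0,C\rho)\cap N_{M\rho^{1/2}R^{\dc}}\tilde\bZ$. Using the relations among $M,\rho,R^\be,R^{\dc}$ in force whenever the proposition is invoked in the iteration (in particular $M\le R^{1/2}$ and $\rho\gtrsim M^2$ up to $R^{\dc}$-powers, so that $M\rho^{1/2}R^{\dc}\ll\rho$), this meets the hypothesis of \cite{Guth-II}, \cite{hickman2018improved}. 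Applying that lemma to $(\tilde g,\tilde\bZ)$ on $B^n(0,C\rho)$ yields an $(m-1)$-dimensional transverse complete intersection $\tilde\bY$ of degree $O(d)$ and $\sim d^m$ disjoint cells $O'$, each lying in a $\rho/2$-ball, such that any tube of radius $\lesssim M\rho^{1/2}R^{\dc}$ meets at most $d+1$ of them, together with the integral inequalities \eqref{partition1} and \eqref{partition2} in the straightened coordinates.

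Finally I would pull back by $S^{-1}$: set $\bY:=S^{-1}(\tilde\bY)$ and $O:=S^{-1}(O')$. The same linear-algebra argument shows $\bY$ is an $(m-1)$-dimensional transverse complete intersection of degree $O(d)$; each $O$ lies in a rescaled $\rho/2$-ball since $S^{-1}$ carries $\rho/2$-balls into sets comparable to rescaled $\rho/2$-balls; and a $\rho$-tube pointing within $O(M^{-1})$ of $c_\si$ is mapped by $S$ to a tube of radius $\sim MR^\be\rho^{1/2}\ll M\rho^{1/2}R^{\dc}$ (using $\be\ll\dc$), hence meets at most $d+1$ of the $O'$, i.e.\ at most $d+1$ of the $O$. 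The inequalities \eqref{partition1}, \eqref{partition2} are invariant under $x\mapsto Sx$ up to the Jacobian $M^{n-1}$, which cancels on both sides. The one place requiring genuine care is the middle step: verifying that after the anisotropic dilation the four scales $M\rho^{1/2}R^{\dc}$ (neighbourhood width), $MR^\be\rho^{1/2}$ (tube radius), $\rho/2$ (cell radius) and $\rho$ (ambient radius) still stand in the relations demanded by the unrescaled partitioning lemma; once the admissible range of $\rho$ relative to $M$ is pinned down, everything else is a formal consequence of affine invariance.
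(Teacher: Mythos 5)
The difficulty you acknowledge at the end — checking that the four scales still stand in the right relation after the dilation — is exactly where the argument breaks, and it does break. The map $S(x',x_n)=(Mx',x_n)$ treats the two relevant scales differently in an incompatible way. The $\rho$-tubes (radius $R^\be\rho^{1/2}$, pointing within $O(M^{-1})$ of $e_n$) become tubes of radius $\sim MR^\be\rho^{1/2}$ after $S$; to have these cross $\le d+1$ cells you are forced to run the unrescaled partitioning lemma with thickness parameter $w\ge MR^\be\rho^{1/2}$, and since the support $S(N_{\rho^{1/2}R^{\dc}}\bZ)$ spreads out to $N_{M\rho^{1/2}R^{\dc}}\tilde\bZ$, the natural and only admissible choice is $w\sim M\rho^{1/2}R^{\dc}$. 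But then the algebraic wall you obtain is $N_{M\rho^{1/2}R^{\dc}}\tilde\bY$; pulling back, since $\|S^{-1}\|_{\mathrm{op}}=1$ and $S^{-1}$ acts as the identity in the $e_n$-direction, one only has $N_{\rho^{1/2}R^{\dc}}\bY\subset S^{-1}\bigl(N_{M\rho^{1/2}R^{\dc}}\tilde\bY\bigr)\subset N_{M\rho^{1/2}R^{\dc}}\bY$. So \eqref{partition1} comes out with a wall that is a factor $M$ too thick. This is not an absorbable constant: $M$ can be as large as $\rho^{1/2}$ in the iteration ($\rho$ is allowed to run down to $\sim M^2R^{\e/10n}$), so the wall becomes comparable to $\cp_\rho$ itself, and in any case the rest of the paper (Lemma \ref{onestep}, the transverse equidistribution, the tube refinements) is calibrated to a wall of thickness precisely $\rho^{1/2}R^{\dc}$. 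Put differently, the dilemma is $MR^\be\rho^{1/2}\le w\le\rho^{1/2}R^{\dc}$, i.e.\ $M\le R^{\dc-\be}$, which fails for most of the relevant range of $M$.

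The underlying point is that the anisotropic linear map only normalizes the shape of $\cp_\rho$; it does not preserve the relationship between tube radii, the variety neighbourhood width, and the ambient radius, because these live at parabolically related rather than linearly related scales. One repair that avoids the issue entirely: do not rescale at all. Apply the unrescaled polynomial partitioning from \cite{Guth-II},\cite{hickman2018improved} directly to $g$ on the set $\cp_\rho\cap N_{\rho^{1/2}R^{\dc}}\bZ$ with the genuine width $\rho^{1/2}R^{\dc}$ (the lemma's proof does not care about the shape of the ambient set, only that $g$ is supported in $N_{\rho^{1/2}R^{\dc}}\bZ$), and then multiply the partitioning polynomial by $O(1)$ extra linear factors whose zero sets bisect $\cp_\rho$ in each of its $n$ coordinate directions. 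This raises the degree by $O(1)$, forces each resulting cell to sit inside one of the $2^{n}$ octants of $\cp_\rho$, each of which is a rescaled $\rho/2$-ball, and leaves the wall thickness at $\rho^{1/2}R^{\dc}$ and the $d+1$ tube-crossing bound intact.
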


\subsection{Transverse equidistribution}
In this subsection, we only state the transverse equidistribution estimate. As this property already appeared in many references (for example, Section 6 of \cite{Guth-II}), we postpone the discussion to the Appendix A.

\medskip

We first give the definition of what it means for a tube $T$ to be tangent to $\bZ$.
\begin{definition}\label{tangenttube}
Let $\rho>R^\e$, $T$ be a $\rho$-tube, $\cp_\rho$ be a rescaled $\rho$-ball and $\bZ$ be a transverse complete intersection. We say $T$ is ``$\rho^{-1/2}$-tangent" to $\bZ$ in $\cp_\rho$ if the following two conditions are satisfied:
\begin{enumerate}
    \item [$\bullet$]$T\subset N_{\rho^{1/2}R^{\dc}}\bZ \cap \cp_\rho$, 
    \item [$\bullet$]If $z$ is any non-singular point of $\bZ$ lying in $10 \cp_\rho\cap 10 T$, then 
    \begin{equation}
    \label{angle-condition}
        |\angle(v(T),T_z\bZ)|\leq \rho^{-1/2}R^{\dc}.
    \end{equation}
\end{enumerate}
Here $v(T)$ is the direction of $T$, and $T_z\bZ$ is the tangent space of $\bZ$ at $z$.

\medskip

We denote the collection of these tubes by $\T_{\cp_\rho}(\bZ)$ or simply $\T_{\cp_\rho}$ when $\bZ$ is clear.
\end{definition}

\begin{remark}
\rm

Our notation is somewhat different from that in \cite{Guth-II}. In \cite{Guth-II}, Guth use different $\de_m$ for different $m$, which is also the dimension of $\bZ$, and he defined the $``\rho^{-1/2+\de_m}$-tangency" with the right hand side of \eqref{angle-condition} replaced by $\rho^{-1/2+\de_m}$. However, the proof still works if we use the same $\dc$ for all dimension $m$ and define the angle condition as in \eqref{angle-condition}.
\end{remark}

We also define what it means for a function to be concentrated on wave packets from a tube set $\T$.
\begin{definition}
Suppose that $\ZT$ is a collection of $r$-tubes. For a vector-valued function $\vg$, we say $\vg$ is concentrated on wave packets from $\ZT$ if each component $g_j$ of $\vg$ has the wave packet decomposition
\begin{equation}
    g_j=\sum_{T\in\ZT}(g_{j})_T+\rap(r)\|g_j\|_2.
\end{equation}
\end{definition}

Now we discuss our setting. Fix $M^2 \le r\le \rho$. Let $\cp_r\subset \cp_\rho$ be two rescaled balls of radius $r$ and $\rho$. Let $\bZ$ be an $m$-dimensional transverse complete intersection of degree $O(d)$. We use the notation $\T_{\cp_\rho}$($=\T_{\cp_\rho}(\bZ)$) as is Definition \ref{tangenttube}. The transverse equidistribution estimate is as follows:
\begin{proposition}[Transverse equidistribution estimate]
\label{TE}
Assume $\rho\geq r\geq \rho^{1/2}M$. Suppose $\vg=\{g_1,\cdots,g_R\}$ satisfies $\textup{supp}(\wh g_j)\subset N_{R^{\beta}\rho^{-1}}\Ga_j(\si)$ for each $j$, and $g_j$ is concentrated on wave packets from $\ZT_{\cp_\rho}$, then
\begin{equation}
\label{TE1}
    \int_{\cp_{r}\cap N_{r^{1/2}R^{\dc}}\bZ}|\Sq \vg|^2\lesssim R^{O(\dc)} \big( \frac{\rho}{r} \big)^{-\frac{n-m}{2}-1} \int_{\R^n} |\Sq \vg|^2+\rap(r)\|\Sq\vg\|_2^2.
\end{equation}
\end{proposition}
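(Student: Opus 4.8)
The plan is to reduce the estimate to a single cap $\tau$ and a single tube $T \in \T_{\cp_\rho}$, prove the transverse equidistribution inequality there, and then sum back using $L^2$-orthogonality (Lemma \ref{l2-orthogonality-big}). First I would fix $j$ and a $\rho^{-1/2}$-cap $\tau$, so that $g_{j,\tau}$ is concentrated on wave packets $T \in \T_\tau[\rho] \cap \T_{\cp_\rho}$. By the tangency hypothesis, every such $T$ satisfies $|\angle(v(T), T_z\bZ)| \le \rho^{-1/2}R^{\dc}$ for nonsingular $z \in 10\cp_\rho \cap 10T$; together with $\supp(\wh{g_{j,\tau}}) \subset N_{R^\beta\rho^{-1}}\Ga_j(2\tau)$, this means $\wh{g_{j,\tau}}$ lives in a small slab that, after accounting for the tangency, is essentially contained in an $R^{O(\dc)}(\rho^{-1}, \ldots, \rho^{-1}, \rho^{-1/2}, \ldots, \rho^{-1/2})$-box whose thin directions are transverse to $T_z\bZ$ — roughly $n-m$ thin directions of width $\sim\rho^{-1}$ (or a bit larger) complementary to the $m$-dimensional tangent plane. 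The point is that on the piece $\cp_r \cap N_{r^{1/2}R^{\dc}}\bZ$, the function $\Sq\vg$ is being tested against a neighborhood of a variety that is much thinner, transversally, than the natural uncertainty scale at radius $r$, and this mismatch gives the gain $(\rho/r)^{-(n-m)/2 - 1}$.

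Concretely, the key step is a local $L^2$ mass estimate on a single $r$-ball $B \subset \cp_r$: for each $T \in \T_{\cp_\rho}$, one shows
\begin{equation}
\nonumber
  \int_{B \cap N_{r^{1/2}R^{\dc}}\bZ} |g_{j,T}|^2 \lesssim R^{O(\dc)}\Big(\frac{\rho}{r}\Big)^{-\frac{n-m}{2}} \int_{B} |g_{j,T}|^2 + \rap(r)\|g_j\|_2^2.
\end{equation}
This follows by straightening $\bZ$ to its tangent plane on the scale-$r$ ball (the variety is $C^2$-close to a plane on that scale after the $R^{\dc}$-dilation), passing to the Fourier side, and using that $\wh{g_{j,T}}$ is supported in a slab whose transverse-to-$T_z\bZ$ extent is $\sim\rho^{-1}$ while the $r$-ball only resolves scale $r^{-1/2}$; the ratio of the transverse widths $(\rho^{-1} / r^{-1/2})^{n-m}$ under a standard locally-constant / Plancherel argument produces $(r^{1/2}\rho^{-1})^{n-m} \cdot r^{(n-m)/2}$ relative to the full mass, i.e. $(\rho/r)^{-(n-m)/2}$ once one compares $\int_{B\cap N}$ to $|N_{r^{1/2}}\bZ \cap B| / |B|$ times $\int_B$. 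The extra factor $(\rho/r)^{-1}$ is the analogous gain from the one direction ($c_\tau$, the long direction of $T$) where the slab has width $\rho^{-1}$ versus $r^{-1}$. Then I would sum over the $\lesssim (\rho/r)$ tubes $T \in \T_\tau[\rho]$ that meet a fixed $r$-ball (using Proposition \ref{final-partition}\eqref{tubeintersect}-type spatial separation, or directly the finite overlap of tubes through a ball), sum over the $r$-balls $B$ tiling $\cp_r$, sum over $\tau$ and $j$ using Lemma \ref{l2-orthogonality-big}, and absorb all powers of $R^\beta$, $K$, $d$ into $R^{O(\dc)}$.

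The main obstacle I expect is making the transversality bookkeeping honest: the slab $\supp(\wh{g_{j,T}})$ is defined relative to the fixed direction $c_\tau$, whereas the tangent plane $T_z\bZ$ varies with $z$ along $T$, so I need the variety to be flat enough along $T$ — this is exactly where the $R^{\dc}$-thickening and the bound $r \ge \rho^{1/2}M$ (equivalently $r \ge \rho^{1/2}$ up to the rescaling factor $M$) enter, ensuring that over a length-$r$ tube the normal directions of $\bZ$ do not wobble by more than $r^{-1/2}R^{\dc}$, which is controlled because $T$ is $\rho^{-1/2}$-tangent and $\rho^{-1/2} \le r^{-1/2}$. I would handle this the same way it is done in Section 6 of \cite{Guth-II}: use a finitely-overlapping cover of $\bZ \cap 2\cp_r$ by pieces on which $T_z\bZ$ is constant up to the allowed error, apply the single-slab estimate on each piece, and sum. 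The details are routine given the references, so I would defer the full computation to Appendix A as the paper indicates, and here only set up the reduction and state the single-tube inequality as the crux.
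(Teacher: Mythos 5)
Your overall decomposition matches the paper's: first peel off a factor of $(\rho/r)^{-1}$ coming from the Fourier support living in a $\rho^{-1}$-thin neighborhood of $\Ga_j(\si)$ localized to a rescaled $r$-ball (the paper isolates this into Lemma \ref{localL2}), then gain $(\rho/r)^{-(n-m)/2}$ from transverse equidistribution (Lemma \ref{trans-equi-sub-lem}), and finally sum everything back using $L^2$-orthogonality. However, the crux inequality you write down has a genuine error in the Fourier-box bookkeeping, and the surrounding heuristics do not actually produce the claimed exponent.

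The incorrect claim is that $\wh{g_{j,\tau}}$ is contained in a box with ``roughly $n-m$ thin directions of width $\sim\rho^{-1}$ complementary to the $m$-dimensional tangent plane.'' What the tangency hypothesis gives, via the Gauss map, is that the directions of the wave packets in $\T_{\cp_\rho,\tau}$ are within $\rho^{-1/2}R^{\dc}$ of $V = T_z\bZ$, so their Fourier supports over $\Ga_j$ lie within $\rho^{-1/2}R^{\dc}$ of the codimension-$1$ submanifold $E = G_j^{-1}(V)\cap\Ga_j(\tau)$. Because $\nabla^2\Phi$ is nondegenerate, $E$ projects into a ball of radius $\sim\rho^{-1/2}$ in the complementary subspace $V'$ with $\dim V' = n-m$. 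So the transverse widths are $\rho^{-1/2}$, not $\rho^{-1}$; only the single surface-normal direction has width $\rho^{-1}$, and that is exactly the direction that feeds the separate factor $(\rho/r)^{-1}$. Your computation $(\rho^{-1}/r^{-1/2})^{n-m}\cdot r^{(n-m)/2}$ evaluates to $(r/\rho)^{n-m} = (\rho/r)^{-(n-m)}$, twice the intended exponent, not $(\rho/r)^{-(n-m)/2}$; plugging in the correct width $\rho^{-1/2}$ gives $(r^{1/2}/\rho^{1/2})^{n-m}$, which is the right answer without further fudging.

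A second issue is the scale at which you localize. You state the key inequality on an ``$r$-ball $B \subset \cp_r$,'' but $\cp_r$ has cross-section $M^{-1}r \le r$, so this doesn't make sense as written, and even an $M^{-1}r$-ball is too large: over length $r \ge \rho^{1/2}M$ the tangent plane $T_z\bZ$ is not approximately constant, so a single transverse subspace $V'$ cannot be fixed. The paper instead tiles $\cp_r$ into $\rho^{1/2}$-balls and proves the equidistribution estimate there (equation \eqref{global}) --- a scale on which the Gauss image $E$ has diameter $\lesssim r^{-1/2} \le \rho^{-1/4}$ and hence sits within $C\rho^{-1/2}$ of its tangent plane, making the $V'$-projection analysis valid. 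This is also where the hypothesis $r\ge\rho^{1/2}M$ actually enters: it ensures $M^{-1}r \ge \rho^{1/2}$ so that $\cp_r$ can be tiled by $\rho^{1/2}$-balls. Your explanation that $r\ge\rho^{1/2}M$ controls the wobble of the normal directions of $\bZ$ along $T$ is not what is going on; the tangency is already at scale $\rho^{-1/2}R^{\dc}$ uniformly. Finally, a single-wave-packet-plus-local-constancy route is delicate when summing: within a fixed $\tau$ the tubes are spatially essentially disjoint so the single-$T$ estimates add, but across different caps the localized integrals $\int_{B\cap N}|g_{j,\tau}|^2$ do not enjoy orthogonality, and the paper avoids this by proving \eqref{global} directly on the Fourier side for the full localized function $g_j$ rather than tube by tube.
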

We discuss the proof of Proposition \ref{TE} in Appendix A.

\subsection{One-step polynomial partitioning}
In this subsection, we state the one-step polynomial partitioning. Compared to the three dimensional counterpart in \cite{gan2021new}, 
the difference is we break the cells obtained from Proposition 
\ref{final-partition} further into smaller cells and use the transverse equidistribution estimate to handle them. 

\begin{lemma}[One-step polynomial partitioning]
\label{onestep}Given the inputs as follows:\hfill
\rm

{\bf Inputs: } $(\rho,m,\bZ,U,\vg_U,A)$. More precisely our inputs are:
\begin{enumerate}
    \item A scale $\rho$ ($M^2 R^{\e/10n}\le \rho\le R$);
    \item A number $1\le m\le n$;
    \item An $m$-dimensional variety $\bZ$ which is a transverse complete intersection of degree $O(d)$.
    \item A set $U$ which is contained in a rescaled $\rho$-ball $\cp_U$;
    \item A vector-valued function $\vg_U=\{ g_{1,U},\cdots,g_{R,U} \}$. Each component $g_{j,U}$ has Fourier support in $N_{R^{\beta}\rho^{-1}}\big(\Gamma_j(\si)\big)$, and the wave packets of $g_{j,U}$ are $\rho^{-1/2}$-tangent to $\bZ$ in $\cp_U$;
    \item An integer $A\geq\log\log R$. 
\end{enumerate}

We obtain the outputs:

{\bf Outputs: }
We obtain an $(m-1)$-dimensional transverse complete intersection $\bY$ of degree $O(d)$. Also, we are in one of the three cases: cellular case, transverse case and tangent case.

\textit{ Cellular case}:
\begin{enumerate}
    \item We obtain a collection of cells $\co=\{O\}$ which we call cellular cells. They satisfy: each $O$ is contained in a  rescaled $\rho/2$-ball $\cp_O$ and $O=\cp_O\cap N_{(\rho/2)^{1/2}R^{\dc}}\bZ$.
    
    \item \label{cellular0} We obtain tube sets $\{\T_O\}_{O\in\co}$ where each $\T_O$ consists of some $(\rho/2)$-tubes that are $(\rho/2)^{-1/2}$-tangent to $\bZ$ in $\cp_O$. We also obtain a set of functions $\{\vg_{O}\}_{O\in\co}$ which are indexed by $\co$. Each $\vg_O$ is concentrated on wave packets from $\T_O$, and they satisfy two $L^2$-estimates
    \begin{equation}\label{cellular1}
        \sum_{O\in\co}\|\Sq\vg_{O}\|_2^2\lesssim d \|\Sq\vg_U\|^2_2,
    \end{equation}
    \begin{equation}\label{cellular2}
        \|\Sq\vg_{O}\|_2^2\lesssim d^{-(m-1)}  \|\Sq\vg_U\|^2_2,
    \end{equation}
    and a broad norm estimate:
    \begin{equation}\label{cellular3}
        \| \Sq\vg_U \|_{\BL^p_{k,A}(U)}^p\lesssim \sum_{O\in\co} \| \Sq\vg_O \|_{\BL^p_{k,A}(O)}^p. 
    \end{equation}
    \item The Fourier transform of each component $g_{j,O}$ of $\vg_{O}$ satisfies:
    \begin{equation}\label{cellular4}  
        \textup{supp~}\widehat{g_{j,O}} \subset N_{R^{\beta}(\rho/2)^{-1}} (\Ga_{j}(\si)).  
    \end{equation}
    
\end{enumerate}

\textit{Transverse case}:
\begin{enumerate}
    \item \label{transverse0}We obtain a collection of cells $\cb=\{B\}$ which we call transverse cells. They satisfies: each $B$ is a subset of $N_{(\rho R^{-\de})^{1/2}R^{\dc}}\bY\cap U$ and each $B$ is contained in a rescaled $\rho R^{-\de}$-ball $\cp_B$.
    
    \item \label{transverse00}We obtain tube sets $\{\T_{B,trans}\}_{B\in\cb}$ where each $\T_{B,trans}$ consists of some $\rho R^{-\de}$-tubes that are $(\rho R^{-\de})^{-1/2}$-tangent to $\bZ_B=\bZ+b_B$ (which is some translation of $\bZ$) in $\cp_B$. We also obtain a set of functions $\{\vg_{B,trans}\}_{B\in\cb}$ which are indexed by $\cB$. Each $\vg_{B,trans}$ is concentrated on wave packets from $\T_{B,trans}$, and they satisfy two $L^2$-estimates
    \begin{equation}\label{transverse1}
    \sum_{B\in\cb}\|\Sq\vg_{B,trans}\|^2_2\lesssim {\poly} (d) R^{-\de}\|\Sq\vg_{U}\|^2_2,
    \end{equation}
    \begin{equation}\label{transverse2}
    \|\Sq\vg_{B,trans}\|^2_2\lesssim R^{O(\dc)} R^{-\de\frac{n-m}{2}} R^{-\de}\|\Sq\vg_{U}\|^2_2,
    \end{equation}
    and a broad estimate
    \begin{equation}\label{transverse3}
    \| \Sq\vg_U \|^p_{\BL^p_{k,A}(U)}
        \lesssim \log R \sum_{B\in\cb}\| \Sq\vg_{B,trans} \|^p_{\BL^p_{k,A/2}(B)}.
    \end{equation}
    \item The Fourier transform of each component $g_{j,B,trans}$ of $\vg_{B,trans}$ satisfies:
    \begin{equation}\label{transverse4}
        \textup{supp~}\widehat{g}_{j,B,trans} \subset N_{R^{\beta}(\rho R^{-\de})^{-1}} (\Ga_{j}(\si)).  
    \end{equation}
    
\end{enumerate}

\textit{Tangent case}:
\begin{enumerate}

    \item \label{tangent0}We obtain a collection of cells $\cb=\{B\}$ which we call tangent cells. They satisfies: each $B$ is a subset of $N_{(\rho R^{-\de})^{1/2}R^{\dc}}\bY\cap U$ and each $B$ is contained in a rescaled $\rho R^{-\de}$-ball $\cp_B$.
    
    \item \label{tangent0.5}We obtain tube sets $\{\T_{B,tang}\}_{B\in\cb}$ where each $\T_{B,tang}$ consists of some $\rho R^{-\de}$-tubes that are $(\rho R^{-\de})^{-1/2}$-tangent to some translation of $\bY$ in $\cp_B$. We also obtain a set of functions $\{\vg_{B,tang}\}_{B\in\cb}$ which are indexed by $\cB$. Each $\vg_{B,tang}$ is concentrated on wave packets from $\T_{B,tang}$, and they satisfy two $L^2$-estimates
    \begin{equation}\label{tangent1}
    \sum_{B\in\cb}\|\Sq\vg_{B,tang}\|^2_2\lesssim  R^{O(\de)}\|\Sq\vg_{U}\|^2_2,
    \end{equation}
    \begin{equation}\label{tangent2}
    \|\Sq\vg_{B,tang}\|^2_2\lesssim  R^{O(\de)}\|\Sq\vg_{U}\|^2_2,
    \end{equation}
    and a broad estimate
    \begin{equation}\label{tangent3}
    \| \Sq\vg_U \|^p_{\BL^p_{k,A}(U)}
        \lesssim R^{O(\de)} \sum_{B\in\cb}\| \Sq\vg_{B,tang} \|^p_{\BL^p_{k,A/2}(B)}.
    \end{equation}
    \item The Fourier transform of each component $g_{j,B,tang}$ of $\vg_{B,tang}$ satisfies:
    \begin{equation}\label{tangent4}
        \textup{supp~}\widehat{g}_{j,B,tang} \subset N_{R^{\beta}(\rho R^{-\de})^{-1}} (\Ga_{j}(\si)).  
    \end{equation}
    
\end{enumerate}
\end{lemma}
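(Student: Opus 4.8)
The plan is to run the polynomial partitioning scheme of \cite{Guth-II} (in the rescaled form of Proposition \ref{final-partition}) on the fixed cap $\si$, with the one new twist already advertised in the statement: the cells produced by Proposition \ref{final-partition} are cut once more into rescaled $\rho R^{-\de}$-balls so that the transverse equidistribution estimate, Proposition \ref{TE}, can be applied with the right exponent. Throughout, one re-runs the wave packet decomposition of Section \ref{wave-packet} at the relevant new scale ($\rho/2$ in the cellular case, $\rho R^{-\de}$ otherwise) and tracks the $R^\beta$-tails exactly as there. Concretely, I would first apply Proposition \ref{final-partition} to a sum of bump functions dominating $|\Sq\vg_U|^p$ on $\cp_U\cap N_{\rho^{1/2}R^{\dc}}\bZ$. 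This yields the degree-$O(d)$, $(m-1)$-dimensional variety $\bY$, a family of $\sim d^m$ cells each inside a rescaled $\rho/2$-ball with every $\rho$-tube meeting at most $d+1$ of them, and the inequalities \eqref{partition1}, \eqref{partition2}. Passing these to the broad norm (using that the cells are disjoint, that the broad norm over a cell is controlled by the $L^p$ mass there, and the usual transition from scale $\rho$ to $\rho/2$) gives the dichotomy: either the cellular term in \eqref{partition1} dominates, or the wall term $\int_{\cp_U\cap N_{\rho^{1/2}R^{\dc}}\bY}|\Sq\vg_U|^p$ does.

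In the cellular case, for each cell $O$ I take $\vg_O$ to be the sum of the scale-$\rho/2$ wave packets of $\vg_U$ whose tubes meet $\cp_O$. The point is that every $\rho$-tube which is $\rho^{-1/2}$-tangent to $\bZ$ in $\cp_U$ and meets $O$ decomposes into $(\rho/2)$-subtubes that are $(\rho/2)^{-1/2}$-tangent to $\bZ$ in $\cp_O$ — this is where the $R^{\dc}$ slack in the angle condition \eqref{angle-condition} is spent — while the Fourier support statement \eqref{cellular4} is automatic, since passing to a thinner neighborhood only enlarges $N_{R^\beta(\rho/2)^{-1}}\Ga_j(\si)$. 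Then \eqref{cellular2} is exactly \eqref{partition2}; \eqref{cellular1} follows from the $L^2$-orthogonality of Lemma \ref{l2-orthogonality-big} together with each tube meeting $\le d+1$ cells; and \eqref{cellular3} is the cellular half of \eqref{partition1} read in the broad norm.

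In the wall-dominant case, I cover $\cp_U\cap N_{\rho^{1/2}R^{\dc}}\bY$ by finitely overlapping rescaled $\rho R^{-\de}$-balls $\cp_B$, and inside each $\cp_B$ perform a fresh scale-$\rho R^{-\de}$ wave packet decomposition of $\vg_U$, splitting the tubes into those that are $(\rho R^{-\de})^{-1/2}$-tangent to a translate of $\bY$ (the \emph{tangent} wave packets) and the remaining ones, which — being tangent to $\bZ$ but not to $\bY$ in $\cp_B$ — are tangent to the translate $\bZ_B=\bZ+b_B$ and transverse to $\bY$ (the \emph{transverse} wave packets). A last dichotomy (global, or per $\cp_B$) decides whether the transverse or the tangent part wins, producing the transverse case or the tangent case. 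In the transverse case the gain \eqref{transverse2} is Proposition \ref{TE} with $r=\rho R^{-\de}$ — admissible since $\rho\ge M^2R^{\e/10n}$ and $\de=\e^2$ force $\rho R^{-\de}\ge\rho^{1/2}M$ — which supplies exactly $R^{O(\dc)}R^{-\de(n-m)/2}R^{-\de}$; summing over $B$, using that all transverse tubes lie in $N_{(\rho R^{-\de})^{1/2}R^{\dc}}\bZ$, bounded overlap, and Lemma \ref{l2-orthogonality-big}, gives \eqref{transverse1} with its $R^{-\de}$; and \eqref{transverse3} loses a $\log R$ from dyadic pigeonholing over the thickness of the $\bY$-neighborhood and passes from $A$ to $A/2$ because localizing to $\cp_B$ forces a re-choice of the directional subspaces. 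The tangent case has no equidistribution gain: \eqref{tangent1}--\eqref{tangent3} come from $L^2$-orthogonality and the bounded overlap of the $\cp_B$ at the cost of $R^{O(\de)}$, and \eqref{tangent4}, \eqref{transverse4} hold for the same reason as \eqref{cellular4}.

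The main obstacle I expect is the bookkeeping rather than any single inequality: keeping the broad norm (with its slowly decaying parameter $A$) compatible with each application of polynomial partitioning and with the re-decomposition of wave packets at the smaller scale; verifying that tangency degrades correctly under subdivision, so that the $R^{\dc}$ in \eqref{angle-condition} absorbs all such losses through the many stages of the eventual iteration; and handling the transition to the thinner neighborhoods $N_{R^\beta(\cdot)^{-1}}\Ga_j$ cleanly using the rapidly decaying tails built into the $\{\wt\eta_u\}$. Relative to the three-dimensional argument of \cite{gan2021new}, the genuinely new ingredient is precisely the extra subdivision of cells into $\rho R^{-\de}$-balls in the transverse branch, so that Proposition \ref{TE} applies with the sharp exponent; the rest is a routine, if lengthy, higher-dimensional adaptation.
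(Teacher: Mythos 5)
Your high-level scheme matches the paper: apply Proposition \ref{final-partition} to the broad-norm density, split off cells and a wall $N_{\rho^{1/2}R^{\dc}}\bY\cap U$, re-run the wave packet decomposition at the finer scale, and further cut the wall into rescaled $\rho R^{-\de}$-balls so that Proposition \ref{TE} applies with the right exponent. The cellular case as you describe it is essentially the paper's: define $\vg_O$ from the wave packets whose tubes meet $O$, use Lemma \ref{l2-orthogonality-big} plus ``each tube meets $\le d+1$ cells'' for \eqref{cellular1}, read \eqref{cellular2} off from \eqref{partition2} (after a pigeonholing refinement of $\co'$), and get \eqref{cellular3} from the cellular half of \eqref{partition1}.

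There is, however, a real gap in your transverse case. You cover the wall by rescaled $\rho R^{-\de}$-balls $\cp_B$ and assert that the non-$\bY$-tangent $\rho R^{-\de}$-wave packets in each $\cp_B$ ``are tangent to the translate $\bZ_B=\bZ+b_B$.'' That is not automatic and cannot be arranged by a mere covering: the wall-neighborhood $N_{\rho^{1/2}R^{\dc}}\bZ\cap\cp_B$ has thickness $\rho^{1/2}R^{\dc}$ in the normal directions of $\bZ$, which is much larger than $(\rho R^{-\de})^{1/2}R^{\dc}$, so the $\rho R^{-\de}$-children of the $\rho$-tubes in a single $\cp_B$ will in general be tangent to many different translates of $\bZ$. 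The ingredient you are missing is the random-translation/probability lemma (Lemma \ref{tetrans} in the paper, following \cite{Guth-II} p.\,132 and \cite{guth2019sharp} Lemma 10.5): one first classifies the $\rho$-tubes in each $B'=\wt B\cap W$ as $\bY$-tangent or $\bY$-transverse using the angle condition against $\bY$ (Definitions \ref{tangent-definition}--\ref{transverse-definition}), and then, for the transverse family, one randomly chooses a \emph{set} $\fB\subset B(0,\rho^{1/2})$ of translates so that $N_{\rho^{1/2}R^{\dc}}\bZ\cap B'$ is effectively tiled by $\{N_{(\rho R^{-\de})^{1/2}R^{\dc}}(b+\bZ)\cap B'\}_{b\in\fB}$. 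Each cell $B$ is then labelled by \emph{both} a $\wt B$ and a $b\in\fB$ (i.e., several $B$'s share the same $\cp_B$), and the $\log R$ loss in \eqref{transverse3} arises from the pigeonholing inside this random-translation argument, not from ``pigeonholing over the thickness of the $\bY$-neighborhood'' as you wrote. Doing the scale-$\rho R^{-\de}$ re-decomposition \emph{before} the tangent/transverse split, as you propose, also muddies the $\text{poly}(d)$ and overlap counts in \eqref{transverse1}, which in the paper come from Lemma \ref{transversetube} applied at scale $\rho$ followed by the $L^2$ gain of \eqref{tetrans2} in passing from $\rho$- to $\rho R^{-\de}$-wave packets. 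The tangent-case output is indeed crude and goes through as you say; but without the probability lemma the transverse case of Lemma \ref{onestep} does not close.
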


\begin{proof}

The rest of this section is devoted to the proof of the Lemma \ref{onestep}.
We apply Proposition \ref{final-partition} to the function $$ F= \sum_{\cp_{M^2K^2}} \| \Sq \vg_U \|_{\BL^p_{k,A}(\cp_{M^2 K^2})}^p\frac{1}{|\cp_{M^2 K^2}|} \Id_{\cp_{M^2 K^2}\cap U}. $$ 
Since $\vg_U$ has wave packets concentrated on $N_{\rho^{1/2}R^{\dc}}\bZ$, we can assume $U\subset\cp_U\cap N_{\rho^{1/2}R^{\dc}}\bZ$. From Proposition \ref{final-partition} we obtain an $(m-1)$-dimensional complete intersection $\bY$ of degree $O(d)$. Also, we obtain a collection of cells $\co'=\{O'\}$ such that: $|\co'|\sim d^m$, each $O'$ is contained in a rescaled $\rho/2$-ball, and 
\begin{equation}\label{cellequal}
\| \Sq\vg_U\|^p_{\BL^p_{k,A}(O')}\lesssim d^{-m} \| \Sq\vg_U\|^p_{\BL^p_{k,A}(U)}
\end{equation}
for each cell $O'$. 

Define the wall $W:=N_{\rho^{1/2}R^{\dc}}\bY\cap U$. By \eqref{partition1}, we have the following inequality:
\begin{equation}
\label{cell-only-1}
    \| \Sq\vg_U \|^p_{\BL^p_{k,A}(U)} \lesssim \sum_{O'\in\co'}\| \Sq\vg_U \|^p_{\BL^p_{k,A}(O')}+\| \Sq\vg_U \|^p_{\BL^p_{k,A}(W)}.
\end{equation}
Invoking the wave packet decomposition at scale $\rho$, we can write 
\begin{equation}
\label{wpt-algorithm}
    \vg_U=\sum_{T\in\T} (\vg_U)_{T}+\rap(R)\|\Sq\vg\|_2,
\end{equation}
where $\T$ is a set of $\rho$-tubes that are $\rho^{-1/2}$-tangent to $\bZ$ in $\cp_U$. In the following discussion, we will first define $\T_{O'}, \T_{B',trans}$ and $\T_{B',tang}$ which are subsets of $\T$. Then we will define smaller tubes $\T_{O}, \T_{B,trans}$ and $\T_{B,tang}$ that we want.

First, we analyze the first term on the right hand side of \eqref{cell-only-1}.
For each cell $O'\in\co'$, define 
$$\ZT_{O'}:=\{ T\in\T:O'\cap T\not=\varnothing \}.$$ 
Recalling \eqref{tubeintersect} in Proposition \ref{final-partition}, we have
\begin{lemma}\label{cellulartube}
Each $T\in\T$ belongs to at most $d+1=O(d)$ many sets $\T_{O'}$.
\end{lemma}

Noting that $\ZT_{O'}$ doesn't depend on the subscript $j$ of each component $g_j$, we define the vector-valued function $\vg_{O'}$ associated to the cell $O'$ as
\begin{equation}\label{cellularfunction}
    \vg_{O'}:=\sum_{T\in\T_{O'}}(\vg_U)_{T}.
\end{equation}

\noindent Since for any $x\in O'$ one has 
$$\vg_U(x)=\vg_{O'}(x)+\rap(R)\|\Sq\vg\|_2,$$ 
as a result 
\begin{equation}\label{cellular}
    \sum_{O'\in\co'}\| \Sq\vg_U \|^p_{\BL^p_{k,A}(O')}\lesssim \sum_{O'\in\co'}\| \Sq\vg_{O'} \|^p_{\BL^p_{k,A}(O')}+\rap(R)\|\Sq\vg\|_2^p.
\end{equation}

\medskip
Next, let us analyze the second term on the right hand side of \eqref{cell-only-1}. We choose a collection of rescaled $\rho R^{-\de}$-balls $\wt\cb=\{\wt B\}$ that form a finitely overlapping covering of $U$. Define 
\begin{equation}\label{defineB}
    B':=\wt B\cap W \textup{~and~} \cb':=\{B'\}.
\end{equation}
For each $B'$, we define $\ZT_{B',tang}$ and $\ZT_{B',trans}$ which are subsets of $\T$ as follows.
\begin{definition}
\label{tangent-definition}
$\ZT_{B',tang}$ is the set of $\rho$-tubes $T\in\T$ obeying the following two conditions:
\begin{enumerate}
    \item [$\bullet$]$T\cap B'\not=\varnothing$, 
    \item [$\bullet$]If $z$ is any non-singular point of $Z$ lying in $10 \wt B\cap 10 T$, then 
    \begin{equation}
    \label{angular-condition-1}
        |\angle(v(T),T_z\bY)|\leq \rho^{-1/2}R^{\dc}.
    \end{equation}
\end{enumerate}
\end{definition}
\noindent Here $v(T)$ is the direction of $T$.
\begin{remark}
\rm

Note that the definition of tangent tubes here is different from that in Definition \ref{tangenttube}.
\end{remark}

\begin{definition}
\label{transverse-definition}
$\ZT_{B',trans}$ is the set of $\rho$-tubes $T\in\ZT$ obeying the following two conditions:
\begin{enumerate}
    \item [$\bullet$]$T\cap B'\not=\varnothing$.
    \item [$\bullet$]There exists a non-singular point $z$ of $Z$ lying in $10 \wt B\cap 10 T$, such that 
    \begin{equation}
    \label{angular-condition-2}
        |\angle(v(T),T_z\bY)|> \rho^{-1/2}R^{\dc}.
    \end{equation}
\end{enumerate}
\end{definition}
\noindent The following lemma for the transverse tubes was proved in \cite{Guth-R3}.
\begin{lemma}\label{transversetube}
Each $T\in\T$ belongs to at most ${\poly}(deg(\bY))={\poly}(d)$ many sets $\{\T_{B',trans}\}_{B'}$.
\end{lemma}

Now we define  $\vg_{B',trans}$ and $\vg_{B',tang}$ as
\begin{equation}
\label{tangent-transverse-vector}
    \vg_{B',trans}:=\sum_{T\in\T_{B',trans}}(\vg_U)_{T},\hspace{5mm}\vg_{B',tang}:=\sum_{T\in\T_{B',tang}}(\vg_U)_{T},
\end{equation}
so we have
\begin{equation}
    \vg_U(x)=\vg_{B',trans}(x)+\vg_{B',tang}(x)+\rap(R)\|\Sq\vg\|_2\ \ \ \textup{for~}x\in B'.
\end{equation}
By the triangle inequality for the broad norm \eqref{broad-triangle}, we have
\begin{align}
\label{broadness-algebraic}
    \| \Sq\vg_U \|^p_{\BL^p_{k,A}(B')}\lesssim\, &\| \Sq\vg_{B',trans} \|^p_{\BL^p_{k,A/2}(B')}+\| \Sq\vg_{B',tang} \|^p_{\BL^p_{k,A/2}(B')}\\ \nonumber
    &+\rap(R)\|\Sq\vg\|_2^p.
\end{align}
Consequently, one has
\begin{align}
\label{transverse}
    \| \Sq\vg_U \|^p_{\BL^p_{k,A}(W)}\!\lesssim & \sum_{B'}\| \Sq\vg_{B'\!,trans} \|^p_{\BL^p_{k,A/2}(B')}\!\!+\!\sum_{B'}\| \Sq\vg_{B'\!,tang} \|^p_{\BL^p_{k,A/2}(B')}\\ \nonumber
    &+\rap(R)\|\Sq\vg\|_2^p.
\end{align}

Combining \eqref{cell-only-1}, \eqref{cellular} and \eqref{transverse}, we get
\begin{align}
\label{algorithm-crude}
     \| \Sq\vg_U \|^p_{\BL^p_{k,A}(U)}\lesssim&\sum_{O'\in\co'} \| \Sq\vg_{O'} \|^p_{\BL^p_{k,A}(O')}+\sum_{B'\in\cb'}  \| \Sq\vg_{B',trans} \|^p_{\BL^p_{k,A/2}(B')}\\ \nonumber
    &+\sum_{B'\in\cb'}  \| \Sq\vg_{B',tang} \|^p_{\BL^p_{k,A/2}(B')} +\rap(R)\|\Sq\vg\|_2^p.
\end{align}
We can just ignore the rapidly decaying term.
Now we determine which one of the three cases we are in according to which term on the right hand side of \eqref{algorithm-crude} dominates.

\smallskip

\textit{Transverse case}: If the second term on the right hand side of \eqref{algorithm-crude} dominates, we say ``we are in the transverse case".

In this case, we cannot directly use the cells $B'$ for $B$, since $B'$ is too thick to satisfies condition \eqref{transverse00}. We need to break each $B'$ into smaller cells $\{B\}$
and define for each $B$ a set of $\rho R^{-\de}$-tubes $\T_{B,trans}$, so that $\T_{B,trans}$ is $(\rho R^{-\de})^{-1/2}$-tangent to some translation of $\bZ$ in $B$. The two main tools are the probability method and the transverse equidistribution estimate. We incorporate them in the next lemma.

\begin{lemma}\label{tetrans}
For a fixed $B'$, 
we can find a set $\fB\subset B(0,\rho^{1/2})$ and disjoint sets of $\rho R^{-\de}$-tubes $\{\T_b\}_{b\in\fB}$,
such that the tubes in $\T_b$ are $(\rho R^{-\de}) ^{-1/2}$-tangent to $b+\bZ$ in $B'$ for each $b\in\fB$.
Intuitively, one may think $$N_{\rho^{1/2}R^{\dc}}\bZ \cap B'\approx\bigcup_{b\in\fB}N_{(\rho R^{-\de})^{1/2}R^{\dc}}(b+\bZ)\cap B'.$$ If we define
\begin{equation}
    \cb(B'):=\{ B'\cap N_{(\rho R^{-\de})^{1/2}}(b+\bZ) \}_{b\in\fB}=\{B\},
\end{equation}
and for each $B=B'\cap N_{(\rho R^{-\de})^{1/2}R^\dc}(b+\bZ)\in\cb(B')$, define
\begin{align}
 \label{transvec}
    \T_{B,trans}:=\T_b,\hspace{.5cm}\vg_{B,trans}:=\sum_{T'\in\T_{B,trans}} (\vg_{B',trans})_{T'},
\end{align}
then we have the following estimates:
\begin{align}
    \label{tetrans1}\| \Sq\vg_{B',trans} \|^p_{\BL^p_{k,A/2}(B')}&\lesssim \log R \sum_{B\in\cb(B')} \| \Sq\vg_{B,trans} \|^p_{\BL^p_{k,A/2}(B)},\\
    \label{tetrans2}\sum_{B\in\cb(B')} \| \Sq\vg_{B,trans} \|^2_2&\lesssim R^{-\de} \| \Sq\vg_{B',trans} \|^2_2,\\ \label{tetrans3}
    \| \Sq \vg_{B,trans} \|_2^2 &\lesssim R^{O(\dc)} R^{-\de\frac{n-m}{2}}R^{-\de} \|\Sq \vg_{B',trans}\|_2^2.
\end{align}

\end{lemma}

\begin{remark}
{\rm
The proof of \eqref{tetrans1} is by some probability argument, where we randomly choose the set $\fB\in B(0,\rho^{1/2})$. The argument can be found in \cite{Guth-II} \textup{page 132}, or \cite{guth2019sharp} \textup{Lemma 10.5}. The factor $\log R$ comes from the pigeonhole argument. For safety, we may choose the factor to be $(\log R)^{10}$, but it is acceptable as long as it is much smaller than $R^{\dc}$. So we just put $\log R$ here for simplicity.

The proof of \eqref{tetrans2} is by a standard $L^2$-argument. We gain a factor $R^{-\de}$ because $\vg_{B',trans}$ is a sum of $\rho$-wave packets whereas $\vg_{B,trans}$ is a sum of $\rho R^{-\de}$-wave packets. 

We also give the idea of the proof of \eqref{tetrans3}. For a fixed $B=B'\cap N_{(\rho R^{-\de})^{1/2}R^\dc}(b+\bZ)\in\cb(B')$, we apply Proposition \ref{TE} with $\vg=\vg_{B',trans}$, $(\rho,r)=(\rho,\rho R^{-\de})$, $\cp_\rho=\cp_U$, $\cp_r=\cp_{B'}$ and ignore the rapidly decaying term to obtain
$$ \int_{B'\cap N_{(\rho R^{-\de})^{1/2}R^{\dc}}(b+\bZ)}|\Sq \vg_{B',trans}|^2\lesssim R^{O(\dc)} \big( R^\de \big)^{-\frac{n-m}{2}-1} \int|\Sq \vg_{B',trans}|^2. $$
Another key observation is $\Sq \vg_{B,trans}(x)\sim \Id_{B' \cap N_{(\rho R^{-\de})^{1/2}R^\dc}(b+\bZ)} \cdot \Sq \vg_{B',trans}(x)$, which is the same as (8.20) in \cite{Guth-II}. Plugging into the left hand side of the above inequality, we obtain \eqref{tetrans3}.

}
\end{remark}

Now we define $\cB:=\cup_{B'}\cB(B')$. Since we are in the transverse case, in order to prove \eqref{transverse3}, one just needs to sum up all $B\in\cB$ in \eqref{algorithm-crude} and \eqref{tetrans1}. Also, we see that \eqref{tetrans3} verifies \eqref{transverse2}, and \eqref{transverse4} is a fact from the wave packet decomposition.

It remains to show \eqref{transverse1}. From \eqref{tangent-transverse-vector} and Lemma \ref{transversetube} one has
\begin{equation}
\nonumber
\begin{split}
    \sum_{B'\in\cb'}\|\Sq\vg_{B',trans}\|_2^2=\sum_{B'\in\cb'}\big\|\Sq\big(\sum_{T\in\T_{B',trans}}(\vg_U)_{T}\big)\big\|_2^2
    &\lesssim {\poly}(d)\big\|\Sq\big(\sum_{T\in\T}(\vg_U)_{T}\big)\big\|^2_2\\
    &\lesssim{\poly}(d)\|\Sq\vg_U\|^2_2.
\end{split}
\end{equation}
Combine this with \eqref{tetrans2}, we prove
\eqref{transverse1}.

\smallskip

\textit{Tangent case}: If the third term on the right hand side of \eqref{algorithm-crude} dominates, we say ``we are in the tangent case". 

The tangent case is handled in a similar way as in the transverse case.
Recalling the definition of $\T_{B',tang}$ in Definition \ref{tangent-definition}, the tubes in $\T_{B',tang}$ are in some sense tangent to $\bY$ in $B'$. We can derive a similar lemma as Lemma $\ref{tetrans}$ with $\bZ$ replaced by $\bY$. Then, all the argument work in the same way.

We also remark that we only need to care about the requirement \eqref{tangent0.5}, since the other estimates \eqref{tangent1}, \eqref{tangent2} and \eqref{tangent3} are quite crude and can be easily verified. The reason that we only need crude estimates in tangent case is because we will only encounter at most $n-k$ tangent cases in our iteration process.

\smallskip
\textit{Cellular case}:
If the first term on the right hand side of \eqref{algorithm-crude} dominates, we say ``we are in the cellular case". We have
\begin{equation}
\nonumber
    \| \Sq\vg_U \|^p_{\BL^p_{k,A}(U)}  \lesssim\sum_{O'\in\co'}\| \Sq\vg_{O'} \|^p_{\BL^p_{k,A}(O')}.
\end{equation}
Since $\T_{O'}$ is a collection of $\rho$-tubes, we need to define $\rho/2$-tubes $\T_{O}$ in order to satisfy the requirement in \eqref{cellular0}. 
However, this is much easier than the transverse case, since the adjacent scales are $\rho$ and $\rho/2$ instead of $\rho$ and $\rho R^{-\de}$. In fact, at a cost of some absolute constant that comes from using the triangle inequality, we can assume that $\T_{O'}$ is a collection of $\rho/2$-tubes, each of which is $(\rho/2)^{-1/2}$-tangent to $\bZ$, and \eqref{cellularfunction} is a wave packet decomposition at scale $\rho/2$, without loss of generality.

Recall that  $|\co'|\sim d^m$ and \eqref{cellequal}. Via pigeonholing, we can choose a subset of $\co'$, which is denoted by $\co$, such that 
\begin{equation}
    \|\Sq\vg_U\|^p_{\BL^p_{k,A}(U)}\sim d^m \|\Sq\vg_O\|^p_{\BL^p_{k,A}(O)}
\end{equation}
holds for every $O\in \co$, and $|\co|\sim d^m$. This verifies \eqref{cellular3}. 

To show \eqref{cellular1}, we first recall \eqref{cellularfunction}, so
\begin{equation}
\nonumber
    \sum_{O\in\co}\|\Sq\vg_{O}\|_2^2=\sum_{O\in\co}\big\|\Sq\big(\sum_{T\in\T_{O}}(\vg_U)_{T}\big)\big\|^2.
\end{equation}
By Lemma \ref{cellulartube}, we see that each tube $T$ belongs to $O(d)$ many sets $\T_{O'}$, implying
\begin{equation}
    \sum_{O\in\co}\big\|\Sq\big(\sum_{T\in\T_{O}}(\vg_U)_{T}\big)\big\|_2^2\lesssim d\big\|\Sq(\sum_{T\in\T}(\vg_U)_{T})\big\|^2_2 \lesssim d\|\Sq\vg_U\|^2_2.
\end{equation}
Combining the above two inequalities, we proved \eqref{cellular1}.

To prove \eqref{cellular2}, we need to refine $\co$ once more. Note that we just proved
$$  \sum_{O\in\co}\|\Sq\vg_{O}\|_2^2\lesssim d\|\Sq\vg_U\|^2_2,$$
Since $|\co|\sim d^m$, we see the number of cells $O\in\co$ for which $\|\Sq\vg_{O}\|_2^2\lesssim d^{-(m-1)}\|\Sq\vg_U\|^2_2$ is $\sim d^m$.
We still denote these cells by $\co$, then \eqref{cellular2} holds. Finally, \eqref{cellular4} is a fact from the wave packet decomposition.
\end{proof}

\section{The first algorithm}

In this section, we discuss our first algorithm.
This algorithm is processed by iteratively using Lemma \ref{onestep}. At each step of the iteration, we endow one of the states: cellular state, transverse state and tangent state.
The iteration ends when we arrive in the tangent state, or the scale is very small (slightly larger than $M^2$). We will discuss more carefully about the latter two scenarios.

\begin{algorithm}[The first algorithm]\label{1algorithm}\hfill
{\rm

{\bf Inputs: } $(r,m,p_m,A)$ and $\{(\bZ_{O_0},O_0,\vf_{O_0})\}_{O_0\in\cO_0}$. More precisely, the quadruple consists of:
\begin{enumerate}
    \item A scale $r$ ($M^2R^{\e/10n}\le r\le R$);
    \item A number $1\le m\le n$;
    \item A number $p_m$ which is used for $L^{p_m}$ space (we will just write $p$ instead of $p_m$ throughout this algorithm);
    \item An integer $A\geq \log\log R$. 
\end{enumerate}

\noindent
For each $O_0\in\co_0$, $(\bZ_{O_0},O_0,\vf_{O_0})$ consists of:
\begin{enumerate}
    \item An $m$-dimensional variety $\bZ_{O_0}$ which is a transverse complete intersection of degree $O(d)$;
    \item A cell $O_0=\cp_{O_0}\cap N_{r^{1/2}}\bZ_{O_0}$, where $\cp_{O_0}$ is a rescaled $r$-ball;
    \item A vector-valued function $\vf_{O_0}=\{ f_{1,O_0},\cdots,f_{R,O_0} \}$. Each component $f_{j,O_0}$ has Fourier support in $N_{R^{\beta}r^{-1}}\big(\Gamma_j(\si)\big)$, and the wave packets of $f_{j,O_0}$ are $r^{-1/2}$-tangent to $\bZ_{O_0}$ in $\cp_r$.
\end{enumerate}

Then we have the following {\bf outputs}:

\medskip

\noindent$\bullet$ There exists an integer $s\ge 1$ which denotes the total number of iteration steps. There is a function STATE which we use to record the state of each step:
\begin{equation}
    \textup{STATE}:\{1,2,\cdots,s\}\rightarrow \{\textup{cell,~trans,~tang}\}.
\end{equation}
We require the tangent case appear at most once, and if it appears, it should only appear at the last step. That is: $\textup{STATE}(u)=$ tang implies $u=s$. If the tangent case does not appear, then we end with a small radius $r_s\sim M^2R^{\e/10n}$ ($r_s$ is as below).

\medskip

\noindent$\bullet$ At each step $u$, $u\in\{1,\cdots,s\}$, we have:

\smallskip

\noindent{\bf 1.} A scale $r_u$ whose explicit formula is
\begin{equation}\label{it1.1}
    r_u=r 2^{-c(u)}R^{-\de a(u) },
\end{equation}
where the two parameters $c(u)$ and $a(u)$ are defined by
\begin{align}
    \label{it1.11}&c(u):=\#\{1\le i\le u:~\textup{STATE}(i)=\textup{cell}\}, \\ 
    \label{it1.12}&a(u):=\#\{1\le i\le u:~\textup{STATE}(i)=\textup{trans}\}.
\end{align}

\smallskip

\noindent{\bf 2.} A number $A_u$ defined by
\begin{equation}\label{it1.2}
    A_u=A/2^{a(u)}.
\end{equation}
For convenience we also set $A_0=A$. The number $A_u$ is used for the broad norm $\BL^p_{k,A_u}$ at step $u$.

\smallskip

\noindent{\bf 3.} A set of cells $\co_u=\{O_u\}$ such that each $O_u$ is contained in a rescaled $r_u$-ball $\cp_{O_u}$. Each $O_u$ has a unique parent $O_{u-1}\in\co_{u-1}$. We denote this relation by 
\begin{equation}
    O_u<O_{u-1}.
\end{equation}
Moreover we have the nested property for these cells. That is, for any cell $O_{s}\in\co_{s}$, there exist unique $O_u\in \co_u$ $(u=0,1,\cdots,s)$ such that
\begin{equation}
    O_s<O_{s-1}<\cdots<O_1<O_0. 
\end{equation}
For $O_u$ and $O_{u'}$ in this chain with $u>u'$,
we also write $O_u<O_{u'}$.

\noindent{\bf 4.} A set of $r_u$-tubes $\T_{O_u}$ and a set of functions $\{\vf_{O_u}\}_{O_u\in\co_{u}}$ that satisfy the following iteration formula
\begin{equation}\label{it3}
    \vf_{O_u}:=\sum_{T\in\T_{O_u}}(\vf_{O_{u-1}})_T+\rap(r_u)\|\Sq\vf\|_2.
\end{equation}
Here tubes in $\T_{O_u}$ are $r_u^{-1/2}$-tangent to a translated variety $\bZ_{O_u}(=b_{O_u}\!+\!\bZ)$ in $\cp_{O_u}$.

\smallskip

\noindent{\bf 5.} There are three possible cases for each step $u$: cellular case, transverse case and tangent case. The outputs for each case are the following:

\medskip

\noindent\textit {Cellular state}: If $\textup{STATE}(u)=$ cell, we have the following outputs.

\begin{enumerate}
    \item[i.] We have the following $L^2$-relations between two adjacent steps:
    \begin{align}\label{itcellular1}
        \sum_{O_u}\|\Sq\vf_{O_{u}}\|^2_2\lesssim & d \sum_{O_{u-1}}\|\Sq\vf_{O_{u-1}}\|^2_2,\\ \label{itcellular2}
        \|\Sq\vf_{O_{u}}\|^2_2\lesssim d^{-(m-1)}& \|\Sq\vf_{O_{u-1}}\|^2_2\ \ \ \textup{for~} O_u<O_{u-1}.
    \end{align}

    \item[ii.] We have the broad norm estimate: 
    \begin{equation}\label{itcellular3}
        \sum_{O_{u-1}\in\co_{u-1}}\|\Sq(\vf_{O_{u-1}})\|^p_{\BL^p_{k,A_{u-1}}(O_{u-1})}\lesssim \sum_{O_{u}\in\co_u}\|\Sq(\vf_{O_u})\|^p_{\BL^p_{k,A_u}(O_u)}
    \end{equation}
    
    \item[iii.] The Fourier transform of each component $f_{j,O_u}$ of $\vf_{O_u}$ satisfies:
    \begin{equation}\label{itcellular4}
       \textup{supp} \wh{f}_{j,O_u}\subset N_{R^{\beta}r_u^{-1}}(\Ga_{j}(\si)).  
    \end{equation}
\end{enumerate}

\medskip

\noindent\textit{Transverse state}: If $\textup{STATE}(u)=$ trans, we have the following outputs.
\begin{enumerate}
    \item[i.] We have the following $L^2$-relations between adjacent steps:
    \begin{align}\label{ittransverse1}
        \sum_{O_u}\|\Sq\vf_{O_{u}}\|^2_2&\lesssim  {\poly}(d)R^{-\de}\sum_{O_{u-1}}\|\Sq\vf_{O_{u-1}}\|^2_2\\ \label{ittransverse2}
        \|\Sq\vf_{O_{u}}\|^2_2&\lesssim R^{O(\dc)} R^{-\de\frac{n-m}{2}} R^{-\de} \|\Sq\vf_{O_{u-1}}\|^2_2\ \ \ \textup{for~} O_u<O_{u-1}.    
    \end{align}
    
    \item[ii.] We have the broad norm estimate:
    \begin{equation}\label{ittransverse3}
        \sum_{O_{u-1}}\|\Sq(\vf_{O_{u-1}})\|^p_{\BL^p_{k,A_{u-1}}(O_{u-1})}\lesssim (\log R)^3 \sum_{O_{u}}\|\Sq(\vf_{O_u})\|^p_{\BL^p_{k,A_u}(O_u)}.
    \end{equation}
    
    \item[iii.] The Fourier transform of each component $f_{j,O_u}$ of $\vf_{O_u}$ satisfies:
    \begin{equation}\label{ittransverse4}
       \textup{supp} \wh{f}_{j,O_u}\subset N_{R^{\beta}r_u^{-1}}(\Ga_{j}(\si)).  
    \end{equation}
\end{enumerate}

\medskip

\noindent\textit{Tangent state}: If $\textup{STATE}(u)=$ tang, so $u=s$, then we have the following outputs.
\begin{enumerate}
    \item[i.] We have the following $L^2$-relations:
    \begin{align}\label{ittangent1}
        \sum_{O_{s}}\|\Sq\vf_{O_{s}}\|^2_2\lesssim  R^{O(\de)}\sum_{O_{s-1}}\|\Sq\vf_{O_{s-1}}\|^2_2\\ \label{ittangent2}
        \|\Sq\vf_{O_{s}}\|^2_2\lesssim R^{O(\de)}\|\Sq\vf_{O_{s-1}}\|^2_2\ \ \ \textup{for~}O_s<O_{s-1}.    
    \end{align}
    
    \item[ii.] We have broad norm estimate:
    \begin{equation}\label{ittangent3}
        \sum_{O_{s-1}}\|\Sq(\vf_{O_{s-1}})\|^p_{\BL^p_{k,A_{s-1}}(O_{s-1})}\lesssim (\log R)^3\sum_{O_s} \|\Sq(\vf_{O_s})\|^p_{\BL^p_{k,A_s}(O_s)}.
    \end{equation}
    
    \item[iii.] The Fourier transform of each component $f_{j,O_s}$ of $\vf_{O_s}$ satisfies:
    \begin{equation}\label{ittangent4}
       \textup{supp} \wh{f}_{j,O_s}\subset N_{R^{\beta}r_s^{-1}}(\Ga_{j}(\si)).  
    \end{equation}
    
    \item[iv.]\label{ittangent5} For each $O_s\in\co_s$, the tubes in $\T_{O_s}$ are $r_s^{-1/2}$-tangent to some $(m-1)$-dimensional transverse complete intersection $\bY_{O_s}$ in $\cp_{O_s}$.
    
\end{enumerate}

}
\end{algorithm}

\begin{proof} We are going to iteratively apply Lemma \ref{onestep}. 
Suppose the iteration is done for step $u$, so we obtain a scale $r_{u}$, a set of cells $\co_{u}$, tube sets $\{\T_{O_{u}}\}_{O_{u}\in\co_{u}}$ and a set of functions $\{\vf_{O_{u}}\}_{O_{u}\in\co_{u}}$. For each $O_u\in\co_u$, we apply Lemma \ref{onestep} to the tuple $(r_u,m,\bZ_{O_u},O_u,\vf_{O_u},A_u)$. To verify this is a valid input, we note that $O_u$ is contained in a rescaled $r_u$-ball and also \eqref{itcellular4} and \eqref{ittransverse4} verify the requirement on the Fourier support of $\vf_{O_u}$.  

For each $O_u$, we obtain cells $\co_{u+1}(O_u)$ and the associated tubes and functions 
\begin{equation}
\nonumber
    \{\T_{O_{u+1}}\}_{O_{u+1}\in\co_{u+1}(O_u)},\hspace{.5cm}\{\vf_{O_{u+1}}\}_{O_{u+1}\in\co_{u+1}(O_u)}.
\end{equation}
We put $O_u$ into one of $\co_{u,cell}$, $\co_{u,trans}$ and $\co_{u,tang}$, depending on which case we are in Lemma \ref{onestep}. Note that
\begin{equation}
\nonumber
    \co_u=\co_{u,cell}\sqcup\co_{u,trans}\sqcup \co_{u,tang}.
\end{equation}
From \eqref{cellular3}, \eqref{transverse3} and \eqref{tangent3}, we have
\begin{align}
    \label{3term}\sum_{O_u\in\co_u} \|\Sq\vf_{O_u}\|^{p}_{\BL^{p}_{k,A_u}}\!&\lesssim \sum_{O_u\in\co_{u,cell}}\ \sum_{O_{u+1}\in\co_{u+1}(O_u)} \|\Sq\vf_{O_{u+1}}\|^{p}_{\BL^{p}_{k,A_u}}\\
    \nonumber&+\log R \sum_{O_u\in\co_{u,trans}}\ \sum_{O_{u+1}\in\co_{u+1}(O_u)}\!\! \|\Sq\vf_{O_{u+1}}\|^{p}_{\BL^{p}_{k,A_u/2}}\\
    \nonumber&+R^{O(\de)}\sum_{O_u\in\co_{u,tang}}\ \sum_{O_{u+1}\in\co_{u+1}(O_u)}\!\! \|\Sq\vf_{O_{u+1}}\|^{p}_{\BL^{p}_{k,A_u/2}}
\end{align}

There are three possible states: cellular state, transverse state and tangent state, depending on which term on the right hand side of \eqref{3term} dominates. We discuss them separately.

\smallskip

\textit{Cellular state}: We say ``the step $u+1$ is in the cellular state", if the first term on the right hand side of \eqref{3term} dominates, i.e.
\begin{equation}\label{celldom}
    \sum_{O_u\in\co_u} \|\Sq\vf_{O_u}\|^{p}_{\BL^{p}_{k,A_u}}\lesssim \sum_{O_u\in\co_{u,cell}}\ \sum_{O_{u+1}\in\co_{u+1}(O_u)} \|\Sq\vf_{O_{u+1}}\|^{p}_{\BL^{p}_{k,A_u}}.
\end{equation}
We set
\begin{equation}
    \textup{STATE}(u+1)=\textup{cell},\ \ \ r_{u+1}=r_u /2.
\end{equation}
Also note from \eqref{it1.12}, 
\begin{equation}
    a(u+1)=a(u).
\end{equation}

Now we define
\begin{align}
\label{celdef1}
     \co_{u+1}:=&\bigcup_{O_u\in\co_{u,cell}}\co_{u+1}(O_u),\\ \label{celdef2}
     \{\T_{O_{u+1}}\}_{O_{u+1}\in\co_{u+1}}:=&\bigcup_{O_u\in\co_{u,cell}}\{\T_{O_{u+1}}\}_{O_{u+1}\in\co_{u+1}(O_u)},\\ \label{celdef3}
    \{\vf_{O_{u+1}}\}_{O_{u+1}\in\co_{u+1}}:=&\bigcup_{O_u\in\co_{u,cell}}\{\vf_{O_{u+1}}\}_{O_{u+1}\in\co_{u+1}(O_u)}.
\end{align}
We can check the following results. 
\begin{enumerate}
    \item By inequality \eqref{cellular1} and the definition in \eqref{celdef3}, we have
    \begin{align}
        \sum_{O_{u+1}\in\co_{u+1}}\|\Sq\vf_{O_{u+1}}\|^2_2&=\sum_{O_u\in\co_{u,cell}}\ \sum_{O_{u+1}\in\co_{u+1}(O_u)}\|\Sq\vf_{O_{u+1}}\|^2_2\\
        \nonumber&\lesssim d \sum_{O_{u}\in\co_{u,cell}}\|\Sq\vf_{O_{u}}\|^2_2\le d\sum_{O_{u}\in\co_u}\|\Sq\vf_{O_{u}}\|^2_2,
    \end{align}
which verifies \eqref{itcellular1}.

    \item By \eqref{cellular2}, we can verify \eqref{itcellular2}.
    
    \item By \eqref{celldom}, we can verify \eqref{itcellular3}.
    \item By \eqref{cellular4}, we can verify \eqref{itcellular4}.
\end{enumerate}
At this point we finish the proof for \textit{Cellular state}.    

\smallskip

\textit{Transverse state}: We say ``the step $u+1$ is in the transverse state", if the second term on the right hand side of \eqref{3term} dominates, i.e. 
\begin{equation}\label{transdom}
    \sum_{O_u\in\co_u} \|\Sq\vf_{O_u}\|^{p}_{\BL^{p}_{k,A_u}}\lesssim 
    \log R \sum_{O_u\in\co_{u,trans}}\ \sum_{O_{u+1}\in\co_{u+1}(O_u)} \|\Sq\vf_{O_{u+1}}\|^{p}_{\BL^{p}_{k,A_u/2}}
\end{equation}
We set
\begin{equation}
    \textup{STATE}(u+1)=\textup{trans},\ \ \ r_{u+1}=r_u R^{-\de},
\end{equation}
and note that from \eqref{it1.12},
\begin{equation}
    a(u+1)=a(u)+1.
\end{equation}

Now we define $\co_{u+1}$ and associated tubes and functions in the same way as in \eqref{celdef1} --- \eqref{celdef3}, except we replace $\co_{u,cell}$ by $\co_{u,trans}$.
We can check the following: 
\begin{enumerate}
    \item By inequality \eqref{transverse1}, we have
    \begin{align}
        \sum_{O_{u+1}\in\co_{u+1}} &\|\Sq\vf_{O_{u+1}}\|^2_2=\sum_{O_u\in\co_{u,trans}}\ \sum_{O_{u+1}\in\co_{u+1}(O_u)}\|\Sq\vf_{O_{u+1}}\|^2_2\\
        \nonumber&\lesssim \poly(d)R^{-\de} \sum_{O_{u}\in\co_{u,trans}}\|\Sq\vf_{O_{u}}\|^2_2\le d\sum_{O_{u}\in\co_u}\|\Sq\vf_{O_{u}}\|^2_2,
    \end{align}
which verifies \eqref{ittransverse1}.

    \item By \eqref{transverse2}, we can verify \eqref{ittransverse2}.
    
    \item By \eqref{transdom}, we can verify \eqref{ittransverse3}.
    \item By \eqref{transverse4}, we can verify \eqref{ittransverse4}.
\end{enumerate}
At this point we finish the proof for \textit{Transverse state}.

\smallskip  

\textit{Tangent state}: We say ``the step $u+1$ is in the tangent state", if the third term on the right hand side of \eqref{3term} dominates. Actually, one may not encounter the tangent state throughout the iteration, but once the tangent state appears, the iteration stops and so we have $u=s$. 

We can proceed in exactly the same way as we did for the \textup{transverse state}. The results we would like to verify have their counterparts in Lemma \ref{onestep}, so details are omitted here.

\smallskip
Now we have finished the proof of Algorithm \ref{1algorithm}.
\end{proof}

\section{The second algorithm}

Before we discuss the second algorithm, let us first determine the Lebesgue exponents $\{p_l\}_{l=k}^n$ that we will use later.

\subsection{Lebesgue exponents \texorpdfstring{$\{p_l\}_{l=k}^n$}{Lg}}
We will choose $\{p_l\}$ in the same way as in \cite{hickman2020note}. First, suppose that they satisfy
\begin{equation}
    2\le p=p_n\le\cdots\le p_{k+1}\le p_k, \hspace{.5cm}
    p_k=\frac{2k}{k-1}.
\end{equation}
Their precise definitions  will be given inductively by the following formula:
\begin{equation}\label{prelation}
    \frac{1}{p_l}:=\frac{1-\al_{l-1}}{2}+\frac{\al_{l-1}}{p_{l-1}}, \hspace{.5cm} l=n,n-1,\cdots,k+1,
\end{equation}
where $\alpha_k,\cdots,\alpha_{n-1}\in[0,1]$ are to be determined.

It is convenient to define $\{\be_l\}_{l=k}^n$, which are partial product of $\alpha_l$:
\begin{equation}\label{defbe}
    \beta_l:=\prod_{i=l}^{n-1}\al_i,\ \ \be_n:=1.
\end{equation}
From \eqref{prelation}, we see
\begin{equation}\label{prelation2}
    \frac{1}{2}-\frac{1}{p_n}=\be_l\big( \frac{1}{2}-\frac{1}{p_l} \big),\ \ k\le l \le n.
\end{equation}

We still need another set of numbers $\{\ga_l\}_{l=k}^n$ satisfying 
\begin{equation}
\nonumber
    \ga_l\in[0,1],\hspace{.5cm} \sum_{k\leq l\leq n}\ga_l=1.
\end{equation}
Similarly, we define $\{\si_l\}_{l=k}^{n}$ to be the partial sum of $\ga_l$:
\begin{equation}\label{defsi}
    \si_l:=\sum_{k\leq i\leq l} \ga_i.
\end{equation}
It is convenient to also set $\si_{k-1}:=0$.

What we want is the solution of following system of equations:
\begin{align}
    \label{solve1} &\frac{\be_{l+1}-\be_l}{2}-\frac{1+\si_l}{2}\big(\frac{1}{2}-\frac{1}{p_n}\big)=0\ \ (k\le l\le n-1),\\  
    \label{solve2} &\frac{\be_{l+1}}{2}-(1+l(1-\si_l))\big(\frac{1}{2}-\frac{1}{p_n}\big)=0\ \ (k-1\le l\le n-1). 
\end{align}
The system was solved in \cite{hickman2020note}. In fact, we get
\begin{align}
    \label{defga}&\ga_l=\frac{k-1}{2}\cdot \frac{1}{l(l-1)}\cdot\prod_{i=k}^l\frac{2i}{2i+1}\ \ (k\le l\le n-1),\\
    &\ga_n=1-\sum_{l=k}^{n-1}\ga_l.
\end{align}
We also get $p_n=p_n(k)$, where $p_n(k)$ is given by \eqref{defpk}.
\begin{remark}
\rm

Our subscript is slightly different from that of \cite{hickman2020note}. Our subscript $l$ indicates the dimension while their subscript indicates the codimension. 
\end{remark}

\subsection{The second algorithm}\hfill

We discuss our second algorithm here. We will use the first algorithm constantly.

\begin{algorithm}[The second algorithm] \hfill

\rm{
We begin with the cell $\cp_R$, the function $\vf$ and a number $A_n\sim\log\log R$. Recall that each component $f_j$ of $\vf$ has Fourier support in $N_{R^{\beta}R^{-1}}\Ga_j(\si)$. For convenience, we write $S_n=\cp_R$, $\vf_{S_n}=\vf$. 

\smallskip

We have the following {\bf outputs}:

\medskip
\noindent$\bullet$ There is an integer $m$ ($k\le m\le n$) so that the algorithm ends at dimension $m$.

\noindent$\bullet$
We obtain a sequence of cell sets $\cs_n,\cs_{n-1},\cdots,\cs_m,\co$ (for convenience we may write $\cs_{m-1}=\cO$).
The cells are nested in the sense that for any $O\in \co$, there exist unique $S_l\in\cs_l$ ($m\le l\le n$) such that 
$$ O<S_m<\cdots<S_{n-1}<S_n. $$
For $S_l, S_{l'}$ in this chain with $l<l'$,
we also write $S_l<S_{l'}$ to mean $S_{l'}$ is the ancestor of $S_l$.

\medskip
\noindent$\bullet$
We obtain numbers $a_l, c_l$ ($m-1\le l\le n$) where $a_l$ (resp. $c_l$) is the number of algebraic (resp. cellular) cases from $\cs_{l+1}$ to $\cs_l$. We also obtain a sequence of scales 
\begin{equation}\label{obtainradius}
    R=r_n\ge r_{n-1}\ge \cdots r_m\ge r_{m-1}\sim M^2 R^{\e/10n},
\end{equation}
so that the cells in $\cs_l$ are at scale $r_l$ ($m-1\le l\le n$). 
These scales are defined recursively as
$$r_l=r_{l+1}2^{-c_l}R^{-\de a_l}.$$

\medskip
\noindent$\bullet$
We obtain associated functions
\begin{equation}
\nonumber
    \{\vf_{S_n}\}_{S_n\in\cs_n}, \{\vf_{S_{n-1}}\}_{S_{n-1}\in\cs_{n-1}}, \cdots, \{\vf_{S_m}\}_{S_m\in\cs_m}, \{\vf_{O}\}_{O\in\co}.
\end{equation}
Also, for each $S_l\in\cs_l$, there is an $l$-dimensional transverse complete intersection $\bZ_{S_l}$. The functions satisfy: each component of $\vf_{S_l}$, $f_{j,S_l}$, has Fourier transform supported in $N_{R^{\beta}r_l^{-1}}(\Ga_j(\si))$. And $\vf_{S_l}$ is concentrated on scale $r_l$ wave packets that are $r_l^{-1/2}$-tangent to $\bZ_{S_l}$ in $\cp_{S_l}$.

\medskip
\noindent$\bullet$
We set
\begin{align}
\nonumber
    D_l:=d^{c_l}, \hspace{.5cm}
    A_l:=A_{l+1}/2^{a_l+1}.
\end{align} 
Then for $m-1\le l\le n-1$, we have the following estimates:

\begin{align}\label{est0.1}
    \sum_{S_{l+1}}\|\Sq\vf_{S_{l+1}}\|^{p_{l+1}}_{\BL^{p_{l+1}}_{k,A_{l+1}}(S_{l+1})}&\lesssim R^{O(\de)} C^{c_l} (\log R)^{3a_l}\sum_{S_{l}}\|\Sq(\vf_{S_l})\|^{p_{l+1}}_{\BL^{p_{l+1}}_{k,2A_l}(S_l)},\\ \label{est0.2}
    \sum_{S_l}\|\Sq\vf_{S_l}\|^2_2\lesssim R^{O(\de)}& C^{c_l} D_l {\poly}(d)^{a_l}R^{-\de a_l}\sum_{S_{l+1}}\|\Sq\vf_{S_{l+1}}\|_2^2,\\
    \label{est0.3}
    \|\Sq\vf_{S_l}\|^2_2\lesssim  R^{O(\de)} C^{c_l} \Big( R^{\de a_l} &\Big)^{-\frac{n-(l+1)}{2}} D_l^{-l} R^{-\de a_l}\|\Sq\vf_{S_{l+1}}\|_2^2,\ \ \forall S_{l}<S_{l+1}.
\end{align}

}
\end{algorithm}

\begin{proof}

Let us first discuss the algorithm from dimension $l+1$ to $l$.
Suppose that we have finished the setup at dimension $l+1$. That is: at dimension $l+1$, we have a scale $r_{l+1}$ ($M^2R^{\e/10n}\le r_{l+1}\le R$) and a number $A_{l+1}$. Also, we have a set of cells $\cs_{l+1}=\{S_{l+1}\}$, so that each $S_{l+1}$ is contained in a rescaled $r_{l+1}$-ball $\cp_{S_{l+1}}$. Assume that for each cell there is an $(l+1)$-dimensional transverse complete intersection $\bZ_{S_{l+1}}$ and a function $\vf_{S_{l+1}}$. Each component of $\vf_{S_{l+1}}$, $f_{j,S_{l+1}}$, has Fourier transform supported in $N_{R^{\beta}r_{l+1}^{-1}}(\Ga_j(\si))$, and is concentrated on wave packets that are $r_{l+1}^{-1/2}$-tangent to $\bZ_{S_{l+1}}$ in $\cp_{S_{l+1}}$.

Apply Algorithm \ref{1algorithm} to them. We set
\begin{align}
    &c_l:=\#\{1\le i\le s:~\textup{STATE}(i)=\textup{cell}\}, \\ 
    &a_l:=\#\{1\le i\le s:~\textup{STATE}(i)\neq\textup{cell}\},
\end{align}
as in \eqref{it1.11} and \eqref{it1.12}. Then the new scale is given by
$$r_l=r_{l+1}2^{-c_l}R^{-\de a_l}.$$
We also set
\begin{align}
\nonumber
    D_l:=d^{c_l}, \hspace{.5cm}
    A_l:=A_{l+1}/2^{a_l+1}.
\end{align} 
Regarding to  STATE($s$), we have the following two scenarios:
\medskip

\noindent
\textit{Scenario 1} (STATE($s$)=tang).
We obtain a set of tangent cells $\cs_{l}=\{S_{l}\}$, each of which is contained in a rescaled $r_{l}$-ball $\cp_{S_{l}}$. For each cell $S_l$, we obtain an $l$-dimensional transverse complete intersection $\bZ_{S_l}$ and a function $\vf_{S_l}$. Each component of $\vf_{S_l}$, $f_{j,S_l}$, has Fourier transform supported in $N_{R^{\beta}r_l^{-1}}(\Ga_j(\si))$, and is concentrated on wave packets that are $r_l^{-1/2}$-tangent to $\bZ_{S_l}$ in $\cp_{S_l}$.

The most important estimates we obtain are:
\begin{align}\label{est1.1}
    \sum_{S_{l+1}}\|\Sq\vf_{S_{l+1}}\|^{p_{l+1}}_{\BL^{p_{l+1}}_{k,A_{l+1}}(S_{l+1})}&\lesssim R^{O(\de)} C^{c_l} (\log R)^{3a_l}\sum_{S_{l}}\|\Sq(\vf_{S_l})\|^{p_{l+1}}_{\BL^{p_{l+1}}_{k,2A_l}(S_l)},\\ \label{est1.2}
    \sum_{S_l}\|\Sq\vf_{S_l}\|^2_2\lesssim R^{O(\de)}& C^{c_l} D_l {\poly}(d)^{a_l}R^{-\de a_l}\sum_{S_{l+1}}\|\Sq\vf_{S_{l+1}}\|_2^2.
\end{align}
Also, for each $S_{l}<S_{l+1}$ one has
\begin{equation}\label{est1.3}
    \|\Sq\vf_{S_l}\|^2_2\lesssim R^{O(\de)} C^{c_l} \Big( R^{\de a_l} \Big)^{-\frac{n-(l+1)}{2}} D_l^{-l} R^{-\de a_l}\|\Sq\vf_{S_{l+1}}\|_2^2.
\end{equation}
Here, \eqref{est1.1} is obtained by iterating \eqref{itcellular3}, \eqref{ittransverse3},  \eqref{ittangent3}; \eqref{est1.2} is obtained by iterating \eqref{itcellular1}, \eqref{ittransverse1}, \eqref{ittangent1}; \eqref{est1.3} is obtained by iterating \eqref{itcellular2}, \eqref{ittransverse2},  \eqref{ittangent2}.

\medskip
\noindent
\textit{Scenario 2} (STATE($s$)$\neq$tang).
In this scenario, $r_l\sim M^2R^{\e/10n}$, and the algorithm stops.
We obtain our final collection of cells $\co=\{O\}$, each of which is contained in a rescaled $r_{l}$-ball $\cp_{O}$. For each cell $O$, we obtain an $l$-dimensional transverse complete intersection $\bZ_{O}$ and a function $\vf_{O}$. Each component of $\vf_{O}$, $f_{j,O}$, has Fourier transform supported in $N_{R^{\beta}r_l^{-1}}(\Ga_j(\si))$, and is concentrated on wave packets that are $r_l^{-1/2}$-tangent to $\bZ_{O}$ in $\cp_{O}$.

Similarly, we obtain:
\begin{align}\label{est2.1}
    \sum_{S_{l+1}}\|\Sq\vf_{S_{l+1}}\|^{p_{l+1}}_{\BL^{p_{l+1}}_{k,A_{l+1}}(S_{l+1})}\lesssim & R^{O(\de)} C^{c_l} (\log R)^{3a_l}\sum_{O}\|\Sq(\vf_{O})\|^{p_{l+1}}_{\BL^{p_{l+1}}_{k,A_{l+1}}(O)},\\ \label{est2.2}
    \sum_{O}\|\Sq\vf_{O}\|^2_2\lesssim R^{O(\de)}& C^{c_l}  D_l {\poly}(d)^{a_l}R^{-\de a_l}\sum_{S_{l+1}}\|\Sq\vf_{S_{l+1}}\|_2^2.
\end{align}
For each $O<S_{l+1}$, one has
\begin{equation}\label{est2.3}
    \|\Sq\vf_{O}\|^2_2\lesssim R^{O(\de)} C^{c_l} \Big( R^{\de a_l}\Big)^{-\frac{n-(l+1)}{2}} D_l^{-l} {\poly}(d)^{a_l}R^{-\de a_l}\|\Sq\vf_{S_{l+1}}\|_2^2.
\end{equation}

\medskip

Now we iterate the above argument.
Let us begin with the setup at dimension $n$.
We begin with a scale $r_n=R$, a single cell $S_n=\cp_R$
and the function $\vf_{S_n}=\vf$. Each component of $\vf$, $f_j$, has Fourier support in $N_{R^{\beta}R^{-1}}\Ga_j(\si)$. We choose $\bZ_{S_n}=\R^n$ to be the $n$-dimensional transverse complete intersection. 
If we are in \textit{Scenario 2}, we end our algorithm and obtain cells $\co$ and functions $\{\vf_O\}_{O\in\co}$. If we are in \textit{Scenario 1}, then we obtain cells $\cs_{n-1}$,  functions $\{\vf_{S_{n-1}}\}_{S_{n-1}\in\cs_{n-1}}$.  We can continue the same reasoning at dimension $n-1$ for $\cs_{n-1}$. There are still two scenarios: \textit{Scenario 1} and \textit{Scenario 2}. We continue when it is in \textit{Scenario 1}; stop when it is in \textit{Scenario 2}. The iteration will finally stop when we encounter \textit{Scenario 2}, since we cannot go below dimension $k$.

\smallskip

Suppose the second algorithm ends at dimension $m$, then $m$ satisfies $k\le m\le n$ (since we are considering the $k$-broad norm).
Let $\cs_n,\cs_{n-1}\cdots,\cs_{m},\co$ be the cells we obtain from the second algorithm.
We also let 
\begin{equation}
    R=r_n\ge r_{n-1}\ge \cdots r_m\ge r_{m-1}\sim M^2 R^{\e/10n}.
\end{equation}
be the scales of these cells. We also see \eqref{est0.1}, \eqref{est0.2}, \eqref{est0.3} are satisfied.  
\end{proof}

Before stating the results, we introduce a notation to simplify the calculation.

\begin{remark}\label{pretend}

\rm
In the following computations, we will use $`` A\lessapprox B"$ to denote 
\begin{equation}
\nonumber
    A\lesssim R^{O(\de)} \prod_{l=m-1}^n D_l^{O(\de)}B.
\end{equation}
With this notation, we can ignore factors like $R^{O(\de)}$, $(\log R)^{3a_l}$, $C^{c_l}$ in \eqref{est1.1}--\eqref{est1.3}, since $(\log R)^{3a_l}\!\le\! (\log R)^{2\de^{-1}}\!\!\!\lesssim \!R^{\de}$, $C^{c_l}\lesssim \! (d^{O(\de)})^{c_l}=D_l^{O(\de)}$ if $d$ is sufficiently large. Also we can pretend $r_{l+1}\approx r_l R^{-\de a_l}$, as $2^{c_l}=D_l^{O(\de)}$ if $d$ is sufficiently large.
\end{remark}

\begin{lemma}
For $m\le l\le n$, we have
\begin{equation}\label{iterationineq}
    \|\Sq\vf\|_{\BL^{p_n}_{k,A_n}(\cp_R)}\lessapprox  M(l) \|\Sq \vf\|_{L^2(\om_{\cp_R})}^{1-\beta_l}\big( \sum_{S_l}\|\Sq \vf_{S_l}\|^{p_l}_{\BL^{p_l}_{k,A_l}(S_l)} \big)^{\frac{\beta_l}{p_l}}
\end{equation} 
where 
\begin{equation}\label{ml}
    M(l)=\prod_{i=l}^{n-1} (D_i R^{-\de a_{i}})^{\frac{\beta_{i+1}-\be_l}{2}}.
\end{equation}
\end{lemma}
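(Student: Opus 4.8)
**The plan is to prove \eqref{iterationineq} by downward induction on $l$, starting from $l=n$ and using the outputs of the second algorithm together with the broad-norm Hölder inequality \eqref{broad-holder}.**

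The base case $l=n$ is trivial: since $\beta_n=1$, $M(n)=1$ (empty product), $A_n$ is the starting value, $\cs_n=\{\cp_R\}$, $\vf_{S_n}=\vf$, $p_n=p$, and \eqref{iterationineq} reduces to the tautology $\|\Sq\vf\|_{\BL^{p_n}_{k,A_n}(\cp_R)}\lessapprox\|\Sq\vf\|_{\BL^{p_n}_{k,A_n}(\cp_R)}$. For the inductive step, assume \eqref{iterationineq} holds at level $l+1$ and pass to level $l$. The right-hand quantity at level $l+1$ involves $\sum_{S_{l+1}}\|\Sq\vf_{S_{l+1}}\|^{p_{l+1}}_{\BL^{p_{l+1}}_{k,A_{l+1}}(S_{l+1})}$. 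I would first apply \eqref{est0.1} (or \eqref{est1.1}/\eqref{est2.1} depending on the scenario) to bound this by $\lessapprox\sum_{S_l}\|\Sq\vf_{S_l}\|^{p_{l+1}}_{\BL^{p_{l+1}}_{k,2A_l}(S_l)}$, absorbing the $R^{O(\de)}$, $C^{c_l}$, $(\log R)^{3a_l}$ factors into $``\lessapprox"$ as licensed by Remark \ref{pretend}.

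Next, on each cell $S_l$ I would interpolate the broad norm at exponent $p_{l+1}$ between the $L^2$-based broad norm and the broad norm at exponent $p_l$. Concretely, by the relation \eqref{prelation} — which says $\tfrac{1}{p_{l+1}}=\tfrac{1-\al_l}{2}+\tfrac{\al_l}{p_l}$ — Hölder's inequality for the broad norm \eqref{broad-holder} gives
\begin{equation}\nonumber
\|\Sq\vf_{S_l}\|_{\BL^{p_{l+1}}_{k,2A_l}(S_l)}\lesssim\|\Sq\vf_{S_l}\|_{\BL^{2}_{k,A_l}(S_l)}^{1-\al_l}\|\Sq\vf_{S_l}\|_{\BL^{p_l}_{k,A_l}(S_l)}^{\al_l}.
\end{equation}
Since the $\BL^2$ broad norm is controlled by the honest $L^2$ norm, and using \eqref{est0.3} to replace $\|\Sq\vf_{S_l}\|_2$ by a gain-factor times $\|\Sq\vf_{S_{l+1}}\|_2$ (and hence, after summing down the chain, by $\|\Sq\vf\|_{L^2(\om_{\cp_R})}$ up to $\lessapprox$-factors), one extracts the $L^2$ norm raised to the correct power. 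Then I would raise to the $p_{l+1}$ power, sum over $S_l$, and apply Hölder in the summation index $S_l$ (in the form $\sum a_i^{1-\al}b_i^{\al}\le(\sum a_i)^{1-\al}(\sum b_i)^{\al}$ when $\sum a_i\le\sum b_i$ up to constants, or more carefully a weighted Hölder), using \eqref{est0.2} to control $\sum_{S_l}\|\Sq\vf_{S_l}\|_2^2$ by $\sum_{S_{l+1}}\|\Sq\vf_{S_{l+1}}\|_2^2$ with the appropriate gain. Feeding these into the level-$(l+1)$ hypothesis, and using $\beta_l=\al_l\beta_{l+1}$ together with the telescoping identity for $M(l)$ versus $M(l+1)$, one recovers \eqref{iterationineq} at level $l$ with the correct power $1-\beta_l$ on the $L^2$ norm, the correct exponent $\beta_l/p_l$ on the cell-sum, and the correct prefactor $M(l)=M(l+1)\cdot(D_l R^{-\de a_l})^{(\beta_{l+1}-\beta_l)/2}$.

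The main obstacle I expect is bookkeeping the exact powers of the gain factors $D_l$ and $R^{-\de a_l}$ so that they assemble into precisely $M(l)$ as defined in \eqref{ml}. The $L^2$ estimates \eqref{est0.2}, \eqref{est0.3} each carry their own powers of $D_l$ and $R^{-\de a_l}$ (and dimension-dependent exponents like $-\tfrac{n-(l+1)}{2}$ on the transverse gains), and the interpolation weights $1-\al_l$, $\al_l$ redistribute these when raised to the $p_{l+1}$ power and summed; one must check that the $L^2$-side contributions (those going into the $\|\Sq\vf\|_2^{1-\beta_l}$ factor, which does not carry an $M(l)$) separate cleanly from the $\BL^{p_l}$-side contributions, and that only the latter feed into $M(l)$. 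This is a careful but routine exponent computation; the conceptual content is entirely in the choice of the $\{p_l\}$ via \eqref{prelation} and in Remark \ref{pretend}'s convention that lets us discard the harmless polynomial factors. I would also need to verify that $A_l=A_{l+1}/2^{a_l+1}$ stays $\ge\log\log R$ throughout, but this follows from the bound $a_l\le\de^{-1}$ on the number of non-cellular steps, exactly as in the source references.
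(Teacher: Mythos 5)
Your argument follows essentially the same route as the paper's own proof: induct on $l$, pass from the $\cs_{l+1}$ sum to the $\cs_l$ sum at exponent $p_{l+1}$ via \eqref{est0.1}, apply the broad-norm H\"older inequality \eqref{broad-holder} together with H\"older in the cell index $S_l$ (using $\frac{1}{p_{l+1}}=\frac{1-\al_l}{2}+\frac{\al_l}{p_l}$), control the $L^2$ side by iterating \eqref{est0.2} down to $\|\Sq\vf\|_{L^2(\om_{\cp_R})}^2$, and close via the telescoping identity $M(l)=M(l+1)\,(D_lR^{-\de a_l})^{(\be_{l+1}-\be_l)/2}$ and $\be_l=\al_l\be_{l+1}$. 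One small correction: only the summed $L^2$ estimate \eqref{est0.2} is used here, not the per-cell bound \eqref{est0.3}, because after the sum-index H\"older the quantity you need to control is precisely $\sum_{S_l}\|\Sq\vf_{S_l}\|_2^2$; the per-cell estimate \eqref{est0.3} is saved for the separate $\max_O\|\Sq\vf_O\|_2$ computation in Section~\ref{section8}.
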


\begin{proof}
We induct on $l$. When $l=n$, it holds. Suppose \eqref{iterationineq} holds for $l+1$, i.e.
\begin{equation}\label{iterationineq1}
    \|\Sq\vf\|_{\BL^p_{k,A_n}(\cp_R)}\lessapprox M(l+1) \|\Sq \vf_{\cp_R}\|_{L^2(\om_{\cp_R})}^{1-\beta_{l+1}}\big( \sum_{S_{l+1}}\|\Sq \vf_{S_{l+1}}\|^{p_{l+1}}_{\BL^{p_{l+1}}_{k,A_{l+1}}(S_{l+1})} \big)^{\frac{\beta_{l+1}}{p_{l+1}}}. 
\end{equation} 
From \eqref{est1.1}, we have
\begin{equation}
    \big(\sum_{S_{l+1}}\|\Sq \vf_{S_{l+1}}\|^{p_{l+1}}_{\BL^{p_{l+1}}_{k,A_{l+1}}(S_{l+1})} \big)^{\frac{1}{p_{l+1}}}\lessapprox \big(\sum_{S_{l}}\|\Sq \vf_{S_{l}}\|^{p_{l+1}}_{\BL^{p_{l+1}}_{k,2A_l}(S_{l})} \big)^{\frac{1}{p_{l+1}}}.
\end{equation}
By H\"older's inequality for the broad norm (see \eqref{broad-holder}), we get
\begin{equation}
    \big(\sum_{S_{l}}\|\Sq \vf_{S_{l}}\|^{p_{l+1}}_{\BL^{p_{l+1}}_{k,2A_l}\!(S_{l})} \big)^{\frac{1}{p_{l+1}}}\!\!\lesssim\! \big(\sum_{S_{l}}\|\Sq \vf_{S_{l}}\|^{2}_{L^{2}_{k,A_l}\!(S_{l})} \big)^{\frac{1-\al_l}{2}}\!\! \big(\sum_{S_{l}}\|\Sq \vf_{S_{l}}\|^{p_{l}}_{\BL^{p_{l}}_{k,A_l}\!(S_{l})} \big)^{\frac{\al_l}{p_{l}}}.
\end{equation}
From \eqref{est1.2}, one has
\begin{equation}
    \sum_{S_{l}}\|\Sq \vf_{S_{l}}\|^{2}_{L^{2}_k(S_{l})}\lessapprox \prod_{i=l}^{n-1}D_l R^{-\de a_l}\|\Sq \vf\|_2^2.
\end{equation}
Plugging the above estimates back to \eqref{iterationineq1}, we get that 
\begin{align*}
    \|\Sq\vf\|_{\BL^{p_n}_{k,A_n}(\cp_R)}\lessapprox \,& M(l+1) \|\Sq \vf\|_{L^2(\om_{\cp_R})}^{1-\beta_{l+1}}
    \big(\prod_{i=l}^{n-1}D_l R^{-\de a_l}\|\Sq \vf\|_{L^2(\om_{\cp_R})}^2\big)^{\be_{l+1}\frac{1-\al_l}{2}}\\
    &\cdot\big( \sum_{S_{l}}\|\Sq \vf_{S_{l}}\|^{p_{l}}_{\BL^{p_{l}}_{k,A_l}(S_{l})} \big)^{\frac{\beta_{l+1}\al_l}{p_{l}}}
    \\
    = M(l+1)\big(&\prod_{i=l}^{n-1}D_l R^{-\de a_l}\big)^{\frac{\be_{l+1}-\be_l}{2}}
    \|\Sq \vf\|_{L^2(\om_{\cp_R})}^{1-\beta_{l}}
    \big( \sum_{S_{l}}\|\Sq \vf_{S_{l}}\|^{p_{l}}_{\BL^{p_{l}}_{k,A_l}(S_{l})} \big)^{\frac{\beta_l}{p_{l}}}.
\end{align*}
So it suffices to verify
$$ M(l)=M(l+1)\big(\prod_{i=l}^{n-1}D_l R^{-\de a_l}\big)^{\frac{\be_{l+1}-\be_l}{2}}, $$
which is easy to check.
\end{proof}

The next lemma concerns the estimates for cells $\co=\{O\}$ that are at the smallest scale $r_{m-1}\sim M^2 R^{\e/10n}$.
\begin{lemma}[Small cells]
Recall that $\co$ is the set cells at scale $r_{m-1}\sim M^{2} R^{\e/10n}$. For each $O\in\co$, we have
\begin{equation}\label{smallradius}
    \|\Sq \vf_{O}\|_{\BL^{p}_{k,A_m}(O)}\lesssim M^{(n+1)(\frac{1}{p}-\frac{1}{2})}\|\Sq \vf_{O}\|_2.
\end{equation}
\end{lemma}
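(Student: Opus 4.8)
The plan is to reduce the broad norm to an ordinary $L^p$-norm and then apply a reverse H\"older (Bernstein-type) inequality exploiting the fact that the \emph{square} $(\Sq\vf_O)^2$ has Fourier transform supported in a box of volume $\sim M^{-(n+1)}$. First I would note that, by Definition \ref{defbroad}, the broad norm is dominated by the full $L^p$-norm: for each cap $\tau$ one has $\|\Sq\vg_\tau\|_{L^p(B)}\lesssim\|\Sq\vg\|_{L^p(\om_B)}$ (since $|\vp_{j,\tau}|$ is controlled by a single $L^1$-normalised rapidly decaying weight, whence $|\Sq\vg_\tau|^2\lesssim|\Sq\vg|^2\ast\psi$ by Cauchy--Schwarz), and summing over the boundedly overlapping rectangles $B$ meeting $O$ gives $\|\Sq\vf_O\|_{\BL^p_{k,A_m}(O)}\lesssim\|\Sq\vf_O\|_{L^p(\R^n)}$. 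Hence it suffices to show $\|\Sq\vf_O\|_{L^p(\R^n)}\lesssim M^{(n+1)(\frac1p-\frac12)}\|\Sq\vf_O\|_2$.

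Next I would locate the Fourier support of $(\Sq\vf_O)^2$. By the output of the second algorithm each component $f_{j,O}$ has $\textup{supp}\,\wh{f}_{j,O}\subset N_{R^\beta r_{m-1}^{-1}}\Ga_j(\si)$ with $r_{m-1}\sim M^2R^{\e/10n}$. As $\si$ is an $M^{-1}$-cap, Lemma \ref{Gauss-map-spt} (with $\rho=M^2$) shows $\Ga_j(\si)$ is the graph $\xi_n=\Phi(\bar\xi;t_j)$ over an $M^{-1}$-ball in $\bar\xi$; expanding $\Phi$ to second order and using $|\nabla^2_{\bar\xi}\Phi|\sim1$ places this graph, after a shear in the $\xi_n$-variable, inside a box of dimensions $\sim M^{-1}\times\cdots\times M^{-1}\times M^{-2}$ whose short axis lies within $O(M^{-1})$ of $c_\si$, and the extra thickening $R^\beta r_{m-1}^{-1}=R^\beta M^{-2}R^{-\e/10n}\ll M^{-2}$ does not enlarge it beyond a constant. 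Therefore $\textup{supp}\,\widehat{|f_{j,O}|^2}=\textup{supp}\,\wh{f}_{j,O}-\textup{supp}\,\wh{f}_{j,O}$ lies in such a box centred at the origin; since these boxes have comparable dimensions and nearly parallel short axes as $j$ varies, they all fit into one common box $\mathcal{B}$ with $|\mathcal{B}|\lesssim M^{-(n+1)}$, so that $\widehat{(\Sq\vf_O)^2}=\sum_j\widehat{|f_{j,O}|^2}$ is supported in $\mathcal{B}$.

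Finally I would close by Young's inequality. Put $h:=(\Sq\vf_O)^2\ge0$, so that $\|h\|_{L^1}=\|\Sq\vf_O\|_2^2$ and $\|h\|_{L^{p/2}}=\|\Sq\vf_O\|_p^2$; choosing a bump $\phi\equiv1$ on $\mathcal{B}$ with $\textup{supp}\,\phi\subset2\mathcal{B}$ one has $h=h\ast\check\phi$ and $\|\check\phi\|_{L^{p/2}(\R^n)}\sim|\mathcal{B}|^{1-2/p}$, and since $p\ge p_n\ge2$, Young's inequality yields
\[
\|\Sq\vf_O\|_p^2=\|h\ast\check\phi\|_{L^{p/2}}\le\|h\|_{L^1}\|\check\phi\|_{L^{p/2}}\lesssim M^{-(n+1)(1-2/p)}\|\Sq\vf_O\|_2^2,
\]
and taking square roots gives the claim.

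The step that needs real care is the middle one: identifying the box $\mathcal{B}$ and verifying $|\mathcal{B}|\lesssim M^{-(n+1)}$, in particular checking that the wave-packet thickening $R^\beta r_{m-1}^{-1}$ is negligible next to the curvature-induced width $M^{-2}$ of the cap $\Ga_j(\si)$. The passage from the broad norm to the $L^p$-norm and the final Young estimate are routine.
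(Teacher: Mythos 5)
Your proof is correct, and it takes a genuinely different route from the paper's. The paper reduces the broad norm to $\|\Sq\vf_O\|_{L^p}$ and then interpolates (log-convexity of $L^p$ norms) between the trivial case $p=2$ and the case $p=\infty$, where it runs Bernstein's inequality $\|f_{j,O}\|_\infty\lesssim M^{-(n+1)/2}\|f_{j,O}\|_2$ componentwise on each $f_{j,O}$ and then passes the $\sup$ inside the $\ell^2_j$-sum. You instead work directly at the exponent $p$: you locate the Fourier support of the \emph{square} $h=(\Sq\vf_O)^2=\sum_j|f_{j,O}|^2$ in a single box $\mathcal{B}$ of volume $\lesssim M^{-(n+1)}$, and then read off $\|h\|_{p/2}\lesssim|\mathcal{B}|^{1-2/p}\|h\|_1$ from Young's inequality. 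Both arguments ultimately rest on the same geometric fact — that $\textup{supp}\,\wh{f}_{j,O}$ sits in a slab of dimensions $M^{-1}\times\cdots\times M^{-1}\times M^{-2}$ (the thickening $R^\beta r_{m-1}^{-1}\ll M^{-2}$ being negligible, as you note). The endpoint-interpolation route in the paper is slightly lighter because it never needs to form difference sets or check that the boxes for different $j$ agree; your reverse-H\"older route does need the observation — which you correctly supply — that since $\Ga_j(\si)$ has unit normal precisely $c_\si$ at its centre, the difference sets $\textup{supp}\,\wh{f}_{j,O}-\textup{supp}\,\wh{f}_{j,O}$ all sit in one common box centred at $0$ with short axis $c_\si$, uniformly in $j$. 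Your version of the reduction from the broad norm to the global $L^p$-norm, via $|\Sq\vg_\tau|^2\lesssim\psi*|\Sq\vg|^2$, is also fine and in fact fills in a step the paper leaves implicit.
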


\begin{proof}
Since $\|\Sq \vf_{O}\|_{\BL^{p}_{k,A_m}}\lesssim \|\Sq \vf_{O}\|_p$, it suffices to prove
\begin{equation}\label{smallradius1}
     \|\Sq \vf_{O}\|_p\lesssim M^{(n+1)(\frac{1}{p}-\frac{1}{2})}\|\Sq \vf_{O}\|_2.
\end{equation}
Since
\begin{equation}
    \|\Sq\vf_{O}\|_{p}=\Big(\int\Big( \sum_{j=1}^R |f_{j,O}|^2\Big)^{p/2}\Big)^{1/p},
\end{equation}
by H\"older's inequality, it suffices to prove \eqref{smallradius1} when $p=2$ and $p=\infty$. In the case $p=2$, there is nothing to prove.

Let us consider the case for $p=\infty$. We note that $\wh{f}_{j,O_s}$ is supported in a slab of dimensions $M^{-1}\times\cdots\times M^{-1}\times M^{-2}$, so via Bernstein's inequality,
\begin{equation}
\label{bernstein}
    \|f_{j,O}\|_\infty\lesssim M^{-\frac{n+1}{2}}\|f_{j,O_s}\|_2.
\end{equation}
Now we have the estimate
\begin{equation}
    \|\Sq\vf_{O}\|_\infty=\sup_{x}\Big( \sum_{j=1}^R |f_{j,O}(x)|^2\Big)^{1/2}\le \Big( \sum_{j=1}^R \sup_{x}|f_{j,O}(x)|^2\Big)^{1/2},
\end{equation}
which, via the Berstein's estimate \eqref{bernstein}, is bounded from above by
\begin{equation}
    M^{-\frac{n+1}{2}}\Big( \sum_{j=1}^R \|f_{j,O_s}\|_2^2\Big)^{1/2}=M^{-\frac{n+1}{2}}\|\Sq\vf_{O_s}\|_2.
\end{equation}
This is the desired estimate when $p=\infty$.
\end{proof}

Combining the above two lemmas, we can get an upper bound for $\|\Sq \vf\|_{\BL^p_k(\cp_R)}$.

\begin{proposition}
We have the estimate
\begin{align}\label{mixnorm}
   \|\Sq\vf\|_{\BL^p_{k,A_n}\!(\cp_R)}\!\lessapprox & M^{\frac{2\be_m}{p_m}+(n+1)(\frac{1}{p_n}-\frac{1}{2})}R^{-\frac{1}{p_n}}\!\prod_{i=m}^{n-1}r_i^{\frac{\be_{i+1}-\be_i}{2}}\!\!\!\!\prod_{i=m-1}^{n-1}\!\!\!\!D_i^{\frac{\be_{i+1}}{2}-(\frac{1}{2}-\frac{1}{p_n})}\\ \nonumber
   &\|\Sq\vf\|_{L^2(\om_{\cp_R})}^{\frac{2}{p_n}}\max_{O}\|\Sq \vf_{O}\|^{1-\frac{2}{p_n}}_{2}.
\end{align}
\end{proposition}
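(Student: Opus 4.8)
The statement is obtained purely by feeding the two preceding lemmas and the output estimates of the second algorithm into one another; no new analytic input is required. The plan is to start from the iteration inequality \eqref{iterationineq} evaluated at the bottom dimension $l=m$, namely
\begin{equation*}
\|\Sq\vf\|_{\BL^{p_n}_{k,A_n}(\cp_R)}\lessapprox M(m)\,\|\Sq\vf\|_{L^2(\om_{\cp_R})}^{1-\be_m}\Big(\sum_{S_m}\|\Sq\vf_{S_m}\|^{p_m}_{\BL^{p_m}_{k,A_m}(S_m)}\Big)^{\be_m/p_m},
\end{equation*}
with $M(m)=\prod_{i=m}^{n-1}(D_iR^{-\de a_i})^{(\be_{i+1}-\be_m)/2}$ as in \eqref{ml}. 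Everything then reduces to controlling the inner sum over $\cs_m$ by $\|\Sq\vf\|_{L^2(\om_{\cp_R})}$, $\max_O\|\Sq\vf_O\|_2$ and explicit geometric factors.

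To estimate that sum I would descend from the scale-$r_m$ cells $\cs_m$ to the final scale-$r_{m-1}$ cells $\co=\cs_{m-1}$. By \eqref{est0.1} (equivalently \eqref{est2.1}) with $l=m-1$,
\begin{equation*}
\sum_{S_m}\|\Sq\vf_{S_m}\|^{p_m}_{\BL^{p_m}_{k,A_m}(S_m)}\lessapprox\sum_{O\in\co}\|\Sq\vf_{O}\|^{p_m}_{\BL^{p_m}_{k,A_m}(O)}.
\end{equation*}
Since the cells in $\co$ sit at scale $r_{m-1}\sim M^2R^{\e/10n}$, the small-cells estimate \eqref{smallradius}, used with exponent $p_m$, trivializes the broad norm: $\|\Sq\vf_O\|_{\BL^{p_m}_{k,A_m}(O)}\lesssim M^{(n+1)(1/p_m-1/2)}\|\Sq\vf_O\|_2$. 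Hence
\begin{equation*}
\sum_{O}\|\Sq\vf_{O}\|^{p_m}_{\BL^{p_m}_{k,A_m}(O)}\lesssim M^{(n+1)(1-p_m/2)}\Big(\max_{O}\|\Sq\vf_O\|_2\Big)^{p_m-2}\sum_{O}\|\Sq\vf_O\|_2^2,
\end{equation*}
and the $L^2$-mass of the small cells is telescoped by iterating the $L^2$-relation \eqref{est0.2} over $n-1\ge l\ge m-1$, giving $\sum_O\|\Sq\vf_O\|_2^2\lessapprox\prod_{l=m-1}^{n-1}(D_lR^{-\de a_l})\,\|\Sq\vf\|^2_{L^2(\om_{\cp_R})}$.

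Plugging these into the displayed bound of Step~1 and raising to the power $\be_m/p_m$, the exponents of $\|\Sq\vf\|_{L^2(\om_{\cp_R})}$ and of $\max_O\|\Sq\vf_O\|_2$ collapse to $\tfrac{2}{p_n}$ and $1-\tfrac{2}{p_n}$ respectively, and the $M$-power produced by the small-cells step becomes $(n+1)(1-\tfrac{p_m}{2})\tfrac{\be_m}{p_m}=(n+1)(\tfrac1{p_n}-\tfrac12)$; all three identities follow from \eqref{prelation2}. It then remains to verify that the residual constant $M(m)\,M^{(n+1)(1/p_n-1/2)}\big(\prod_{l=m-1}^{n-1}D_lR^{-\de a_l}\big)^{\be_m/p_m}$ matches, up to $\lessapprox$, the prefactor in \eqref{mixnorm}. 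This is done by substituting $r_l=r_{l+1}2^{-c_l}R^{-\de a_l}$ (with $r_n=R$), $D_l=d^{c_l}$ and $2^{c_l}=D_l^{O(\de)}$: rewriting the $R^{-\de a_i}$ in $M(m)$ as $(r_i/r_{i+1})\,D_i^{O(\de)}$ produces $R^{-(1-\be_m)/2}\prod_{i=m}^{n-1}r_i^{(\be_{i+1}-\be_i)/2}$, while the telescoping product $\prod_{l=m-1}^{n-1}R^{-\de a_l}$ collapses to $r_{m-1}/R\sim M^2/R$ and so supplies the factor $M^{2\be_m/p_m}$ together with the remaining $R^{-\be_m/p_m}$; combining the $R$-powers via $\tfrac1{p_n}=\tfrac{1-\be_m}{2}+\tfrac{\be_m}{p_m}$ gives $R^{-1/p_n}$, and the accumulated powers of the $D_i$ rearrange (again via \eqref{prelation2}) into $\prod_{i=m-1}^{n-1}D_i^{\be_{i+1}/2-(1/2-1/p_n)}$. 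This is exactly \eqref{mixnorm}.

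Since all the genuine analysis is already packaged in \eqref{iterationineq}, \eqref{smallradius} and the algorithmic estimates \eqref{est0.1}--\eqref{est0.3}, the only real obstacle is the exponent bookkeeping in the last step: one must keep the $L^2$-telescoping running over the full range down to $\cs_{m-1}$, perform the conversions between the scales $r_i$, the degrees $D_i=d^{c_i}$ and the gains $R^{-\de a_i}$ consistently with the recursion defining the $r_i$, and trace every exponent identity invoked back to the defining relations \eqref{prelation} and \eqref{prelation2} among the $\{p_l\}$ and $\{\be_l\}$.
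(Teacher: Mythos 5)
Your proposal is correct and follows the paper's own argument nearly step for step: combine \eqref{iterationineq} at $l=m$, descend to $\cs_{m-1}=\co$ via \eqref{est0.1} at $l=m-1$, remove the broad norm on the $r_{m-1}$-scale cells with \eqref{smallradius} at exponent $p_m$, telescope the $L^2$-mass by iterating \eqref{est0.2}, and then track exponents using \eqref{prelation2} and $r_l\approx r_{l+1}R^{-\de a_l}$, $D_l=d^{c_l}$, $2^{c_l}=D_l^{O(\de)}$. The algebraic identities you invoke (the collapse of the $L^2$ and $\max_O$ exponents to $2/p_n$ and $1-2/p_n$, the $M$-power identity $(n+1)(1-p_m/2)\be_m/p_m=(n+1)(1/p_n-1/2)$, the telescope $\prod_{l=m-1}^{n-1}R^{-\de a_l}\approx r_{m-1}/R$, the $R$-power combination via $1/p_n=(1-\be_m)/2+\be_m/p_m$, and the rearrangement of the $D_i$-powers) all check out.
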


\begin{proof}
Recall our convention $\co=\cs_{m-1}$.
Combining \eqref{iterationineq} with $l=m$, \eqref{est0.1} with $l=m-1$ and \eqref{smallradius}, we get
\begin{align}\label{eee}
    \|\Sq\vf\|_{\BL^p_{k,A_n}(\cp_R)}&\lessapprox M(m) M^{(n+1)(\frac{1}{p_n}-\frac{1}{2})} \|\Sq \vf\|_{L^2(\om_{\cp_R})}^{1-\beta_m}\big( \sum_{O}\|\Sq \vf_{O}\|^{p_m}_{2} \big)^{\frac{\beta_m}{p_m}}.
\end{align}
Iterating \eqref{est0.2} from $l=m-1$ to $l=n-1$, we get
\begin{equation*}
    \sum_{O}\|\Sq \vf_{O}\|^{2}_{2}\lessapprox \big(\prod_{i=m-1}^{n-1}D_i R^{-\de a_{i}} \big)\|\Sq\vf\|_{L^2(\om_{\cp_R})}^2.
\end{equation*}
Plugging it into \eqref{eee} and recalling \eqref{ml}, we have
\begin{align*}
    &\|\Sq\vf\|_{\BL^p_{k,A_n}(\cp_R)}
    \lessapprox  M(m) M^{(n+1)(\frac{1}{p_n}-\frac{1}{2})}\|\Sq \vf\|_{L^2(\om_{\cp_R})}^{1-\beta_m}\big( \sum_{O}\|\Sq \vf_{O}\|^{2}_{2} \big)^{\frac{\beta_m}{p_m}}\max_{O}\|\Sq \vf_{O}\|^{1-\frac{2}{p_n}}_{2}\\
    &\lessapprox  \prod_{i=m}^{n-1} (D_i R^{-\de a_{i}})^{\frac{\beta_{i+1}-\be_m}{2}} \prod_{i=m-1}^{n-1} (D_i R^{-\de a_{i}})^{\frac{\beta_m}{p_m}} M^{(n+1)(\frac{1}{p_n}-\frac{1}{2})}\|\Sq\vf\|_{L^2(\om_{\cp_R})}^{\frac{2}{p_n}} \max_{O}\|\Sq \vf_{O}\|^{1-\frac{2}{p_n}}_{2}.
\end{align*}

We simplify the formula:
\begin{align*}
    \prod_{i=m}^{n-1} (D_i R^{-\de a_{i}})^{\frac{\beta_{i+1}-\be_m}{2}} &\prod_{i=m-1}^{n-1} (D_i R^{-\de a_{i}})^{\frac{\beta_m}{p_m}}\lessapprox \prod_{i=m}^{n-1}\Big( \frac{r_i D_i}{r_{i+1}} \Big)^{\frac{\beta_{i+1}-\beta_m}{2}}\prod_{i=m-1}^{n-1}\Big( \frac{r_i D_i}{r_{i+1}} \Big)^{\frac{\be_m}{p_m}}\\
    &=r_{m-1}^{\frac{\be_m}{p_m}}\prod_{i=m}^{n-1}r_i^{\frac{\be_{i+1}-\be_i}{2}}r_n^{-(\frac{\be_n-\be_m}{2}+\frac{\be_m}{p_m})}\prod_{i=m-1}^{n-1}D_i^{\frac{\be_{i+1}-\be_m}{2}+\frac{\be_m}{p_m}}
\end{align*}
Recall $r_n=R, r_{m-1}\sim M^2 R^{\e/10n}, \be_n=1, \be_i\big( \frac{1}{2}-\frac{1}{p_i} \big)=\frac{1}{2}-\frac{1}{p_n}$. The above equals
\begin{equation}
    M^{\frac{2\be_m}{p_m}}R^{-\frac{1}{p_n}}\prod_{i=m}^{n-1}r_i^{\frac{\be_{i+1}-\be_i}{2}}\prod_{i=m-1}^{n-1}D_i^{\frac{\be_{i+1}}{2}-(\frac{1}{2}-\frac{1}{p_n})}.
\end{equation}
Combining the estimates above, we proved \eqref{mixnorm}.
\end{proof}

\section{Estimate the functions associated to the smallest cells}\label{section8}
We estimate $\max_{O}\|\Sq \vf_{O}\|_2$ in this section.
First, we discuss the nested polynomial Wolff estimate.

\subsection{Nested polynomial Wolff}

Recall the relation between tubes in Definition \ref{tuberelation} .
We need the following result of Zahl:
\begin{lemma}[\cite{zahl2021new} Lemma 2.11]\label{lemzahl}
Fix $r_n\ge r_{n-1}\ge \cdots\ge r_l>0$ and $\rho_n\ge \rho_{n-1}\ge \cdots\ge \rho_l>0$ so that $1\ge \frac{\rho_l}{r_l}\ge \frac{\rho_{l+1}}{r_{l+1}}\ge \cdots \ge \frac{\rho_n}{r_n}$. Let $S_n\supset S_{n-1}\supset \cdots \supset S_{l}$ be semi-algebraic sets of complexity at most $E$ such that for each $i$, $S_i$ is $i$-dimensional and contained in $B_{r_i}(x_i)$. We recursively define another sequence of sets $\wt S_i$ $(l\le i\le n)$.

We define $\wt S_l:=N_{2\rho_l}(S_l)$. For each $i=l,\cdots, n-1$, define 
\begin{equation}
\wt S_{i+1}:= N_{2\rho_{i+1}}(S_{i+1})\cap \bigcup_{\begin{subarray}{c}
     T\textup{~a~}\rho_i\times r_i\textup{~tube}  \\
     T\subset \wt S_{i} 
\end{subarray} } \textup{Fat}_{\frac{r_{i+1}}{r_i}}(T).
\end{equation}
Here $\textup{Fat}_A T$ is the $A$-dilation of $T$ with respect to the center of $T$.

Then we have
\begin{equation}\label{estmeasure}
    |\wt S_n|\le C(n,E,\e')r_n^{\e'}r_n^n \prod_{i=l}^{n-1}\frac{\rho_i}{r_i},
\end{equation}
for any $\e'>0$.
\end{lemma}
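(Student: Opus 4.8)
This is Lemma 2.11 of \cite{zahl2021new}; we only indicate the strategy, the full bookkeeping being carried out there. The plan is to induct on the number of scales $s:=n-l+1$. When $s=1$ there is nothing to do: $\wt S_n=N_{2\rho_n}(S_n)$, and since $S_n$ is an $n$-dimensional semi-algebraic set of complexity at most $E$ contained in $B_{r_n}(x_n)$, one has $|\wt S_n|\lesssim_{n,E}r_n^n$, which is \eqref{estmeasure} with an empty product.

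For the inductive step assume \eqref{estmeasure} for the truncated chain $S_{n-1}\supset\cdots\supset S_l$, with scales $(r_i)_{l\le i\le n-1}$ and $(\rho_i)_{l\le i\le n-1}$, so that
\begin{equation}\nonumber
    |\wt S_{n-1}|\le C(n-1,E,\e')\,r_{n-1}^{\e'}\,r_{n-1}^{\,n}\prod_{i=l}^{n-2}\frac{\rho_i}{r_i}.
\end{equation}
First I would rescale by $r_{n-1}^{-1}$ so that $\wt S_{n-1}\subset B_2$, and write $\mu:=\rho_{n-1}/r_{n-1}\le1$; in these coordinates the tubes $T$ entering the definition of $\wt S_n$ are $\mu$-tubes of length $1$. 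Cover $\wt S_{n-1}$ by a boundedly overlapping family $\mathcal T$ of such $\mu$-tubes with $\mu$-separated directions. The crucial point is to bound $\#\mathcal T$: since $\wt S_{n-1}\subset N_{2\mu}(S_{n-1})$ and the rescaled $S_{n-1}$ is an $(n-1)$-dimensional variety of complexity at most $E$, the polynomial Wolff axiom of Katz--Rogers \cite{katz2018polynomial} gives $\#\mathcal T\lesssim_{n,E,\e'}\mu^{-(n-2)-\e'}$, and one feeds the inductive volume bound on $|\wt S_{n-1}|$ into this count so that the resulting power of $\mu$ is exactly the telescoping factor $\prod_{i=l}^{n-2}\rho_i/r_i$ rather than something weaker.

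Having fixed $\mathcal T$, for each $T\in\mathcal T$ one has $\wt S_n\cap\mathrm{Fat}_{r_n/r_{n-1}}(T)\subset N_{2\rho_n}(S_n)\cap\mathrm{Fat}_{r_n/r_{n-1}}(T)$, and $\mathrm{Fat}_{r_n/r_{n-1}}(T)$, after undoing the rescaling, is a tube of dimensions $(\rho_{n-1}r_n/r_{n-1})\times\cdots\times(\rho_{n-1}r_n/r_{n-1})\times r_n$. Bounding the measure of the intersection of such a tube with the $\rho_n$-neighbourhood of the bounded-complexity set $S_n$ by a standard tubular-neighbourhood volume estimate, summing over $T\in\mathcal T$, and invoking the monotonicity $1\ge\rho_l/r_l\ge\cdots\ge\rho_n/r_n$ so that the exponents of the various ratios telescope, one reaches $|\wt S_n|\le C(n,E,\e')\,r_n^{\e'}r_n^{\,n}\prod_{i=l}^{n-1}\rho_i/r_i$. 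Since the recursion has at most $n$ steps, the accumulated $\mu^{-\e'}$-type losses are absorbed by choosing the auxiliary exponent small.

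The hard part is the tube-counting step: one must combine the single-scale polynomial Wolff axiom with the inductive hypothesis on $|\wt S_{n-1}|$ in such a way that the exponent produced at scale $n-1$ is precisely $\prod_{i=l}^{n-2}\rho_i/r_i$, which is exactly where the nested geometry of the $S_i$ and the decreasing ratios $\rho_i/r_i$ are used; a secondary subtlety is handling tubes that are nearly tangent to $S_n$ at the last step, for which the robust containment built into the $\mathrm{Fat}$ operations is needed. We refer to \cite{zahl2021new} for the complete argument.
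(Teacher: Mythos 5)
The paper itself does not prove this lemma; it cites it as \cite{zahl2021new} Lemma 2.11 and notes in the following remark that the measure estimate is proved in (2.33) of that paper. So there is no in-paper argument to compare against, only the citation.

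Your sketch outlines an induction-on-scales strategy, but it has a genuine gap at the tube-counting step that is precisely the content of the nested polynomial Wolff axiom. You propose to bound $\#\mathcal T$ by applying the Katz--Rogers polynomial Wolff axiom and then somehow ``feeding the inductive volume bound on $|\wt S_{n-1}|$ into this count.'' But Katz--Rogers applies to families of tubes contained in a \emph{semi-algebraic set of bounded complexity}, and $\wt S_{n-1}= N_{2\rho_{n-1}}(S_{n-1})\cap\bigcup_{T\subset\wt S_{n-2}}\mathrm{Fat}(T)$ is an intersection with a union over a potentially huge collection of tubes, so it does \emph{not} have bounded complexity. Applying Katz--Rogers to the ambient set $N_{2\mu}(S_{n-1})$ alone only yields $\#\mathcal{T}\lesssim\mu^{-(n-2)-\e'}$ and does not produce the extra gain $\prod_{i=l}^{n-2}\rho_i/r_i$; the inductive volume bound on $|\wt S_{n-1}|$ cannot simply be ``fed in'' because a small measure set can contain many $\mu$-tubes with $\mu$-separated directions when the set is not of bounded algebraic complexity. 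This is exactly the difficulty that makes the nested form non-trivial; Zahl's actual proof does not iterate the single-scale polynomial Wolff axiom but instead goes through Gromov's algebraic lemma to parametrize the semi-algebraic sets by smooth maps with controlled derivatives, which is a genuinely different mechanism.

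There is also a bookkeeping error in your stated inductive hypothesis: you wrote $|\wt S_{n-1}|\le C\,r_{n-1}^{\e'}r_{n-1}^{n}\prod_{i=l}^{n-2}\rho_i/r_i$, but the correct bound in the fixed ambient space $\R^n$ must carry the top-scale factor $\rho_{n-1}/r_{n-1}$ as well, i.e.\ one should have $|\wt S_{n-1}|\lesssim r_{n-1}^{\e'}\,r_{n-1}^{\,n-1}\rho_{n-1}\prod_{i=l}^{n-2}\rho_i/r_i = r_{n-1}^{\e'}\,r_{n-1}^{\,n}\prod_{i=l}^{n-1}\rho_i/r_i$, since $S_{n-1}$ is $(n-1)$-dimensional and $\wt S_{n-1}$ lives in a $\rho_{n-1}$-thickening of it. Your form, applied naively, loses a factor of $r_{n-1}/\rho_{n-1}$ in the next step. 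If you want to reason about a shortened chain $S_{n-1}\supset\cdots\supset S_l$, the statement you would need is a generalization in which the top set may have dimension strictly below the ambient one, and that generalization must be formulated carefully.
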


\begin{remark}
\rm

The estimate here is for the measure of $\wt S_n$, while in \cite{zahl2021new} is for the number of tubes. But actually \eqref{estmeasure} is already proved in (2.33) in \cite{zahl2021new} .
\end{remark}

We will actually apply the following rescaled version:

\begin{proposition}[Nested polynomial Wolff on a small cap]\label{smallcapwolff}
Fix $M>1$.
Fix $r_n\ge r_{n-1}\ge\cdots\ge r_l>0$ and $\rho_n\ge \rho_{n-1}\ge\cdots\ge \rho_l>0$ so that $M^{-1}\ge\frac{\rho_l}{r_l}\ge \frac{\rho_{l+1}}{r_{l+1}}\ge\cdots \ge \frac{\rho_n}{r_n}$. Let $S_n\supset S_{n-1}\supset \cdots \supset S_{l}$ be semi-algebraic sets of complexity at most $E$ such that for each $i$, $S_i$ is $i$-dimensional and contained in $\cp_{r_i}(x_i)$ (a rescaled ball of dimensions $M^{-1}r_i\times\cdots\times M^{-1}r_i\times r_i$ whose long side points to the direction of $\vec e_n$). We recursively define another sequence of sets $\wt S_i$ $(l\le i\le n)$.

We define $\wt S_k:=N_{2\rho_l}(S_l)$, and for each $i=l,\cdots, n-1$, define
\begin{equation}\label{smallcapwolff1}
\wt S_{i+1}:= N_{2\rho_{i+1}}(S_{i+1})\cap \bigcup_{\begin{subarray}{c}
     T\textup{~a~}\rho_i\times r_i\textup{~tube}  \\
     T\subset \wt S_{i} 
\end{subarray} } \textup{Fat}_{\frac{r_{i+1}}{r_i}}(T).
\end{equation}
Then we have
\begin{equation}\label{polynomialwolff}
    |\wt S_n|\le C(n,E,\e')r_n^{\e'}M^{-(l-1)}r_n^n \prod_{i=l}^{n-1}\frac{\rho_i}{r_i},
\end{equation}
for any $\e'>0$.

\end{proposition}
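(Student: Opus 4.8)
The plan is to deduce Proposition \ref{smallcapwolff} from Lemma \ref{lemzahl} by an anisotropic rescaling that straightens rescaled balls into ordinary balls. Let $L\colon\R^n\to\R^n$ be the linear map $L(x_1,\dots,x_{n-1},x_n)=(Mx_1,\dots,Mx_{n-1},x_n)$, so that $\det L=M^{n-1}$ and hence $|L(A)|=M^{n-1}|A|$ for measurable $A$. First I would record the elementary facts about $L$: it maps a rescaled $r$-ball $\cp_r$ (a tube of length $r$ and radius $M^{-1}r$ in the direction $\vec e_n$) into an honest ball of radius $\lesssim r$; it satisfies $|x-y|\le |Lx-Ly|\le M|x-y|$, so $N_\rho(LS)\subset L(N_\rho(S))\subset N_{M\rho}(LS)$; it commutes with dilations about a point, so $L(\textup{Fat}_A T)=\textup{Fat}_A(LT)$; and if $S$ is $i$-dimensional semialgebraic of complexity $\le E$, then so is $L(S)$, since $L(S)=\{y:L^{-1}y\in S\}$ and $L^{-1}$ is linear. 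Consequently, setting $S_i':=L(S_i)$ and $r_i':=C_n r_i$ for a suitable dimensional constant, each $S_i'$ is $i$-dimensional, of bounded complexity, and contained in $B_{r_i'}(Lx_i)$.

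The geometric heart of the argument — and the point I would be most careful about — is the claim that any $\rho_i\times\cdots\times\rho_i\times r_i$ tube $T$ contained in $\wt S_i$ is mapped by $L$ into a tube of dimensions $\sim M\rho_i\times\cdots\times M\rho_i\times r_i$. Indeed, $\wt S_i\subset N_{2\rho_i}(S_i)\subset N_{2\rho_i}(\cp_{r_i}(x_i))$, which, using the hypothesis $\rho_i/r_i\le M^{-1}$, is a tube of length $\lesssim r_i$ and radius $\lesssim M^{-1}r_i$ pointing in the direction $\vec e_n$; a tube of length $r_i$ inside such a slab must make angle $\lesssim M^{-1}$ with $\vec e_n$, so its direction $v=(v',v_n)$ has $|v'|\lesssim M^{-1}$. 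Then $Lv=(Mv',v_n)$ has $|Lv|\sim 1$, the image of the axis has length $\sim r_i$, and, since $L$ has singular values $1$ and $M$, the image of the $\rho_i$-cross-section is an ellipse of semiaxes in $[\rho_i,M\rho_i]$; thus $L(T)$ lies in a tube of dimensions $\sim M\rho_i\times\cdots\times M\rho_i\times r_i$, with ratio $M\rho_i/r_i\le 1$. (If $M\lesssim 1$ the bound \eqref{polynomialwolff} is trivial, so we may assume $M$ large.)

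With these preparations, I would check by induction on $i$ that $L(\wt S_i)$ is contained in Zahl's nested construction $\wt S_i'$ built from the data $\{S_i'\}$ with inner radii $\rho_i':=C_nM\rho_i$ and outer radii $r_i':=C_nr_i$: the base case is $L(\wt S_l)=L(N_{2\rho_l}(S_l))\subset N_{2M\rho_l}(S_l')\subset N_{2\rho_l'}(S_l')=\wt S_l'$, and the inductive step combines the neighborhood inclusion with the tube claim and the commutation $L(\textup{Fat}_{r_{i+1}/r_i}T)=\textup{Fat}_{r_{i+1}'/r_i'}(LT)$ (noting $r_{i+1}'/r_i'=r_{i+1}/r_i$); the propagation of the $O_n(1)$ losses through the recursion is routine and I would treat it as such. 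One verifies the monotonicity hypotheses of Lemma \ref{lemzahl}: $\rho_n'\ge\cdots\ge\rho_l'$, $r_n'\ge\cdots\ge r_l'$, and $\rho_l'/r_l'=M(\rho_l/r_l)\le 1$ after harmlessly adjusting constants. Finally, applying Lemma \ref{lemzahl} and undoing the scaling,
\[
|\wt S_n|=M^{-(n-1)}|L(\wt S_n)|\le M^{-(n-1)}|\wt S_n'|\lesssim M^{-(n-1)}r_n^{\e'}r_n^n\prod_{i=l}^{n-1}\frac{\rho_i'}{r_i'}\lesssim M^{-(n-1)}M^{\,n-l}\,r_n^{\e'}r_n^n\prod_{i=l}^{n-1}\frac{\rho_i}{r_i},
\]
and $-(n-1)+(n-l)=-(l-1)$, which is exactly \eqref{polynomialwolff}. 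Thus the only genuine work beyond bookkeeping is the tube-angle estimate of the second paragraph, where the hypothesis $\rho_i/r_i\le M^{-1}$ and the operator norm $\|L\|=M$ must be used in tandem to keep $L(T)$ a tube of the right dimensions.
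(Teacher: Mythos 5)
Your argument is correct and is essentially the paper's own proof up to a cosmetic change of coordinates: the paper contracts by $M^{-1}$ in the $\vec e_n$ direction while you dilate by $M$ in the transverse directions, and the two maps differ only by an isotropic dilation, so the bookkeeping (Jacobian $M^{n-1}$ on one side versus $M^{-1}$ on the other, and the corresponding adjustment of the factors $\rho_i/r_i$) lands on the same power $M^{-(l-1)}$. The only place you add genuine content is the tube-direction estimate $|v'|\lesssim M^{-1}$ justifying that tubes in $\wt S_i$ map to tubes of the right eccentricity; the paper leaves this step as ``one sees that after rescaling $\wt S_i$ becomes a subset of $\wt S'_i$,'' so your elaboration is a welcome, if minor, amplification rather than a different route.
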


\begin{proof}
Let us see how Lemma \ref{lemzahl} implies Proposition \ref{smallcapwolff}. We rescale by factor $M^{-1}$ in the $\vec e_n$ direction, then each $\cp_{r_i}(x_i)$ becomes a ball $B_{M^{-1}r_i}(x_i)$. Denote by $S'_i$ the set $S_i$ after rescaling.
We consider the following sets.

Let $\wt S'_l=N_{2\rho_l}(S'_l)$. For each $i=l,\cdots, n-1$, let
\begin{equation}
\wt S'_{i+1}= N_{2\rho_{i+1}}(S'_{i+1})\cap \bigcup_{\begin{subarray}{c}
     T\textup{~a~}\rho_i\times M^{-1}r_i\textup{~tube}  \\
     T\subset \wt S'_{i} 
\end{subarray} } \textup{Fat}_{\frac{r_{i+1}}{r_i}}(T).
\end{equation}
One sees that after rescaling, $\wt S_i$ becomes a subset of $\wt S'_i$.
Now we apply Lemma \ref{lemzahl} to $\{M^{-1}r_i\}, \{\rho_i\}, \{S'_i\}$ so that
\begin{equation}
    |\wt S'_n|\le C(n,E,\e')(M^{-1}r_n)^{n+\e'} \prod_{j=l}^{n-1}\frac{\rho_j}{M^{-1}r_j}\le C(n,E,\e')M^{-l}r_n^{\e'}r_n^n \prod_{j=l}^{n-1}\frac{\rho_j}{r_j}.
\end{equation}
Finally, \eqref{polynomialwolff} follows from $|\wt S_n|\le M|\wt S'_n|$. 
\end{proof}

Next, we discuss how to use this proposition in our setting. Recall \eqref{obtainradius}. After the second algorithm, we obtain a sequence of radius $$R=r_n\ge r_{n-1}\ge \cdots\ge r_m\ge r_{m-1}\sim M^2 R^{\e/10n}$$ and the cells $$\cs_n(=\cp_R),\ \cs_{n-1},\ \cdots,\ \cs_m,\ \cs_{m-1}(=\co).$$ 
We set $\rho_i=r_i^{1/2}R^{\dc}$ ($m-1\le i\le n$). From the second algorithm, for any $m\le l\le n$ and $S_l\in\cs_l$, we have a sequence of nested semi-algebraic sets $S_n\supset S_{n-1}\cdots \supset S_{l}$ such that each $S_i$ is contained in a rescaled $r_i$-ball $\cp_{S_i}$. For each $S_i$, there is a set of $r_i$-tubes $\T_{S_i}$ that are $r_i^{-1/2}$-tangent to $S_i$ in $\cp_{S_i}$ (recall Definition \ref{tangenttube}). 

Currently we make the following assumption on these tubes: 
\textit{For any $l\le i\le t\le n$ and any $T_t\in\T_{S_t}$, there exists a $T_i\in\T_{S_i}$ such that $T_i<T_t$}. Our goal is to give an upper bound on $|\bigcup_{T_n\in\T_{S_n}} T_n|$.

If we define $\wt S_i$ $(l\le i\le n)$ as in \eqref{smallcapwolff1}, then we claim that
\begin{equation}
    \bigcup_{T_i\in\T_{S_{i}}} T_i \subset \wt S_i. 
\end{equation}
We prove the claim by induction on $i$. When $i=l$, it is just by definition. If step $i$ is proved, consider $i+1$. First of all, we have $\bigcup_{T_{i+1}\in\T_{S_{i+1}}} T_{i+1}\subset N_{2\rho_{i+1}}(S_{i+1})$ since $\T_{S_{i+1}}$ are $r_{i+1}^{-1/2}$-tangent to $S_{i+1}$. 
It remains to show
\begin{equation}\label{tubebelong}
    \bigcup_{T_{i+1}\in\T_{S_{i+1}}} T_{i+1} \subset \bigcup_{\begin{subarray}{c}
     T\textup{~a~}\rho_i\times r_i\textup{~tube}  \\
     T\subset \wt S_{i} 
\end{subarray} } \textup{Fat}_{\frac{r_{i+1}}{r_i}}(T). 
\end{equation} 
For each $T_{i+1}\in\T_{S_{i+1}}$, there exists a $T_i\in\T_{S_i}$ such that $T_i<T_{i+1}$. We observe that $T_i<T_{i+1}$ and $\rho_i\frac{r_{i+1}}{r_i}\ge\rho_{i+1}$ imply $T_{i+1}\subset \textup{Fat}_{\frac{r_{i+1}}{r_i}}(T_i)$. By induction $T_i\subset \wt S_i$, we proved \eqref{tubebelong}.

From \eqref{polynomialwolff}, we have
\begin{equation}\label{polywolffineq}
    \Big|\bigcup_{T_n\in\T_{S_n}}T_n\Big|\le C(n,E,\de)R^{2\de}M^{-(l-1)}R^n \prod_{i=l}^{n-1}r_i^{-1/2}.
\end{equation}

\subsection{Estimate \texorpdfstring{$\max_O \|\Sq\vf_O\|_2$}{Lg}}

\begin{lemma}\label{lem1} For $m\le l\le n$, each $O<S_l$, we have
\begin{equation}\label{final1}
    \|\Sq\vf_O\|_2^2\lessapprox r_{m-1}^{\frac{n-m}{2}} r_l^{-\frac{n-l}{2}}\prod_{i=m}^{l-1}r_i^{-\frac{1}{2}}\prod_{i=m-1}^{l-1}D_i^{-i}R^{-\de a_i}\|\Sq\vf_{S_l}\|_2^2.
\end{equation}
\end{lemma}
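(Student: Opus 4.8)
The plan is to deduce \eqref{final1} by simply iterating the $L^2$-relation \eqref{est0.3} of the second algorithm down the nested chain of cells from $S_l$ to $O$, and then collapsing the resulting product of scale factors by a telescoping computation.

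First I would fix $l$ with $m\le l\le n$ and a cell $S_l\in\cs_l$, and take $O\in\co$ with $O<S_l$. The nesting built into the second algorithm furnishes a unique chain $O=S_{m-1}<S_m<\cdots<S_l$ with $S_i\in\cs_i$. For each $i$ with $m-1\le i\le l-1$ (so $i\le n-1$, which is the range in which \eqref{est0.3} is valid), I would apply \eqref{est0.3} with the index $i$ in place of $l$ to obtain
\[
\|\Sq\vf_{S_i}\|_2^2\lessapprox \big(R^{\de a_i}\big)^{-\frac{n-i-1}{2}}D_i^{-i}R^{-\de a_i}\,\|\Sq\vf_{S_{i+1}}\|_2^2 ,
\]
the extra factors $R^{O(\de)}$, $C^{c_i}$ and (at the bottom step, in Scenario $2$) ${\poly}(d)^{a_i}$ all being absorbed into $\lessapprox$ as in Remark \ref{pretend}. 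Multiplying these finitely many inequalities gives
\[
\|\Sq\vf_O\|_2^2\lessapprox \Big(\prod_{i=m-1}^{l-1}\big(R^{\de a_i}\big)^{-\frac{n-i-1}{2}}\Big)\Big(\prod_{i=m-1}^{l-1}D_i^{-i}R^{-\de a_i}\Big)\|\Sq\vf_{S_l}\|_2^2 .
\]

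The second step is to turn the powers of $R^{\de a_i}$ into powers of the radii. From the scale recursion $r_i=r_{i+1}2^{-c_i}R^{-\de a_i}$ one has $R^{\de a_i}=2^{-c_i}\,r_{i+1}/r_i$, and since $2^{c_i}=D_i^{O(\de)}$ and the exponents $\frac{n-i-1}{2}$ all lie in $[0,n/2]$, this yields $\big(R^{\de a_i}\big)^{-\frac{n-i-1}{2}}\lessapprox (r_i/r_{i+1})^{\frac{n-i-1}{2}}$. It then remains to evaluate
\[
\prod_{i=m-1}^{l-1}(r_i/r_{i+1})^{\frac{n-i-1}{2}}=\prod_{i=m-1}^{l-1}r_i^{\frac{n-i-1}{2}}\cdot\prod_{j=m}^{l}r_j^{-\frac{n-j}{2}} ;
\]
collecting exponents, $r_{m-1}$ survives with exponent $\frac{n-m}{2}$, $r_l$ survives with exponent $-\frac{n-l}{2}$, and every $r_j$ with $m\le j\le l-1$ survives with exponent $\frac{n-j-1}{2}-\frac{n-j}{2}=-\frac12$, so the product equals $r_{m-1}^{\frac{n-m}{2}}\,r_l^{-\frac{n-l}{2}}\prod_{i=m}^{l-1}r_i^{-1/2}$. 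Substituting this back reproduces \eqref{final1} exactly.

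Since the only real input is the already-proved estimate \eqref{est0.3}, I do not expect a serious obstacle. The one point requiring care is the bookkeeping of exponents through the telescoping product together with the $\lessapprox$-absorptions — in particular, using that $\frac{n-i-1}{2}$ is bounded so the $2^{-c_i}$ and implicit-constant factors can be swallowed, and noting that there are only $O(n)$ steps so the accumulated $\lessapprox$-factors stay of the allowed form. I would carry this out explicitly rather than appeal to "one checks".
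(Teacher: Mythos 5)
Your proposal is correct and follows essentially the same route as the paper: the paper's proof of Lemma~\ref{lem1} also just applies the $L^2$ step down the chain $O=S_{m-1}<S_m<\cdots<S_l$ (citing \eqref{est2.3} at the bottom step and \eqref{est1.3} for the intermediate steps, which together are exactly your \eqref{est0.3}), rewrites $R^{\de a_i}\approx r_{i+1}/r_i$ via Remark~\ref{pretend}, and multiplies the chain out; you have merely carried out the telescoping of exponents explicitly, which matches \eqref{final1}.
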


\begin{proof}
By \eqref{est2.3} and \eqref{est1.3}, and by Remark \ref{pretend} that $r_{i+1}\approx r_i R^{\de a_i}$, we have
\begin{align}
\nonumber
    \|\Sq\vf_O\|_2^2\lessapprox &\big( \frac{r_m}{r_{m-1}} \big)^{-\frac{n-m}{2}}D_{m-1}^{-(m-1)}R^{-\de a_{m-1}} \|\Sq\vf_{S_m}\|_2^2\ \ \ \textup{for~}O<S_{m},\\ \nonumber
    \|\Sq\vf_{S_i}\|_2^2\lessapprox \big( \frac{r_{i+1}}{r_{i}} &\big)^{-\frac{n-(i+1)}{2}}D_{i}^{-i}R^{-\de a_{i}} \|\Sq\vf_{S_{i+1}}\|_2^2\ \ \ \textup{for~}S_i<S_{i+1},\ \ m\le i\le n-1.
\end{align}
Combining these two estimates, we prove the result.
\end{proof}

To state out next lemma, we need to introduce some new notations. 

\begin{definition}\label{defnesttube}
For $l$ and $t$ satisfying $m\le l< t\le n$ and $S_l\in\cs_l$, we are going to define $\vf_{S_l,t}$. Let $S_t\in \cs_t$ be the ancestor of $S_l$: $S_l<S_t$, we define 
\begin{equation}
\nonumber
    \T_{S_t,S_l}:=\{T_t\in\T_{S_t}:\ \exists\  T_i\in\T_{S_i}\ (l\le i\le t-1), \textup{~so~that~}\ T_l<T_{l+1}<\cdots<T_t\};
\end{equation}
\begin{equation}
\nonumber
    \vf_{S_t,S_l}:=\sum_{T_t\in \T_{S_t,S_l}} (\vf_{S_t})_{T_t}.
\end{equation}
For simplicity, we denote  $\vf^\#_{S_l}=\vf_{S_n,S_l}$, $\vf_{S_l}=\vf_{S_l,S_l}$.
\end{definition}

Let us digest this definition. Since $\vf_{S_t}$ is concentrated on wave packets from $\T_{S_t}$, if we ignore the rapidly decaying term, then
\begin{equation}
\nonumber
    \vf_{S_t}=\sum_{T_t\in\T_{S_t}} (\vf_{S_t})_{T_t}.
\end{equation}
Hence we see that $\vf_{S_t,S_l}$ is the sum over a subset of wave packets of $\vf_{S_t}$. These wave packets are related to $\T_{S_l}$.

\begin{lemma}\label{lem2} For $m\le l\le n$ and any $S_l\in\cs_l$, we have
\begin{equation}\label{final2}
    \|\Sq\vf_{S_l}\|_2^2\lessapprox r_l^{\frac{n-l}{2}}\prod_{i=l}^{n-1}r_i^{-\frac{1}{2}}\big(\frac{r_l}{R}\big)\|\Sq\vf_{S_l}^{\#}\|_2^2.
\end{equation}

\end{lemma}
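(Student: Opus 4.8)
The goal is to show that $\Sq\vf_{S_l}$, which is built from $r_l$-wave packets tangent to $\bZ_{S_l}$, is essentially supported on a set of controlled measure, namely the union of the $R$-tubes $T_n\in\T_{S_n}$ that descend down to wave packets appearing in $\vf_{S_l}$. The plan is to run the standard $L^2$-density argument: write $\|\Sq\vf_{S_l}\|_2^2$ as an integral, localize it to the small set where the wave packets of $\vf_{S_l}$ live, and then convert the bound on that set into the claimed product of scales using the nested polynomial Wolff estimate \eqref{polywolffineq}. I would phrase everything in terms of $\vf_{S_l}^{\#}=\vf_{S_n,S_l}$, since by Definition \ref{defnesttube} the wave packets of $\vf_{S_l}$ are (up to rapidly decaying errors and the relation ``$<$'' between tubes) exactly the restrictions of the wave packets of $\vf_{S_l}^{\#}$ to the finer tube set $\T_{S_l}$.

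\textbf{Key steps.} First I would record the elementary geometric fact that $\Sq\vf_{S_l}$ is morally supported on $X_{S_l}:=\bigcup_{T\in\T_{S_l}}T$: each component $f_{j,S_l}$ is concentrated on wave packets from $\T_{S_l}$, so $f_{j,S_l}\cdot\1_{\R^n\setminus 2X_{S_l}}=\rap(r_l)\|f_{j,S_l}\|_2$, and hence $\|\Sq\vf_{S_l}\|_2^2\lessapprox \int_{2X_{S_l}}|\Sq\vf_{S_l}|^2+\rap$. Second, I would use $L^2$-orthogonality (Lemma \ref{l2-orthogonality-big}, applied componentwise) together with the tube relations to compare $\vf_{S_l}$ with $\vf_{S_l}^{\#}$: since each $T_l\in\T_{S_l}$ satisfies $T_l<T_n$ for the corresponding ancestor $T_n\in\T_{S_n,S_l}$, Lemma \ref{compare} gives that the piece of $\vf_{S_l}^{\#}$ overlapping $T_l$ is, up to $\rap$, the piece of $\vf_{S_l}$ there; conversely wave packets of $\vf_{S_n}$ whose direction caps do not refine some $\om(T_l)$ contribute negligibly. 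The upshot is $\|\Sq\vf_{S_l}\|_2^2\lessapprox \frac{|X_{S_l}|}{|X_{S_n}^{(l)}|}\,\|\Sq\vf_{S_l}^{\#}\|_2^2$ up to $R^{O(\de)}$ factors, where $X_{S_n}^{(l)}=\bigcup_{T_n\in\T_{S_n,S_l}}T_n$ — this is the same mechanism as in the transverse-equidistribution bookkeeping, a comparison of $L^2$ mass on a union of thin tubes at two scales. Third, I would bound $|X_{S_l}|$ trivially by the number of $r_l$-tubes times $(R^\be r_l^{1/2})^{n-1}r_l$, and bound $|X_{S_n}^{(l)}|$ from below, and more importantly bound the ratio directly: the relevant ratio of measures telescopes, since passing from scale $r_{i+1}$-tubes to scale $r_i$-tubes inside a fixed $r_{i+1}$-tube multiplies the covering volume by $(r_i/r_{i+1})\cdot(\text{number of }r_i\text{-subtubes})$, and the number of such subtubes is controlled. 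Finally I would invoke \eqref{polywolffineq} applied to the nested chain $S_n\supset\cdots\supset S_l$ (with $\rho_i=r_i^{1/2}R^{\dc}$), which gives $|X_{S_n}^{(l)}|=\big|\bigcup_{T_n\in\T_{S_n,S_l}}T_n\big|\lessapprox R^{O(\de)}M^{-(l-1)}R^n\prod_{i=l}^{n-1}r_i^{-1/2}$; combining with $|X_{S_l}|\lessapprox R^{O(\de)}M^{-(l-1)}r_l^{\,n}\cdot(\#\T_{S_l})$ and the trivial tube-count bookkeeping yields exactly the factor $r_l^{(n-l)/2}\prod_{i=l}^{n-1}r_i^{-1/2}\cdot (r_l/R)$ in \eqref{final2}. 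The $M^{-(l-1)}$ factors cancel in the ratio, which is why the final bound has no $M$.

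\textbf{Main obstacle.} The delicate point is the second step: making precise the claim that, after restricting the wave packets of $\vf_{S_n}$ to those that refine all the way down to $\T_{S_l}$, the $L^2$ mass of $\vf_{S_l}$ on its own tube set is comparable to the $L^2$ mass of $\vf_{S_l}^{\#}$ on its (larger) tube set, with the ratio being the ratio of the covering volumes. One must check that the nested-tube hypothesis used for \eqref{polywolffineq} — that every $T_t\in\T_{S_t,S_l}$ has a descendant in each $\T_{S_i}$, $l\le i\le t$ — is exactly what Definition \ref{defnesttube} encodes, and that the $\rap$ errors accumulated over the $O(\e^{-1})$ scales remain negligible (they do, since each is $\rap$ and there are only boundedly many steps). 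One also has to be slightly careful that the $R^{O(\de)}\prod D_i^{O(\de)}$ slack hidden in ``$\lessapprox$'' absorbs the overlap constants from $L^2$-orthogonality and from the finitely-overlapping tube coverings at each scale. Modulo this bookkeeping — which parallels the arguments in \cite{Guth-II} and \cite{zahl2021new} — the proof is a direct combination of $L^2$-orthogonality, Lemma \ref{compare}, and the rescaled nested polynomial Wolff estimate of Proposition \ref{smallcapwolff}.
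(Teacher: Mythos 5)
Your proposed proof takes a genuinely different route from the paper's, and that route has a gap. The paper proves \eqref{final2} by establishing the one-step recursion
\begin{equation*}
    \|\Sq\vf_{S_t,S_l}\|_2^2\lessapprox \Big(\tfrac{r_{t+1}}{r_t}\Big)^{-\frac{n-(t+1)}{2}} R^{-\de a_t}\,\|\Sq\vf_{S_{t+1},S_l}\|_2^2
\end{equation*}
(this is \eqref{final2key}) and iterating from $t=l$ up to $t=n-1$. At the heart of that recursion is the intermediate identity \eqref{com}, namely that $\vf_{O_u,S_l}$ is (up to $\rap$ errors) the restriction of $\vf_{O_{u-1},S_l}$ to the tube set $\T_{O_u,S_l}$; this requires Lemma \ref{compare} applied carefully at each scale of the first algorithm. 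Once that is in hand, each cellular step contributes only the trivial bound \eqref{compare1}, and each transverse step contributes the quantitative gain \eqref{compare2}, which is an application of the transverse equidistribution estimate, Proposition \ref{TE}. The exponents $\frac{n-(t+1)}{2}$ in the transverse steps are what ultimately produce the factor $r_l^{(n-l)/2}\prod_{i=l}^{n-1}r_i^{-1/2}$, and the $R^{-\de a_t}$ factors telescope to $r_l/R$.

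Your key claim — that $\|\Sq\vf_{S_l}\|_2^2\lessapprox \frac{|X_{S_l}|}{|X_{S_n}^{(l)}|}\,\|\Sq\vf_{S_l}^{\#}\|_2^2$ — is the gap. This is not a consequence of $L^2$-orthogonality and Lemma \ref{compare} alone: those tools only say that incompatible wave packets have negligible overlap and that restricting to a tube subset can only decrease the $L^2$ norm up to constants; they give no proportionality between $L^2$ masses and covering volumes. The proportionality you want is precisely what transverse equidistribution certifies, but only one scale at a time along the variety, and only after the functions $\vf_{S_t,S_l}$ have been identified as restrictions of one another via \eqref{com}. Asserting the global volume ratio as a shortcut skips both of these, and the skipped content is the entire substance of the lemma. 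Moreover, the numerology does not check out: a direct estimate of $|X_{S_l}|$ as a thin neighborhood of an $l$-dimensional variety inside a rescaled $r_l$-ball gives something of order $M^{-(l-1)}r_l^{(n+l)/2}$, and dividing by the nested-Wolff bound $|X_{S_n}^{(l)}|\lessapprox M^{-(l-1)}R^n\prod_{i=l}^{n-1}r_i^{-1/2}$ does not reproduce $r_l^{(n-l)/2}\prod_{i=l}^{n-1}r_i^{-1/2}\cdot(r_l/R)$ for generic scale sequences $r_l\le\cdots\le r_n=R$. A further confusion: the nested polynomial Wolff estimate \eqref{polywolffineq} is the main input for Lemma \ref{lem3} (bounding $\|\Sq\vf_{S_l}^{\#}\|_2^2$ by $\|f\|_\infty^2$ via the set $X$ of $R$-tubes), not for Lemma \ref{lem2}; you are importing the wrong tool. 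To repair the proof, you should prove the recursion formula \eqref{com}, then run the cellular/transverse dichotomy with Proposition \ref{TE} supplying the gain on transverse steps, exactly as in the paper.
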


\begin{proof}
We claim that for $m\!\le\! l\!\le\! t\!\le\! n$, $S_t\!\in\!\cs_t, S_{t+1}\!\in\!\cs_{t+1}$ and $S_t<S_{t+1}$, we have
\begin{equation}\label{final2key}
    \|\Sq\vf_ {S_t,S_l}\|_2^2\lessapprox \big(\frac{r_{t+1}}{r_t}\big) ^{-\frac{n-(t+1)}{2}} R^{-\de a_t} \|\Sq\vf_ {S_{t+1},S_l}\|_2^2.
\end{equation}
Using \eqref{final2key} and noting $R\approx r_l \prod_{i=l}^{n-1}R^{\de a_i}$ by Remark \ref{pretend}, we can prove \eqref{final2}. So our main goal is to prove 
\eqref{final2key}.

By the Algorithm \ref{1algorithm}, we have the intermediate cells between $S_l$ and $S_{l+1}$:
\begin{equation}
\nonumber
    S_l=O_s<\cdots<O_u<O_{u-1}<\cdots<O_0=S_{l+1},
\end{equation}
and the iteration formula
\begin{equation}
\nonumber
    \vf_{O_u}=\sum_{T_u\in\T_{O_u}}(\vf_{O_{u-1}})_{T_u}.
\end{equation}
Similar to Definition \ref{defnesttube}, for $S_l<O_{u}$, we define the tubes 
$$ \T_{O_{u},S_l}:=\{T_{u}\in\T_{O_{u}}:\ \exists\  T\in\T_{S_l} \textup{~so~that~}\ T<T_{u}\}, $$
and function
$$ \vf_{O_{u},S_l}:=\sum_{T_u\in\T_{O_{u},S_l}}(\vf_{O_{u}})_{T_{u}}. $$
We are going to show 
\begin{equation}\label{com}
    \vf_{O_u,S_l}=\sum_{T_u\in \T_{O_u,S_l}}(\vf_{O_{u-1},S_l})_{T_u}+\rap(r_l)\|\Sq\vf\|_2.
\end{equation}

First, we consider when $u=s$, which is
\begin{equation}\label{com0}
    \vf_{S_l}=\sum_{T\in \T_{S_l}}(\vf_{O_{s-1},S_l})_{T}+\rap(r_l)\|\Sq\vf\|_2.
\end{equation}
Note the iteration formula
\begin{equation}\label{com1}
    \vf_{S_l}=\sum_{T\in\T_{S_l}}(\vf_{O_{s-1}})_{T}.
\end{equation}
Since $\vf_{O_{s-1}}$ is concentrated on wave packets from $\T_{O_{s-1}}$, we can write
\begin{equation}
\nonumber
    \vf_{O_{s-1}}=\sum_{T_{s-1}\in\T_{O_{s-1},S_l}}(\vf_{O_{s-1}})_{T_{s-1}}+\sum_{T_{s-1}\in\T_{O_{s-1}}\setminus\T_{O_{s-1},S_l}}(\vf_{O_{s-1}})_{T_{s-1}}.
\end{equation}
By definition, the first term is $\vf_{O_{s-1},S_t}$. By Lemma \ref{compare}, we have 
\begin{equation}
\nonumber
    ((\vf_{O_{s-1}})_{T_{s-1}})_T=\rap(r_l)\|\Sq\vf\|_2
\end{equation}
for $T_{s-1}\not\in \T_{O_{s-1},S_l}$ and $T\in\T_{S_l}$. The above two estimates and \eqref{com1} imply \eqref{com0}. The reasoning for \eqref{com} about other $u$ are the same, so we omit the proof.

To get \eqref{final2key}, when STATE($u$)=cell, we use the trivial estimate:
\begin{equation}\label{compare1}
\|\Sq\vf_{O_u,S_l}\|_2^2\lesssim \|\Sq\vf_{O_{u-1},S_l}\|_2^2;
\end{equation}
when STATE($u$)=trans, we use the transverse equidistribution estimate \eqref{TE1} and the same idea in the proof of \eqref{tetrans3} to get:
\begin{equation}\label{compare2}
    \|\Sq\vf_{O_u,S_l}\|_2^2\lesssim R^{O(\dc)} R^{-\de\frac{n-(l+1)}{2}} R^{-\de} \|\Sq\vf_{O_{u-1},S_l}\|_2^2.
\end{equation}

Combining \eqref{compare1} and \eqref{compare2} gives \eqref{final2key} when $t=l$. For other $t$, we can proceed in the same way.
\end{proof}

\begin{remark}
\rm

One may compare \eqref{final2key} with \eqref{est1.3} where there is an additional factor $D_l^{-l}$. The proof of \eqref{est1.3} is by iterating \eqref{itcellular2} and \eqref{ittransverse2} (which can be viewed as counterparts of \eqref{compare1} and \eqref{compare2}). However, for the proof of \eqref{final2key}, we don't have the strong estimate \eqref{itcellular2}, instead we only have the trivial bound \eqref{compare1}.
\end{remark}

\begin{lemma}\label{lem3} For $m\le l\le n$,
\begin{equation}\label{final3}
    \|\Sq\vf_{S_l}^{\#}\|_2^2\lessapprox M^{-(l-1)} R^n \prod_{i=l}^{n-1}r_i^{-\frac{1}{2}}\|f\|_\infty^2.
\end{equation}
\end{lemma}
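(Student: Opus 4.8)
Recall from Definition~\ref{defnesttube} that $\vf_{S_l}^{\#}=\vf_{S_n,S_l}=\sum_{T\in\T_{S_n,S_l}}(\vf_{S_n})_{T}$ and that $\vf_{S_n}=\vf$, so for each $j$ the component $f_{j,S_l}^{\#}=\sum_{T\in\T_{S_n,S_l}}f_{j,T}$ is a sum of scale-$R$ wave packets living on the tube family $\T_{S_n,S_l}$. Set $X:=\bigcup_{T\in\T_{S_n,S_l}}T$. The plan is: (i) pass from $\|\Sq\vf_{S_l}^{\#}\|_2^2$ to $\|f\|_\infty^2$ times the ``size'' of $X$ using wave packet orthogonality and a uniform $L^\infty$ bound for the single-cap square function; (ii) bound that size by the nested polynomial Wolff estimate of Section~8.1. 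For step (i), I would first apply the $L^2$-orthogonality of the scale-$R$ wave packets (Lemma~\ref{l2-orthogonality-big}) componentwise and sum over $j$, obtaining
\[
\|\Sq\vf_{S_l}^{\#}\|_2^2\lesssim\sum_{T\in\T_{S_n,S_l}}\|\Sq\vf_{T}\|_2^2=\sum_{T\in\T_{S_n,S_l}}\int_{T}\Sq\vf_{\tau(T)}(x)^2\,dx+\rap(R)\|\Sq\vf\|_2^2,
\]
where $\tau(T)$ is the $R^{-1/2}$-cap of $T$, $\vf_{T}=\vf_{\tau(T)}\Id^{\ast}_{T}$, and we used that $\Id^{\ast}_{T}$ is essentially supported in $T$ while $\|\Sq\vf_\tau\|_2\le\|\Sq\vf\|_2$.

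\textbf{A uniform $L^\infty$ bound for $\Sq\vf_\tau$.} The key point is that for every $R^{-1/2}$-cap $\tau$,
\[
\|\Sq\vf_\tau\|_{L^{\infty}(\R^{n})}\lesssim\|f\|_{L^{\infty}(\R^{n})}.
\]
To see this, observe that $f_{j,\tau}=(m_j*\vp_{j,\tau})*f$ has Fourier support in $N_{R^{-1}}\Ga_j(\tau)$, a slab of dimensions $\sim R^{-1/2}\times\cdots\times R^{-1/2}\times R^{-1}$; as $j$ varies these slabs stack with $\sim R^{-1}$-separated centres $\xi^{0}_{j}$ inside the common plank $\bigcup_j N_{R^{-1}}\Ga_j(\tau)$ of dimensions $\sim R^{-1/2}\times\cdots\times R^{-1/2}\times 1$ (using Lemma~\ref{Gauss-map-spt} and $|\partial_t\nabla_{\bar\xi}\Phi|\lesssim 1$). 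Hence, up to $\rap(R)$ errors, $m_j*\vp_{j,\tau}$ is a modulation $e^{i\xi^{0}_{j}\cdot z}\kappa(z)$ of one fixed plank bump $\kappa$ of dimensions $\sim R^{1/2}\times\cdots\times R^{1/2}\times R$, so with $G_x(y):=\kappa(x-y)f(y)$ we get $f_{j,\tau}(x)=e^{i\xi^{0}_{j}\cdot x}\,\wh{G_x}(\xi^{0}_{j})+\rap(R)\|f\|_\infty$, and therefore $\Sq\vf_\tau(x)^{2}\lesssim\sum_j|\wh{G_x}(\xi^{0}_{j})|^{2}+\rap(R)\|f\|_\infty^{2}$. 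Since $G_x$ is supported on a plank, $\wh{G_x}$ is roughly constant on dual boxes of volume $\sim R^{-(n+1)/2}$ and the $\sim R$ points $\xi^{0}_{j}$ lie essentially one per box along the fine axis, so $\sum_j|\wh{G_x}(\xi^{0}_{j})|^{2}\lesssim R^{(n+1)/2}\|G_x\|_2^{2}\le R^{(n+1)/2}\|\kappa\|_2^{2}\|f\|_\infty^{2}\lesssim\|f\|_\infty^{2}$, using $\|\kappa\|_2^{2}\sim R^{-(n+1)/2}$. (The cruder per-$j$ estimate loses a full power of $R$, so this square-function bound is essential.)

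\textbf{Counting tubes via nested Wolff.} Plugging this into the first display and using $|T|\lessapprox R^{(n+1)/2}$ (and, for safety, $\|\Sq\vf\|_2^2\lesssim\|f\|_2^2\lessapprox R^{O(1)}\|f\|_\infty^2$ to absorb the $\rap$ term) gives
\[
\|\Sq\vf_{S_l}^{\#}\|_2^{2}\lessapprox\|f\|_\infty^{2}\sum_{T\in\T_{S_n,S_l}}|T|\lessapprox\#\!\big(\T_{S_n,S_l}\big)\,R^{(n+1)/2}\,\|f\|_\infty^{2},
\]
so \eqref{final3} reduces to $\#(\T_{S_n,S_l})\lessapprox M^{-(l-1)}R^{(n-1)/2}\prod_{i=l}^{n-1}r_i^{-1/2}$. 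Now the nested cells $S_n\supset\cdots\supset S_l$ together with the chain condition that every $T\in\T_{S_n,S_l}$ carries $T_l<\cdots<T_n=T$ with $T_i\in\T_{S_i}$ (each $r_i^{-1/2}$-tangent to the $i$-dimensional $\bZ_{S_i}$) place us exactly in the setting of Proposition~\ref{smallcapwolff} with $\rho_i=r_i^{1/2}R^{\dc}$ (note $M^{-1}\ge\rho_l/r_l\ge\cdots\ge\rho_n/r_n$): every $T\in\T_{S_n,S_l}$ is contained in $\mathrm{Fat}_{r_{i+1}/r_i}$ of its ancestor at scale $r_i$, so the union and the count are controlled as in Section~8.1. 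Running the $\vec e_n$-rescaling argument of Proposition~\ref{smallcapwolff} with the \emph{number-of-tubes} form of Zahl's estimate (inequality (2.33) of \cite{zahl2021new}, as flagged in the remark after Lemma~\ref{lemzahl}) in place of the measure bound \eqref{estmeasure} yields the desired tube count, and \eqref{final3} follows.

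\textbf{Main obstacle.} The delicate point is the last step. Section~8.1 as written only records the bound \eqref{polywolffineq} on $|\bigcup_{T}T|$, whereas tubes pointing in different directions overlap, so $\#(\T_{S_n,S_l})\cdot|T|$ is genuinely larger than $|\bigcup_T T|$ in general; one therefore has to go back to the number-of-tubes version of the nested polynomial Wolff axiom, re-verify that the chain/tangency structure is compatible with the $\mathrm{Fat}$-construction and the scale inequalities, and carefully track the rescaling by $M$ in the $\vec e_n$-direction that is responsible for the gain $M^{-(l-1)}$. By contrast, the $L^\infty$ square-function bound, though indispensable, is essentially a classical computation once the wave-packet/plank structure is unwound, and the remaining orthogonality steps are routine.
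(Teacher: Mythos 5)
The paper's own proof of this lemma does not go through an $L^\infty$ square-function bound or a tube count at all. It sets $f^{\#}_{S_l}:=\sum_{j}f^{\#}_{j,S_l}$, observes that the Fourier supports $N_{R^{-1}}\Ga_j(\tau)$ are finitely overlapping \emph{jointly over $j$ and $\tau$}, so that $\|\Sq\vf^{\#}_{S_l}\|_2^2\lesssim\|f^{\#}_{S_l}\|_2^2$, and then shows by the frequency/spatial localisation of the wave packets and Plancherel that $\|f^{\#}_{S_l}\|_2^2\lesssim\|\Id_X f\|_2^2\le|X|\,\|f\|_\infty^2$, after which only the measure bound \eqref{polywolffineq} on $|X|$ is needed. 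Your route replaces this with the pointwise step $\int_T\Sq\vf_{\tau(T)}^2\le|T|\,\|\Sq\vf_\tau\|_\infty^2\lesssim|T|\,\|f\|_\infty^2$, summed over tubes, and is a genuinely different (and, as written, deficient) argument.

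The deficiency is quantitative and concrete. Once you bound each $\int_T|\Sq\vf_\tau|^2$ by $|T|\,\|f\|_\infty^2$, you have decoupled the caps $\tau$, and what you are left to control is $\#\T_{S_n,S_l}\cdot|T|$, i.e.\ the \emph{sum of tube volumes}, not the measure of their union. These can diverge badly when tubes in different directions pile up on the same region, which is exactly the regime the polynomial Wolff axiom is concerned with. In fact your own chain already fails at the endpoint $l=n$: there $\T_{S_n,S_n}=\T_{S_n}$ is essentially all $R$-tubes inside $\cp_R$, so $\#\T\cdot|T|\sim M^{-2(n-1)}R^{(3n-1)/2}$, while the target in \eqref{final3} is $M^{-(n-1)}R^n$ — a loss of a factor $\sim M^{-(n-1)}R^{(n-1)/2}$, which is huge unless $M\gtrsim R^{1/2}$. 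By contrast $\|\Sq\vf\|_2^2\lesssim\|f\|_2^2\lesssim|\cp_R|\,\|f\|_\infty^2$, which is the correct trivial argument the paper's approach reproduces automatically. The lesson is that comparing each $\tau$ with $\|f\|_\infty$ separately and then summing over tubes loses a factor of the number of caps; the paper's Plancherel step $\|f^{\#}_{S_l}\|_2^2\lesssim\|\Id_X f\|_2^2$ is exactly what recombines the $\tau$'s losslessly.

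There is also a misreading at the point you flag as the main obstacle. The remark after Lemma~\ref{lemzahl} says (2.33) in \cite{zahl2021new} is the \emph{measure} estimate which proves \eqref{estmeasure}; it is \emph{not} the number-of-tubes form. Zahl's Lemma~2.11 proper concerns a tube count, but you have not verified that version yields $\#\T_{S_n,S_l}\lessapprox M^{-(l-1)}R^{(n-1)/2}\prod_i r_i^{-1/2}$, and the $l=n$ computation above shows that your reduction requires a bound that cannot follow from the measure version alone. Until that gap is filled (and one confronts the fact that the hypotheses in the degenerate $l=n$ case give essentially no constraint), the proof is incomplete. Your auxiliary claim $\|\Sq\vf_\tau\|_\infty\lesssim\|f\|_\infty$ is plausible — the slabs $N_{R^{-1}}\Ga_j(\tau)$ all share the normal direction $c_\tau$ because $\Ga_j(\tau)$ is defined via the Gauss map, so the Plancherel–P\'olya sampling argument you sketch can be made to work — but it is an extra ingredient the paper does not need once the $\|\Id_Xf\|_2$ step is in place.
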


\begin{proof}
First, we remind readers that $S_n=\cp_R$. 
Since $\vf_{S_l}^{\#}=\vf_{S_n,S_l}$ is the function at scale $R$, so the components of $\vf_{S_l}^{\#}$ have finitely overlapping Fourier support. We define the corresponding function $f_{S_l}^{\#}$ by summing over all the components of $\vf_{S_l}^{\#}$:
\begin{equation}
\nonumber
    f_{S_l}^{\#}:=\sum_{j=1}^R f_{j,S_l}^{\#},
\end{equation}
and have the estimate
\begin{equation}
\nonumber
    \|\Sq\vf_{S_l}^{\#}\|_2^2\lesssim\|f_{S_l}^{\#}\|_2^2.
\end{equation}
Note that $f_{S_l}^{\#}$ is concentrated on the wave packets of $\vf_{S_l}^{\#}$, which we denoted by
\begin{equation}\label{defuniontube}
    X:=\bigcup_{T\in\T_{S_n,S_l}}100 T. 
\end{equation} 
By the nested polynomial Wolff axioms \eqref{polywolffineq} and Definition \ref{defnesttube}, we have the estimate
\begin{equation}\label{final3.1}
    |X|\lessapprox M^{-(l-1)} R^n \prod_{i=l}^{n-1}r_i^{-\frac{1}{2}}.
\end{equation}

Our next goal is to show 
\begin{equation}\label{final3.2}
    \|f^{\#}_{S_l}\|_2^2 \lesssim \|\Id_X f\|_2^2.
\end{equation}
If this is true, we get
\begin{equation}
    \|\Sq\vf_{S_l}^{\#}\|_2^2=\|f_{S_l}^{\#}\|_2^2\lesssim \|\Id_X f\|_{2}^2\lesssim |X|\|f\|_{\infty}^2.
\end{equation}
Combining this with \eqref{final3.1}, we proved \eqref{final3}.

It remains to prove \eqref{final3.2}. Let us recall the wave packet decomposition for each component of $\vf^\#_{S_l}$:
\begin{equation}
\nonumber
    \vf_{S_l}^\#=\sum_{T\in\T_{S_n,S_l}} \vf _T=\sum_{\theta}\sum_{T\in\T_\theta\cap\T_{S_n,S_l}} \vf_{\theta}\Id^*_T. 
\end{equation}
Here $\T_{S_n,S_l}$ is a set of $R$-tubes, $\sum_\theta$ is a sum over $R^{-1/2}$-caps, $\T_{\theta}$ is the set of $R$-tubes that point to the direction $c_\theta$. Thus, we have
\begin{equation}
    f_{S_l}^\#= \sum_{j=1}^R\sum_{T\in\T_{S_n,S_l}} (f_{j})_T=\sum_{j=1}^R\sum_{\theta}\sum_{T\in\T_\theta\cap\T_{S_n,S_l}} (f_{j})_{\theta}\Id^*_T.
\end{equation}
Recall the definitions in \eqref{operator-1}, \eqref{vector-f} that 
$$ (f_{j})_{\theta}= \vp_{j,\theta}* f_{j}= \vp_{j,\theta}* m_j* f, $$
where $\vp_{j,\theta}$ is a smooth cut off function at the $R^{-1/2}$-slab $N_{R^{-1/2}}\Ga_j(\theta)$ whose normal direction is $c_\theta$ (recall \eqref{gtau}), and $m_j$ is a smooth cut off function at $N_{R^{-1/2}}(\Ga_j)$ (recall \eqref{kernel-1}). 
Intuitively, $\vp_{j,\theta}\cdot m_j\approx \Id_{N_{R^{-1/2}}\Ga_j(\theta)}$. 

Set $m_{j,\theta}:=\vp_{j,\theta}\cdot m_j$. Let $\theta^*$ be the $R$-tube dual to $\theta$ and passes through the origin and let $\Id^*_{\theta^*}$ be a bump function on $\theta^\ast$, then we have the estimate
\begin{equation}
\nonumber
    |m_{j,\theta}(x)|\lesssim \frac{1}{|\theta^*|}\Id^*_{\theta^*}.
\end{equation}

Now, we can write
\begin{align}
    \nonumber f_{S_l}^\#&=\sum_{j=1}^R\sum_{\theta}\sum_{T\in\T_\theta\cap\T_{S_n,S_l}}   (m_{j,\theta}*f)\Id^*_T\\
    \label{secondterm}&= \sum_{j=1}^R\sum_{\theta}\!\!\sum_{T\in\T_\theta\cap\T_{S_n,S_l}}  \!\!\!\!\!\!\!\!\Big(m_{j,\theta}*\big(f\Id_X\big)\Big)\Id^*_T+ \sum_{j=1}^R\sum_{\theta}\!\!\sum_{T\in\T_\theta\cap\T_{S_n,S_l}} \!\!\!\!\!\!\!\!\!\Big(m_{j,\theta}*\big(f\Id_{X^c}\big)\Big)\Id^*_T.
\end{align} 
We claim the second term in \eqref{secondterm} is negligible. Note that for $T\in\T_\theta\cap\T_{S_n,S_l}$, $m_{j,\theta}*\big(f_{j}\Id_{X^c}\big)\Id^*_T$ is essentially supported in $(\theta^*+X^c)\cap T$. This is an empty set since $X^c\cap (\theta^*+T)\subset X^c \cap 5T=\empty $ by the definition of $X$ in \eqref{defuniontube}. Thus,
\begin{align}
     f_{S_l}^\#= \sum_{j=1}^R\sum_{\theta}\sum_{T\in\T_\theta\cap\T_{S_n,S_l}}  \Big(m_{j,\theta}*\big(f\Id_X\big)\Big)\Id^*_T+\rap(R)\|\Sq(\vf)\|_2.
\end{align} 
Note that for each $j, \theta$, the Fourier support of $\sum_{T\in\T_\theta\cap\T_{S_n,S_l}} \Big((m_{j,\theta}*\big(f\Id_X\big)\Big)\Id^*_T$ is contained in $N_{10R^{-1}}\Ga_j(10\theta)$, and the sets $\{N_{10R^{-1}}\Ga_j(10\theta)\}_{j,\theta}$ are still finitely overlapping, so by Plancherel we have 
\begin{equation}
\nonumber
    \|f_{S_l}^\#\|_2^2\lesssim  \sum_{j=1}^R\sum_{\theta}\|\sum_{T\in\T_\theta\cap\T_{S_n,S_l}}  \Big(m_{j,\theta}*\big(f\Id_X\big)\Big)\Id^*_T\|_2^2.
\end{equation}
Since the tubes $T\in\T_\theta$ are essentially disjoint, we further have 
$$ \|f_{S_l}^\#\|_2^2\lesssim  \sum_{j=1}^R\sum_{\theta}\| m_{j,\theta}*\big(f\Id_X\big)\|_2^2. $$
Now by Plancherel again we obtain
\begin{equation}
\nonumber
    \|f_{S_l}^\#\|_2^2\lesssim \|f\Id_X\|_2^2,
\end{equation}
which gives \eqref{final3.2}.
\end{proof}

Combining Lemma \ref{lem1}, Lemma \ref{lem2} and Lemma \ref{lem3} above, we obtain
\begin{equation}\label{L2Linfty}
    \|\Sq\vf_O\|_2^2\lessapprox R^{\e/2}R^{n-1} M^{n-m+3}M^{-l} \prod_{i=m}^{n-1}r_i^{-1/2}\prod_{i=m-1}^{l-1}D_i^{-i}\prod_{i=l}^{n-1}r_i^{-1/2}\|f\|_\infty^2.
\end{equation}
Next, we are going to use \eqref{L2Linfty} for different $l$ to derive the estimate we need.
Recall the definition of numbers $0\le \ga_k,\cdots ,\ga_n\le 1$ and $1\le \si_{k},\cdots,\si_{n}\le 1$ are given in \eqref{defga} and \eqref{defsi}. Since our range of $l$ is $m\le l\le n$, we need new parameters $\{\ga'_l\}_{l=m}^n,\ \{\si'_l\}_{l=m}^n$ which are a slight modification of the definitions of $\ga_l$'s and $\si_l$'s. We define
\begin{align}
    &\ga'_m:=\sum_{i=k}^m \ga_i,\\
    &\ga'_l:=\ga_l\ \ (m+1\le l\le n),\\
    &\si'_{l}:=\sum_{i=k}^l \ga'_i\ \ (m\le l\le n).
\end{align}
One can checks
\begin{equation}
\nonumber
    \sum_{l=m}^n\ga_l'=1, \hspace{.5cm}\si_l'=\si_l\ \ (m\le l\le n),
\end{equation}
So we still use $\si_l$ instead of $\si'_l$.
\smallskip

Take the power of $\ga'_l$ on both sides of $\eqref{L2Linfty}$ and then multiply them together for all the $m\le l \le n$. After some calculations, we obtain
\begin{proposition}
\begin{equation}\label{maxL2_1}
    \|\Sq\vf_O\|_2^2\lessapprox R^{\e/2} R^{n-1} M^{n-m+3-\sum_{l=m}^n l\ga'_l}\prod_{i=m}^{n-1}r_i^{-\frac{1+\si_i}{2}}\prod_{i=m-1}^{n-1}D_i^{-i(1-\si_i)}\|\Sq\vf\|_\infty^2.
\end{equation}
\end{proposition}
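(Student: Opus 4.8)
The plan is to deduce \eqref{maxL2_1} from the family of bounds \eqref{L2Linfty} --- one valid for each $l$ with $m\le l\le n$, since in the nested chain produced by the second algorithm each $O$ lies below a unique $S_l\in\cs_l$ --- by taking their weighted geometric mean with weights $\ga'_l$. Because $\sum_{l=m}^n\ga'_l=1$, raising the $l$-th instance of \eqref{L2Linfty} to the power $\ga'_l$ and multiplying over $l$ returns $\|\Sq\vf_O\|_2^2$ on the left; on the right, the implicit constant of ``$\lessapprox$'' gets raised to the power $\sum_l\ga'_l=1$, so it stays of the form allowed by Remark \ref{pretend}, namely $R^{O(\de)}\prod_{i=m-1}^{n}D_i^{O(\de)}$.

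Carrying this out, the $l$-independent factors of \eqref{L2Linfty}, namely $R^{\e/2}R^{n-1}M^{n-m+3}\prod_{i=m}^{n-1}r_i^{-1/2}$ and $\|f\|_\infty^2$, survive with exponent $1$, while $M^{-l}$ becomes $M^{-\sum_{l=m}^n l\ga'_l}$. It then remains only to simplify the two double products. For $\prod_{l=m}^n\prod_{i=l}^{n-1}r_i^{-\ga'_l/2}$ I would interchange the order of multiplication: for a fixed $i$ the exponent of $r_i$ is $-\tfrac12\sum_{l=m}^i\ga'_l=-\tfrac12\si_i$, using the identity $\si'_l=\si_l$ recorded just after the definition of the $\ga'_l$; combined with the prefactor $\prod_{i=m}^{n-1}r_i^{-1/2}$ this gives $\prod_{i=m}^{n-1}r_i^{-(1+\si_i)/2}$. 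Similarly, interchanging order in $\prod_{l=m}^n\prod_{i=m-1}^{l-1}D_i^{-i\ga'_l}$, the exponent of $D_i$ (for $m-1\le i\le n-1$) is $-i\sum_{l=i+1}^n\ga'_l=-i\bigl(1-\si_i\bigr)$, with the endpoint convention $\si_{m-1}=0$ consistent with $\si_{k-1}=0$. Collecting all the factors produces exactly \eqref{maxL2_1}.

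Since \eqref{L2Linfty} is already in hand (it is Lemmas \ref{lem1}, \ref{lem2} and \ref{lem3} combined), no analytic input remains; the only thing that requires care is the index bookkeeping, in particular the asymmetric definitions $\ga'_m=\sum_{i=k}^m\ga_i$ versus $\ga'_l=\ga_l$ for $l>m$ and the $i=m-1$ boundary term in the $D$-product. I expect that, rather than any inequality, to be the sole ``obstacle,'' and it is entirely routine.
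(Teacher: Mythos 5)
Your proposal is correct and coincides with the paper's own derivation: take the $\ga'_l$-weighted geometric mean of \eqref{L2Linfty} over $m\le l\le n$, use $\sum_l\ga'_l=1$ to preserve the $\lessapprox$ constant and the $l$-independent factors, and interchange the order of multiplication in the two double products to recover the exponents $-\tfrac{1+\si_i}{2}$ and $-i(1-\si_i)$, with the implicit convention $\si_{m-1}=\si'_{m-1}=0$ at the lower endpoint. Your index bookkeeping at $i=m-1$ (where $\max(m,i+1)=i+1$ still holds, so the exponent is $-(m-1)\sum_{l=m}^n\ga'_l=-(m-1)$) is exactly the delicate point and you handled it correctly.
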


We can rewrite the power of $M$ on the right hand side of \eqref{maxL2_1}. Note that
\begin{equation*}
    \sum_{l=m}^n l\ga'_l=m\sum_{l=m}^n\ga'_l+\!\!\sum_{l=m+1}^n (l-m)\ga'_l=m+\!\!\sum_{i=m+1}^{n} \sum_{l=i}^n\ga'_l=m+\sum_{i=m}^{n-1}(1-\si_i)=n-\sum_{i=m}^{n-1}\si_i.
\end{equation*}
Recall \eqref{solve1}. After summing over all $m\le l\le n-1$ we get
\begin{equation}
    (\frac{1}{2}-\frac{1}{p_n})\sum_{i=m}^{n-1} \si_i=\be_n-\be_m-(n-m)(\frac{1}{2}-\frac{1}{p_n}).
\end{equation}
This implies
\begin{equation}
\label{maxL2-M}
    M^{-\sum_{l=m}^n l\ga'_l}=M^{-n}\prod_{i=m}^{n-1}M^{\si_i}=M^{-n+(\frac{1}{2}-\frac{1}{p_n})^{-1}\big( \be_n-\be_m-(n-m)(\frac{1}{2}-\frac{1}{p_n}) \big)}.
\end{equation}
Therefore, combining \eqref{mixnorm}, \eqref{maxL2_1} and \eqref{maxL2-M}, we obtain the following key estimate:
\begin{equation}\label{finalestimate1}
    \|\Sq\vf\|_{\BL^{p_n}_{k,A_n}(\cp_R)}\lessapprox R^{\e/2} R^{\frac{n-1}{2}-\frac{n}{p}}M^{e(M)} \prod_{l=m}^{n-1}r_l^{X_l}\prod_{l=m-1}^{n-1}D_l^{Y_l}\|\Sq\vf\|_{L^2(\om_{\cp_R})}^{2/p_n}\|f\|_\infty^{1-\frac{2}{p_n}},
\end{equation}
where  
\begin{align*}
    &X_l=\frac{\be_{l+1}-\be_l}{2}-\frac{1+\si_l}{2}\big(\frac{1}{2}-\frac{1}{p_n}\big), \hspace{.5cm} Y_l=\frac{\be_{l+1}}{2}-(1+l(1-\si_l))\big(\frac{1}{2}-\frac{1}{p_n}\big),\\ 
    &e(M)=\frac{2\be_m}{p_m}+(n+1)(\frac{1}{p_n}-\frac{1}{2})
    +\be_n\!-\!\be_m\!-\!(n-m)(\frac{1}{2}-\frac{1}{p_n})\!-\!(m-3)(\frac{1}{2}-\frac{1}{p_n}).
\end{align*}
We can compute that $e(M)=\frac{2n}{p_n}-(n-1)$. Also from \eqref{solve1} and \eqref{solve2}, we have $X_l=Y_l=0$. We can slightly perturb the choice of $\si_l$'s so that $X_l, Y_l\le -C\de$ at the cost of making $p_n$ slightly bigger. Actually, when we are solving $\{\si_l\}$ from \eqref{solve1} and \eqref{solve2}, the equality $``=0"$ should be replaced by $``=-C\de"$. Since it is a small modification, we do not put more details here. 

Now if we want to replace the $p_n$ in \eqref{finalestimate1} by the $p$ in 
\eqref{defpk}, then we need $Y_l\le -C\de$, which can be used to compensate for the implicit factor $\prod_{l=m-1}^n D_l^{O(\de)}$ of ``$\lessapprox$" in \eqref{finalestimate1} (see Remark \ref{pretend}). So we actually obtain
\begin{equation}\label{finalestimate}
    \|\Sq\vf\|_{\BL^{p}_{k,A_n}(\cp_R)}\lesssim R^{\e} R^{\frac{n-1}{2}-\frac{n}{p}}M^{\frac{2n}{p}-(n-1)}\|\Sq\vf\|_{L^2(\om_{\cp_R})}^{2/p}\|f\|_\infty^{1-\frac{2}{p}}.
\end{equation}
Now we have finished the proof of our \eqref{broadinequality2}.

\section{Appendix A: Proof of Proposition \ref{TE}}
\label{appen-trans}

For simplicity, we assume $\beta=0$.
For an $r^{-1/2}$-cap $\tau$, define
$$ \T_{\cp_\rho,\tau}:=\{ T\in\T_{\cp_\rho}:\ \textup{the~direction~of~}T\textup{~is~contained~in~}2\tau \}. $$
By the $L^2$-orthogonality, it suffices to prove for $\vg$ concentrated on wave packets from $\T_{\cp_{\rho},\tau}$, there holds 
\begin{equation}
\label{TEE}
    \int_{\cp_{r}\cap N_{r^{1/2}R^{\dc}}\bZ}|\Sq \vg|^2\lesssim R^{O(\dc)} \big( \frac{\rho}{r} \big)^{-\frac{n-m}{2}-1} \int_{\R^n}|\Sq \vg|^2+\rap(r)\|g\|_2^2.
\end{equation}
We just need to prove for each component of $\vg$ separately, that is:
\begin{equation}
\label{TEE-each}
    \int_{\cp_{r}\cap N_{r^{1/2}R^{\dc}}\bZ}|g_j|^2\lesssim R^{O(\dc)} \big( \frac{\rho}{r} \big)^{-\frac{n-m}{2}-1} \int_{\R^n}|g_j|^2+\rap(r)\|g_j\|_2^2.    
\end{equation}

\smallskip

Let us do some reductions to \eqref{TEE-each}. We first state a higher dimensional analogue of Lemma 2.1 in \cite{Wu}.
\begin{lemma}\label{localL2}
Suppose that $g\!\!:\!\ZR^n\!\!\to\!\ZC$ is a function whose Fourier support is contained in $N_{\rho^{-1}}(\Ga_j(\si))$. Let $M^2 \!\le\! r\! \le\! \rho$ and let $\cp_r\subset\ZR^n$ be a rescaled $r$-ball. Then
\begin{equation}
\label{local-l2}
    \|g\|_{L^2(\cp_r)}\lesssim r^{1/2}\rho^{-1/2}\|g\|_2.
\end{equation}
\end{lemma}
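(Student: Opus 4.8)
The plan is to treat this as the $n$-dimensional analogue of Lemma 2.1 in \cite{Wu}: reduce, by a parabolic rescaling, to a model estimate on a unit cap, and then run a standard wave-packet plus local-constancy argument. First I would normalize the geometry. After a rotation I may assume $c_\si=e_n$; since the Gauss map of $\Ga_j$ at the center $\bar\xi_0$ of $\si$ is then $e_n$, one has $\nabla_{\bar\xi}\Phi(\bar\xi_0;t_j)=0$, so (using Lemma \ref{Gauss-map-spt}) $\Ga_j(\si)$ is the graph over an $M^{-1}$-ball of a function with vanishing gradient at its center, i.e.\ a ``horizontal'' $M^{-1}$-cap, and $N_{\rho^{-1}}\Ga_j(\si)$ is comparable to the vertical slab $\{\bar\xi\in M^{-1}\text{-ball},\ |\xi_n-\Phi(\bar\xi;t_j)|\lesssim\rho^{-1}\}$. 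Next I would apply the anisotropic dilation $L=\mathrm{diag}(M,\dots,M,M^2)$ to the frequency variable (after centering): by Taylor expansion and the Hessian hypotheses on $\Phi$, $L$ carries $\Ga_j(\si)$ to a unit-radius cap $\wt\Ga$ of a hypersurface all of whose principal curvatures are $\sim 1$, and carries the slab to $N_{\wt\rho^{-1}}\wt\Ga$ with $\wt\rho=\rho/M^2\ge 1$ (using $\rho\ge M^2$), while its conjugate on the physical side sends $\cp_r$ to a genuine Euclidean ball $B_{\wt r}$, $\wt r=r/M^2$, with $1\le\wt r\le\wt\rho$. Since such changes of variable preserve the ratio $\|\,\cdot\,\|_{L^2(\text{set})}^2/\|\,\cdot\,\|_{L^2(\R^n)}^2$ between dual objects and $(\wt r/\wt\rho)^{1/2}=(r/\rho)^{1/2}$, everything reduces to the model claim: $\supp\wh h\subset N_{\wt\rho^{-1}}\wt\Ga$ implies $\|h\|_{L^2(B_{\wt r})}\lesssim(\wt r/\wt\rho)^{1/2}\|h\|_{2}$ whenever $1\le\wt r\le\wt\rho$.

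For the model claim I would decompose $\wh h=\sum_\theta\wh h_\theta$ along a finitely overlapping cover of $N_{\wt\rho^{-1}}\wt\Ga$ by $\wt\rho^{-1/2}$-caps $\theta$; each $\theta$ is a box of dimensions $\wt\rho^{-1/2}\times\dots\times\wt\rho^{-1/2}\times\wt\rho^{-1}$ with short side along the normal $\nu_\theta$ of $\wt\Ga$, since a cap of tangential radius $\wt\rho^{-1/2}$ stays within $\lesssim\wt\rho^{-1}$ of its tangent hyperplane. Hence each $h_\theta$ is, modulo rapidly decaying tails, constant at scale $\wt\rho$ along $\nu_\theta$ (and locally constant at the scale of the dual box $\theta^*$, $|\theta^*|\sim\wt\rho^{(n+1)/2}$). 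Integrating the pointwise local-constancy bound for $|h_\theta|^2$ against $\1_{B_{\wt r}}$ and using that every chord of $B_{\wt r}$ has length $\le 2\wt r\le 2\wt\rho$ then gives $\int_{B_{\wt r}}|h_\theta|^2\lesssim(\wt r/\wt\rho)\|h_\theta\|_2^2$, whereas in the range $\wt r\le\wt\rho^{1/2}$ — where $B_{\wt r}$ sits inside one translate of $\theta^*$ — one gets the sharper bound $\int_{B_{\wt r}}|h_\theta|^2\lesssim(|B_{\wt r}|/|\theta^*|)\|h_\theta\|_2^2$. Summing over $\theta$ I would use an almost-orthogonality cutoff $\psi_{B_{\wt r}}$ with $\wh{\psi_{B_{\wt r}}}$ supported in $B_{\wt r^{-1}}$, so that the cross term for $\theta,\theta'$ is negligible unless they lie within tangential distance $\lesssim\max(\wt\rho^{-1/2},\wt r^{-1})$ on $\wt\Ga$, i.e.\ unless $\theta'$ ranges over $O(\max(1,(\wt\rho^{1/2}/\wt r)^{n-1}))$ caps. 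If $\wt r\ge\wt\rho^{1/2}$ this overlap is $O(1)$ and the flat per-cap bound closes the estimate; if $\wt r<\wt\rho^{1/2}$ the $(\wt\rho^{1/2}/\wt r)^{n-1}$-fold overlap, multiplied by the sharper per-cap bound $|B_{\wt r}|/|\theta^*|\sim\wt r^{\,n}\wt\rho^{-(n+1)/2}$, again produces exactly $(\wt r/\wt\rho)\|h\|_2^2$. Undoing the rescaling finishes the proof.

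The delicate point is precisely this last regime $\wt r<\wt\rho^{1/2}$ (equivalently $M^2\le r<M\rho^{1/2}$): there a naive use of almost-orthogonality genuinely loses a power of $\wt\rho^{1/2}/\wt r$, and that loss is recovered only because the relevant per-cap estimate in that range is the ``$B_{\wt r}\subset\theta^*$'' one rather than the flat one — arranging that the two factors cancel exactly is the main piece of bookkeeping. (One could also sidestep it: at the one place the lemma is invoked below, one has $r\ge M\rho^{1/2}$, i.e.\ $\wt r\ge\wt\rho^{1/2}$.) Everything else — handling the Schwartz tails of $\psi_{B_{\wt r}}$ and of the local-constancy bumps, and checking that the rotation and the dilation $L$ really do send rescaled $r$-balls to Euclidean balls of radius $\sim r/M^2$ — is routine.
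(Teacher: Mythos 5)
The paper does not actually prove this lemma: it is stated as a higher-dimensional analogue of Lemma~2.1 in \cite{Wu} and the reader is referred there, so there is no in-paper proof to compare against. Your argument is correct and self-contained. The anisotropic change of variable $L=\mathrm{diag}(M,\dots,M,M^{2})$ does carry the rescaled $r$-ball $\cp_r$ to a Euclidean ball $B_{\wt r}$ with $\wt r=r/M^{2}$ and the $\rho^{-1}$-slab over the $M^{-1}$-cap to a $\wt\rho^{-1}$-slab (with $\wt\rho=\rho/M^{2}\ge1$) over a cap of a hypersurface whose Hessian is still $\sim1$ (the rescaled cubic remainder $O(M^{-1})$ is harmless since only curvature $\sim1$ and bounded derivatives are used); and since $L^{2}$-mass ratios are invariant under the rescaling, the lemma does reduce to the model claim. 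In the model claim the two-regime bookkeeping closes exactly: for $\wt r\ge\wt\rho^{1/2}$ the frequency overlap after convolving with $\wh\psi_{B_{\wt r}}$ is $O(1)$ and the per-cap bound $\wt r/\wt\rho$ from longitudinal local constancy at scale $\wt\rho$ suffices, while for $1\le\wt r<\wt\rho^{1/2}$ the $(\wt\rho^{1/2}/\wt r)^{n-1}$-fold overlap multiplied by $|B_{\wt r}|/|\theta^{*}|\sim\wt r^{\,n}\wt\rho^{-(n+1)/2}$ again gives $\wt r/\wt\rho$. You are also right that at the only place the lemma is invoked (inside the proof of Proposition~\ref{TE}, where $r\ge M\rho^{1/2}$) only the simpler regime $\wt r\ge\wt\rho^{1/2}$ is ever needed.
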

From this lemma, we see \eqref{TEE-each} boils down to
\begin{equation}
\label{TEE-reduced-1}
    \int_{\cp_{r}\cap N_{r^{1/2}R^{\dc}}\bZ}|g_j|^2\lesssim R^{O(\dc)} \big( \frac{\rho}{r} \big)^{-\frac{n-m}{2}} \int_{10\cp_r}|g_j|^2+\rap(r)\|g_j\|_2^2.   
\end{equation}
By further breaking $\cp_r$ into $\rho^{1/2}$-balls $\cp_r=\bigcup B$ (we can do this since $rM^{-1}\ge \rho^{1/2}$), it suffices to prove that for any $\rho^{1/2}$-ball $B$, one has 
\begin{equation}
\label{global}
    \int_{B\cap N_{r^{1/2}R^{\dc}}\bZ}|g_j|^2\lesssim R^{O(\dc)}\big( \frac{\rho}{r} \big)^{-\frac{n-m}{2}} \int_{10B}|g_j|^2+\rap(r)\|g_j\|_2^2.
\end{equation}

\noindent One can compare this estimate to Lemma 6.2 in \cite{Guth-II}.

To prove \eqref{global}, we need a result similar to Lemma 6.5 in \cite{Guth-II}. For any subspace $V$ in $\ZR^n$, we define a set of $\rho$-tubes $\ZT_{B,\tau,V}$ to be
\begin{equation}
    \ZT_{B,\tau,V}:=\{T\subset \cp_\rho:T\cap B\not=\varnothing,~\angle(\om(T),V)\lesssim \rho^{-1/2}R^{\dc}~\text{and}~\om(T)\subset2\tau\}.
\end{equation}

\begin{lemma}
\label{trans-equi-sub-lem}
Let $V$ be a subspace of $\ZR^{n}$. Then there exists a linear subspace $V'$ with the following properties:
\begin{itemize}
    \item[1. ]$\dim(V)+\dim(V')=n$. 
    \item[2. ]$V$ and $V'$ are quantitatively transverse, in the sense that for any unit vectors $v\in V$ and $v'\in V'$,
    \begin{equation}
        \angle(v,v')\gtrsim 1.
    \end{equation}
    \item[3. ]If $g$ has Fourier support in $N_{\rho^{-1}}\Ga_j(\tau)$, $g$ is concentrated on wave packets from $\ZT_{B,\tau,V}$, $\Pi$ is any plane parallel to $V'$ and $y\in\Pi\cap B$, then 
    \begin{equation}
    \label{transverse-equi-subspace}
        \int_{\Pi\cap B(y,r^{1/2}R^{\dc})}|g|^2\lesssim R^{O(\dc)}\Big(\frac{\rho^{1/2}}{r^{1/2}}\Big)^{-\dim(V')}\int_{\Pi\cap 10B}|g|^2+\rap(r)\|g\|_2^2.
    \end{equation}
\end{itemize}
\end{lemma}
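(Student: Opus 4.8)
\noindent The plan is to follow the proof of Lemma~6.5 in \cite{Guth-II}, adapted to our hypersurfaces $\Ga_j$; the vector-valued structure and the $M$-rescaling have already been removed by the reductions preceding \eqref{global} and by Lemma~\ref{localL2}, so only the model case remains. First I would make the Fourier-support picture of $g$ explicit. If $g$ is concentrated on wave packets from $\ZT_{B,\tau,V}$, then up to $\rap(R)$ tails each contributing wave packet $g_T$ has Fourier transform supported in a dual slab of $T$, of dimensions $\rho^{-1/2}\times\cdots\times\rho^{-1/2}\times\rho^{-1}$, lying over a point $\xi_T\in\Ga_j(\tau)$ whose Gauss image $G_j(\xi_T)$ is the direction of $T$. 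The constraint $\angle(\om(T),V)\lesssim\rho^{-1/2}R^{\dc}$ forces $\xi_T$ to lie in
\[
   \Sigma:=\bigl\{\xi\in\Ga_j(\tau):\ G_j(\xi)\in N_{C\rho^{-1/2}R^{\dc}}(V)\bigr\},
\]
and, combining this with $\supp\wh g\subset N_{\rho^{-1}}(\Ga_j)$, one obtains $\supp\wh g\subset N_{C\rho^{-1}}(\Sigma)$.

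The crux is the construction of $V'$ together with an estimate for the projection $\proj_{V'}(\supp\wh g)$. Since $\tau$ is an $r^{-1/2}$-cap and $\Ga_j$ has Hessian of size $\sim1$, the Gauss map $G_j$ restricts to a diffeomorphism of $\Ga_j(\tau)$ onto $2\tau$ with derivatives bounded above and below by absolute constants (Lemma~\ref{Gauss-map-lem} and Lemma~\ref{Gauss-map-spt}). Hence $N_{C\rho^{-1/2}R^{\dc}}(V)\cap 2\tau$, which is of width $\sim\rho^{-1/2}R^{\dc}$ in $n-\dim V$ directions and of size $\sim r^{-1/2}$ in the remaining $\dim V-1$ directions, pulls back to a subset of $\Ga_j(\tau)$ of the same shape; since over these $(n-\dim V)$ ``thin'' directions $\Ga_j$ differs from its tangent plane by at most $(\rho^{-1/2}R^{\dc})^2=\rho^{-1}R^{2\dc}$, one checks that $\Sigma$, hence $\supp\wh g\subset N_{C\rho^{-1}}(\Sigma)$, is contained in the $C\rho^{-1/2}R^{O(\dc)}$-neighbourhood of a $\dim V$-dimensional affine subspace $\Pi_\Sigma$. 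I would then take $V'$ to be (essentially) the orthogonal complement of the direction space of $\Pi_\Sigma$, so that $\dim V+\dim V'=n$; the bounded distortion of $G_j$ forces $\Pi_\Sigma$ to be within bounded distortion of $V$, which yields the quantitative transversality of $V$ and $V'$ exactly as in \cite{Guth-II}. By construction $\proj_{V'}(\supp\wh g)$ is then contained in a ball of radius $\lesssim\rho^{-1/2}R^{O(\dc)}$.

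Finally I would run a locally-constant argument slice by slice. Fix a plane $\Pi$ parallel to $V'$; the previous paragraph shows that $g|_\Pi$, viewed as a function on the $\dim V'$-dimensional space $\Pi$, has Fourier transform supported in a ball of radius $\lesssim\rho^{-1/2}R^{O(\dc)}$, hence is locally constant at scale $\rho^{1/2}R^{-O(\dc)}$: for every $x\in\Pi\cap 2B$,
\[
   |g(x)|^2\lesssim(\rho^{1/2}R^{-O(\dc)})^{-\dim V'}\int_{\Pi\cap 10B}|g|^2+\rap(r)\|g\|_2^2,
\]
because $B$ is a $\rho^{1/2}$-ball. Combining this with the trivial bound $\int_{\Pi\cap B(y,r^{1/2}R^{\dc})}|g|^2\le(r^{1/2}R^{\dc})^{\dim V'}\sup_{\Pi\cap B(y,r^{1/2}R^{\dc})}|g|^2$ and using $r\le\rho$ gives
\[
   \int_{\Pi\cap B(y,r^{1/2}R^{\dc})}|g|^2\lesssim R^{O(\dc)}\Bigl(\tfrac{r^{1/2}}{\rho^{1/2}}\Bigr)^{\dim V'}\int_{\Pi\cap 10B}|g|^2+\rap(r)\|g\|_2^2,
\]
which is \eqref{transverse-equi-subspace}. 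The main obstacle is the middle paragraph: one must follow, through the Gauss map, how the purely angular condition ``$\om(T)$ lies near $V$'' converts into genuine flatness of $\supp\wh g$ along a full $\dim V'$-dimensional block of frequency directions, and must simultaneously verify that the resulting $V'$ is quantitatively transverse to $V$; the remaining steps are bookkeeping of $\rap(r)$ and $R^{O(\dc)}$ factors.
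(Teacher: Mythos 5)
Your proposal follows the same strategy as the paper: find $V'$ so that $\proj_{V'}(\supp\wh g)$ lands in a $\rho^{-1/2}R^{O(\dc)}$-ball, then run a locally-constant argument on $V'$-slices of $B$. However, the step you yourself flag as ``the main obstacle''---verifying that the resulting $V'$ is quantitatively transverse to $V$---is where the real content lies, and ``the bounded distortion of $G_j$'' is not the right mechanism and does not fill the gap. Bounded distortion of $G_j$ says $|dG_j(w)|\sim|w|$, which holds for any nondegenerate Hessian, including saddle-type surfaces; it controls the size of $dG_j(w)$ but not the angle between $w$ and $dG_j(w)$, and it is the latter that governs transversality.

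The paper's construction makes this explicit: it sets $E:=G_j^{-1}(V\cap\ZS^{n-1})\cap\Ga_j(\tau)$, picks $\xi_0\in E$, $x_0:=G_j(\xi_0)$, and takes $V':=(T_{\xi_0}E\oplus\R x_0)^\perp$ (so $\dim V'=n-\dim V$ automatically). Since $E$ has diameter $\lesssim r^{-1/2}\le\rho^{-1/4}$, curvature over the \emph{wide} ($\dim V-1$) directions of extent $r^{-1/2}$ places $E$, and hence $\Sigma$ and $\supp\wh g$, within $C\rho^{-1/2}R^{O(\dc)}$ of $T_{\xi_0}E\oplus\R x_0$; your own curvature bound $(\rho^{-1/2}R^{\dc})^2$ concerns the thin directions and is not the one doing the work. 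For transversality, observe $x_0\in V$, and for $w\in T_{\xi_0}E$ one has $dG_j(w)\in T_{x_0}(V\cap\ZS^{n-1})\subset V$; the key estimate is
\[
|\langle w,\,dG_j(w)\rangle| \;=\; |\langle \nabla^2\Phi(\xi_0)\,w,\,w\rangle| \;\gtrsim\; |w|^2 \;\gtrsim\; |w|\,|dG_j(w)|,
\]
which requires that $\nabla^2\Phi$ have all eigenvalues of the same sign (the elliptic hypothesis on $\Phi$), not merely that $G_j$ be a diffeomorphism with bounded distortion. This forces each $w\in T_{\xi_0}E$ to have a nontrivial component along $dG_j(w)\in V$, hence $T_{\xi_0}E\oplus\R x_0$ is quantitatively transverse to $V^\perp$, i.e.\ $V'$ is quantitatively transverse to $V$. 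Your slicing and locally-constant argument at the end is fine once this is in place.
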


\begin{proof}
The key is to find the linear subspace $V'$ such that the projection of supp $\wh g$ to $V'$ is contained in some $\rho^{-1/2}R^{\dc}$-ball:
\begin{equation}
    \proj_{V'}(\textup{supp}\ \wh g)\subset B_{C\rho^{-1/2}R^{\dc}}\cap V'.
\end{equation}
Then \eqref{transverse-equi-subspace} follows by the same reasoning as (6.8), (6.9) in \cite{Guth-II}.

Let us construct $V'$.
Recall the definition of $\Ga_j$ in \eqref{surface} and that the Gauss map $G_j: \Ga_j\rightarrow S^{n-1}$ maps each point $\xi\in\Ga_j$ to the normal direction of $\Ga_j$ at that point. 
Define 
$$E=G_j^{-1}(V\cap\ZS^{n-1})\cap\Ga_j(\tau).$$ 
Since $g$ is concentrated on wave packets from $T_{B,\tau,V}$, we have
$$ \proj_{V'}(\textup{supp}\ \wh g)\subset N_{\rho^{-1/2}R^{\dc}}\proj_{V'}(E). $$ 
So it suffices to prove
\begin{equation}\label{sufficeto}
    \proj_{V'}(E)\subset B_{C\rho^{-1/2}}\cap V'.
\end{equation}

Suppose that $\dim(V)=m$.
Since $G_j$ is a diffeomorphism, we see $E$ is an $(m-1)$-dimensional submanifold of $\Ga_j(\tau)$. We pick any point $\xi_0\in E$ and set $x_0:=G_j(\xi_0)$. We
choose 
$$V':=(T_{\xi_0}E\oplus \R x_0)^\perp, $$
where $T_{\xi_0}E$ is the tangent space of $E$ at $\xi_0$ and $\perp$ means the orthogonal complement.

Since the diameter of $E$ is $\lesssim r^{-1/2}\le \rho^{-1/4}$, we have $E$ lies in the $C\rho^{-1/2}$-neighborhood of $T_{\xi_0}E$, so \eqref{sufficeto} holds. 
It remains to verify that $V$ and $V'$ are quantitatively
transverse. By the definition of $V'$, it's equivalent to verify that $T_{\xi_0}E\oplus \R x_0$ and $V$ are not too orthogonal in the sense that: For any $w\in T_{\xi_0}E\oplus \R x_0$, there exists a $v\in V$ such that  $\angle(w,v)<\pi/2-c$ for some $c>0$ only depending on the surface. 

Since $x_0\in V$, it suffices to prove for $w\in T_{x_0}E$.
Actually, we just choose $v=d G_j(w)$ and will show for $w\in T_{\xi_0}E$, there holds $|\langle w,d G_j(w)\rangle|\gtrsim |w||d G_j(w)|$, so $\angle(w,d G_j(w))<\pi/2-c$.

Without loss of generality, we may assume $\xi_0=0$, $x_0=(0,\cdots,0,1)$ and $$\Ga_j=\{ (\bar\xi, \Phi(\bar\xi)): |\bar\xi|\le 1/2 \},$$
for some $\Phi$ with $\nabla \Phi(\xi_0)=0$, $\nabla^2 \Phi(\xi_0)$ positively definite.
We have $$|\langle w,d G_j(w)\rangle|=|\langle \nabla^2 \Phi(\xi_0)w,w\rangle|\gtrsim |w|^2\gtrsim |w||d G_j(w)|.$$
This finishes the proof.
\end{proof}

\medskip
We can prove \eqref{global} from Lemma \ref{trans-equi-sub-lem} by following the same argument as in \cite{Guth-II} page 113-114, so we omit the details. Therefore, we proved Proposition \ref{TE}.

\section{Appendix B: Proof of Lemma \ref{lemdec}}

For simplicity, we assume $\beta=0$.
We will prove Lemma \ref{lemdec} by doing a series of reductions. In this appendix, let us assume that $\{\Ga_j\}$ are defined in \eqref{msurface}. First, we recall our lemma.

\begin{lemma}[Local, small cap]\label{dec1}
Let $\si\subset \ZS^{m-1}$ be a cap of radius $M^{-1}$, and $\Tau_{\sigma}=\{\tau\}$ be a collection of $K^{-1}M^{-1}$-caps that tile $\sigma$. Let $\vg=\{g_1,\cdots,g_R\}$ be any vector-valued function, such that each $g_j:\R^{m}\rightarrow \ZC$ has Fourier support in $N_{K^{-2}M^{-2}}\Ga_j(\si)$.
Let $B_{MK^2\times M^2K^2}\subset \R^{m}$ be a rectangle of dimensions $MK^2\times\cdots\times MK^2\times M^2K^2$, pointing to the direction $c_\si$. Then for $2 \leq p \leq 2+\frac{2}{m-1}$, we have
\begin{equation}
    \|\Sq \vg\|_{L^{p}(B_{MK^2\times M^2K^2})}\lesssim_\e (KM)^{\e}(\sum_{\tau \in \Tau_{\sigma}}\|\Sq \vg_\tau\|^2_{L^{p}(\om_{B_{MK^2\times M^2K^2}})})^{1/2}.
\end{equation}
\end{lemma}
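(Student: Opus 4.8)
The plan is to reduce the claim, by parabolic rescaling together with a Hilbert-space-valued upgrade, to the $\ell^2$-decoupling theorem for uniformly elliptic hypersurfaces in $\R^m$. Bourgain--Demeter \cite{Bourgain-Demeter} prove $\ell^2$-decoupling in the full range $2\le p\le \frac{2(m+1)}{m-1}$; since $2+\frac{2}{m-1}=\frac{2m}{m-1}<\frac{2(m+1)}{m-1}$, the exponent range in Lemma~\ref{dec1} is strictly subcritical, so no endpoint issue arises and one does not even need the sharp decoupling constant.

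\textbf{Step 1: parabolic rescaling.} Rotate so that $c_\si=e_m$. There is an affine map $A$ in the frequency variable, depending only on $M$ and the direction $c_\si$ — namely a shear straightening the common tilt of the slabs $N_{K^{-2}M^{-2}}\Ga_j(\si)$ followed by the anisotropic dilation $(\bar\xi,\xi_m)\mapsto (M\bar\xi,M^2\xi_m)$ — with the following properties. For each $j$, since $G_j^{-1}(\si)$ is comparable to an $M^{-1}$-ball in $B^{m-1}(0,1/2)$ and $\nabla\Phi_j$ varies by $O(M^{-1})$ on it, the image of $N_{K^{-2}M^{-2}}\Ga_j(\si)$ under $A$ is contained in $N_{K^{-2}}\widetilde\Ga_j$ (up to a harmless constant dilation), where $\widetilde\Ga_j$ is a graph over a ball of radius $\sim 1$ whose Hessian is $D^2\Phi_j$ read at the rescaled points, hence still has all eigenvalues in $[1/2,2]$, uniformly in $j$; the $K^{-1}M^{-1}$-caps $\tau$ become a finitely overlapping family of $\sim K^{-1}$-caps; and the dual change of variables in the physical space carries $B_{MK^2\times M^2K^2}$ to a ball $B_{K^2}$ of radius $K^2$, with $\om_{B_{MK^2\times M^2K^2}}$ comparable to $\om_{B_{K^2}}$. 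The $j$-dependence of the precise rescaling only translates the base point and tilts it by $O(M^{-1})$, which is absorbed. Thus it is enough to prove the decoupling in the model case: $\vg=\{g_1,\dots,g_R\}$ with $\widehat g_j$ supported in $N_{K^{-2}}\widetilde\Ga_j$, decoupled into $\sim K^{-1}$-caps on $B_{K^2}$.

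\textbf{Step 2: $\ell^2$-decoupling.} Apply the $\ell^2$-decoupling inequality for elliptic hypersurfaces to the surfaces $\widetilde\Ga_j$, in its square-function ($\ell^2$-valued) form: the Bourgain--Demeter proof only uses Fourier localisation, flat and lower-dimensional decouplings, and multilinear restriction/Kakeya, none of which distinguish the scalar from the $\ell^2(\ZN)$-valued setting, so it applies verbatim to $\ell^2_j$-valued functions, with a constant depending only on $m$, $p$, $\e$ and the curvature bounds (hence uniform in $j$). This yields
\[
  \|\Sq\vg\|_{L^p(B_{K^2})}\lesssim_\e K^{\e}\Big(\sum_\tau\|\Sq\vg_\tau\|_{L^p(\om_{B_{K^2}})}^2\Big)^{1/2}
  \qquad \Big(2\le p\le \frac{2m}{m-1}\Big).
\]
Undoing the rescaling of Step 1 and bounding $K^\e\le (KM)^\e$ gives Lemma~\ref{dec1}.

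\textbf{Expected main obstacle.} The analytic heart of the statement is off the shelf once Step 1 is in place, so the real work is the bookkeeping there: arranging a single affine normalisation that simultaneously straightens all $R$ surfaces $\Ga_j$ and converts the anisotropic rectangle $B_{MK^2\times M^2K^2}$, its weight, and the square-function structure into their model counterparts, while verifying that the rescaled surfaces $\widetilde\Ga_j$ retain curvature bounds uniform in $j$ and that the leftover $j$-dependence is negligible. If one wished to avoid quoting decoupling at all, the subcritical range $p\le \frac{2m}{m-1}$ is also reachable by an elementary $L^2$-orthogonality, C\'ordoba-type argument, but the reduction above is the most economical.
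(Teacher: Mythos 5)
Your Step 1 (straighten $\si$ and dilate so that the $K^{-1}M^{-1}$-cap family becomes $K^{-1}$-caps and $B_{MK^2\times M^2K^2}$ becomes $B_{K^2}$, with rescaled surfaces $\wt\Ga_j$ still uniformly elliptic) is fine and is essentially the paper's first two reductions in Appendix B (Lemma \ref{dec1} $\Rightarrow$ Lemma \ref{dec2} $\Rightarrow$ Lemma \ref{dec3}). The gap is in Step 2. You treat $\ell^2$-valued decoupling for the family $\{\Ga_j\}_{1\le j\le R}$ of \emph{distinct} elliptic hypersurfaces as a quotable corollary of Bourgain--Demeter, asserting the argument ``applies verbatim.'' It does not: in the Bourgain--Demeter multi-scale induction, each parabolic rescaling replaces every $\Ga_j$ with a new elliptic graph whose Hessian is read at a $j$- and cap-dependent base point, so all of the intermediate inputs — the multilinear restriction step, the lower-dimensional decoupling step, and the ball-inflation bookkeeping — must be run simultaneously over all $R$ surfaces and over the new family produced at every scale. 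This is precisely the extension the paper explicitly declines to carry out; the remark after Lemma \ref{lemdec} says that reaching the full $2(n+1)/(n-1)$ range ``looks plausible\ldots but we do not pursue it here, since the shorter range $2\le p\le 2n/(n-1)$ is enough.'' So Step 2 is an unproved citation rather than an argument.

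What the paper actually does stays in the subcritical range $2\le p\le 2m/(m-1)$ and bypasses Bourgain--Demeter altogether: after a $K^{1/3}/K^{2/3}$ Pramanik--Seeger bootstrap to replace the varying surfaces $\Ga_j$ by paraboloids (Lemma \ref{dec3} $\Rightarrow$ Lemma \ref{dec4} $\Rightarrow$ Lemma \ref{dec5}), it runs a Bourgain--Guth broad/narrow analysis on $K^2$-balls, handling the broad piece with Lee's genuinely $\ell^2$-valued multilinear restriction estimate (\cite{Lee-Sqfcn}, quoted here as Lemma \ref{MRT}) and the narrow piece by induction on dimension. This is the single available $\ell^2$-valued black box, and it only reaches $2m/(m-1)$ — which is exactly why the paper stops there. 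Your closing suggestion that an ``elementary $L^2$-orthogonality, C\'ordoba-type argument'' would also give $p\le 2m/(m-1)$ is overoptimistic for $m\ge 3$: that exponent is not an even integer in general, and bi-orthogonality/$L^4$ arguments do not reach it; the $m$-linear restriction input is essential.
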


Note that Lemma \ref{dec1} is stated in the local way for small caps. Here ``local" means that the integration domain is $B_{MK^{2}\times M^2K^2}$ (when it is $\R^m$, we call ``global"); ``small cap" means that each component of our function $\vg$ has Fourier support on caps that are determined by the small cap $\si$.

It is standard that Lemma \ref{dec1} is equivalent to the following global version.

\begin{lemma}[Global, small cap]\label{dec2}
Let $\si\subset \ZS^{m-1}$ be a cap of radius $M^{-1}$, and $\Tau_{\sigma}=\{\tau\}$ be a collection of $K^{-1}M^{-1}$-caps that tile $\sigma$. Let $\vg=\{g_1,\cdots,g_R\}$ be any vector-valued function, such that each $g_j:\R^{m}\rightarrow \ZC$ has Fourier support in $N_{K^{-2}M^{-2}}\Ga_j(\si)$. Then for $2 \leq p \leq 2+\frac{2}{m-1}$, we have
\begin{equation}
    \|\Sq \vg\|_{L^{p}(\R^m)}\lesssim_\e (KM)^{\e}(\sum_{\tau \in \Tau_{\sigma}}\|\Sq \vg_\tau\|^2_{L^{p}(\R^m)})^{1/2}.
\end{equation}
\end{lemma}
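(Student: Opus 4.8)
The plan is to prove the global version, Lemma~\ref{dec2}; as noted, this is equivalent to the local Lemma~\ref{dec1} by the standard localisation argument, since each $\wh g_j$ (hence each $\wh g_{j,\tau}$) is supported in a $(KM)^{-2}$-neighbourhood of $\Ga_j$, so $g_j$ and $g_{j,\tau}$ are morally constant on each $B_{MK^2\times M^2K^2}$: cover $\ZR^m$ by finitely overlapping such rectangles, apply the local bound with its rapidly decaying weight on each, raise to the $p$-th power, and sum. Before invoking decoupling it is convenient to rescale the cap $\si$, of radius $M^{-1}$, to radius $\sim 1$. For fixed $j$, Lemma~\ref{Gauss-map-spt} (with $\rho=M^2$) gives that $G_j^{-1}(\si)$ lies in a ball $\bar\xi^{(j)}_0+M^{-1}B^{m-1}(0,C)$, and $\Ga_j(\si)$ is the graph of $\Phi_j$ over it; the parabolic rescaling $\bar\xi=\bar\xi^{(j)}_0+M^{-1}\bar u$, $\xi_m\mapsto M^2\bigl(\xi_m-\Phi_j(\bar\xi^{(j)}_0)-M^{-1}\nabla\Phi_j(\bar\xi^{(j)}_0)\cdot\bar u\bigr)$, sends $\Ga_j(\si)$ to the graph of a function whose Hessian is $D^2\Phi_j(\bar\xi^{(j)}_0+M^{-1}\bar u)$, still with all eigenvalues in $[1/2,2]$, sends the $(KM)^{-2}$-neighbourhood to a $K^{-2}$-neighbourhood, and sends the $K^{-1}M^{-1}$-subcaps $\tau$ to $K^{-1}$-caps.

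The analytic input is then the Bourgain--Demeter $\ell^2$-decoupling theorem for hypersurfaces with non-vanishing Gaussian curvature, valid for $2\le p\le\tfrac{2(m+1)}{m-1}$, which contains the range $2\le p\le 2+\tfrac{2}{m-1}=\tfrac{2m}{m-1}$ of Lemma~\ref{dec2}. For a single surface this upgrades automatically to its $\ell^2$-valued (square-function) version: writing $G=(g_j)_j$, Khintchine's inequality gives $\|\Sq\vg\|_{L^p}\sim_p\bigl(\mathbb{E}_\theta\|\sum_j\theta_jg_j\|_{L^p}^p\bigr)^{1/p}$ over random signs $\theta=(\theta_j)$, and likewise for each $\Sq\vg_\tau$; applying scalar decoupling to $\sum_j\theta_jg_j$ for each fixed sign pattern, then Minkowski's inequality in $L^p_\theta(\ell^2_\tau)$ (legitimate since $p\ge 2$) and Khintchine once more, produces
\begin{equation}
\nonumber
    \|\Sq\vg\|_{L^p(\ZR^m)}\lesssim_\e (KM)^\e\Bigl(\sum_\tau\|\Sq\vg_\tau\|_{L^p(\ZR^m)}^2\Bigr)^{1/2},
\end{equation}
which is the assertion of Lemma~\ref{dec2}. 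At $p=2$ this is simply $L^2$-orthogonality in $\tau$.

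The one genuinely delicate point is that the components $g_j$ sit on \emph{different} hypersurfaces $\Ga_j$, so in the step above $\sum_j\theta_jg_j$ is frequency-supported on $\bigcup_j\Ga_j$ rather than on one surface, and the decoupling argument must be run in a way that is uniform in $j$. This is where the structure is used: by \eqref{msurface} every $\Ga_j$ has elliptic Hessian with the same ellipticity constants, and for each cap $\tau$ the wave packets of $g_{j,\tau}$ at every scale point in the single direction $c_\tau$ independently of $j$, so the only geometric data entering the Bourgain--Demeter scheme — the transversality of caps and the associated multilinear Kakeya inequality — are common to all $j$; carrying an $\ell^2_j$-norm through the induction on scales then yields the family version with the scalar constant. (The restriction $p\le\tfrac{2m}{m-1}$ is precisely the range required by the broad--narrow reduction in Lemma~\ref{reduction}; as the Remark after Lemma~\ref{lemdec} indicates, the full range $p\le\tfrac{2(m+1)}{m-1}$ could be reached by adapting the Bourgain--Demeter argument but is not needed here. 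The three-dimensional case of Lemma~\ref{dec2} is carried out in \cite{gan2021new}; the higher-dimensional argument is the same.)
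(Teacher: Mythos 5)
Your reduction steps --- local/global equivalence, parabolic rescaling of the cap $\si$ from radius $M^{-1}$ to radius $\sim 1$, so that the $K^{-1}M^{-1}$-subcaps become $K^{-1}$-caps and the $(KM)^{-2}$-neighbourhood becomes a $K^{-2}$-neighbourhood --- closely match the paper's chain Lemma~\ref{dec1} $\Leftrightarrow$ Lemma~\ref{dec2} and Lemma~\ref{dec3} $\Leftarrow$ Lemma~\ref{dec4}. However, the core argument you give, Khintchine followed by an application of scalar $\ell^2$-decoupling to $\sum_j\theta_jg_j$, has a genuine gap, and your own last paragraph does not close it.

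The problem is that after randomising over signs, the function $\sum_j\theta_jg_j$ has Fourier support in $\bigcup_{j=1}^R N_{(KM)^{-2}}\Ga_j$: this is the union of $R$ \emph{distinct} hypersurfaces, and it is an $O(1)$-thick set in $\ZR^m$, not a $(KM)^{-2}$-neighbourhood of any single surface. Bourgain--Demeter decoupling for a hypersurface with nonvanishing curvature does not apply to it. Your Minkowski/Khintchine manoeuvre (which is the right order of quantifiers for $p\ge 2$) would be correct if all $\Ga_j$ coincided, but they do not, and so the display you label as ``the assertion of Lemma~\ref{dec2}'' does not actually follow. You acknowledge this as ``the genuinely delicate point,'' but the remedy you sketch --- ``carry an $\ell^2_j$-norm through the induction on scales'' --- is precisely the entire content of the lemma; asserting that the Bourgain--Demeter scheme goes through unchanged because the caps $\tau$ are common to all $j$ is not a proof, since the vector-valued versions of the multilinear inputs do need to be established (and the caps are common only because of the Gauss-map normalisation, which is a real structural feature that earns its keep at that step). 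In short, the Khintchine step does not reduce Lemma~\ref{dec2} to scalar decoupling.

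The paper's route is different in two ways that specifically avoid this trap. First, it does not invoke the full Bourgain--Demeter decoupling at all: because only the shorter range $2\le p\le 2m/(m-1)$ is needed, it runs a simpler Bourgain--Guth broad--narrow iteration (induction on both the dimension $m$ and the radius $R$) rather than the full $2(m+1)/(m-1)$ parabolic-rescaling/ball-inflation machinery, as the Remark after Lemma~\ref{lemdec} points out. Second, and crucially, the multilinear input it feeds into this iteration is not the scalar multilinear restriction upgraded by randomisation, but an intrinsically $\ell^2$-valued multilinear restriction estimate for square functions, Lemma~\ref{MRT}, cited from \cite{Lee-Sqfcn} Proposition~3.10. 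That is the missing named ingredient in your sketch; without it (or an equivalent $\ell^2$-valued multilinear Kakeya to re-run Bourgain--Demeter), your plan is incomplete.
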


Now we drop the restriction on the support of $\wh \vg$ and prove a stronger lemma.

\begin{lemma}[Global]\label{dec3}
Let $\Tau=\{\tau\}$ be a collection of $K^{-1}$-caps that tile $\ZS^{m-1}$. Let $\vg=\{g_1,\cdots,g_R\}$ be any vector-valued function, such that each $g_j:\R^{m}\rightarrow \ZC$ has Fourier support in $N_{K^{-2}}\Ga_j$.
Then for $2 \leq p \leq 2+\frac{2}{m-1}$, we have
\begin{equation}\label{dec3ineq}
    \|\Sq \vg\|_{L^{p}(\R^m)}\lesssim_\e K^{\e}(\sum_{\tau \in \Tau}\|\Sq \vg_\tau\|^2_{L^{p}(\R^m)})^{1/2}.
\end{equation}
\end{lemma}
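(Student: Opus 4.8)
The plan is to establish Lemma~\ref{dec3} directly; Lemmas~\ref{dec1} and~\ref{dec2} then follow by the standard reductions already indicated, namely local--global equivalence together with an affine (parabolic) rescaling that sends the $M^{-1}$-cap $\si$ to unit size, turning $N_{K^{-2}M^{-2}}\Ga_j(\si)$ into a $(CK)^{-2}$-neighborhood of a graph still obeying the Hessian hypotheses of~\eqref{msurface}, and turning the $K^{-1}M^{-1}$-caps into $(CK)^{-1}$-caps.

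For Lemma~\ref{dec3}, write $\Dec_p(K)$ for the best constant in~\eqref{dec3ineq}. The first step is to reduce to the endpoint $p_0:=2+\tfrac{2}{m-1}=\tfrac{2m}{m-1}$. At $p=2$ the inequality is trivial: for each fixed $j$ the slabs $N_{K^{-2}}\Ga_j(\tau)$, $\tau\in\Tau$, are finitely overlapping, so Plancherel gives $\|\Sq\vg\|_2^2\lesssim\sum_\tau\|\Sq\vg_\tau\|_2^2$, i.e.\ $\Dec_2(K)\lesssim 1$. Since the operator realizing the decoupling is the same for all $p$, its $L^p$ bounds interpolate, giving $\Dec_p(K)\lesssim\Dec_2(K)^{1-\theta}\Dec_{p_0}(K)^{\theta}$ whenever $\tfrac1p=\tfrac{1-\theta}{2}+\tfrac{\theta}{p_0}$; hence it suffices to show $\Dec_{p_0}(K)\lesssim_\e K^\e$.

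The exponent $p_0$ lies in the range $2\le p\le\tfrac{2(m+1)}{m-1}$ covered by the Bourgain--Demeter $\ell^2$-decoupling theorem for hypersurfaces with uniformly elliptic second fundamental form. I would deduce the bound on $\Dec_{p_0}(K)$ from that theorem after two adaptations. First, our surfaces $\Ga_j$ are graphs of functions $\Phi_j$ whose Hessians have all eigenvalues in $[1/2,2]$; Bourgain--Demeter applies to this class with a constant depending only on these bounds, in particular uniformly in $j$. Second, we need the conclusion for the $\ell^2_j$-valued function $\vg$ rather than for each scalar $g_j$ separately; here one invokes that the $\ell^2$-decoupling machinery is robust under passing to functions valued in a Hilbert space — the broad--narrow reduction, Bernstein's inequality, parabolic rescaling and interpolation all vectorize verbatim with $|\cdot|$ replaced by $\|\cdot\|_{\ell^2_j}$, and the transversality input to multilinear Kakeya is uniform in $j$ because, by Lemma~\ref{Gauss-map-lem} and Lemma~\ref{Gauss-map-spt}, each cap $\tau$ is cut out on every $\Ga_j$ by (morally) the same $K^{-1}$-ball in the common parameter domain $B^{m-1}(0,1/2)$, with tube directions $G_j(\tau)$ varying through uniform diffeomorphisms. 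Running the vectorized induction on scales then yields $\|\Sq\vg\|_{p_0}\lesssim_\e K^\e\big(\sum_\tau\|\Sq\vg_\tau\|_{p_0}^2\big)^{1/2}$.

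The main obstacle is this last point: making the passage to the $\ell^2_j$-valued, several-surface setting fully rigorous. One cannot cite scalar decoupling as a black box, since not every scalar inequality vectorizes; the honest options are to rerun the Bourgain--Demeter argument while carrying the $\ell^2_j$-norm throughout, or to repackage $\{\Ga_j\}_{j}$ as slices of a single higher-codimensional variety so that one scalar higher-dimensional decoupling suffices. Since the only surface-dependent ingredient is the uniform ellipticity of the $\Phi_j$ and the caps are synchronized across $j$ by the Gauss maps, no essentially new difficulty arises, but this is where the care is needed.
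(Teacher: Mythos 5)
Your proposal takes a genuinely different route from the paper, and unfortunately it is incomplete precisely at the step you yourself flag as delicate. You aim to deduce the lemma by vectorizing the full Bourgain--Demeter $\ell^2$-decoupling theorem (carrying the $\ell^2_j$-norm through the entire induction on scales, parabolic rescaling, Bernstein, and multilinear Kakeya), but you do not actually carry out that vectorization; you only argue that ``no essentially new difficulty arises.'' That is the entire analytical content of the lemma, and leaving it as an assertion is a gap, not a proof. The paper explicitly avoids this: it remarks that adapting Bourgain--Demeter to extend the range to $2\le p\le 2(m+1)/(m-1)$ ``looks plausible'' but is ``not pursued,'' because the shorter range $2\le p\le 2m/(m-1)$ is all that is needed and can be reached by a much lighter mechanism.

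Concretely, the paper's proof proceeds by a chain of reductions (Lemma~\ref{dec3} $\Rightarrow$ Lemma~\ref{dec4} via a bootstrap $\Dec(K,\{\Gamma_j\})\lesssim \Dec(K^{1/3},\{\Gamma_j\})\cdot\Dec(K^{2/3})$ using Taylor expansion and parabolic rescaling to pass to exact paraboloids, then $\Rightarrow$ Lemma~\ref{dec5} by local--global equivalence), and finally proves Lemma~\ref{dec5} by the Bourgain--Guth broad--narrow induction on dimension and radius, with the broad part controlled by the \emph{already established} vector-valued multilinear restriction estimate of Lee (\cite{Lee-Sqfcn} Proposition 3.10, quoted as Lemma~\ref{MRT}). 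The benefit of this route is that the only ``vector-valued'' input, the multilinear restriction bound for $\Sq\vg$, is a black box available in the literature; one never has to re-derive a vector-valued Bourgain--Demeter theorem. If you want to salvage your approach, you would need to either actually rerun the $\ell^2$-decoupling argument in the $\ell^2_j$-valued setting step by step, or (as you note) repackage $\{\Gamma_j\}_j$ as slices of a single higher-codimensional variety and cite a scalar decoupling theorem for that variety; either way the work you have postponed is the substance of the lemma. Also, your interpolation reduction to the endpoint $p_0=2m/(m-1)$ is fine in spirit but should be phrased via a genuine linear operator (e.g.\ the embedding $(\vg_\tau)_\tau\in \ell^2_\tau L^p \mapsto \sum_\tau\vg_\tau\in L^p$) rather than as interpolation of an inequality; the paper sidesteps this as well since the broad--narrow argument handles the whole range $2\le p\le 2m/(m-1)$ at once.
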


We can make further reductions on the shape of the hypersurfaces $\{\Ga_j\}$. The readers will see that it suffices to prove \eqref{dec3ineq} when each $\Ga_j$ is a paraboloid. We need to define a new family of hypersurfaces. Let $\{A_j\}_{j=1}^R$ be $(m-1)\times (m-1)$ symmetric matrices whose eigenvalues lie in $[1/2,2]$. Define the paraboloids 
\begin{equation}
    \label{defpa} P_j:=\{ (\bar\xi,\xi_m):\xi_m=\langle A_j\bar\xi,\bar\xi\rangle, |\bar\xi|\le 1/2 \}.
\end{equation}
If $\vg=\{g_1,\cdots,g_R\}$ satisfies supp$(\wh g_j)\subset N_{K^{-2}}P_j$. 
One can also define $\vg_\tau$ in the same way as \eqref{gtau}, but with $\{\Ga_j\}$ replaced by $\{P_j\}$.
We state the following lemma.

\begin{lemma}[Global, paraboloid]\label{dec4}
Let $\Tau=\{\tau\}$ be a collection of $K^{-1}$-caps that tile $\ZS^{m-1}$. Let $\vg=\{g_1,\cdots,g_R\}$ be any vector-valued function, such that each $g_j:\R^{m}\rightarrow \ZC$ has Fourier support in $N_{K^{-2}}P_j$. 
We have
\begin{equation}\label{dec4ineq}
    \|\Sq \vg\|_{L^{p}(\R^m)}\lesssim_\e K^{\e}(\sum_{\tau \in \Tau}\|\Sq \vg_\tau\|^2_{L^{p}(\R^m)})^{1/2}.
\end{equation}
\end{lemma}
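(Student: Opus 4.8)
The plan is to deduce Lemma~\ref{dec4} from the $\ell^2$-decoupling theorem of Bourgain--Demeter \cite{Bourgain-Demeter}, used in two (by now standard) strengthened forms: (i) $\ell^2$-decoupling into $K^{-1}$-caps holds with a constant $\lesssim_{\e}K^{\e}$, uniformly, for any compact piece of a $C^2$ hypersurface whose Gaussian curvature is pinched between two positive constants, hence in particular for each paraboloid $P_j$ from \eqref{defpa} with a constant depending only on the ellipticity bound $[1/2,2]$ on the $A_j$; and (ii) the inequality upgrades from the scalar $L^p$ norm of a single function to the $L^p(\ell^2)$ norm $\|\Sq\vg\|_{L^p}$ of a vector $(g_j)_j$. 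Since $2+\tfrac{2}{m-1}=\tfrac{2m}{m-1}<\tfrac{2(m+1)}{m-1}$, the exponent in \eqref{dec4ineq} lies strictly inside the range where the decoupling constant is $\lesssim_{\e}K^{\e}$, so it suffices to prove the square-function decoupling with this subcritical constant.

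To obtain the square-function version one reruns the Bourgain--Demeter induction on scales, but carries the quantity $\|\Sq\vg\|_{L^p}$ in place of the scalar $L^p$ norm of $\sum_\tau f_\tau$. Each ingredient survives: the $\ell^2$-orthogonality (``flat'') step is just Plancherel applied to each $g_j$ and then summed in $j$, since $\Sq\vg(x)^2=\sum_j|g_j(x)|^2$; the broad--narrow (Bourgain--Guth) decomposition is performed cap by cap in frequency, and because the Gauss maps of the $P_j$ are uniformly bi-Lipschitz, a choice of $m$ transversal $K^{-1}$-caps $\tau_1,\dots,\tau_m\subset\ZS^{m-1}$ yields pieces $P_j(\tau_1),\dots,P_j(\tau_m)$ that are transversal with constants uniform in $j$; and in the broad part one applies the multilinear Kakeya / Bennett--Carbery--Tao inequality to the densities $|g_j|$ for each fixed $j$ (that inequality is linear in each of the $m$ inputs, so the index $j$ costs nothing), reassembling the outcomes in $j$ with $p\ge 2$ and Minkowski's inequality. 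Parabolic rescaling in the inductive step is applied to each $P_j$ separately and is uniform by the ellipticity hypothesis. Thus the induction closes with the same exponents as in \cite{Bourgain-Demeter}, giving \eqref{dec4ineq}.

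The only place the multi-surface feature of the statement really enters — and the step requiring care — is ensuring that \emph{no} power of $K$ is lost in passing from a single paraboloid to the family $\{P_j\}_{j\le R}$: a brute-force grouping of the indices $j$ according to the value of $A_j$ at resolution $K^{-2}$ would cost a factor $K^{O(m^2)}$, so instead the index $j$ must be kept inert throughout, and one uses only that every relevant geometric constant (curvature pinching, transversality in multilinear Kakeya, distortion under parabolic rescaling) is uniform in $j$ — which is exactly what the hypothesis ``$A_j$ has eigenvalues in $[1/2,2]$'' provides. I would also note, as an alternative that keeps Appendix~B independent of \cite{Bourgain-Demeter}, that the subcritical range $p\le\frac{2m}{m-1}$ can be reached by a self-contained broad--narrow argument: estimate the broad part by multilinear Kakeya and the narrow part by $(m-1)$-dimensional decoupling (which holds on the relevant range) composed with parabolic rescaling, and iterate. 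Either way the conclusion covers $2\le p\le 2+\frac{2}{m-1}$, which is all that is needed.
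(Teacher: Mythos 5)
Your proposal is correct in spirit but takes a strictly more powerful route than the paper, and the paper explicitly declines to go that way. In the remark following Lemma~\ref{lemdec} the authors note that it ``looks plausible to extend the decoupling inequality to the range $2\le p\le 2(m+1)/(m-1)$ by adapting the argument of Bourgain--Demeter, but we do not pursue it here, since the shorter range $2\le p\le 2m/(m-1)$ is enough.'' Accordingly, the paper does not vectorize the full Bourgain--Demeter machinery as in your first paragraph; instead it reduces Lemma~\ref{dec4} to the equivalent local statement (Lemma~\ref{dec5}), and then proves that by a single broad--narrow (Bourgain--Guth) decomposition on an intermediate scale $K\sim\log R$, handling the broad term by invoking Lee's vector-valued multilinear restriction estimate (Lemma~\ref{MRT}, cited from \cite{Lee-Sqfcn}) as a black box, the narrow term by induction on the dimension $m$, and the remaining $\tau$-scale term by parabolic rescaling plus induction on the radius $R$. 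That is precisely the ``self-contained broad--narrow argument'' you mention only as an alternative at the end of your write-up, with one difference: the paper does not vectorize multilinear Kakeya itself, but directly cites the already-known vector-valued multilinear restriction estimate. Your central structural observation is right — the whole point is that the $j$-index can be kept inert because the ellipticity bound makes transversality and rescaling uniform in $j$ — but in the paper this observation is packaged inside Lee's result rather than reproved by hand. If you do want the critical exponent $2(m+1)/(m-1)$ via a vectorized Bourgain--Demeter induction, be aware that ``each ingredient survives'' is more than a one-line check: the pruning/uniformization and the rescaled ball-inflation steps need to be verified wave-packet family by wave-packet family (with directions that vary slightly in $j$), not only the multilinear Kakeya application — that is non-trivial book-keeping, and the paper's subcritical approach avoids it entirely.
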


We show Lemma \ref{dec4} implies Lemma \ref{dec3}. The trick can be found, for example, in Chapter 12 of \cite{demeter2020fourier}.
\begin{proof}[Sketch proof of Lemma \ref{dec4} implying Lemma \ref{dec3}]
Let $\Dec(K,\{\Ga_j\})$ be the best constant so that \eqref{dec3ineq} holds. Let $\Dec(K)$ be the best constant so that \eqref{dec4ineq} holds. By Lemma \ref{dec4}, we have $\Dec(K)\lesssim_\e K^\e$. Our goal is to prove $\Dec(K,\{\Ga_j\})\lesssim_\e K^\e$.

First, we choose a set of $K^{-1/3}$-caps $\{\al\}$ that tile $\ZS^{m-1}$. For a fixed cap $\al$ among them, we consider the truncated hypersurfaces $\{ \Ga_j(\alpha) \}_{j=1}^R$. Let $\xi_{j,\al}=G_j^{-1}(c_\al)\in \Ga_j$ be the point where the normal direction of $\Ga_j$ is $c_\al$. Let $L_\al$ be the $(m-1)\times (m-1)$ matrix  ($L_\al$ does not depend on $j$) such that $\{\xi: \xi_n=L_\al\bar\xi\}$ is parallel to the tangent space of $\Ga_j$ at $\xi_{j,\al}$.
Define $A_{j,\al}:=D^2\Phi_j(\bar \xi_{j,\al})$ (recall $\Phi_j$ in \eqref{msurface}), then $A_{j,\al}$ is symmetric with all eigenvalues lying in $[1/2,2]$.
We have the Taylor's expansion of $\Phi_j$ at $\xi_{j,\al}$:
\begin{equation}
\nonumber
    \Phi_j(\bar\xi_{j,\al}+\bar\xi)=(\xi_{j,\al})_n+L_\al\bar\xi+ \langle A_{j,\al}\bar\xi,\bar\xi \rangle+O(K^{-1}),\ \ \textup{for~}|\bar\xi|\le K^{-1/3}.
\end{equation}

Now we define our paraboloids:
\begin{equation}\label{defpar}
P_{j,\al}:=\{ \xi: \xi_m=\langle A_{j,\al}\bar\xi,\bar\xi \rangle,\ |\bar\xi|\le K^{-1/3} \}. 
\end{equation}
Here is an important observation: there is an affine map (which is the so-called parabolic rescaling) $\bar\xi\rightarrow A_\al\bar\xi+b_{j,\al}$ that maps $N_{K^{-1}}\Ga_j(\al)$ to $N_{CK^{-1}}P_{j,\al}$.

Since $\{P_{j,\al}\}_{j=1}^R$ are paraboloids truncated in a region of radius $K^{-1/3}$, we can easily find a rescaling: $\bar\xi\rightarrow K^{1/3}\bar\xi,\ \xi_n\rightarrow K^{2/3}\xi_n$, so that after performing such rescaling,
$\{P_{j,\al}\}_{j=1}^R$ become the paraboloids truncated in the region $\{|\bar\xi|\le 1/2 \}$, who satisfy the condition as in \eqref{defpa}.

By the definition of $\Dec(K^{2/3})$, on one hand we have
\begin{equation}
    \|\Sq \vg_{\al}\|_{L^{p}(\R^m)}\lesssim \Dec(K^{2/3}) (\sum_{\tau \subset \al}\|\Sq \vg_\tau\|^2_{L^{p}(\R^m)})^{1/2}.
\end{equation}
On the other hand, by the definition of $\Dec(K^{1/3},\{\Ga_j\})$, we have
\begin{equation}
    \|\Sq \vg\|_{L^{p}(\R^m)}\lesssim \Dec(K^{1/3},\{\Ga_j\}) (\sum_{\al}\|\Sq \vg_\al\|^2_{L^{p}(\R^m)})^{1/2}.
\end{equation}
As a result, we have
\begin{equation}
    \|\Sq \vg\|_{L^{p}(\R^m)}\lesssim \Dec(K^{1/3},\{\Ga_j\})\cdot\Dec(K^{2/3}) (\sum_{\tau\in\Tau}\|\Sq \vg_\tau\|^2_{L^{p}(\R^m)})^{1/2},
\end{equation}
which implies
\begin{equation}
\nonumber
    \Dec(K,\{\Ga_j\})\lesssim\Dec(K^{1/3},\{\Ga_j\})\cdot\Dec(K^{2/3}).
\end{equation}
Since $\Dec(K)\lesssim_\e K^\e$, by bootstrapping, we can obtain
$\Dec(K,\{\Ga_j\})\lesssim_\e K^\e$.
\end{proof}

Lemma \ref{dec4} is equivalent to the following local version.  Let us slightly change our notation to make it consistent with other references. We replace $K$ by $R^{1/2}$; we assume that we have $J$ paraboloids $\{P_j\}_{j=1}^J$ and write $\vg=\{g_1,\cdots,g_J\}$. We will see in our proof that the implicit constant does not depend on $J$.
\begin{lemma}[Local, paraboloid]\label{dec5}
Let $\Tau=\{\tau\}$ be a collection of $R^{-1/2}$-caps that tile $\ZS^{m-1}$. Let $\vg=\{g_1,\cdots,g_J\}$ be any vector-valued function, such that each $g_j:\R^{m}\rightarrow \ZC$ has Fourier support in $N_{R^{-1}}P_j$. 
We have
\begin{equation}\label{dec5ineq}
    \|\Sq \vg\|_{L^{p}(B_{R})}\lesssim_\e R^{\e}(\sum_{\tau \in \Tau}\|\Sq \vg_\tau\|^2_{L^{p}(\om_{B_{R}})})^{1/2}.
\end{equation}
\end{lemma}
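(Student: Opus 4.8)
After the chain of reductions in this appendix, the only thing left is Lemma~\ref{dec5}: the $\ell^2$-decoupling, over a ball $B_R$, of the square function attached to the family $\{P_1,\dots,P_J\}$ of paraboloids of curvature $\sim1$ into $R^{-1/2}$-caps, in the range $2\le p\le 2+\tfrac{2}{m-1}$, with a constant independent of $J$. First I would dispose of the $B_R$-localization and of the weight $\om_{B_R}$ by the uncertainty-principle bookkeeping used throughout this appendix: convolving a function with Fourier support in $N_{R^{-1}}P_j$ against $\om_{B_R}$ fattens its spectrum only by $O(R^{-1})$, so the $R^{-1/2}\times\dots\times R^{-1/2}\times R^{-1}$ slabs of distinct caps stay finitely overlapping, and the local$\leftrightarrow$global passage is the standard parabolic-rescaling/locally-constant argument. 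This reduces matters to the global inequality $\|\Sq\vg\|_{L^p(\R^m)}\lesssim_{\e}R^{\e}(\sum_\tau\|\Sq\vg_\tau\|_{L^p(\R^m)}^2)^{1/2}$.

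I would then obtain this by interpolating between the two endpoints $p=2$ and $p=2+\tfrac2{m-1}$, viewing both as norm bounds for the single linear operator $(\vh_\tau)_\tau\mapsto\sum_\tau P_\tau\vh_\tau$ on the mixed-norm spaces $\ell^2_\tau(L^q(\ell^2_j))$ (complex interpolation of these spaces being standard, so the operator norms interpolate). At $p=2$ the inequality is Plancherel -- the slabs finitely overlap -- so the constant is $O(1)$, and since it is an $L^2(\ell^2_j)$ identity it is uniform in $J$. At the sub-critical endpoint $p=2+\tfrac2{m-1}=\tfrac{2m}{m-1}$ the inequality is a Córdoba--Fefferman type square-function estimate for the paraboloid: for $m=2$ it is the classical $L^4$ square-function estimate for the parabola, which one expands as $\|\Sq\vg\|_4^4=\|\sum_j|g_j|^2\|_2^2$ and controls by bi-orthogonality of the two-fold difference sets of the $R^{-1/2}$-caps (each $R^{-1}$-box of the difference set receiving $O(1)$ unordered pairs, uniformly in $j$), with no $R^{\e}$ loss; for $m\ge3$ the needed endpoint follows either by an induction on dimension or, in any case, from the observation that the range $[2,\tfrac{2m}{m-1}]$ sits strictly inside the Bourgain--Demeter range $[2,\tfrac{2(m+1)}{m-1}]$ of \cite{Bourgain-Demeter}, whose argument can be run with $\ell^2_j$-valued functions. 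Since each $P_j$ has curvature $\sim1$, the cap decompositions for different $j$ agree up to bounded linear maps, so the family structure costs only universal constants.

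Transferring the resulting estimate back up the chain of reductions from Lemma~\ref{dec5} to Lemma~\ref{dec1} -- the only care needed being to absorb the $(KM)^{\e}$-losses, which is harmless since $KM\le R^{1/2}$ -- then finishes the proof of Lemma~\ref{lemdec}.

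The step I expect to be the main obstacle is the sub-critical endpoint for $m\ge3$: pinning down the $\ell^2$-decoupling at the exact exponent $\tfrac{2m}{m-1}$ for the square function of a $J$-indexed family of paraboloids, uniformly in $J$, without invoking the full Bourgain--Demeter machinery. It is precisely this confinement to the sub-critical range -- rather than the full $\tfrac{2(n+1)}{n-1}$, which would require an adaptation of Bourgain--Demeter -- that keeps the argument at the level of the classical square-function estimate plus interpolation, and that is why the longer range is not pursued here.
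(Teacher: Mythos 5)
Your overall strategy here is genuinely different from the paper's, and it has a gap at precisely the point you flag as "the main obstacle." The paper does \emph{not} obtain Lemma~\ref{dec5} by interpolating two endpoints; it proves the full range $2\le p\le 2+\tfrac{2}{m-1}$ at once by a Bourgain--Guth induction on both the dimension $m$ and the radius $R$, using the broad--narrow dichotomy on $B_{K^2}$-balls with $K\sim\log R$, the vector-valued multilinear restriction estimate of Lee (Lemma~\ref{MRT}, imported from \cite{Lee-Sqfcn} Prop.\ 3.10) for the broad part, slicing plus the lower-dimensional induction for the narrow part, and parabolic rescaling plus the induction on radius for the single-cap remainder. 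That machinery is exactly the vector-valued analogue of the Bourgain--Guth argument, and it is what delivers the sub-critical range; nothing in the paper's proof is ``elementary C\'ordoba.''

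The gap in your proposal is the endpoint claim. For $m\ge3$ there is no C\'ordoba--Fefferman square-function estimate, bi-orthogonal or otherwise, at the exponent $p=\tfrac{2m}{m-1}$; the only known route to that exponent (already for a \emph{single} paraboloid) is multilinear-restriction/decoupling machinery, so ``interpolate with the sub-critical endpoint'' does not shortcut anything -- that endpoint is the theorem. Invoking Bourgain--Demeter with $\ell^2_j$-valued inputs is not a black box either, because here the Fourier support of $g_j$ lies on a \emph{different} paraboloid $P_j$ for each $j$, so the parabolic rescalings in the recursion are $j$-dependent; making the decoupling argument carry through uniformly in the family is exactly the content of the paper's Lemma~\ref{MRT} (a vector-valued multilinear restriction for the family) plus the careful bookkeeping in the induction. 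Moreover, even at $m=2$ the bi-orthogonality step does not go through as you sketch it: $\|\Sq\vg\|_4^4=\sum_{j,j'}\int|g_j|^2|g_{j'}|^2$ contains cross terms $j\neq j'$, and the frequency supports of $\widehat{|g_j|^2}$ and $\widehat{|g_{j'}|^2}$ are \emph{not} almost orthogonal for distinct paraboloids, so ``$O(1)$ pairs per $R^{-1}$-box, uniformly in $j$'' controls only the diagonal contribution and the naive estimate loses a factor of $J$. Fixing this already requires the kind of argument the paper runs, and for $m\ge3$ you cannot avoid it.

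Finally, the interpolation framing -- viewing decoupling as an operator-norm bound on $\ell^2_\tau(L^q(\ell^2_j))$ -- is not wrong in principle, but it does not reduce the difficulty: since $p=2$ is trivial (Plancherel) and does not carry any gain, the entire burden sits at the other endpoint. The paper proves this endpoint (and everything below it) directly; your proposal essentially assumes it.
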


\begin{remark}

\rm

This paraboloid version allows us to do rescaling much easier. 
\end{remark}

In the rest of Appendix B, we prove Lemma \ref{dec5}.
We adopt the notation as in \eqref{setofcap}. For a set of caps $\Si=\{\si\}$, we define
\begin{align}
    \vg_{\Si}&(x):= \sum_{\si \in\Si} \vg_{\si}(x),\\
    \Sq\vg_{\Si}(x):=& \big( \sum_{j=1}^J |\sum_{\si \in\Si} g_{j,\si}(x)|^2 \big)^{1/2}.
\end{align}

We use induction on the dimension $m$ and radius $R$. Suppose Lemma \ref{dec5} holds for dimension $m-1$ and radius $\leq R/2$.

Our strategy is to use the broad-narrow analysis. Narrow part will be dealt with by applying the induction. In order to deal with the broad part, we need the vector-valued multilinear restriction estimate. To state the estimate, let us introduce some definition: we call a collection of sets $A_1,\ldots,A_m \subset \ZS^{m-1}$ $\nu$-transverse if
\begin{equation}
    |a_1 \wedge a_2 \wedge \ldots \wedge a_m| > \nu
\end{equation}
for every points $a_i \in A_i$. For convenience, we introduce the averaged integrals
\begin{equation}
\begin{split}
    &\|f\|_{L^p_{\sharp}(B_R )}^p:= \fint_{B_R}|f|^p:=\frac{1}{|B_R|}\int_{B_R}|f|^p,
    \\&
    \|f\|_{L^p_{\sharp}(\om_{B_R })}^p:=\frac{1}{|B_R|}\int|f|^pw_{B_R}.
    \end{split}
\end{equation}
Here is the vector-valued multilinear restriction estimate. This result was proved in \cite{Lee-Sqfcn} Proposition 3.10.

\begin{lemma}\label{MRT} Let $0<\nu<1$. Suppose $\Tau_1,\ldots,\Tau_{m}$ are subsets of $\Tau$ such that any choice of $\tau_1\in\Tau_1,\ldots,\tau_m\in\Tau_{m}$ are $\nu$-transverse. For $2 \leq p \leq 2m/(m-1)$, we have
\begin{equation}\label{0728.115}
\begin{split}
     \big\|\prod_{l=1}^{m} |\Sq\vg_{\Tau_l}|^{\frac{1}{m}}\big\|_{L^{p}_{\sharp}(B_R  )} 
     \lesssim_\e \nu^{-O(1)} R^{\e}
     \Big(\prod_{l=1}^m 
     \|\Sq \vg_{\Tau_l}\|_{L^{2}_{\sharp}(\om_{B_R })} \Big)^{1/m}.
\end{split}
 \end{equation}
\end{lemma}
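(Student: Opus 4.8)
The plan is to deduce this \emph{vector-valued} multilinear restriction bound from the classical scalar Bennett--Carbery--Tao (BCT) multilinear restriction inequality by randomising the entries of the square functions. Fix, for each $l\in\{1,\dots,m\}$, an independent family $(\epsilon^{(l)}_j)_j$ of $\pm1$ Bernoulli variables and set $G^{(l)}_\epsilon:=\sum_j\epsilon^{(l)}_j g_{j,\Tau_l}$. Khintchine's inequality with exponent $p/m>0$ gives, for every $x$ and $l$,
\[
  \big|\Sq\vg_{\Tau_l}(x)\big|^{p/m}=\Big(\sum_j|g_{j,\Tau_l}(x)|^2\Big)^{p/(2m)}\sim\mathbb{E}_{\epsilon^{(l)}}\big|G^{(l)}_\epsilon(x)\big|^{p/m};
\]
multiplying over $l$, using independence of the families $\epsilon^{(l)}$, integrating over $B_R$ and applying Fubini yields
\[
  \Big\|\prod_{l=1}^m|\Sq\vg_{\Tau_l}|^{1/m}\Big\|_{L^p_\sharp(B_R)}^p\sim\mathbb{E}_\epsilon\Big\|\prod_{l=1}^m|G^{(l)}_\epsilon|^{1/m}\Big\|_{L^p_\sharp(B_R)}^p.
\]
For fixed signs, $\widehat{G^{(l)}_\epsilon}$ is supported in $\bigcup_j N_{R^{-1}}P_j(\Tau_l)$, a union of $R^{-1}$-neighbourhoods of hypersurface pieces all of whose normals lie within $O(R^{-1/2})$ of $\Tau_l$, so the frequency supports of $G^{(1)}_\epsilon,\dots,G^{(m)}_\epsilon$ sit in $O(R^{-1/2})$-neighbourhoods of $\gtrsim\nu$-transverse sets of directions.

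Next I would invoke the local BCT estimate for these functions. Because $2\le p\le 2m/(m-1)$ and averaged $L^p$-norms are nondecreasing in $p$, it suffices to treat the endpoint $p=2m/(m-1)$, where
\[
  \Big\|\prod_{l=1}^m|G^{(l)}_\epsilon|^{1/m}\Big\|_{L^{2m/(m-1)}_\sharp(B_R)}\lesssim_\e\nu^{-O(1)}R^{\e}\prod_{l=1}^m\|G^{(l)}_\epsilon\|_{L^2_\sharp(\om_{B_R})}^{1/m}.
\]
The one point deserving comment --- and the main obstacle --- is that each $\widehat{G^{(l)}_\epsilon}$ is concentrated near a \emph{union} of transverse surface pieces rather than near a single smooth hypersurface. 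This causes no real trouble: BCT follows, with the same dependence on $R$ and $\nu$, from the multilinear Kakeya inequality (Guth's polynomial-method proof, or the original heat-flow proof of Bennett--Carbery--Tao), and multilinear Kakeya is insensitive to curvature and registers only the \emph{directions} of the dual $R^{1/2}\times\cdots\times R^{1/2}\times R$ wave packets, which for $\widehat{G^{(l)}_\epsilon}$ all point into the $l$-th transverse cone. Equivalently, one may run the standard reduction ``multilinear restriction $\Leftarrow$ multilinear Kakeya'' directly in $\ell^2$-vector-valued form; this is what yields the lemma as in \cite{Lee-Sqfcn}. Everything else is soft.

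Finally I would transfer the expectation back onto the square functions on the right-hand side. Raising the previous display to the power $p$, taking $\mathbb{E}_\epsilon$, using independence across $l$, and estimating each factor by Jensen's inequality for the concave map $t\mapsto t^{p/(2m)}$ (note $p/(2m)\le1$ in our range),
\[
  \mathbb{E}_{\epsilon^{(l)}}\|G^{(l)}_\epsilon\|_{L^2_\sharp(\om_{B_R})}^{p/m}\le\Big(\mathbb{E}_{\epsilon^{(l)}}\|G^{(l)}_\epsilon\|_{L^2_\sharp(\om_{B_R})}^{2}\Big)^{p/(2m)}=\Big(\tfrac1{|B_R|}\int\Big(\sum_j|g_{j,\Tau_l}|^2\Big)w_{B_R}\Big)^{p/(2m)}=\|\Sq\vg_{\Tau_l}\|_{L^2_\sharp(\om_{B_R})}^{p/m},
\]
the cross terms vanishing upon averaging over one sign family. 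Combining this with the two earlier displays and taking $p$-th roots gives precisely \eqref{0728.115}, with the claimed $\nu^{-O(1)}R^{\e}$ dependence.
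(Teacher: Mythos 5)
The paper does not give a proof of Lemma \ref{MRT}: it simply invokes Proposition~3.10 of \cite{Lee-Sqfcn}. Your argument is therefore a self-contained alternative, and it is essentially correct. The randomisation scheme (Khintchine pointwise, Fubini, scalar multilinear restriction for a fixed sign pattern, then Jensen to return to $\Sq\vg_{\Tau_l}$) is a standard and valid way to promote the scalar Bennett--Carbery--Tao bound to the $\ell^2$-vector-valued setting. Each individual step checks out: Khintchine with exponent $p/m \in [2/m, 2/(m-1)]$ is uniformly two-sided; monotonicity of the averaged $L^p_\sharp$-norm legitimises the reduction to $p = 2m/(m-1)$ (since the right-hand side is $p$-independent); $p/(2m)\le 1$ so Jensen applies; and $\mathbb{E}_{\epsilon^{(l)}}\big|\sum_j\epsilon^{(l)}_jg_{j,\Tau_l}(x)\big|^2=\sum_j|g_{j,\Tau_l}(x)|^2$ pointwise, with the cross-terms dying without any frequency-side orthogonality.

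The one step that deserves a sharper justification than ``multilinear Kakeya is insensitive to curvature'' is the scalar bound for $G^{(l)}_\epsilon$, whose Fourier support is a union $\bigcup_j N_{R^{-1}}P_j(\Tau_l)$ of pieces of \emph{different} elliptic hypersurfaces. The assertion is correct: BCT, via wave-packet decomposition and multilinear Kakeya (or the induction-on-scales reduction), uses only that (i) each factor admits an $R^{1/2}\times\cdots\times R^{1/2}\times R$ wave-packet decomposition with $L^2$-orthogonality and (ii) the tube directions of the $l$-th factor lie in the $\nu$-transverse cone over $\Tau_l$. Both hold here because the normals of every $P_j$ over the cap $\Tau_l$ lie in a fattened $\Tau_l$ (using the uniform Hessian bounds $A_j\in[\tfrac12,2]\cdot I$), and the slabs tiling $\bigcup_j N_{R^{-1}}P_j(\Tau_l)$ are still finitely overlapping. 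One should note, however, that this ``union of surfaces'' extension is not literally the BCT statement one usually cites, so a careful write-up would either point to a reference that records it or sketch the induction-on-scales reduction in this generality. With that caveat, the proof is fine and, modulo presentation, it matches what \cite{Lee-Sqfcn} carries out.
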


Let us use the above lemma to finish the proof of Lemma \ref{dec5}. We introduce an intermediate scale $K\sim \log R$. Let $\Si=\{\si\}$ be a set of $K^{-1}$-caps that tile $\ZS^{m-1}$.
Somehow we abuse the notation to just write  
\begin{equation}
\vg_\si= \sum_{\tau \subset \si}\vg_\tau.
\end{equation}
Consider a partition $B_R=\cup B_{K^2}$. 
We follow the broad-narrow analysis of Bourgain-Guth \cite{Bourgain-Guth-Oscillatory}. For each $B_{K^2}$ in this partition, define the significant set $\mathcal{S}(B_{K^2})$ by 
\begin{equation}
\mathcal{S}(B_{K^2}):=\big\{ \si\in\Si: \big\|\Sq\vg_\si \big\|_{L^{p}(B_{K^2} )} \geq \frac{100}{|\Si|}
 \big\|\Sq\vg \big\|_{L^{p}(B_{K^2})}
\big\}.
\end{equation}

Suppose that there are $C K^{-m}$-transverse caps $\si_1,\ldots,\si_m$. Then by the definition of the significant set, we have
\begin{equation}
    \big\|\Sq\vg \big\|_{L^{p}(B_{K^2} )}
         \lesssim
     K^{C} \prod_{l=1}^m
     \big\|\Sq\vg_{\si_l} \big\|_{L^{p}(B_{K^2})}^{\frac1m}.
\end{equation}
After some random translations (see  pages 11--12 of \cite{MR3961084}), this is bounded by 
\begin{equation}
    \lesssim
    K^C \big\|\prod_{l=1}^{m}
         |\Sq\vg_{\si_l}|
         ^{\frac{1}{m}}\big\|_{L^{p}(B_{K^2} )}.
\end{equation}

If such transverse caps do not exist, then all the elements of $\mathcal{S}(B_{K^2})$ are contained in the $K^{-1}$-neighborhood of an $(m\!-\!1)$-dimensional subspace $V$. As in \eqref{fsi}, define
\begin{equation}
    \Si(V):=\{ \si\in\Si: \angle(\si, V)\le K^{-1} \}.
\end{equation}
In this case, we have
\begin{equation}
    \big\|\Sq\vg \big\|_{L^{p}(B_{K^2} )}
        \lesssim 
         \big\|\Sq\vg_{\Si(V)} \big\|_{L^{p}(B_{K^2} )}  + \big( \sum_{\tau\in\Tau} \big\|\Sq\vg_{{\tau}} \big\|_{L^{p}(B_{K^2} )}^p \big)^{1/p}.
\end{equation}
As a consequence, we obtain
\begin{equation}\label{116}
    \begin{split}
        \big\|\Sq\vg \big\|_{L^{p}(B_{K^2} )}
        &\lesssim K^C
         \big\|\prod_{l=1}^{m}
         |\Sq\vg_{\si_l}|
         ^{\frac{1}{m}}\big\|_{L^{p}(B_{K^2} )}\\
         &+ \big\|\Sq\vg_{\Si(V)} \big\|_{L^{p}(B_{K^2} )}+\big( \sum_{\tau\in\Tau} \big\|\Sq\vg_\tau \big\|_{L^{p}(B_{K^2} )}^p \big)^{1/p},
    \end{split} 
\end{equation}
for some $(m-1)$-dimensional subspace $V$ and $CK^{-m}$-transverse caps $\{\si_l\}_{l=1}^m$. By the  induction hypothesis on $m$, we have
\begin{equation}
    \big\|\Sq\vg_{\Si(V)} \big\|_{L^{p}(B_{K^2} )} 
         \lesssim 
         K^{\e}(\sum_{\si \in \Si}\|\Sq\vg_\si\|^2_{L^{p}(\om_{B_{K^2}})})^{1/2}.
\end{equation}
Summing over all the balls $B_{K^2} \subset B_R$ on the both sides of \eqref{116}, applying the above inequalities, we obtain
\begin{equation}
    \begin{split}
        \big\|\Sq\vg &\big\|_{L^{p}(B_R )}
        \lesssim 
         K^C
         \big\|\prod_{l=1}^{m}
         |\Sq\vg_{\si_l}|
         ^{\frac{1}{m}}\big\|_{L^{p}(B_R )}\\
         &+K^{\e}(\sum_{\si\in\Si(V)}\|\Sq\vg_\si\|^2_{L^{p}(w_{B_R})})^{1/2}+\big( \sum_{\tau\in\Tau} \big\|\Sq\vg_\tau \big\|_{L^{p}(B_{K^2} )}^2 \big)^{1/2}
         .
    \end{split} 
\end{equation}
We hope the right hand side is $\lesssim_\e R^\e( \sum_{\tau\in\Tau} \|\Sq\vg_\tau \|_{L^{p}(B_{K^2})}^2 )^{1/2}$.
For the first term on the right hand side, we apply Lemma \ref{MRT} with $\Tau_l=\{\tau\in\Tau:\tau\subset \si_l\}$ to obtain the desired estimate. For the second term, we first do parabolic rescaling so that for each $j$, $N_{R^{-1}}P_j(\si)$ (recall the definition in \eqref{defslabsurface}) becomes $N_{CK^2R^{-1}}P_j'$ for some paraboloid $P_j'$, and the slabs $\{N_{R^{-1}}P_j(\tau): \tau\in \Tau, \tau\subset\si\}$ become $KR^{-1/2}\times\cdots\times KR^{-1}\times K^2R^{-1}$-slabs of $N_{CK^2R^{-1}}P_j'$. Hence, we can apply the induction hypothesis for the radius $K^{-2}R$ to get the desired bound. We leave out the details. Finally, we proved
\begin{equation}
        \big\|\Sq\vg \big\|_{L^{p}(B_R )}\lesssim_\e 
         R^\e\big( \sum_{\tau\in\Tau} \big\|\Sq\vg_\tau \big\|_{L^{p}(B_{K^2} )}^2 \big)^{1/2}.
\end{equation}

\section{Appendix C: Proof of Corollary \ref{mixed-norm-cor-2}}
Let us assume $I=[1,2]$ without loss of generality. By the triangle inequality and the Littlewood-Paley decomposition, it suffices to show that given a smooth cutoff function $\wh\vp$ with  $\supp(\wh{\vp})\subset B^n(0,1)\setminus B^n(0,1/2)$, one has
\begin{equation}
\label{11-1}
    \Big\| \Big(\int_{R}^{2R}\big|e^{it(-\De)^{\al/2}}(\vp\ast f)\big|^q\,dt\Big)^{1/q}\Big\|_{L^p(\ZR^n)}\lesssim R^{\frac{n}{2}-\frac{n}{p}}\|f\|_p.
\end{equation}
Via a localization argument similar to Lemma 8 in \cite{Rogers}, \eqref{11-1} boils down to 
\begin{equation}
\label{11-2}
    \Big\| \Big(\int_{R}^{2R}\big|e^{it(-\De)^{\al/2}}(\vp\ast f)\big|^q\,dt\Big)^{1/q}\Big\|_{L^p(B^n_R)}\lesssim R^{\frac{n}{2}-\frac{n}{p}}\|f\|_p,
\end{equation}
with an extra assumption on $f$ that $f$ is supported in an $R$ ball in $\ZR^n$. We partition $[R,2R]$ as a union of unit intervals $\{I_k\}$, and let $\{\vp_{k}\}_k$ be a smooth partition of unity associated to it, where $\supp(\wh\vp_k)\subset [-1,1]$. Notice that $\wh {\vp\ast f}$ is supported in the unit ball. It implies that as a function of $t$, $\vp_ke^{it(-\De)^{\al/2}}(\vp\ast f)$ is essentially constant in any unit interval. Indeed, Bernstein's inequality gives 
\begin{equation}
    \Big(\int \big|\vp_ke^{it(-\De)^{\al/2}}(\vp\ast f)\big|^q\,dt\Big)^{1/q}\lesssim\Big(\int \big|\vp_ke^{it(-\De)^{\al/2}}(\vp\ast f)\big|^2\,dt\Big)^{1/2}.
\end{equation}
Summing up all $\vp_k$ and using the imbedding $l^2 \hookrightarrow l^q$ so that
\begin{align}
\nonumber
    \Big\| \Big(\int_{R}^{2R}\!\!\big|e^{it(-\De)^{\al/2}}(\vp\ast f)\big|^qdt\Big)^\frac{1}{q}\Big\|_{L^p(B^n_R)}\!\lesssim & \,\Big\| \Big(\int_{R/2}^{3R}\big|e^{it(-\De)^{\al/2}}(\vp\ast f)\big|^2dt\Big)^\frac{1}{2}\Big\|_{L^p(B^n_{R})}\\ \nonumber
    &+\rap(R)\|f\|_p.
\end{align}
Apply a rescaled version of  \eqref{local-smoothing-2} to the estimate above to obtain \eqref{11-2}. \qed

\bibliographystyle{alpha}
\bibliography{bibli}

\end{document}